\def\Z{\mathbb{Z}}
\newcommand{\la}{\lambda}
\newcommand{\bt}{\beta}
\newcommand{\op}{{\rm Op}}
\newcommand{\lr}[1]{{\langle{#1}\rangle}}
\def\f{{\varphi}}
\def\im{{\mathtt i}}
\def\g{{\gamma}}
\def\pa{\partial}
\def\cO{\mathcal{O}}
\def\cC{\mathcal{C}}
\def\e{\varepsilon}
\def\N{{\mathbb N}}
\renewcommand{\to}{\rightarrow}
\newcommand{\tk}{{\mathtt k}}
\numberwithin{equation}{section}
\theoremstyle{plain}%definition
\newtheorem{teor}{Theorem}[section]
\newtheorem{teor1}{Theorem}
\newtheorem{ese}[teor]{Example}
\newtheorem{prop}[teor]{Proposition}
\newtheorem{lem}[teor]{Lemma}
\newtheorem{cor}[teor1]{Corollary}
\newtheorem{coro}[teor]{Corollary}
\newcommand{\bdm}{\begin{displaymath}}
\newcommand{\edm}{\end{displaymath}}
\newcommand{\bpb}{\begin{prob}}
\newcommand{\epb}{\end{prob}}
\newcommand{\beq}{\begin{equation}}
\newcommand{\eeq}{\end{equation}}
\newcommand{\bem}{\begin{multline}}
\newcommand{\eem}{\end{multline}}
\newcommand{\bes}{\begin{ese}}
\newcommand{\ees}{\end{ese}}
\newcommand{\bde}{\begin{defi}}
\newcommand{\ede}{\end{defi}}
\newcommand{\bpr}{\begin{prop}}
\newcommand{\epr}{\end{prop}}
\newcommand{\ble}{\begin{lem}}
\newcommand{\ele}{\end{lem}}
\newcommand{\bte}{\begin{teor}}
\newcommand{\ete}{\end{teor}}
\newcommand{\bco}{\begin{cor}}
\newcommand{\eco}{\end{cor}}
\newtheorem{defi}[teor]{Definition}
\newtheorem{remark}[teor]{Remark}
\newcommand{\su}{\s_1}
\newcommand{\lD}{{\langle D_x\rangle}}
\newcommand{\R}{\mathbb{R}}
\newcommand{\T}{\mathbb{T}}
\newcommand{\calL}{{\mathcal L}}
\newcommand{\calO}{{\mathcal O}}
\newcommand{\calP}{{\mathcal P}}
\newcommand{\calQ}{{\mathcal Q}}
\newcommand{\RR}{{\mathcal R}}
\newcommand{\gotd}{{\mathfrak d}}
\newcommand{\gotL}{{\mathfrak L}}
\newcommand{\gotM}{{\mathfrak M}}
\newcommand{\gotN}{{\mathfrak N}}
\newcommand{\und}{\underline}
\newcommand{\ol}{\overline}
\newcommand{\al}{\alpha}
\newcommand{\be}{\beta}
\newcommand{\x}{\xi}
\newcommand{\ka}{\kappa}
\newcommand{\s}{\sigma}
\newcommand{\del}{\partial}
\newcommand{\oo}{\omega}
\newcommand{\ta}{\mathtt{a}}
\newcommand{\tb}{\mathtt{b}}
\newcommand{{\resonance}}{relevant self-energy cluster }
\newcommand{\un}{\underline}
\def\cD{{\mathcal D}}
\def\cP{{\mathcal P}}
\def\cA{{\mathcal A}}
\def\cC{{\mathcal C}}
\def\cQ{{\mathcal Q}}
\def\cP{{\mathcal P}}
\def\cD{{\mathcal D}}
\newcommand{\tq}{{\mathtt q}}
\begin{document}

\title{\bf Reducibility for a class of weakly dispersive  linear operators arising from the Degasperis Procesi  equation}
\date{}

\author{\bf 
R. Feola$^{**}$, F. Giuliani$^\dag $, M. Procesi$^\dag$
\\
\small
${}^{**}$ SISSA, Trieste, rfeola@sissa.it; 
\\
\small
${}^\dag$ RomaTre, Roma, procesi@mat.uniroma3.it, fgiuliani@mat.uniroma3.it\footnote{
This research was supported by PRIN 2015 ``Variational methods, with applications to problems in 
mathematical physics and geometry'' and by 
ERC grant ``Hamiltonian PDEs and small divisor problems: a dynamical systems approach n. 306414 under FP7''.
%This research was supported by PRIN 2012 ``Variational and perturbative aspects of nonlinear differential problems''.
}}

\maketitle

%%%%%%%%%%%%%%%%%%%%%%%%%%%%%%%%%%%%%%%%%%%%%%%%%%%%%%%%%%%%%%%%%%%%%%%%%
%%%%%%%%%%%%%%%%%%%%%%%%%%%%%%%%%%%%%%%%%%%%%%%%%%%%%%%%%%%%%%%%%%%%%%%%%
\begin{abstract}
We prove reducibility of a class of quasi-periodically forced linear equations of the form
\[
\partial_tu-\partial_x\circ (1+a(\omega t, x))u+\mathcal{Q}(\omega t)u=0 \qquad  x\in\T:=\mathbb{R}/2\pi\mathbb{Z},
\]
where $u=u(t,x)$, $a$ is a small, $C^{\infty}$ function, $\mathcal{Q}$ is a pseudo differential operator of order $-1$, provided that $\omega\in\mathbb{R}^{\nu}$ satisfies appropriate non-resonance conditions.
Such PDEs arise by linearizing the Degasperis-Procesi (DP) equation at a small amplitude quasi-periodic function. Our work provides a first fundamental step in developing a KAM theory for perturbations of the DP equation on the circle.
Following \cite{Airy}, our approach is based on two main points: first  a reduction in orders  based on an Egorov type theorem  then a KAM diagonalization scheme. In both steps the key difficulites arise from the asymptotically linear dispersion law. In view of the application to the nonlinear context we prove sharp {\it  tame} bounds on the diagonalizing change of variables.
 We remark that the strategy and the techniques proposed are applicable for proving reducibility of more general classes of linear pseudo differential first order operators. 
\end{abstract}
%%%%%%%%%%%%%%%%%%%%%%%%%%%%%%%%%%%%%%%%%%%%%%%%%%%%%%%%%%%%%%%%%%%%%%%%%
%%%%%%%%%%%%%%%%%%%%%%%%%%%%%%%%%%%%%%%%%%%%%%%%%%%%%%%%%%%%%%%%%%%%%%%%%

\tableofcontents

\section{Introduction}
The problem of reducibility and stability of Sobolev norms for quasi-periodically forced linear operators on the circle is a classical one, 
and it has received new attention in the past few years. 
Informally speaking, given  a  linear operator, say $\mathcal X_\oo: H^s(\T,\R)\to H^{s-\mu}(\T,\R)$ 
(where $\T:=\R/2\pi \Z$)
depending  on time in a quasi-periodic way, we say that it is reducible if there exists a bounded change of variables  depending quasi-periodically on time (say mapping $H^s\to H^s$ for all times), which conjugates the linear PDE $\del_t u = \mathcal X_\oo u$ 
to the constant coefficient one
\[
\del_{t}v=\mathcal{D}_{\omega}v, \qquad \mathcal{D}_{\omega}:={\rm diag}_{j\in \Z}\{d_{j}\}, \;\;\; d_{j}\in \mathbb{C}.
\]
%to constant coefficients.
The notion of reducibility has been first introduced for ODEs (see for instance \cite{Jorba}, \cite{Eli}, \cite{You},  \cite{Avila} and reference therein). In the PDEs context this problem has been studied mostly in a perturbative regime,
 both on compact and non-compact domains. The reducibility of linear operators entails relevant dynamical consequences such as the control on the growth of Sobolev norms for the associated Cauchy problem.
 
 \noindent
 The subject has been studied by many authors:
we mention, among others, \cite{Comb87}, \cite{Bambusi-Graffi}, {\cite{EK2}, \cite{GP}, \cite{BGMR}, \cite{Mon2}}. 
For more details we refer for instance to \cite{Bam} (and reference therein).

\noindent
A strong motivation for the development of reducibility theory comes from KAM theory for nonlinear PDEs.
 Actually, reducibility  is a key ingredient in the construction of quasi-periodic solutions via quadratic schemes, such as Nash-Moser algorithms. 
 Indeed, the main issue is to invert the linearized PDE at a quasi-periodic approximate solution, see \cite{CFP}. 
 This reduces the problem to the study of a quasi-periodically forced linear PDE such as the ones described above. 
 We point out that in this context a sharp quantitative control on the reducing changes of variables is fundamental.
Regarding KAM theory for PDEs, we mention 
\cite{Ku},\cite{W},\cite{KP},\cite{CY} for equations on the circle, \cite{GY},\cite{EK},\cite{GYX},\cite{PP},\cite{EGK} for PDEs on $\T^n$.  
These works  all deal with equations possessing bounded nonlinearities. 
Regarding unbounded cases we mention \cite{Ku2}, \cite{LY}, \cite{BBiP2} for semilinear PDEs and 
\cite{Airy},\cite{BBM16},\cite{FP},\cite{Gi},\cite{BM1},\cite{BBHM} for the quasilinear case.

\noindent
 The main issues in all these problems are related to the geometry/dimension of the domain, 
 the dispersion of the PDE and the number of derivatives appearing in the nonlinearities. In particular the dispersionless case, 
 i.e. the case of (asymptotically) equally spaced spectral gaps, often exhibits 
 unstable behaviours and explosion of Sobolev norms (see \cite{M18}). 
 In this paper we discuss operators of this type, proving  reducibility and  stability for a class of quasi-periodically 
 forced first order linear operators on the circle. In view of possible applications to KAM theory we chose to consider 
 a class of linear operators related to the Degasperis-Procesi equation. However, both the strategy and the techniques are general and, 
 we believe, can be applied to wider classes of first order operators.
 
 \medskip
 
The \textbf{Degasperis-Procesi (DP) equation}
\begin{equation}\label{DP}
u_t-u_{x x t}+u_{xxx}-4 u_x-u u_{xxx}-3 u_x u_{xx}+4 u u_x=0. 
\end{equation}
was singled out  in \cite{DegPro} by  applying a test of asymptotic integrability to a family of third order dispersive PDEs. Later Degasperis-Holm-Hone \cite{Deg} proved its complete integrability by providing a Lax pair and a bi-Hamiltonian structure for this system.

\noindent
Constantin and Lannes showed in \cite{ConstLannes} that the Degasperis-Procesi equation, as well as the Camassa-Holm equation, can be regarded as a model for nonlinear shallow water dynamics and it captures stronger nonlinear effects than the classical Korteweg de Vries equation: 
for example, it exhibits wave-breaking phenomena and it shows peakon-like solutions. 
Unlike the Camassa-Holm equation, the DP system exhibits also shock waves.

\noindent
Since its discovery, lots of works have been written on this equation, mostly on the construction of very special exact solutions such as traveling waves and peaked solitons. We wish to stress that in general the existence of a Lax pair, in the infinite dimensional context, does not directly imply the possibility to construct Birkhoff (or action-angle) variables or even simpler structure, such as finite dimensional invariant tori (the so-called finite gap solutions for KdV and NLS on the circle).  For results on the spectral theory of the DP equation we refer to \cite{Cons,ConsIvLe},\cite{Hou}.
In conclusion the problem of KAM theory for the DP equation is, at the best of our knowledge, still open. This is one of the main motivations for proving this reducibility result.
Before introducing our classes of operators let us briefly describe the structure of the DP equation and in particular its linearized at a quasi-periodic function.

\smallskip

The equation \eqref{DP} can be formulated as a Hamiltonian PDE $u_t=J\,\nabla_{L^2} H(u)$, where $\nabla_{L^2} H$ is the $L^2$-gradient of the Hamiltonian
\begin{equation}\label{DPHamiltonian}
H(u)=\int \frac{u^2}{2}-\frac{u^3}{6}\, dx
\end{equation}
on the real phase space
\begin{equation}\label{H01}
H_0^1(\mathbb{T}):=\Big\{ u\in H^1(\mathbb{T}, \mathbb{R}) : \int_{\mathbb{T}} u\,dx=0\Big\}
\end{equation}
endowed with the non-degenerate symplectic form
\begin{equation}\label{SymplecticForm}
\Omega(u, v):=\int_{\mathbb{T}} (J^{-1} u)\,v\,dx, \quad \forall u, v\in H_0^1(\mathbb{T}), \qquad J:=(1-\partial_{xx})^{-1}(4-\partial_{xx})\partial_x.
\end{equation}
The Poisson bracket induced by $\Omega$ between two functions $F, G\colon H_0^1(\mathbb{T})\to \mathbb{R}$ is
\begin{equation}\label{PoissonBracketDP}
\{ F(u), G(u) \}:=\Omega(X_F, X_G)=\int_{\mathbb{T}} \nabla F(u)\,J \nabla G(u)\,dx,
\end{equation}
where $X_F$ and $X_G$ are the vector fields associated to the Hamiltonians $F$ and $G$, respectively.

\medskip

\noindent
Let $\nu\in \mathbb{N}^{*}:=\N\setminus\{0\}$, $L>0$, $\g\in(0, 1)$.
%\red{NON capisco bene come e' fissato $\tau$ , in piu' sembra che a un certo punto della riducibilita' lo fissiamo a $2\nu+6$}

  Consider $\omega\in \cO_0$ where
\begin{equation}\label{0Meln}
\cO_0:=\left\{ \omega\in [L, 2 L]^{\nu}   : \lvert \omega\cdot \ell \rvert\geq \frac{2\gamma}{\langle \ell \rangle^{\nu}}, \,\,\,\ell\in\mathbb{Z}^{\nu}  \right\} , \quad \langle \ell \rangle:=\max\{\lvert \ell \rvert, 1 \}
\end{equation}
and a quasi-periodic function $u(t,x)$ with zero average in $x$,  small-amplitude and frequency $\omega$,
 \begin{equation}\label{smallqp}
 u(t,x)=\e\mathfrak{I}(\omega t,x), \qquad \e\ll1,
 \end{equation}
 where $\f\mapsto \mathfrak{I}(\f,x)$ belongs to $ C^{\infty}(\T^{\nu+1};\R)$.
 Linearizing equation \eqref{DP} at $u$ one obtains
 \begin{equation}\label{linearDP}
 v_{t}=\mathcal{X}_{\omega}(\oo t) v,\qquad 
 \mathcal{X}_{\omega}(\oo t)=\mathcal{X}_{\omega}(\oo t, \mathfrak{I}):=J\circ(1+a(\omega t,x)), \qquad a(\f,x)=a(\mathfrak{I};\f,x)
 \end{equation}
 with $a(\f,x)\in C^{\infty}(\T^{\nu+1};\R)$  Lipschitz in $\omega$ and $\mathfrak{I}$. In particular 
 one has 
 \[
 \|a\|_{H^{s}(\T^{\nu+1};\R)}\leq \e\|\mathfrak{I}\|_{H^{s}(\T^{\nu+1};\R)}\,,\quad \forall s.
 \]
Note that that $J$ in \eqref{SymplecticForm} can be written as
\begin{equation}\label{Helmotz}
J:=\del_{x}+3\Lambda\del_{x}, \qquad \Lambda:=(1-\del_{xx})^{-1},
\end{equation}
hence the operator $ \mathcal{X}_\oo(\oo t)$ in \eqref{linearDP}
has the form
\begin{equation}\label{sushi}
\begin{aligned}
\mathcal{X}_{\omega}(\oo t)= (1+a(\oo t ,x))\del_{x}+a_x(\oo t,x)+3(1-\del_{xx})^{-1}\del_{x} \circ (1+a(\oo t,x))
\end{aligned}
\end{equation}
and it is a  pseudo-differential operator of order one, moreover $\mathcal{X}_\oo(\oo t) $ 
is a  Hamiltonian vector field w.r.t. the  DP symplectic form \eqref{SymplecticForm}.
\\
In the paper \cite{FGMP}, together with Montalto, we proved that {\sl transport} operators of the form $(1+a(\oo t,x))\del_{x}$, with  $(\omega,1)\in \R^{\nu+1}$  diophantine, are reducible by a change of variables which has very sharp tame estimates in terms of the Sobolev norm of the function $a$.  Here we prove the same result for the more general class \eqref{sushi}. We have to deal with two main issues:
\begin{itemize}
\item the operator \eqref{sushi} is not purely transport;
\item we wish to diagonalize with a change of variables which is  symplectic w.r.t. \eqref{SymplecticForm}.
\end{itemize}
As in \cite{FGMP}, the main difficulties, which turn out to be  particularly delicate in our context, consist in giving sharp estimates of the  
change of variables; in order to do this, we need to  introduce a number of technical tools, for instance a {\sl quantitative} version of Egorov's theorem. 

We prove the following \emph{reducibility} result.

\begin{teor1}\label{MainResult}
Fix $\g\in (0, 1) $, consider  $\mathcal{X}_{\omega}(\oo t)$ 
in \eqref{sushi} with $\omega\in \cO_0$ (see \eqref{0Meln}),  assume that 
$\|\mathfrak{I}\|_{H^{s}(\T^{\nu+1};\R)}\leq 1$
%$\mathfrak{I}\in B_{H^{s}(\T^{\nu+1};\R)}(0,1)$ 
for some $s>1$ large enough and
$|\e|\le \e_0(\gamma)$ (recall \eqref{smallqp}, \eqref{0Meln}). 
%(depending on an appropriate {\sl low} Sobolev norm of $\mathfrak I$). 
Then there  exists a  Cantor set $\calO_{\infty}\subseteq\calO_0$ such that for all $\oo\in \calO_{\infty}$ there exists a  quasi-periodic in time family of bounded symplectic maps $\Psi(\oo t):  H^s(\T;\R)\to H^s(\T;\R)$, which reduces \eqref{linearDP} to a diagonal constant coefficients operator 
with purely imaginary spectrum. 
Moreover  the Lebesgue measure 
of $\calO_0 \setminus \mathcal{O}_{\infty}$ goes to $0$ as $\gamma\to0$.
\end{teor1}
%\red{mettere un confronto con Water Waves ultimo}

From Theorem \ref{MainResult} we deduce the following dynamical consequence.
\begin{cor}
Consider the Cauchy problem
\begin{equation}\label{probCau}
\left\{\begin{aligned}
&\del_{t}u=\mathcal{X}_{\oo}(\oo t)u,\\
&u(0,x)=u_0(x)\in  H^{s}(\T;\R),
\end{aligned}\right.
\end{equation}
with $s\gg 1$. If the Hypotheses of Theorem \ref{MainResult} are fulfilled then the solution 
 of \eqref{probCau} exists, is unique, and  satisfies
\begin{equation}\label{noncrescita}
\big(1-c(s)\big)\|u_0\|_{H^{s}(\T;\R)}\leq \|u(t,\cdot)\|_{H^{s}(\T;\R)}\leq \big(1+c(s)\big)\|u_0\|_{H^{s}(\T;\R)},
\end{equation}
for some $0<c(s)\ll 1$ for any $t\in \R$.

\end{cor}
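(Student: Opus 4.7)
The plan is to deduce the corollary as a direct consequence of Theorem \ref{MainResult}. The idea is that in the reduced coordinates the dynamics becomes explicit and isometric on each Sobolev scale, so all we need to control is the norm of the change of variables and of its inverse.

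First I would invoke Theorem \ref{MainResult} on the set $\calO_\infty\subseteq\calO_0$, obtaining the quasi-periodic symplectic map $\Psi(\oo t)$ together with the diagonal operator $\mathcal{D}_\omega=\mathrm{diag}_{j\in\Z}\{d_j\}$ with $d_j\in \ii\R$, such that for any $\oo\in\calO_\infty$
\[
\Psi(\oo t)^{-1}\bigl(\mathcal{X}_\oo(\oo t)-\del_t\bigr)\Psi(\oo t)=\mathcal{D}_\omega-\del_t.
\]
Setting $u(t,x)=\Psi(\oo t)v(t,x)$, the Cauchy problem \eqref{probCau} is transformed into
\[
\del_t v=\mathcal{D}_\omega v,\qquad v(0,x)=\Psi(0)^{-1}u_0(x).
\]
Since $\mathcal{D}_\omega$ is a Fourier multiplier with purely imaginary symbol, the flow $e^{t\mathcal{D}_\omega}$ is an $H^s$-isometry for every $s$: explicitly, $\widehat{v}(t,j)=e^{t d_j}\widehat{v}(0,j)$ with $|e^{td_j}|=1$. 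Hence $\|v(t,\cdot)\|_{H^s}=\|v(0,\cdot)\|_{H^s}$ for all $t\in\R$, which also gives global existence and uniqueness in $H^s$.

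To conclude, I would go back to $u$ via $u(t,\cdot)=\Psi(\oo t)v(t,\cdot)$ and use the tameness/boundedness of $\Psi(\oo t)$ and $\Psi(\oo t)^{-1}$ on $H^s(\T;\R)$ claimed in the abstract and delivered by the construction underlying Theorem \ref{MainResult}. Since $\Psi$ is obtained as the composition of near-identity symplectic transformations whose corrections are controlled by $\e$ (the small parameter in \eqref{smallqp}), for $s$ large and $\e$ small enough one has
\[
\|\Psi(\oo t)-\mathrm{Id}\|_{\mathcal{L}(H^s)}+\|\Psi(\oo t)^{-1}-\mathrm{Id}\|_{\mathcal{L}(H^s)}\le c(s),
\]
uniformly in $t\in\R$, with $c(s)\ll 1$. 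Combining this with the isometry of the reduced flow yields
\[
\|u(t,\cdot)\|_{H^s}\le (1+c(s))\|v(t,\cdot)\|_{H^s}=(1+c(s))\|v(0,\cdot)\|_{H^s}\le (1+c(s))\|u_0\|_{H^s},
\]
and analogously the lower bound in \eqref{noncrescita}, after a harmless rescaling of the constant $c(s)$.

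The only delicate point, and the main thing to verify rather than to prove here, is the uniform-in-time operator bound on $\Psi(\oo t)-\mathrm{Id}$ on $H^s$. This is not a new estimate but an extraction from the reducibility construction: one needs that the composition of the Egorov-type and KAM changes of variables produced in Theorem \ref{MainResult} satisfies sharp tame estimates with loss small in $\e$, which is precisely the content stressed in the abstract. Granted this, the dynamical statement follows with no further work.
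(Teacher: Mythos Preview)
Your proposal is correct and is precisely the argument the paper has in mind; the paper does not spell out a proof of this corollary but treats it as an immediate dynamical consequence of Theorem~\ref{MainResult} (more precisely of Theorem~\ref{teoMainRed}), via the conjugation formula discussed around \eqref{gnocchi}--\eqref{gnocchi2}. Your identification of the two ingredients---the $H^s$-isometry of $e^{t\mathcal D_\omega}$ (purely imaginary eigenvalues) and the uniform-in-$t$ boundedness of $\Psi(\omega t)^{\pm1}$ close to the identity---is exactly right.

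One small refinement worth noting: the bound you write as $\|\Psi(\omega t)-\mathrm{Id}\|_{\mathcal L(H^s)}\le c(s)$ is not literally stated in that form in the paper. The map $\Phi=\Phi_2\circ\Phi_1$ of Theorem~\ref{teoMainRed} has $\Phi_2-\mathrm I$ small in $\mathcal L(H^s)$ by \eqref{grano}, but $\Phi_1-\mathrm I$ (the time-one flow of the hyperbolic transport PDE) a priori loses one derivative, see the estimate after \eqref{flow2} in Corollary~\ref{CoroDPdiffeo}. What one actually uses is the tame bound \eqref{grano1000}, whose leading constant in front of $\|h\|_s$ is in fact $1$ (this comes from the standard change-of-variables estimate for diffeomorphisms close to the identity, cf.\ Lemma~\ref{bastalapasta}), so that $\|\Phi^{\pm1}h\|_{H^s}\le \|h\|_{H^s}+C(s)\,\varepsilon\gamma^{-5/2}\|\mathfrak I\|_{s+\sigma}\|h\|_{H^{s_0}}$. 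Chaining this with the isometry of the reduced flow gives \eqref{noncrescita} with $c(s)=O_s(\varepsilon\gamma^{-5/2})$, which is $\ll1$ under the smallness hypothesis. This is a cosmetic point, not a gap in your argument.
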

We remark that \eqref{noncrescita} means that the Sobolev norms of the solutions of \eqref{probCau} do not increase in time. This is due to the quasi-periodic dependence on time of the perturbation. One could consider also problems with more general time dependence. However one expects to give at best an upper bound on the growth of the norms (see \cite{BGMR2}).

\smallskip

We shall deduce Theorem \ref{MainResult} from 
 Theorem \ref{teoMainRed} below. 
We
first need to introduce some notations.
%\subsection{Main result}
%Let us start by introducing some notations and definitions

\paragraph{Functional space.}
Passing to the Fourier representation
\begin{equation}\label{realfunctions}
u(\varphi, x)=\sum_{j\in\mathbb{Z}} u_{j}(\varphi)\,e^{\mathrm{i} j x}=\sum_{\ell\in\mathbb{Z}^{\nu}, j \in \mathbb{Z}} u_{\ell j} \,e^{\mathrm{i}(\ell \cdot \varphi+j x)},\quad 
\overline{u}_j(\varphi)=u_{-j}(\varphi), \quad \overline{u}_{\ell j}=u_{-\ell, -j}\,,
\end{equation}
we define the Sobolev space %\red{ma a che ci serviva $\mathbb C$?, metto $\R$.}
\begin{equation}\label{space} 
H^{s}:=\Big\{ u(\varphi, x)\in L^{2}(\T^{\nu+1}; \mathbb{R}) : \lVert u \rVert_s^2:=\sum_{\ell\in\mathbb{Z}^{\nu}, j\in\mathbb{Z}\setminus\{0\}} \lvert u_{\ell j} \rvert^2 \langle\ell, j \rangle^{2 s}<\infty \Big\}
\end{equation}
where $\langle \ell, j \rangle:=\max\{ 1, \lvert \ell \rvert, \lvert j \rvert\}$, $\lvert \ell \rvert:=\sum_{i=1}^{\nu} \lvert \ell_i \rvert$. 
We denote by $B_{s}(r)$ the ball of radius $r$ centered at the origin of $H^{s}$.
\paragraph{Pseudo  differential operators.}
Following \cite{BM1} and \cite{Meti} we give the following Definitions.
\begin{defi}\label{pseudoR}
	A linear operator $A$ is called pseudo differential 
	of order $\le m$ if its action on any $H^s(\T)$ with $s\ge m$ is given by
	\[
	A\sum_{j\in\Z} u_j e^{\im j x} = \sum_{j\in\Z} a(x,j) u_j e^{\im j x} \,,
	\]
	where   $a(x, j)$, called the {\it symbol} of $A$,  is the restriction to 
	$\mathbb{T}\times \mathbb{Z}$ of a complex valued function 
	$a(x, \xi)$ which is $C^{\infty}$ smooth on $\mathbb{T}\times\mathbb{R}$, 
	$2\pi$-periodic in $x$ and satisfies
	\begin{equation}\label{space3}
	|\del_{x}^{\al}\del_{\x}^{\be}a(x,\x)|\leq C_{\al,\be}\langle\x\rangle^{m-\be},
	\;\;\forall \; \al,\be\in \mathbb{N}.
	\end{equation} 
	We denote by 
	$A[\cdot]=\op(a)[\cdot]$
	the pseudo operator with symbol $a:=a(x, j)$.
	We call $OPS^m$ the class of the pseudo differential operator of order less or equal to $m$ and
	$OPS^{-\infty}:=\bigcap_m OPS^m$.
	We define the class $S^m$ as the set of symbols which satisfy \eqref{space3}. 
\end{defi}

We will consider mainly operator acting on $H^s(\T)$ with a quasi-periodic time dependence.  
In the case of pseudo differential operators this corresponds\footnote{since $\oo$ 
is diophantine we can replace the time variable with angles $\varphi\in\T^{\nu}$. 
The time dependence is recovered by setting $\varphi=\omega t$.} 
to consider symbols $a(\varphi, x, \x)$ with $\varphi\in\T^{\nu}$. 
Clearly these operators can be thought as acting on $H^s(\T^{\nu+1})$.

\begin{defi}
Let $a(\f,x,\x)\in S^{m}$ and set $A=\op(a)\in OPS^{m}$,
\begin{equation}\label{norma}
|A|_{m,s,\al}:=\max_{0\leq \be\leq \al} \sup_{\xi\in\mathbb{R}}\|\del_{\x}^{\be}a(\cdot,\cdot,\x)\|_{s}
\langle\x\rangle^{-m+\be}.
\end{equation}
We will use also the notation 
$\lvert a \rvert_{m, s, \alpha}:=|A|_{m,s,\al}$.
\end{defi}
Note that the norm $|\cdot|_{m,s,\al}$ is non-decreasing in $s$ and $\al$.
Moreover given a symbol $a(\f,x)$ independent of $\x$, the norm of the associated multiplication operator $\op(a)$ is just the $H^{s}$ norm of the function $a$.
If on the contrary the symbol $a(\x)$ depends only on $\x$, then the norm of the corresponding 
Fourier multipliers $\op(a(\x))$ is just controlled by a constant.

\vspace{0.5em}
\noindent
{\bf Linear operators. } Let $A\colon \T^{\nu}\to \mathcal{L}(L^2(\T))$, $\varphi\mapsto A(\varphi)$, be a $\varphi$-dependent family 
of linear operators acting on $L^2(\T)$. We consider $A$ as an operator acting on $H^{s}(\T^{\nu+1})$ by setting
\[
(A u)(\varphi, x)=(A(\varphi)u(\varphi, \cdot))(x).
\]  
This action is represented in Fourier coordinates as
\begin{equation}\label{cervino}
A u(\varphi, x)=\sum_{j, j'\in\mathbb{Z}} A_j^{j'}(\varphi) \,u_{j'}(\varphi)\,e^{\mathrm{i} j x}=\sum_{\ell\in\mathbb{Z}^{\nu}, j\in\mathbb{Z}} \sum_{\ell'\in\mathbb{Z}^{\nu}, j'\in\mathbb{Z}} A_j^{j'}(\ell-\ell')\,u_{\ell' j'}\,e^{\mathrm{i}(\ell\cdot \varphi+j x)}.
\end{equation}
Note that for the pseudo differential operators defined above  the norm \eqref{norma} provides a quantitative control of the action on $H^{s}(\T^{\nu+1})$. 
Conversely, given a T\"opliz in time operator $A$, namely such that its matrix coefficients (with respect to the Fourier basis) satisfy
\begin{equation}\label{topliz}
A_{j, l}^{j', l'}=A_j^{j'}(l-l')\qquad \forall j, j'\in\mathbb{Z},\,\,l, l'\in\mathbb{Z}^{\nu},
\end{equation}
we can associate it a time dependent family of operators acting on $H^s(\T)$ by setting
$$A(\varphi) h=\sum_{j, j'\in\mathbb{Z}, \ell\in\mathbb{Z}^{\nu}} A_j^{j'}(\ell) h_{j'}\,e^{\mathrm{i} j x} e^{\mathrm{i}\ell\cdot \varphi}, \qquad \forall h\in H^s(\T).$$

For $m=1, \dots, \nu$ we define the operators $\partial_{\varphi_m} A(\varphi)$ as
\begin{equation}\label{cervino2}
\begin{aligned}
&(\partial_{\varphi_m} A(\varphi)) u(\varphi, x)=\sum_{\ell\in\mathbb{Z}^{\nu}, j\in\mathbb{Z}} \sum_{\ell'\in\mathbb{Z}^{\nu}, j'\in\mathbb{Z}}\mathrm{i}(\ell-\ell')\, A_j^{j'}(\ell-\ell')\,u_{\ell' j'}\,e^{\mathrm{i}(\ell\cdot \varphi+j x)},%\\
%& [A, \partial_x] u(\varphi, x)=\sum_{\ell\in\mathbb{Z}^{\nu}, j\in\mathbb{Z}} \sum_{\ell'\in\mathbb{Z}^{\nu}, j'\in\mathbb{Z}}\mathrm{i}(j-j')\, A_j^{j'}(\ell-\ell')\,u_{\ell' j'}\,e^{\mathrm{i}(\ell\cdot \varphi+j x)}.
\end{aligned}
\end{equation}
We say that $A$ is a \textit{real} operator if it maps real valued functions in real valued functions. For the matrix coefficients this means that
\[
\overline{A_j^{j'}(\ell)}=A_{-j}^{-j'}(-\ell).
\]

\paragraph{Lipschitz norm.} Fix $\nu\in\mathbb{N}^{*}$ 
and let $\calO$ be a compact subset of $\mathbb{R}^{\nu}$. 
For a function $u\colon \calO\to E$, where $(E, \lVert \cdot \rVert_E)$ is a Banach space, we define the sup-norm and the lip-seminorm of $u$ as
\begin{equation}\label{suplip}
\begin{aligned}
&\lVert u \rVert_E^{\sup}:=\lVert u \rVert_{E}^{\sup, \calO}:=\sup_{\omega\in\calO} \lVert u(\omega) \rVert_E,\qquad \lVert u \rVert_{E}^{lip}:=\lVert u \rVert_{E}^{lip, \calO}:=\sup_{\substack{\omega_1, \omega_2\in \calO,\\ \omega_1\neq \omega_2}} \frac{\lVert u(\omega_1)-u(\omega_2)\rVert_E}{\lvert \omega_1-\omega_2\rvert}.
\end{aligned}
\end{equation}
If $E$ is finite dimensional,  for any $\gamma>0$ we introduce the 
weighted  Lipschitz norm:
\begin{equation}
\label{tazzone}
\lVert u \rVert_E^{\g, \calO}:=\lVert u \rVert_E^{sup, \calO}+\gamma \lVert u \rVert_{E}^{lip, \calO}.
\end{equation}
If $E$ is a scale of Banach spaces, say $E=  H^s$, for $\gamma>0$ we introduce the 
weighted  Lipschitz norms
\begin{equation}\label{tazza10}
\lVert u \rVert_s^{\g, \calO}:=\lVert u \rVert_s^{sup, \calO}+\gamma \lVert u \rVert_{s-1}^{lip, \calO}, \quad \forall s\geq [\nu/2]+3
\end{equation}
where we denoted by $[ r ]$ the integer part of $r\in\R$.
Similarly if $A={\rm Op}(a(\oo,\f,x,\x))\in OPS^{m}$ is a family of 
pseudo differential operators with symbols $a(\oo,\f,x,\x)$ belonging to $S^{m}$ and 
depending in a Lipschitz way on some parameter $\oo\in \calO\subset \mathbb{R}^{\nu}$, we set
\begin{equation}\label{norma2}
|A|_{m,s,\al}^{\g,\calO}:=\sup_{\oo\in \calO}|A|_{m,s,\al}+
\g \sup_{\oo_1,\oo_2\in \calO}\frac{{|\rm Op}\big(a(\oo_1,\f,x,\x)-a(\oo_2,\f,x,\x)\big)|_{m,s-1,\al}}{|\oo_1-\oo_2|}.
\end{equation}

\paragraph{Hamiltonian linear operators.} 
In the paper we shall deal with operators which are Hamiltonian according to the following Definition.
\begin{defi}\label{dynamicaldef}
 We  say that a linear map is symplectic if it preserves  the $2$-form $\Omega$ in \eqref{SymplecticForm};
similarly we say that a linear operator $M$  is Hamiltonian if $Mu $ is a linear hamltonian vector field w.r.t. $\Omega$ in \eqref{SymplecticForm}. This means that each $J^{-1} M$  is real symmetric.
Similarly, we call a family of maps $\f\to A(\f)$  symplectic if  for each fixed $\f$ $A(\f)$ is symplectic, 
same for the Hamiltonians.
We shall say that an operator of the form $\oo\cdot\del_{\f}+M(\f)$ is Hamiltonian if $M(\f)$ is Hamiltonian.
\end{defi}

\paragraph{Notation.} We use the notation $A\le_s B$ to denote $A\le C(s) B$ where $C(s)$ is a constant depending on some real number $s$.

\medskip

For $\omega\in \calO_0$ (see \eqref{0Meln}) we consider 
(in order to keep the parallel with \eqref{sushi})
a quasi-periodic function $\e \mathfrak{I}\in C^\infty(\T^{\nu+1},\R)$ such that,
by possibly rescaling $\e$,
%we may assume that (recall \eqref{tazza10})
 \begin{equation}\label{ipopiccolezza}
\lVert \mathfrak{I} \rVert_{s_0+\mu}^{\g, \cO_0}\leq 1 \,,\quad s_0 := [\nu/2]+ 3
\end{equation}
for some $\mu>0$ sufficiently large.
We consider  classes of linear Hamiltonian operators of the form
\begin{equation}\label{LomegaDP}
\mathcal{L}_{\omega}=\calL_{\oo}(\mathfrak{I})=\omega\cdot \partial_{\varphi}-J \circ (1+a(\varphi, x))+\mathcal{Q}(\f),
\end{equation}
where $a=a( \varphi, x)=a( \mathfrak{I}; \varphi, x)\in C^{\infty}(\T^{\nu+1}, \mathbb{R})$ 
and 
\begin{equation}\label{opQ}
\mathcal{Q}:=\op(q)[\cdot], \qquad q=q(\mathfrak{I};\f,x,\x)=q(\f,x,\x)\in S^{-1}.
\end{equation}
is Hamiltonian.
We assume that  $a,q$ depend on  
the small quasi-periodic function $\e \mathfrak{I}\in C^\infty(\T^{\nu+1},\R)$
(with $\mathfrak{I}$ as in \eqref{ipopiccolezza}),  as well as  on $\oo\in\calO_0$ in a Lipschitz way
and,   for all $s\ge s_0$  we require that (recall \eqref{norma2})
\begin{equation}\label{opQ2}
\lVert a \rVert_s^{\g, \calO_0}+|q|_{-1,s,\al}^{\g,\calO_0}\le_s \e \lVert \mathfrak{I} \rVert^{\gamma, \calO_0}_{s+\sigma_0},
%\qquad \lVert \Delta_{12} a\rVert_{p}\le \delta \lVert i_1-i_2 \rVert_{s_0+\sigma}
\end{equation}
for some $\s_0>0$. If $\mathfrak{I}_1, \mathfrak{I}_2\in C^\infty(\T^{\nu+1},\R)$ satisfy \eqref{ipopiccolezza} we assume
\begin{equation}\label{opQ3}
\lVert \Delta_{12}a \rVert_{p}+|\Delta_{12}q|_{-1,s,\al}\le_{p} 
\e \lVert \mathfrak{I}_1-\mathfrak{I}_2 \rVert_{p+\sigma_0},
\end{equation}
for any $p\leq s_0+\mu-\s_0$ ($\mu>\s_0$), 
where we set $\Delta_{12}a:=a(\mathfrak{I}_1;\f,x)-a(\mathfrak{I}_2;\f,x)$ and similarly for $\Delta_{12}q$.

\smallskip

With this formulation our purpose is to diagonalize (in both space and time) the linear operator \eqref{LomegaDP} 
with changes of variables $H^s(\T^{\nu+1})\to H^s(\T^{\nu+1})$. Since $\mathcal{L}_{\omega}$ 
is T\"opliz in time (see \eqref{topliz}), it turns out that  these transformations can be seen 
as a family of quasi-periodically time dependent maps acting on $H^s(\T)$.

Theorem \ref{MainResult} is a consequence of the following result.

\begin{teor}[{\bf Reducibility}]\label{teoMainRed}
Let $\gamma\in (0,1)$ and 
consider $\calL_{\omega}$ in \eqref{LomegaDP} with $\omega\in \calO_0$ satisfying \eqref{opQ}-\eqref{opQ2} with $\e \g^{-5/2}\ll 1$.
Then there exists a sequence
\begin{equation}\label{Djei}
d_j=d_{j}(\mathfrak{I}):=m\, j 
\frac{4+j^2}{1+j^2} 
+r_j\,, \qquad j\in\mathbb{Z}\setminus\{0\}  \,, \quad r_j\in\mathbb{R}\,,\quad r_j= - r_{-j}
\end{equation}
with $m=m(\omega,\mathfrak{I})$, $r_j=r_j(\omega,\mathfrak{I})$ well defined and Lipschitz for $\omega\in \calO_0$ with
$	\lvert m-1 \rvert^{\g,\calO_{0}}, sup_j \langle j \rangle \,\lvert r_j \rvert^{\gamma^{3/2},\calO_0}\le C \e\,,$
such that the following holds:

\noindent
$(i)$	
for $\oo$ in the set  $\calO_{\infty}=\calO_{\infty}(\mathfrak{I}):=\Omega_1 \cap \Omega_2$, where ($\tau\geq 2\nu+6$ )
\begin{align}
& \Omega_1=\Omega_1(\mathfrak{I}):=
\{ \omega\in\cO_0 : \lvert \omega\cdot \ell-m\, j \rvert\geq 2\gamma\langle \ell \rangle^{-\tau}, 
\quad \forall j\in\mathbb{Z}\setminus\{0\},\,\,\ell\in\mathbb{Z}^{\nu}  \}  \label{prime}\\ 
& \Omega_2=\Omega_2(\mathfrak{I}):=\{ \omega\in\cO_0 : \lvert \omega\cdot \ell+ d_j-d_k \rvert\geq 
2\gamma^{3/2}\langle \ell \rangle^{-\tau}, 
\quad \forall j, k\in\mathbb{Z}\setminus\{0\},\,\,\ell\in\mathbb{Z}^{\nu}, \,\,\,\,(j, k, \ell)\neq (j, j, 0) \},\label{seconde}
\end{align}
there exists a linear, symplectic, bounded transformation 
$\Phi\colon \calO_{\infty}\times H^s\to H^s$ 
with bounded inverse $\Phi^{-1}$ such that for all $\oo\in \calO_{\infty}$
\begin{equation}\label{assoOp}
\Phi \mathcal{L}_{\omega} \Phi^{-1}=\omega\cdot \partial_{\varphi}-\mathcal{D}_{\oo}, 
\qquad \mathcal{D}_{\oo}:=\mathrm{diag}_{j\neq 0} (\mathrm{i} d_j)\,;
\end{equation}

\noindent
$(ii)$ the following tame estimates hold
\begin{align}
&\lVert \Phi^{\pm 1} h \rVert_s^{\g^{3/2}, \cO_{\infty}}\le_s 
\lVert h \rVert_s+\varepsilon \g^{-5/2}\lVert \mathfrak{I} \rVert^{\g, \cO_0}_{s+\s} 
\lVert h \rVert_{s_0} \qquad \forall s\geq s_0\label{grano1000}\\
& |\calO_{0}\setminus \cO_{\infty}|\le C\, \gamma \,L^{\nu-1}\label{grano1001}\,,
\end{align}
for some constants $\s, C>0$ depending on $\tau, \nu$;

\smallskip

$(iii)$ the map $\Phi$ is T\"opliz in time and via \eqref{topliz} induces a bounded transformation of the phase space $H^{s}(\T;\R)$ depending quasi-periodically on time.
\end{teor}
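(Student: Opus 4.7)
The plan is to follow the two-step strategy sketched in the introduction: a finite pseudodifferential \emph{descent} that reduces $\mathcal{L}_\omega$ to a smoothing perturbation of a constant-coefficient diagonal operator, followed by a quadratic KAM iteration that completes the diagonalization. Throughout, every conjugation is required to be symplectic with respect to $\Omega$ in \eqref{SymplecticForm} (equivalently, generated by a Hamiltonian operator in the sense of Definition \ref{dynamicaldef}), and every estimate is tracked in the Lipschitz-in-$\omega$ tame norms introduced in \eqref{tazza10} and \eqref{norma2}.

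First I would reduce the leading symbol of $\mathcal{L}_\omega$ to a constant. Since $J=(1+3\Lambda)\partial_x$ has principal part $\partial_x\in OPS^1$ and lower-order part $3\Lambda\partial_x\in OPS^{-1}$, the top symbol of $J\circ(1+a)$ is $\mathrm{i}\xi(1+a(\varphi,x))$. I conjugate by the symplectic lift of a quasi-periodic torus diffeomorphism $x\mapsto x+\beta(\varphi,x)$, determining $\beta$ and a constant $m=m(\omega,\mathfrak{I})$ close to $1$ via a straightening equation of the form $\omega\cdot\partial_\varphi\beta+(1+a)(1+\beta_x)=m$, solvable by a Neumann/Newton argument thanks to the Diophantine condition on $\mathcal{O}_0$ and the smallness of $a$ in \eqref{opQ2}. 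A quantitative Egorov theorem, analogous to the one developed in \cite{FGMP} but adapted here to accommodate the non-local factor $\Lambda$, shows that the conjugated operator is again pseudodifferential with constant leading coefficient, and yields sharp tame bounds on the new symbols in terms of the Sobolev norms of $a$ and $\mathfrak{I}$.

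Having normalized the top order, I would then iteratively conjugate by time-one flows of Hamiltonian pseudodifferential operators $\mathrm{Op}(b_k)$ of orders $-1,-2,\ldots,-N$, each $b_k$ chosen so that the symbol at order $-k$ becomes $x$-independent. Each step amounts to solving a transport-type homological equation of the form $\omega\cdot\partial_\varphi b_k+m\,\partial_x b_k=(\text{given symbol})-(\text{its $x$-average})$, which is well-posed under the first Melnikov bound encoded in $\mathcal{O}_0$; symplecticity is preserved by taking $\mathrm{Op}(b_k)$ Hamiltonian. After $N$ steps the operator takes the form $\omega\cdot\partial_\varphi-\mathcal{D}^{(N)}+\mathcal{R}^{(N)}$ with $\mathcal{D}^{(N)}$ a diagonal Fourier multiplier (whose symbols are already close to the $d_j$ of \eqref{Djei}) and $\mathcal{R}^{(N)}\in OPS^{-N}$. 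I would then run a quadratic KAM scheme on the smoothing remainder, conjugating by $\exp(\Psi_n)$ with $\Psi_n$ solving the homological equation
$$
\omega\cdot\partial_\varphi\Psi_n+[\mathcal{D}^{(n-1)},\Psi_n]=\Pi_{N_n}\mathcal{R}^{(n-1)}-[\mathcal{R}^{(n-1)}],
$$
where $[\cdot]$ extracts the diagonal block in $j$ and $\Pi_{N_n}$ is an ultraviolet cut-off in $\ell$. Matrix-element-wise, solvability requires exactly the second Melnikov bounds defining $\Omega_2$; super-exponential decay of $\mathcal{R}^{(n)}$ in the tame norm \eqref{norma2} delivers \eqref{grano1000}, while the measure estimate \eqref{grano1001} follows from a union-bound argument exploiting the Lipschitz dependence of $m$ and $r_j$ on $\omega$ together with the asymptotic behavior $d_j\sim m\,j$. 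Item (iii) is then automatic, since each conjugation is T\"opliz in time by construction.

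The main obstacle is the \emph{asymptotically linear} dispersion $d_j\sim m\,j$: the spectral gaps $d_{j+1}-d_j$ remain bounded, so the small divisors $\omega\cdot\ell+d_j-d_k$ provide no gain of derivatives. Consequently, all the regularization needed for the KAM iteration to converge must be extracted from the smoothing nature of $\mathcal{R}^{(N)}$ produced by the descent, which in turn forces one to propagate sharp tame bounds through each Egorov step despite the non-local factor $\Lambda$ and the symplectic constraint imposed by the DP structure. Reconciling the quantitative Egorov estimates with the loss of derivatives coming from the two Melnikov conditions, while keeping every conjugation symplectic with respect to the non-local form $\Omega$, is the core technical difficulty of the proof and the reason why the sharp bound \eqref{grano1000} is achievable.
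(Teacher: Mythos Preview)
Your two-step strategy (regularize, then KAM-diagonalize) matches the paper's architecture, but the regularization step diverges in a significant way. The paper does \emph{not} perform an iterative symbolic descent to order $-N$: a single symplectic conjugation by the time-one flow $\Psi^1$ of the Hamiltonian transport PDE \eqref{diffeotot} already yields $\mathcal{L}_\omega^+=\omega\cdot\partial_\varphi-mJ+\mathcal{R}$ with $\mathcal{R}$ of order $-1$ (Theorem \ref{risultatosez8}). The order-zero term vanishes automatically \emph{because} the conjugation is symplectic with respect to the DP form; this is precisely why the paper builds $\Psi^\tau$ as a Hamiltonian flow (Proposition \ref{DPdiffeo}) rather than a bare diffeomorphism, and then invokes a quantitative Egorov theorem (Theorem \ref{EgorovQuantitativo}) to control the conjugated symbol. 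Your proposed further descent by flows of $\mathrm{Op}(b_k)$ is therefore unnecessary; note also that solving the straightening equation already requires the first Melnikov condition defining $\Omega_1$ (Proposition \ref{moser}), not merely the Diophantine condition on $\mathcal{O}_0$ as you state.

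There are two further gaps. First, the KAM iteration cannot be tracked in the pseudo-differential norm \eqref{norma2} as you write: the generators $\Psi_n$ are defined matrix-element-wise from the homological equation and do not preserve the symbolic calculus, so the paper switches to the \emph{modulo-tame} framework (Definitions \ref{def:op-tame}--\ref{menounomodulotame}, Proposition \ref{iterazione riducibilita}), for which it suffices that $\mathcal{R}$ be $-1$-modulo-tame rather than in $OPS^{-N}$. Second, your measure argument is incomplete: because the dispersion is asymptotically linear, for each fixed $\ell$ there are \emph{infinitely} many non-empty resonant sets $R_{\ell j k}$, and a direct union bound diverges. The paper's key device (Lemma \ref{delpiero}) is the inclusion $R_{\ell j k}(\gamma^{3/2},\tau)\subseteq Q_{\ell,j-k}(\gamma,\tau_1)$ for $|j|,|k|\geq C\langle\ell\rangle^{\tau_1}\gamma^{-1/2}$, which collapses the count to finitely many sets per $\ell$; this inclusion is exactly why the two Melnikov conditions must carry the different powers $\gamma$ and $\gamma^{3/2}$.
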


\noindent
Let us briefly discuss how to deduce Theorem \ref{MainResult} 
 from Theorem \ref{teoMainRed}.
Consider  the equation
\begin{equation}\label{gnocchi}
\del_{t}u=\mathcal{X}_{\oo}(\oo t)u
\end{equation}
with $\mathcal{X}_{\oo}(\oo t)$ in \eqref{sushi}. 
The operator associated to \eqref{gnocchi} acting on  
quasi-periodic function
is $\calL_{\omega}=\oo\cdot\del_{\f}-\mathcal{X}_{\oo}(\f)$ 
which has the form \eqref{LomegaDP} with $\mathcal{Q}(\f)=0$.

Under the action of the   transformation $v=\Phi(\oo t)u$ 
of the phase space $H^{s}(\T;\R)$ depending quasi-periodically on time 
the equation \eqref{gnocchi} is transformed  into the linear equation
\begin{equation}\label{gnocchi2}
\del_{t}v=\mathcal{D}_{\oo}v, \qquad \mathcal{D}_{\oo}=\Phi(\oo t)\mathcal{X}_{\oo}(\oo t)\Phi^{-1}(\oo t)+\Phi(\oo t)\del_{t}\Phi^{-1}(\oo t).
\end{equation}
The operator associated to \eqref{gnocchi2} is $\Phi\calL_{\omega}\Phi^{-1}$ given in \eqref{assoOp}.

\medskip

Let us makes some comments on the statement of  our main result.
\begin{itemize}
\item If we consider a $C^{\infty}$ Hamiltonian perturbation of the DP equation, say adding to the Hamiltonian \eqref{DPHamiltonian} a term like $\int_{\T} f(u)\,dx$, where the density $f\in C^{\infty}(\mathbb{R}, \mathbb{R})$, then the operator obtained by linearizing at a quasi-periodic function has the same form of the operator $\mathcal{L}_{\omega}$ in \eqref{LomegaDP}.

\item Along the reducibility procedure in order to deal with small divisor problems, we use  that $\omega$  belongs to the intersection of the sets \eqref{prime}, \eqref{seconde}. We point out that the diophantine constants appearing in the first order Melnikov conditions \eqref{prime} and the second order ones \eqref{seconde} consist of different powers of a small constant $\gamma$. This fact is crucial in view of the measure estimates of the sets \eqref{prime} and \eqref{seconde}, in particular for the proof of Lemma \ref{delpiero}. \\
Different scalings in $\g$ for non-resonance conditions are typical in problems with (asymptotically) linear dispersion such as the Klein-Gordon equation, see \cite{P1}, \cite{BBiP1}.

%\item From Theorem \ref{teoMainRed} we can deduce a control on the Sobolev norms for all times for the initial value problem 
%\begin{equation}\label{probCau100}
%\left\{\begin{aligned}
%&\del_{t}u=J \circ (1+a(\mathfrak{I};\varphi, x)) u-\mathcal{Q}(\mathfrak{I}; \f)u,\\
%&u(0,x)=u_0(x)\in  H^{s}(\T;\R),\,s\gg 1.
%\end{aligned}\right.
%\end{equation}
\end{itemize}

As said above, the linear operator $\mathcal{L}_{\omega}$ depends on a smooth function $\mathfrak{I}$ in a Lipschitz way. This dependence is preserved by the reducibility procedure, in the following sense.

\begin{lem}[{\bf Parameter dependence}]\label{paramdependence}
Consider $\mathfrak{I}_{1}, \mathfrak{I}_{2}\in  C^\infty(\T^{\nu+1},\R)$ satisfying \eqref{ipopiccolezza}.
Under the assumptions of Theorem \ref{teoMainRed} the following holds:
 for $\oo\in \calO_{\infty}(\mathfrak{I}_1)\cap\calO_{\infty}(\mathfrak{I}_2)$ there is  $\s>0$ such that

\begin{equation}
\lvert \Delta_{12}m \rvert\le \varepsilon\lVert \mathfrak{I}_1-\mathfrak{I}_2 \rVert_{s_0+\s},\quad 
\sup_j\langle j\rangle\lvert \Delta_{12} r_j \rvert\le \varepsilon \g^{-1}\lVert \mathfrak{I}_1-\mathfrak{I}_2 \rVert_{s_0+\sigma}.
\end{equation}
\end{lem}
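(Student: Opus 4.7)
\medskip

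\noindent\emph{Proof proposal.} The plan is to run the reducibility scheme of Theorem \ref{teoMainRed} in parallel for the two profiles $\mathfrak{I}_1$ and $\mathfrak{I}_2$ and track the variation $\Delta_{12}$ of every object produced along the way. The input data already satisfy such a bound by hypothesis \eqref{opQ3}: we have $\lVert \Delta_{12}a\rVert_{p}+|\Delta_{12}q|_{-1,p,\alpha}\lesssim_p \varepsilon\lVert \mathfrak{I}_1-\mathfrak{I}_2\rVert_{p+\sigma_0}$ for $p$ up to $s_0+\mu-\sigma_0$, and this loss of $\sigma_0$ derivatives is the source of the constant $\sigma$ appearing in the statement.

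The first conclusion, namely $|\Delta_{12}m|\le \varepsilon\lVert \mathfrak{I}_1-\mathfrak{I}_2\rVert_{s_0+\sigma}$, comes from the \emph{reduction in orders} (the Egorov step), which takes place \emph{before} the KAM diagonalization and does not involve the second Melnikov divisors. In that step $m$ is obtained as the spatial average of $(1+a)$ transported by a torus diffeomorphism (plus lower order contributions from the pseudo differential conjugation eliminating the symbols of orders $0,-1,\ldots$). Each of these transformations is constructed algebraically from $a$ and $q$ with a controlled loss of derivatives, so $m-1$ depends on $(a,q)$ through smooth composition operators only. Applying \eqref{opQ3} to the difference and using tame estimates of the symbol norms on the difference of symbols gives the stated bound with no $\gamma^{-1}$ loss.

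The second conclusion, $\sup_j\langle j\rangle|\Delta_{12}r_j|\le \varepsilon\gamma^{-1}\lVert \mathfrak{I}_1-\mathfrak{I}_2\rVert_{s_0+\sigma}$, is proved by induction on the KAM iteration. At step $n$ the eigenvalue corrections satisfy $r_j^{(n+1)}-r_j^{(n)}=\langle Z_n\rangle_{j,j}$, where $Z_n$ is the diagonal of the remainder after the $n$-th conjugation, so $\Delta_{12}(r_j^{(n+1)}-r_j^{(n)})$ is controlled by $|\Delta_{12}Z_n|$, which satisfies the standard tame estimates with no small-divisor loss (these are bounds on the remainder, not on the solution of the homological equation). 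The $\gamma^{-1}$ loss enters instead when one has to estimate $\Delta_{12}\Phi_n$, the variation of the conjugating map at step $n$: $\Phi_n$ is the time-one flow of a generator $\Psi_n$ solving $\omega\cdot\partial_\varphi\Psi_n+[\mathcal{D}_{n},\Psi_n]=\Pi_N(\ldots)$, and differentiating this equation in $\mathfrak{I}$ one has to invert the divisors $\omega\cdot\ell+d_j^{(n)}-d_k^{(n)}$ on $\calO_\infty(\mathfrak{I}_1)\cap\calO_\infty(\mathfrak{I}_2)$, paying one power of $\gamma^{3/2}$, plus an additional variation coming from $\Delta_{12}d_j^{(n)}$ appearing in the denominator, which at the next step gets reabsorbed in the same $\gamma^{-1}$ loss after using the inductive assumption on $\Delta_{12}r_j^{(n)}$. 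Summing the telescopic series $r_j=\lim_n r_j^{(n)}$, using the superexponential decay in $n$ of the tame constants produced by the KAM scheme, and the fact that $\langle j\rangle$ can be brought out of the $r_j$'s as in Theorem \ref{teoMainRed}, one obtains the claimed estimate.

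The main obstacle will be the bookkeeping in the inductive step: one has to show that the $\gamma^{-1}$ loss does not accumulate along the iteration, which requires exploiting the fast (superexponential) contraction of the KAM scheme and the gain provided by the mismatch between the powers $\gamma$ and $\gamma^{3/2}$ appearing in \eqref{prime}--\eqref{seconde}. This is the same gain used in the measure estimate \eqref{grano1001} and it is the reason why the non-resonance conditions of Theorem \ref{teoMainRed} had to be stated with two different powers of $\gamma$; here it guarantees that $\varepsilon\gamma^{-5/2}\ll 1$ suffices to close the inductive bound on $\Delta_{12}r_j^{(n)}$ at each step.
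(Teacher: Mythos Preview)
Your overall strategy coincides with the paper's: the two bounds are exactly \eqref{clinica1000} from the regularization Theorem~\ref{risultatosez8} and \eqref{stimaLipR} from the diagonalization Theorem~\ref{ReducibilityDP}, and the latter is proved by tracking $\Delta_{12}$ through the KAM iteration via the inductive statement~$(\mathbf{S3})_k$ of Proposition~\ref{iterazione riducibilita}, with $K_2=\varepsilon\gamma^{-1}$ fixed by Lemma~\ref{natalino10}.

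Where you go wrong is the \emph{source} of the $\gamma^{-1}$. You claim the regularization is ``algebraic'' and places no $\gamma^{-1}$ on the remainder, and that the $\gamma^{-1}$ enters only in the KAM step through the second Melnikov divisors. The paper shows the opposite. The diffeomorphism $\beta$ in the Egorov step is obtained from the straightening theorem (Proposition~\ref{moser}), which solves a quasi\-periodic transport equation using the \emph{first} Melnikov conditions of size~$\gamma$; consequently $\lVert\Delta_{12}\beta\rVert_p\le C\gamma^{-1}\lVert\Delta_{12}a\rVert_{p+\sigma}$ (Lemma~\ref{Anagrafe6}), and this $\gamma^{-1}$ propagates to $\Delta_{12}\mathcal{R}$, see \eqref{pasqua200} and \eqref{capoinb2}. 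So the input to the KAM scheme already satisfies \eqref{nomidimerda2} with $K_2=\varepsilon\gamma^{-1}$. Conversely, the KAM iteration \emph{preserves} $K_2$: the eigenvalue increments satisfy $\langle j\rangle|\Delta_{12}(r_j-r_j^+)|\le K_2\lVert\mathfrak{I}_1-\mathfrak{I}_2\rVert_{s_0+\sigma}$ directly (see \eqref{d+}), and the $\gamma^{-3/2}$ cost of inverting second Melnikov divisors in $\Delta_{12}\mathcal{A}$ is absorbed by the smallness $N^{2\tau+1}\gamma^{-3/2}\mathfrak{M}^{\sharp,\gamma^{3/2}}_{\mathcal{P}}(s_0)<1$ when it feeds back into $\Delta_{12}\mathcal{P}^+$ (see \eqref{P+i}). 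There is no accumulation to prevent.

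This is not a fatal gap --- if you carry out your plan you will find the $\gamma^{-1}$ waiting at the output of the Egorov step and the KAM scheme closing with $K_2$ fixed --- but your final paragraph misidentifies the mechanism. The mismatch between $\gamma$ and $\gamma^{3/2}$ in \eqref{prime}--\eqref{seconde} is exploited in the \emph{measure estimates} (Lemma~\ref{delpiero}), not in the $\Delta_{12}r_j$ bound.
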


The above quantitative lemma is important in view of application to KAM for nonlinear PDEs. Moreover it easily implies an approximate reducibility result, which in turns implies a control of Sobolev norms for long but finite times for all the operators $\mathcal{L}_{\omega}(\mathfrak{I})$ with $\mathfrak{I}$ in a small ball.

\begin{teor}[{\bf Almost reducibility}]\label{almostRED}
 Under the hypotheses of Theorem \ref{teoMainRed}, 
consider $\mathfrak{I}_{1}, \mathfrak{I}_{2}\in  C^\infty(\T^{\nu+1},\R)$ and assume that  
$\calL_{\omega}(\mathfrak{I}_1)$, $\calL_{\omega}(\mathfrak{I}_2)$ as in \eqref{LomegaDP}
satisfy \eqref{opQ2}-\eqref{opQ3}. Assume moreover that \eqref{ipopiccolezza} holds for $\mathfrak{I}_1$, $\mathfrak{I}_2$ and
\begin{equation}\label{piccolezzaI1}
\sup_{\oo\in \calO_0}\|\mathfrak{I}_1-\mathfrak{I}_2\|_{s_0+\mu}\leq  C\rho N^{-(\tau+1)}
\end{equation}
for $N$ sufficiently large, $0\leq \rho<\gamma^{3/2}/2$.
Then
the following holds. For any $\oo\in \calO_{\infty}(\mathfrak{I}_1)$
there exists a linear, symplectic, bounded transformation 
$\Phi_{N}$
with bounded inverse $\Phi_{N}^{-1}$ 
such that 
\begin{equation}\label{assoOpBIS}
\Phi_{N} \mathcal{L}_{\omega}(\mathfrak{I}_2) \Phi_{N}^{-1}=
\omega\cdot \partial_{\varphi}-\mathcal{D}^{(N)}_{\oo}+\RR^{(N)}, 
\qquad \mathcal{D}^{(N)}_{\oo}:=\mathcal{D}^{(N)}_{\oo}(\mathfrak{I}_2):=\mathrm{diag}_{j\neq 0} (\mathrm{i}\, d^{(N)}_j(\mathfrak{I}_2))\,.
\end{equation}
Here $d^{(N)}_j(\mathfrak{I}_2)$ has the form \eqref{Djei} 
for some $m^{(N)}=m^{(N)}(\mathfrak{I}_2)$ and $r_{j}^{(N)}=r_{j}^{(N)}(\mathfrak{I}_2)$
satisfying the bounds \begin{equation}\label{emmeENNE}
|m^{(N)}(\mathfrak{I}_2)-m(\mathfrak{I}_1)|+
\langle j\rangle|r_{j}^{(N)}(\mathfrak{I}_2)-r_{j}(\mathfrak{I}_1)|
\leq \e C \|\mathfrak{I}_1-\mathfrak{I}_2\|_{s_0+\mu}+
C\e N^{-\ka}
\end{equation}
for some $\ka>\tau$ and $C>0$. \\ The remainder 
$\RR^{(N)}=J\circ a^{(N)}+\mathcal{Q}^{(N)}$ with  $a^{(N)}\in C^{\infty}(\T^{\nu+1};\R)$,  $\mathcal{Q}^{(N)} $ T\"opliz in time, bounded on $H^{s}$, $\mathcal{Q}^{(N)}(\f)\colon H^s(\T^{\nu})\to H^{s+1}(\T)$, satisfying
\begin{equation}\label{emmeENNE2}
\|a^{(N)}\|^{\gamma,\calO_{\infty}(\mathfrak{I}_1)}_{s}\leq \e CN^{-\ka}\|\mathfrak{I}_2\|^{\gamma,\calO_0}_{s+\mu}, \qquad
\|\mathcal{Q}^{(N)}v\|_{s}\leq \e CN^{-\ka}(\|v\|_{s}+\|\mathfrak{I}_2\|_{s+\mu}\|v\|_{s_0}), \;\;\; \forall\; v\in H^{s}.
\end{equation}
The maps $\Phi_{N},\Phi_{N}^{-1}$ satisfy bounds like \eqref{grano1000}.
\end{teor}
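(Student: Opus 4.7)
\medskip

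\noindent\textbf{Proof proposal for Theorem \ref{almostRED}.} The plan is to truncate the KAM diagonalization scheme of Theorem \ref{teoMainRed} applied to $\calL_\omega(\mathfrak{I}_2)$ at a finite stage $N_*$ chosen so that the tail remainder has size $\varepsilon N^{-\ka}$, and to verify that the finitely many Melnikov conditions enabling the first $N_*$ iterative steps are enforced by the hypothesis $\omega\in\calO_\infty(\mathfrak{I}_1)$ together with the closeness assumption \eqref{piccolezzaI1}. Throughout, $\Phi_N$ will denote the composition of the first $N_*$ symplectic transformations of the iteration run on $\calL_\omega(\mathfrak{I}_2)$, so the tame estimates in the style of \eqref{grano1000} for $\Phi_N^{\pm 1}$ are inherited stage by stage.

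Recall that the scheme of Theorem \ref{teoMainRed} proceeds along a geometric sequence of ultraviolet cut-offs $N_n$ and, at each step, produces
\[
\Phi_n\,\calL_\omega(\mathfrak{I}_2)\,\Phi_n^{-1}=\omega\cdot\partial_\varphi-\mathcal{D}^{(n)}_\omega(\mathfrak{I}_2)+\RR^{(n)},\qquad \mathcal{D}^{(n)}_\omega=\mathrm{diag}_{j\neq 0}(\im\,d_j^{(n)}(\mathfrak{I}_2)),
\]
with $d_j^{(n)}$ of the form \eqref{Djei} and $\RR^{(n)}$ of size $\varepsilon N_{n-1}^{-\beta}$ for some $\beta>\tau$. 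Choosing $N_*$ minimal such that $N_{N_*}\ge N$ gives a tail $N_{N_*-1}^{-\beta}\lesssim N^{-\ka}$ for some $\ka>\tau$, producing the required magnitude in \eqref{emmeENNE2}. The split $\RR^{(N)}=J\circ a^{(N)}+\mathcal{Q}^{(N)}$ is a structural feature preserved along the scheme: the Egorov-type reduction handling the order-one transport part and the KAM homological equation handling the order$-1$ tail together keep the remainder in the class $\{J\circ\text{function}\}+OPS^{-1}$, and both summands are bounded by the same telescopic estimate.

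The main obstacle is verifying the Melnikov conditions required at each of the $N_*$ steps on the set $\calO_\infty(\mathfrak{I}_1)$ rather than on $\calO_\infty(\mathfrak{I}_2)$. At stage $n\le N_*$ the homological equation demands, for $\langle\ell\rangle\le N_n$,
\[
\bigl|\omega\cdot\ell+d_j^{(n)}(\mathfrak{I}_2)-d_k^{(n)}(\mathfrak{I}_2)\bigr|\ge 2\gamma^{3/2}\langle\ell\rangle^{-\tau},
\]
together with the analogous first order condition corresponding to \eqref{prime}. Running the same Lipschitz-in-$\mathfrak{I}$ bookkeeping that proves Lemma \ref{paramdependence}, but applied to the finite-stage normal form, one obtains uniformly in $n\le N_*$
\[
\langle j\rangle^{-1}\bigl|d_j^{(n)}(\mathfrak{I}_2)-d_j(\mathfrak{I}_1)\bigr|\le C\varepsilon\bigl(\|\mathfrak{I}_1-\mathfrak{I}_2\|_{s_0+\mu}+N^{-\ka}\bigr).
\]
For $\omega\in\calO_\infty(\mathfrak{I}_1)$ the full denominator $|\omega\cdot\ell+d_j(\mathfrak{I}_1)-d_k(\mathfrak{I}_1)|$ exceeds $2\gamma^{3/2}\langle\ell\rangle^{-\tau}$, while the correction from replacing $d_j(\mathfrak{I}_1)$ by $d_j^{(n)}(\mathfrak{I}_2)$ is, for $\langle\ell\rangle\le N$, bounded by $C\varepsilon(\rho\,N^{-(\tau+1)}+N^{-\ka})\langle\ell\rangle\le \gamma^{3/2}\langle\ell\rangle^{-\tau}$, provided $\rho<\gamma^{3/2}/2$ and $N$ is large enough; hence the required Melnikov conditions persist. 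The hardest technical point is precisely this \emph{uniform-in-$n$} Lipschitz estimate, which must be tracked step by step along the iteration with constants independent of $n$.

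Finally, the bounds \eqref{emmeENNE} follow by the triangle inequality
\[
m^{(N)}(\mathfrak{I}_2)-m(\mathfrak{I}_1)=\bigl[m^{(N)}(\mathfrak{I}_2)-m(\mathfrak{I}_2)\bigr]+\bigl[m(\mathfrak{I}_2)-m(\mathfrak{I}_1)\bigr],
\]
and similarly for the $r_j$'s: the first bracket is the KAM tail $\sum_{n>N_*}(m^{(n)}-m^{(n-1)})$ for the full scheme with input $\mathfrak{I}_2$ (defined by standard Lipschitz extension off $\calO_\infty(\mathfrak{I}_2)$) and is bounded by $C\varepsilon N^{-\ka}$, while the second is controlled by Lemma \ref{paramdependence} as $C\varepsilon\|\mathfrak{I}_1-\mathfrak{I}_2\|_{s_0+\mu}$. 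Combining these pieces yields \eqref{emmeENNE} and concludes the scheme.
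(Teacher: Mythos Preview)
Your overall strategy matches the paper's: truncate the iteration for $\mathcal{L}_\omega(\mathfrak{I}_2)$ at a finite scale and verify that $\omega\in\calO_\infty(\mathfrak{I}_1)$ together with \eqref{piccolezzaI1} supplies the finitely many Melnikov conditions needed. The structure of the argument, and the derivation of \eqref{emmeENNE} via the triangle split, are correct in outline.

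There is, however, a genuine gap in your Melnikov verification. You record the per-eigenvalue bound
\[
\langle j\rangle^{-1}\bigl|d_j^{(n)}(\mathfrak{I}_2)-d_j(\mathfrak{I}_1)\bigr|\le C\varepsilon\bigl(\|\mathfrak{I}_1-\mathfrak{I}_2\|_{s_0+\mu}+N^{-\ka}\bigr),
\]
which by triangle inequality only gives a bound on the correction $(d_j^{(n)}-d_k^{(n)})(\mathfrak{I}_2)-(d_j-d_k)(\mathfrak{I}_1)$ proportional to $\langle j\rangle+\langle k\rangle$. For the DP dispersion this is fatal: the law is asymptotically linear, so $j$ and $k$ are \emph{not} controlled by $\langle\ell\rangle$; only the difference $|\omega(j)-\omega(k)|$ is (Lemma~\ref{BrexitDP}). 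Your jump to a bound proportional to $\langle\ell\rangle$ is therefore unjustified, and with $\langle j\rangle+\langle k\rangle$ unbounded the desired inequality $\le\gamma^{3/2}\langle\ell\rangle^{-\tau}$ cannot be concluded.

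The paper does not bound $|d_j^{(n)}(\mathfrak{I}_2)-d_j(\mathfrak{I}_1)|$ individually. It writes $d_j=m\,\omega(j)+r_j$ and estimates the \emph{difference of pairs}
\[
\bigl|m^{(N)}(\mathfrak{I}_2)-m(\mathfrak{I}_1)\bigr|\,|\omega(j)-\omega(k)|\;+\;|r_j^{(N)}(\mathfrak{I}_2)-r_j(\mathfrak{I}_1)|\;+\;|r_k^{(N)}(\mathfrak{I}_2)-r_k(\mathfrak{I}_1)|.
\]
For the first term one uses $|\omega(j)-\omega(k)|\le C|\ell|\le CN$ (only the nonempty $R_{\ell jk}$ matter), and for the $r_j$'s the $\langle j\rangle^{-1}$-decay gives a bound uniform in $j,k$. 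This is exactly \eqref{marathon1}; operationally it is encapsulated in item $(\mathbf{S4})_k$ of Proposition~\ref{iterazione riducibilita}, which directly yields $\Omega_k^{\gamma^{3/2}}(\mathfrak{I}_1)\subseteq\Omega_k^{\gamma^{3/2}-\rho}(\mathfrak{I}_2)$ under \eqref{piccolezzaI1}. You must invoke this split and the restriction $|\omega(j)-\omega(k)|\lesssim|\ell|$ explicitly; without it the argument does not close.

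A smaller point: the regularization producing the term $J\circ a^{(N)}$ is itself a truncated iteration (the straightening of Proposition~\ref{moser}, cf.\ \cite{FGMP} Lemma~5.2), not a single Egorov conjugation; $a^{(N)}$ is the residual of \emph{that} scheme, and one must separately verify the first-order inclusion $\Omega_1(\mathfrak{I}_1)\subset\Omega_1^{(N)}(\mathfrak{I}_2)$ of \eqref{prime100}. Your sketch collapses these two layers into one.
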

\begin{remark}\label{Includo}
In order to prove the above theorem the main point is to show the inclusion $\calO_{\infty}(\mathfrak{I}_1)\subset \Omega_1^{(N)} \cap \Omega_2^{(N)}$, where
\begin{align}
& \Omega_1^{(N)}=\Omega_1^{(N)}(\mathfrak{I}_2):=
\{ \omega\in\cO_0 : \lvert \omega\cdot \ell-m^{(N)}\, j \rvert\geq \frac{2(\gamma-\rho)}{\langle \ell \rangle^{\tau}}, 
\quad \forall j\in\mathbb{Z}\setminus\{0\},\,\,|\ell|\leq N \} , \label{prime100}\\ 
& \Omega_2^{(N)}=\Omega_2^{(N)}(\mathfrak{I}_2):=\{ \omega\in\cO_0 : \lvert \omega\cdot \ell+ d^{(N)}_j-d^{(N)}_k \rvert\geq 
\frac{2(\gamma^{3/2}-\rho)}{\langle \ell \rangle^{\tau}}, 
\quad \forall j, k\in\mathbb{Z}\setminus\{0\},\,\,|\ell|\leq N \}.\label{seconde100}
\end{align}
\end{remark}

\noindent
One can deduce the following dynamical consequence.

\begin{coro}
Under the Hypotheses of Theorem \ref{almostRED} consider
 the Cauchy problem
\begin{equation}\label{probCau100}
\left\{\begin{aligned}
&\del_{t}u=J \circ (1+a(\mathfrak{I};\varphi, x)) u-\mathcal{Q}(\mathfrak{I}; \f)u,\\
&u(0,x)=u_0(x)\in  H^{s}(\T;\R),\,s\gg 1.
\end{aligned}\right.
\end{equation}
Consider $\mathfrak{I}_1$ as in Theorem \ref{almostRED} and $\oo\in \calO_{\infty}(\mathfrak{I}_1)$ (which is given in Theorem
\ref{teoMainRed}). Then for any $\mathfrak{I}$ in the ball \eqref{piccolezzaI1},
 \eqref{probCau100} admits a unique solution which satisfies
\begin{equation}\label{garofano}
%\big(1-c(s)\big)\|u_0\|_{H^{s}(\T;\R)}\leq 
\sup_{t\in [-T_{N},T_N]}\|u(t,\cdot)\|_{H^{s}(\T;\R)}\leq \big(1+c(s)\big)\|u_0\|_{H^{s}(\T;\R)},
\end{equation}
for some $0<c(s)\ll 1$ and some  $T_{N}\geq  \e^{-1} N^{\ka}$. Finally, if $\mathfrak{I}=\mathfrak{I}_1$ the bound \eqref{garofano} holds for all times.

\end{coro}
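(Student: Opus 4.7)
The plan is to conjugate \eqref{probCau100} to a perturbed diagonal constant-coefficient system via the almost-reducing map from Theorem \ref{almostRED}, and then run a standard $H^s$ energy estimate exploiting the fact that the remainder is Hamiltonian (hence symmetrizable to skew-adjoint at principal order). For the case $\mathfrak{I}=\mathfrak{I}_1$ one replaces this by the exact reducibility of Theorem \ref{teoMainRed}, obtaining a purely isometric evolution for all times.

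First I apply Theorem \ref{almostRED} with $\mathfrak{I}_2=\mathfrak{I}$, producing a symplectic quasi-periodic family $\Phi_N(\oo t)$ conjugating $\calL_\oo(\mathfrak{I})$ to $\oo\cdot\del_\f - \DD^{(N)}_\oo + \RR^{(N)}$. Setting $v(t,\cdot):=\Phi_N(\oo t)[u(t,\cdot)]$, the equation \eqref{probCau100} becomes
\[
\del_t v = \DD^{(N)}_\oo v - \RR^{(N)}(\oo t) v, \qquad v(0,\cdot)=\Phi_N(0)u_0.
\]
Since $\Phi_N^{\pm 1}$ are bounded on $H^s$ with operator norm $1+O(\e\g^{-5/2})$ by \eqref{grano1000}, it suffices to establish the analogue of \eqref{garofano} for $\|v(t,\cdot)\|_s$. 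The diagonal operator $\DD^{(N)}_\oo = \mathrm{diag}(\im\, d_j^{(N)})$ has purely imaginary spectrum, so it is skew-adjoint on every $H^s$ and contributes nothing to the energy growth. The remainder splits as $\RR^{(N)} = J\circ a^{(N)} + \calQ^{(N)}$; by \eqref{emmeENNE2}, $\calQ^{(N)}$ has $H^s$-operator norm $O(\e N^{-\ka})$, and $a^{(N)}$ is a real function with $\|a^{(N)}\|_s = O(\e N^{-\ka})$ in every Sobolev norm.

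The delicate point is the unbounded contribution $J\circ a^{(N)}$. Using the decomposition
\[
J\circ a^{(N)} = \tfrac12(J\circ a^{(N)} + a^{(N)}\circ J) + \tfrac12[J, a^{(N)}],
\]
the symmetrized piece is skew-adjoint in $L^2$ (since $J^{*}=-J$ and $a^{(N)}$ is a real multiplication operator), while $[J,a^{(N)}]$ is a pseudo-differential operator of order $0$ with operator norm bounded by $\|a^{(N)}\|_{H^{s+\s_1}}$ for some $\s_1$. Standard commutator estimates between $\lD^{s}$ and operators of order one then yield
\[
\frac{d}{dt}\|v(t)\|_s^2 \leq C\e N^{-\ka}\|v(t)\|_s^2,
\]
and Gronwall's inequality gives $\|v(t)\|_s^2 \leq e^{C\e N^{-\ka}|t|}\|v(0)\|_s^2$. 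Choosing $T_N = c(s)\e^{-1}N^{\ka}$ with $c(s)$ sufficiently small produces the required estimate on $[-T_N,T_N]$; undoing the change of variables via \eqref{grano1000} yields \eqref{garofano}. Existence and uniqueness are then a standard Picard argument for the transformed equation combined with this a priori bound.

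For $\mathfrak{I}=\mathfrak{I}_1$ one replaces $\Phi_N$ by the fully reducing map $\Phi$ of Theorem \ref{teoMainRed}, so that $\RR^{(N)}\equiv 0$: the $H^s$-norm of $v$ is then exactly preserved and undoing the bounded symplectic conjugation gives \eqref{garofano} for all $t\in\R$ (this is \eqref{noncrescita} applied to the present setting). The main obstacle in the proof is precisely the cancellation of the leading first-order term of $\RR^{(N)}$ in the energy estimate: the symplecticity of $\Phi_N$ is what forces the conjugated operator to remain Hamiltonian and the top-order remainder to have exactly the form $J\circ a^{(N)}$, which can be symmetrized to a skew-adjoint operator plus a bounded commutator; without this structural property, one would only get exponential growth at a rate proportional to the principal symbol of $J$.
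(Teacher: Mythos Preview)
The paper states this corollary without proof, presenting it as a direct dynamical consequence of Theorem~\ref{almostRED}; your argument supplies exactly the natural details one would expect the authors to have in mind. The conjugation by $\Phi_N$, the observation that the diagonal part $\mathcal{D}^{(N)}_\oo$ is skew-adjoint on every $H^s$, the tame bound \eqref{emmeENNE2} for $\mathcal{Q}^{(N)}$, and the symmetrization of the unbounded piece $J\circ a^{(N)}$ via $\tfrac12(J a^{(N)}+a^{(N)}J)+\tfrac12[J,a^{(N)}]$ together with the commutator $[\langle D_x\rangle^s, J\circ a^{(N)}]\langle D_x\rangle^{-s}\in OPS^0$ are precisely the ingredients needed for the $H^s$ energy estimate and Gronwall; the case $\mathfrak{I}=\mathfrak{I}_1$ is indeed just Theorem~\ref{teoMainRed} (equivalently \eqref{noncrescita}).
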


\subsection{Strategy of the proof}

%It is well known that the main issue for the proof of KAM results for PDEs is the invertibility of the linearized operator at quasi-periodic functions.\\
In \cite{Airy} Baldi-Berti-Montalto developed a strategy  for the reducibility of a quasi-periodically forced linear operators, as a fundamental step in constructing quasi-periodic solutions for non-linear PDEs, via a Nash-Moser/ KAM scheme.
Indeed, the main point in the Nash-Moser scheme is to obtain tame estimates on \emph{high} Sobolev norms of the inverse of the linearized operator at a sequence of quasi-periodic approximate solutions. Given a diagonal operator, its inverse can be bounded 
 in \emph{any} Sobolev norm  by giving lower bounds on the eigenvalues. Therefore, if an operator is reducible,  the estimates on the inverse follow from corresponding tame bounds on the diagonalizing changes of variables, see for instance \eqref{grano1000}. Note that in order to use \eqref{grano1000} in a Nash-Moser scheme, the crucial point is that the $s$-Sobolev norm of $\Phi$ is controlled by the $(s+\sigma)$-Sobolev norm $\mathfrak{I}$ where $\sigma$ is fixed or at least $\s=\s(s)$ with $\s<s$.
 
 \smallskip
 
The main idea in the reducibility procedure of \cite{Airy} is to perform two steps.\\
 The first step
consists in applying a quasi-periodically depending on time change of coordinates which conjugates $\mathcal{L}_{\omega}$ to an operator $\mathcal{L}_{\omega}^+$ which is the sum of a diagonal unbounded part and a bounded, possibly smoothing, remainder. This is called the \textbf{regularization procedure} and, in fact, reduces the reducibility issue to a semilinear case.\\
The second step consists in performing a \textbf{KAM-like scheme} which completes the diagonalization of $\mathcal{L}_{\omega}^+$.

\smallskip

\paragraph{Step one.}
The operator $\mathcal{L}_{\omega}$ differs from the transport operator considered in \cite{FGMP} by a regularizing pseudo differential operator. 
Then, in order to make  the coefficient of the leading order  constant one can apply a  map 
\begin{equation}\label{diffeotoro}
\mathcal{T}_{\beta} u(\varphi, x)=u(\varphi, x+\beta(\varphi, x)).
\end{equation}
If we choose $\beta$ correctly, this map conjugates $\calL_{\omega}$ to constant coefficients plus a bounded remainder.
Such a map however  is  clearly not symplectic. In order to find the {\it symplectic equivalent} of this transformation we study the flow of the hyperbolic PDE
\begin{equation}\label{sushino}
\left\{
\begin{aligned}
&\del_{\tau}\Psi^{\tau}(u)=(J\circ b) \Psi^{\tau}(u), \qquad b:=\frac{\beta}{1+\tau \beta_x}\\
&\Psi^{0}u=u,
\end{aligned}\right.
\end{equation}
which is generated by the  Hamiltonian
\[
S(\tau,\f, u)= \int b(\tau,\f,x) u^2 dx.
\]
By construction if the flow of \eqref{sushino} is well defined then it is symplectic.
\\
First, %, in order to prove Theorem \ref{risultatosez8}, 
in Proposition \ref{DPdiffeo} we show that $\Psi^{\tau}$ is the composition of
\[
\mathcal{A}^{\tau} u:=(1+\tau \beta_x)\, u(\varphi, x+\tau \beta(\varphi, x))
\]
with a  pseudo differential  operator $\cO$ plus a remainder.
$\cO$ is one smoothing in the $x$ variable, while the remainder is  $\rho$-smoothing in the $x$ variable for some very large $\rho$.
\begin{remark}
We point out that the strategy used in Proposition \ref{DPdiffeo} for constructing of the symplectic version of the torus diffeomorphism is applicable for more general symplectic structure, provided that $J$ is pseudo differential.
\end{remark}
Next,  we study how the map $\Psi^{\tau}$ conjugates $\mathcal{L}_{\omega}$; 
this is the content of Proposition \ref{ConjugationLemma}.
Egorov's theorem ensures that the main order of the conjugated operator 
$\Psi^{\tau} \mathcal{L}_{\omega} (\Psi^{\tau})^{-1}$ is
\[
a_+(\varphi, x):=-(\omega\cdot\del_{\f} \tilde{\beta})(\varphi, x+\beta(\varphi, x))
+(1+a(\varphi, x+\beta(\varphi, x)))(1+\tilde{\beta}_x(\varphi, x+\beta(\varphi, x)))-1
\]
where $x+\tilde{\beta}(\varphi, x)$ is the inverse of the diffeomorphism of the torus $x\mapsto x+\beta(\varphi, x)$.
The function $\beta$ is chosen as the solution of a quasi-periodic transport equation $a_+(\varphi, x)={\rm const}$. 
This equation has been treated in \cite{FGMP} and the Corollary $3.6$ 
in \cite{FGMP} gives the right $\beta$ with estimates.

\noindent
The map $\Psi^{\tau}$ is the flow of a hyperbolic PDE, 
hence the Egorov theorem guarantees that $\Psi^{\tau} \mathcal{L}_\oo (\Psi^{\tau})^{-1}$ 
is again a pseudo differential operator, whose leading order is constant. The fact that $\Psi^\tau$ is symplectic also ensures that the zero order terms vanish  and
the non-constant coefficients terms are one smoothing in the $x$- variables. 

\noindent
 In order to have sufficiently good bounds on the symbol of the transformed operator, we provide a quantitative version of the Egorov theorem
(see Theorem \ref{EgorovQuantitativo} in Section \ref{reg:procedure}). As before, the idea is to express such operator as a pseudo-differential term 
(whose symbol we can be bounded in a very precise way) plus a {\it remainder}  
which is $\rho$-smoothing in the $x$ variable for some very large $\rho$.

\noindent
The Egorov theorem regards the conjugation of a pseudo differential operator $P_0=\op(p_0)\in OPS^m$ by the flow of a linear pseudo differential vector field $\mathtt{X} u=\op(\chi) u$ of order $d$ with $d\in (0, 1]$. It is well known that the transformed operator $P(\tau)=\op(p(\tau))\in OPS^m$ satisfies the Heisenberg equation 
\begin{equation}\label{Heise}
\partial_{\tau} P=[\mathtt{X}, P]
\end{equation}
(see \eqref{ars}) and that the symbol $p(\tau)$ satisfies $\partial_{\tau} p=\{p, \chi \}_M$, where $\{ \cdot, \cdot\}_M$ are the Moyal brackets. The proof consists in making the ansatz that the new symbol $p$ can be written as sum of decreasing symbols $p=\sum_{i\le m} p_i$ (see \eqref{balconata0}) and solving the Heisenberg equation order by order. This gives a set of triangular ODEs for the symbols $p_i$ (see \eqref{ars2}). The r.h.s of \eqref{Heise} is of order $m+d-1$, hence if $d<1$ the leading order symbol $p_m(\tau)=p_0$. The remaining terms are easily computable by iteration.
A detailed discussion of the case $d=1/2$ can be found in \cite{BM1} and \cite{BBHM}.\\
If $d=1$ then the equation for $p_m$ is a Hamilton equation with Hamiltonian $\chi$, hence $p_m(\tau)$ is given by $p_0$ transported by the flow of the Hamiltonian $\chi$ (see \eqref{ars3}). Consequently the symbols $p_i$, $i<m$, are given by ODE of the same kind but with forcing terms. We need to control the norms $\lvert p_i \rvert_{i, s, \alpha}$ with the norm $\lvert p_0 \rvert_{m, s+\s_1, \alpha+\s_2}$ with $\s_1+\s_2<s$. This requires some careful analysis (see Lemma \ref{Lemmino}).

\medskip

%\red{As in the classical version of the Egorov theorem, the new pseudo differential term $P=\mbox{Op}(p)\in OPS^m$, $m>0$, has to satisfy a Heisenberg equation $\partial_{\tau} P=[\mathtt{X}, P]$ (see \eqref{ars}), where $\mathtt{X}$ is a linear operator of order $d\in (0, 1]$. This one can be solved by making the ansatz that the new symbol $p\in S^m$ can be written as sum of decreasing symbols $p=\sum_{i\le m} p_i$ (see \eqref{balconata0}). Then the Heisenberg equation reduces to a system of ODEs for the symbols $p_i$ (see \eqref{ars2}).\\
%This system of equations is solved iteratively and, except for the symbol of leading order, these ODEs are easily integrable (see \eqref{gnomo}). If $d=1$ then the new leading order symbol is given by the old symbol transported by the flow of the Hamiltonian ODE \eqref{charsys} (see \eqref{ars3}). If $d<1$ then the equation for the leading order symbol, say $p_m\in S^m$, is trivial since the r.h.s of the Heisenberg equation has order $m-(1-d)$. 
%% The symbol $p_1$ results to be the "old" symbol transported by the flow of the Hamiltonian ODE ...\\
%A similar approach was already discussed in \cite{BM1} and \cite{BBHM}, in order to study the flow of a hyperbolic PDE similar to \eqref{sushino} but  of order $1/2$ (case $d<1$). The main new difficulty in our case ($d=1$) is to give quantitative sharp bound on the leading order symbol \eqref{ars3}.}
%\smallskip

Before stating the main regularization theorem let us briefly describe our class of remainders
%In particular we need precise estimates on the Sobolev norms of symbols and operators produced in the order reduction.
 i.e.   operators  which are sufficiently smoothing in the $x$-variable that they can be ignored in the pseudo-differential reduction, and are diagonalized in the KAM scheme. 
 We call such remainders $\gotL_{\rho,p}$ (for some $\rho\geq 3$, $p\geq s_0$).
  Roughly speaking we require that an operator 
$\mathcal{R}$ in $\gotL_{\rho,p}$ is tame as a bounded operator on $H^s$ and $\rho$-regularizing in space; moreover   its derivatives in $\varphi$ of order $\tb\le \rho-2$ are tame and $(\rho-\tb)$-regularizing in space.
This definition is made quantitative by introducing constants $\mathbb{M}_{\mathcal{R}}^{\g}(s, \tb)$, see Definition \ref{ellerho}  in Section \ref{sezione functional setting}.\\
The most important features of this class are that
it is closed for conjugation by maps $\mathcal{T}_\beta$ as in \eqref{diffeotoro} and that
any $\mathcal{R}$ in $\gotL_{\rho,p}$ is modulo-tame and hence can be diagonalized by a KAM procedure.

%Now we have all the informations 
%on the structure of the conjugated operator 
%$\Psi^{\tau} \mathcal{L}_{\omega} (\Psi^{\tau})^{-1}$. 
%The map $\Phi_1$ is obtained by the flow at time one 
%of the hyperbolic PDE \eqref{sushino} with $\beta$ given by the solution of the equation $a_+=\mbox{const}$.
%We remark that the bounded remainders have the same size of $\beta$, which is (in low norm) $\varepsilon \g^{-1}$. 
%Moreover we want to point out that thanks to the Hamiltonian structure the conjugation with the map $\Phi$ does not originate new terms of order zero. 

\begin{teor}[{\bf Regularization}]\label{risultatosez8}
Let $\rho\geq 3$ and consider $\mathcal{L}_{\omega}$ in \eqref{LomegaDP}.
There exist $\mu_1\ge  \mu_2>0$ such that, if condition \eqref{ipopiccolezza}
is satisfied  with $\mu=\mu_1$   then the following holds for all  $p\le s_0+\mu_1-\mu_2$.\\
There exists a constant $m(\oo)$ which depend in a   Lipschitz way  w.r.t.  $\oo\in\cO_0$, satisfying  %(ma qui non ci va un $\gamma$?)
\begin{equation}\label{clinica100}
\lvert m-1 \rvert^{\gamma, \cO_0}\le C \varepsilon,
\end{equation}
such that
for all $\oo$ in the set $\Omega_1(\mathfrak{I})$ (see \eqref{prime}) there exists a real bounded linear operator  
$
\Phi_1=\Phi_1(\oo) : \Omega_1\times H^{s}\to H^{s}
$
such that
\begin{equation}\label{operatorefinale}
\calL_{\omega}^+:=\Phi_1 \calL_{\oo}\Phi_1^{-1}=\omega\cdot \partial_{\varphi}-m J +\mathcal{R}.
\end{equation}
The constant $m$ depends on $\mathfrak{I}$ and for $\oo\in \Omega_1(\mathfrak{I}_1)\cap\Omega_1(\mathfrak{I}_2)$ 
one has
\begin{equation}\label{clinica1000}
\lvert \Delta_{12}m \rvert\le \varepsilon\lVert \mathfrak{I}_1-\mathfrak{I}_2 \rVert_{s_0+\mu_1},
\end{equation}
where $\Delta_{12}m:=m(\mathfrak{I}_1)-m(\mathfrak{I}_2)$.
The remainder in \eqref{operatorefinale} has the form 
$\RR={\rm Op}(r)+\widehat{\RR}$ where $r\in S^{-1}$, $\widehat{\RR}$
belongs to $\gotL_{\rho, p}$ 
(see Def. \ref{ellerho}) and 
\begin{equation}
\begin{aligned}\label{pasqua200}
|r|_{-1,s,\al}^{\gamma,\Omega_1}+\mathbb{M}^{\g}_{\widehat{\mathcal{R}}}(s, \tb)  \le_{s,\al} \varepsilon \g^{-1} \lVert \mathfrak{I}\rVert_{s+{\mu_1}}^{\gamma, \cO_0}, 
\quad 0\le \tb\le \rho-2,\\
|\Delta_{12}r|_{-1,p,\al}+
\mathbb{M}_{\Delta_{12}\mathcal{R} }(p, \tb)\le_{p,\al} 
\varepsilon \gamma^{-1}\lVert \mathfrak{I}_1-\mathfrak{I}_2\rVert_{s_0+{\mu_1}}\quad 0\le \tb\le \rho-3.
\end{aligned}
\end{equation}
Moreover if $u=u(\omega)$ depends on  $\omega\in \Omega_1$ in a Lipschitz way then
\begin{equation}\label{grecia}
\lVert \Phi_1^{\pm 1} u \rVert_s^{\g,\Omega_1}\le_s \lVert u \rVert_s^{\g,\Omega_1}+\varepsilon \gamma^{-1}\lVert \mathfrak{I} \rVert^{\g, \calO_0}_{s+\mu_1}\lVert u \rVert^{\gamma, \Omega_1}_{s_0}.
\end{equation}
Finally  $\Phi_1$, $\Phi_1^{-1}$ are  symplectic (according to Def. \ref{dynamicaldef}). 
\end{teor}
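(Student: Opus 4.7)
The plan is to take $\Phi_1$ to be the time--one flow $\Psi^1$ of the hyperbolic PDE \eqref{sushino} for a profile $\beta(\f,x)$ to be chosen, and to verify that this conjugation flattens the principal symbol of $\mathcal{L}_\omega$ to a constant while respecting the symplectic structure \eqref{SymplecticForm}. The naive torus diffeomorphism $\mathcal{T}_\beta$ in \eqref{diffeotoro} would do the flattening but is not symplectic with respect to $\Omega$; the Hamiltonian flow \eqref{sushino} is its symplectic analogue. By construction $\Psi^\tau$ is symplectic for every $\tau$, and Proposition \ref{DPdiffeo} decomposes it as $\Psi^\tau=\mathcal{A}^\tau\circ(\mathrm{Id}+\mathcal{O}_\tau)+\mathcal{R}_\tau$, with $\mathcal{A}^\tau$ the weighted pullback by the torus diffeomorphism $x\mapsto x+\tau\beta(\f,x)$, $\mathcal{O}_\tau\in OPS^{-1}$, and $\mathcal{R}_\tau\in\gotL_{\rho,p}$. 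This reduces the conjugation problem to one explicit pullback plus a one--smoothing and a $\rho$--smoothing correction.

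Next I apply the quantitative Egorov theorem (Theorem \ref{EgorovQuantitativo}), through Proposition \ref{ConjugationLemma}, to compute $\Psi^1\mathcal{L}_\omega(\Psi^1)^{-1}$. Since the generator of \eqref{sushino} is a linear Hamiltonian pseudo--differential vector field of order one, the conjugate is again pseudo--differential modulo a remainder in $\gotL_{\rho,p}$, and the Heisenberg equation \eqref{Heise} for the principal symbol is a Hamilton transport equation whose solution, together with the extra term coming from conjugating $\omega\cdot\partial_\f$, is exactly
\[
a_+(\f,x)=-(\omega\cdot\partial_\f\tilde\beta)(\f,x+\beta(\f,x))+(1+a(\f,x+\beta(\f,x)))(1+\tilde\beta_x(\f,x+\beta(\f,x)))-1,
\]
where $x+\tilde\beta$ is the inverse of $x+\beta$. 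The symplectic nature of $\Psi^1$ guarantees that the conjugate of the Hamiltonian vector field $J\circ(1+a)$ is again Hamiltonian, so the zero and positive order contributions in the conjugate must organize themselves into $J\circ(1+a_+)$, with every other surviving piece of order $-1$ or $\rho$--smoothing.

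I then choose $\beta$ so that $a_+\equiv m-1$ is constant. This is precisely the quasi--periodic transport equation solved in Corollary 3.6 of \cite{FGMP}: on the set $\Omega_1(\mathfrak{I})$ of \eqref{prime} it furnishes a real $\beta$ with sharp tame bounds $\|\beta\|^{\g,\Omega_1}_s\lesssim\varepsilon\g^{-1}\|\mathfrak{I}\|^{\g,\cO_0}_{s+\mu_1}$ and a Lipschitz constant $m$ verifying \eqref{clinica100} and, by differencing the transport equation, \eqref{clinica1000}. With this choice the output reads $\mathcal{L}_\omega^+=\omega\cdot\partial_\f-mJ+\mathcal{R}$. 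The remainder splits as $\mathrm{Op}(r)+\widehat{\mathcal{R}}$: the part $r\in S^{-1}$ collects the sub--principal symbols produced by Egorov, the contribution of $\mathcal{O}_1\in OPS^{-1}$ from Proposition \ref{DPdiffeo}, and the conjugate $\Psi^1\mathcal{Q}(\Psi^1)^{-1}$, which stays in $OPS^{-1}$ by standard symbolic calculus thanks to \eqref{opQ2}; while $\widehat{\mathcal{R}}\in\gotL_{\rho,p}$ gathers $\mathcal{R}_1$ from Proposition \ref{DPdiffeo} and the tail of the asymptotic Egorov expansion truncated at order $-\rho$. The estimates \eqref{pasqua200} and \eqref{grecia} then follow by composing the pseudo--differential calculus bounds with the tame bound on $\beta$, and the Lipschitz--in--$\mathfrak{I}$ statements follow by differencing each ingredient and using \eqref{opQ3} together with the $\mathfrak{I}$--dependence of $\beta$ inherited from the transport equation.

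The main technical obstacle is the quantitative Egorov theorem \ref{EgorovQuantitativo} in the regime $d=1$ of \eqref{Heise}: the leading symbol equation is itself a Hamilton transport equation (not a trivial ODE as in the sub--principal case), and the sub--principal symbols $p_i$, $i<m$, obey inhomogeneous transport equations along the same flow, whose forcings involve derivatives of the higher order symbols. The delicate point is to bound $|p_i|_{i,s,\al}$ in terms of $|p_0|_{m,s+\s_1,\al+\s_2}$ with $\s_1,\s_2$ fixed and independent of $s$, so that the tame structure of \eqref{grecia} is preserved across the conjugation; this is the content of Lemma \ref{Lemmino}. Once that quantitative Egorov is in hand, what remains is algebraic manipulation of pseudo--differential symbols, the closure of $\gotL_{\rho,p}$ under conjugation by $\mathcal{T}_\beta$, and the bookkeeping of $\oo$-- and $\mathfrak{I}$--Lipschitz dependence through every step.
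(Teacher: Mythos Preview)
Your proposal is correct and follows essentially the same approach as the paper: take $\Phi_1=\Psi^1$ the symplectic time--one flow of \eqref{sushino}, invoke Proposition~\ref{DPdiffeo} for its structure, apply the conjugation Proposition~\ref{ConjugationLemma} (built on the quantitative Egorov Theorem~\ref{EgorovQuantitativo}) to compute $\Psi^1\mathcal{L}_\omega(\Psi^1)^{-1}$, and choose $\beta$ via the straightening result (Proposition~\ref{moser}, i.e.\ Corollary~3.6 of \cite{FGMP}) so that $a_+\equiv m-1$. The only minor imprecision is in your decomposition of $\Psi^\tau$: the paper writes $\Psi^\tau=\mathcal{A}^\tau\circ(\Theta^\tau+R^\tau)$ with $\Theta^\tau=\mathrm{Op}(1+\vartheta)$ rather than $\mathcal{A}^\tau\circ(\mathrm{Id}+\mathcal{O}_\tau)+\mathcal{R}_\tau$, but this is a cosmetic difference and your identification of the ingredients and the source of each estimate is accurate.
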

\noindent
{\bf Step two}. We apply a KAM algorithm which diagonalizes $\calL^+_\oo$. 
As in the first step, an important point is to implement such algorithm by requiring only a smallness 
condition on a low norm of the remainder of the regularization procedure. 
Hence in order to achieve estimates on high Sobolev norms for the changes of variables 
it is not sufficient that the non- diagonal terms are bounded. 
To this purpose, following \cite{BBHM}, we work in the class of modulo tame operators 
(see  Def. \eqref{menounomodulotame}), more precisely we need that $\RR$ in \eqref{operatorefinale} 
is  modulo tame and one smoothing in the $x$-variable together with its derivatives in times up to some sufficiently 
large order, this follows from our definition of $\mathfrak L_{\rho,p}$ and 
properties of pseudo-differential operators, see Lemma \ref{LemmaAggancio}.
Our strategy is mostly parallel to \cite{BBHM}, hence we give only a sketch of the proof for completeness.%We first show that the operator 

\begin{teor}{\textbf{(Diagonalization)}}\label{ReducibilityDP}
%Let $\g^{3/2}:=\gamma^{3/2}$ and f
Fix $\mathcal{S}>s_0$.
Assume that $\omega \mapsto \mathfrak{I}(\omega)$ 
is a Lipschitz function defined on 
$\calO_0$, satisfying \eqref{ipopiccolezza} with 
$\mu \geq \mu_1$ where $\mu_1:=\mu_1(\nu)$ is given in 
Theorem \ref{risultatosez8}. %and $\tb_0:=6 \tau+5$. 
Then there exists $\delta_0\in (0, 1)$, $N_0>0$, $C_0>0$, such that, if
\begin{equation}\label{PiccolezzaperKamredDP}
N_0^{C_0} \e \g^{-\frac{5}{2}} \le \delta_0, 
\end{equation}
then the following holds.

\vspace{0.5em}
\noindent
$(i)$  \textbf{(Eigenvalues)}. For all $\omega\in \calO_0$ there exists a sequence
\begin{align}\label{FinalEigenvaluesDP}
&d_j(\omega):=d_j(\omega, \mathfrak{I}(\omega)):=m(\omega)\,j 
\frac{4+j^2}{1+j^2}  + r_j(\omega), \quad j\neq 0,
\end{align}
 with $m$ in \eqref{clinica100}. Furthermore, for all $j\neq 0$
\begin{equation}\label{stimeautovalfinaliDP}
 \sup_j \langle j\rangle |r_j|^{\g^{\frac{3}{2}},\calO_0} < C \e, \qquad r_j=-r_{-j}
\end{equation}
for some $C>0$. All the eigenvalues $\mathrm{i} d_j$ are purely imaginary. 

\vspace{0.5em}
\noindent
$(ii)$ \textbf{(Conjugacy)}. For all $\omega$ in the set 
$\cO_{\infty}:=\Omega_1(\mathfrak{I})\cap \Omega_2(\mathfrak{I})$ (see \eqref{prime}, \eqref{seconde})
there is a real, bounded, invertible, linear operator $\Phi_{2}(\omega)\colon H^s\to H^s$, 
with bounded inverse $\Phi_{2}^{-1}(\omega)$, that conjugates 
$\mathcal{L}_{\omega}^{+}$ in \eqref{operatorefinale} to constant coefficients, namely 
\begin{equation}\label{Linfinito}
\begin{aligned}
&\mathcal{L}_{\infty}(\omega):=\Phi_{2} (\omega) \circ \mathcal{L}_{\oo}^{+} \circ \Phi_{2}^{-1}(\omega)=\omega\cdot \partial_{\varphi}+\mathcal{D}(\omega), \qquad \mathcal{D}(\omega):=\mathrm{diag}_{j\neq 0} \{ \mathrm{i} d_j(\omega) \}.
\end{aligned}
\end{equation}
The transformations $\Phi_{2}, \Phi_{2}^{-1}$ are symplectic, tame and they satisfy for $s_0\le s\le \mathcal{S}$
\begin{equation}\label{grano}
\lVert (\Phi^{\pm 1}_{2}-\mathrm{I}) h \rVert^{\gamma^{3/2},\cO_{\infty}}_{s}\le_s  \big(\e \gamma^{-3/2}
+\e \gamma^{-5/2} \lVert \mathfrak{I} \rVert_{s+\mu}^{\gamma, \calO_0}\big) 
\lVert h \rVert^{\gamma^{3/2}, \cO_{\infty}}_{s_0}+\e \gamma^{-3/2} \lVert h \rVert^{\gamma^{3/2}, \cO_{\infty}}_s.
\end{equation}
with $h=h(\omega)$.
Moreover, for $\omega\in \calO_{\infty}(\mathfrak{I}_1)\cap \calO_{\infty}(\mathfrak{I}_2)$ we have the following bound
for some $\s>0$:
\begin{equation}\label{stimaLipR}
\sup_j\langle j\rangle\lvert \Delta_{12} r_j \rvert\le \varepsilon \g^{-1}\lVert \mathfrak{I}_1-\mathfrak{I}_2 \rVert_{s_0+\sigma}.
\end{equation}
\end{teor}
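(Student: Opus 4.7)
The plan is to construct, via a quadratic KAM iteration in the spirit of \cite{BBHM}, a convergent sequence of symplectic, modulo-tame transformations whose composition conjugates $\calL_\omega^+$ to constant coefficients. Set $\calL_0 := \calL_\omega^+$, $\mathcal{D}_0 := mJ$ so that $d_j^{(0)} = mj(4+j^2)/(1+j^2)$, and $\RR_0 := \RR$. By Lemma \ref{LemmaAggancio} the class $\gotL_{\rho,p}$ produced by the regularization embeds into a class of one-smoothing, modulo-tame operators, so the iteration can start. We then construct inductively operators
\begin{equation*}
\calL_n = \oo\cdot\pa_\f - \mathcal{D}_n + \RR_n, \qquad \mathcal{D}_n = \mathrm{diag}_{j\neq 0}(\im d_j^{(n)}),
\end{equation*}
where $\RR_n$ remains modulo-tame and one-smoothing, with low-$s$ norm bounded by $\e N_n^{-\kappa}$ for $N_n := N_0^{\chi^n}$, $\chi \in (1,2)$.

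At step $n$, we truncate $\RR_n$ at Fourier modes $|\ell| \leq N_n$ and solve the homological equation
\begin{equation*}
\oo\cdot\pa_\f \Psi_n - [\mathcal{D}_n, \Psi_n] = \Pi_{N_n}\bigl(\RR_n - [\RR_n]\bigr),
\end{equation*}
where $[\RR_n]$ denotes the $\f$-average of the diagonal part of $\RR_n$. On the Cantor set $\Omega_2^{(n)}$ on which $|\omega\cdot\ell + d_j^{(n)} - d_k^{(n)}| \geq 2\gamma^{3/2}\langle\ell\rangle^{-\tau}$ holds for $|\ell|\leq N_n$, the Fourier solution is $(\Psi_n)_j^k(\ell) = (\RR_n)_j^k(\ell)/[\im(\oo\cdot\ell + d_k^{(n)} - d_j^{(n)})]$ for $(j,k,\ell)\neq(j,j,0)$ (and zero otherwise); it inherits the modulo-tame and one-smoothing structure because the divisor grows at worst linearly in $j-k$, which is compensated by the one-smoothing gain of $\RR_n$ in the $k$ variable. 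Lipschitz dependence on $\omega$ is propagated losing one derivative in $\ell$; bounds off the Cantor set are restored by a Kirszbraun-type extension. Setting $\Phi_2^{(n)} := e^{\Psi_n}$, which is symplectic since $\Psi_n$ is Hamiltonian (inherited from $\calL_n$), a Lie-series computation yields
\begin{equation*}
\calL_{n+1} = \Phi_2^{(n)}\calL_n(\Phi_2^{(n)})^{-1} = \oo\cdot\pa_\f - \mathcal{D}_{n+1} + \RR_{n+1},
\end{equation*}
with $\mathcal{D}_{n+1} = \mathcal{D}_n + \im[\RR_n]$ (updating $d_j^{(n+1)}$ by the $(j,j,0)$-Fourier coefficient of $\RR_n$) and $\RR_{n+1}$ a remainder consisting of quadratic-in-$\RR_n$ commutator terms plus the high-mode tail $\Pi_{N_n}^\perp\RR_n$.

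Convergence: the smallness assumption \eqref{PiccolezzaperKamredDP} is calibrated so that the low-$s$ modulo-tame norm $\mathbb{M}_{\RR_n}^{\gamma^{3/2}}(s_0,\tb)$ decays super-exponentially like $\e N_n^{-\kappa}$, while the high-$s$ norm grows only polynomially in $N_n$; this yields the tame bounds on $\Psi_n$ needed for \eqref{grano}. Composing the $\Phi_2^{(n)}$'s and passing to the limit gives $\Phi_2$ and the limit eigenvalues $d_j$ of \eqref{FinalEigenvaluesDP}; the symmetry $r_j = -r_{-j}$ and the purely imaginary character of $\im d_j$ follow from the reality/Hamiltonian structure, which is preserved at every step. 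Lipschitz dependence on $\mathfrak{I}$ at the level of eigenvalues, i.e.\ \eqref{stimaLipR}, is obtained by tracking how $\Delta_{12}\calL_n$ and $\Delta_{12}\Psi_n$ evolve and telescoping the corrections $d_j^{(n+1)}-d_j^{(n)}$. The main obstacle is the compatibility of the modulo-tame bounds with the second Melnikov divisors: since $d_j^{(n)} \sim m j$ as $|j|\to\infty$, the denominators $\oo\cdot\ell + d_j^{(n)} - d_k^{(n)}$ behave like transport-type divisors along the diagonal $m(j-k) \approx -\oo\cdot\ell$, so the asymmetric scaling $\gamma$ vs $\gamma^{3/2}$ in \eqref{prime}--\eqref{seconde} is essential both to keep the homological equation solvable and to control $|\calO_0 \setminus \Omega_2^{(n)}|$ at each step, as anticipated in the remarks after Theorem \ref{MainResult}.
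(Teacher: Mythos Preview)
Your proposal is correct and follows the same route as the paper's proof: verify via Lemma \ref{LemmaAggancio} that $\mathcal{R}$ is $-1$-modulo-tame, apply the abstract KAM iteration (Proposition \ref{iterazione riducibilita} and the KAM step Lemma \ref{KAMstep}) with symplectic step maps $e^{\Psi_n}$, prove the inclusion $\mathcal{O}_\infty \subseteq \bigcap_k \Omega_k^{\gamma^{3/2}}$, and pass to the limit for both $\Phi_2$ and the eigenvalue corrections. One minor imprecision worth flagging: the $-1$-modulo-tame structure of $\Psi_n$ is inherited not through any $(j,k)$-compensation mechanism but simply because the second-Melnikov lower bound $|\omega\cdot\ell+d_j^{(n)}-d_k^{(n)}|\geq \gamma^{3/2}\langle\ell\rangle^{-\tau}$ is uniform in $j,k$, giving directly the majorant bound $\underline{\Psi_n}\preceq \gamma^{-3/2}N_n^{\tau}\,\underline{\mathcal{R}_n}$ (and $N_n^{2\tau+1}$ after the Lipschitz variation).
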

%In order to prove the theorem above we shall apply to  $\mathcal{L}_{\omega}^+$ in \eqref{operatorefinale} 
%an abstract diagonalization procedure  which is stated in  Proposition \ref{iterazione riducibilita}.
%Actually this is a KAM reducibility scheme for smoothing modulo-tame operators (see Def. \eqref{menounomodulotame}).
%We point out that the bounded remainders of the regularization procedure are the sum
%of pseudo-differential operators of order $-1$ and an element
%of the class $\calL_{\rho}$ with $\rho$ arbitrarily large.
%In Lemma \ref{LemmaAggancio} we prove that such operator are $-1$-modulo-tame operators provided
%that $\rho$ is big enough.
It remains to prove measure estimates for the Cantor set $\cO_{\infty}=\Omega_1\cap\Omega_2$. In Section \ref{measure} we prove the following.

\begin{teor}[{\bf Measure estimates}]\label{stimedimisura}
Let $\calO_{\infty}$ be the set of parameters in \eqref{prime}-\eqref{seconde}.
For some
constant $C>0$ one has that 
\begin{equation}\label{stimedimisuraTeo}
|\calO_0\setminus\cO_{\infty}|\leq C\gamma L^{\nu-1}.
\end{equation}
\end{teor}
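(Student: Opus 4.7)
The plan is to decompose $\cO_0\setminus\cO_\infty=(\cO_0\setminus\Omega_1)\cup(\cO_0\setminus\Omega_2)$ into countably many resonant sublevel sets
\[
R^{(1)}_{\ell,j}:=\{\omega\in\cO_0:|\omega\cdot\ell-m(\omega)j|<2\gamma\langle\ell\rangle^{-\tau}\},\quad R^{(2)}_{\ell,j,k}:=\{\omega\in\cO_0:|\omega\cdot\ell+d_j-d_k|<2\gamma^{3/2}\langle\ell\rangle^{-\tau}\},
\]
and to bound each by a one-parameter Lipschitz level-set estimate along $\ell/|\ell|$, combined with Fubini in the $\nu-1$ transverse directions of the cube $[L,2L]^\nu$. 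The transverse slicing is what produces the $L^{\nu-1}$ factor.

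For $R^{(1)}_{\ell,j}$ I would use the bound $\lvert m-1\rvert^{\gamma,\cO_0}\le C\e$ from Theorem \ref{teoMainRed}, which yields $|m|^{lip}\le C\e\gamma^{-1}$. The Lipschitz derivative of $\omega\mapsto\omega\cdot\ell-m(\omega)j$ along $\ell/|\ell|$ is then at least $|\ell|-C\e|j|\gamma^{-1}$. The set is empty for $\ell=0$ (since $m|j|\ge 1/2$) and for $|j|>C_0\langle\ell\rangle$ (since $|mj|$ dominates $|\omega\cdot\ell|$), while on the remaining range $|j|\le C_0\langle\ell\rangle$ the derivative is $\ge|\ell|/2$ once $\e\gamma^{-1}$ is small. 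This yields $|R^{(1)}_{\ell,j}|\le C\gamma L^{\nu-1}/(|\ell|\langle\ell\rangle^{\tau})$ and, summing,
\[
\sum_{\ell\neq 0}\sum_{|j|\le C_0\langle\ell\rangle}|R^{(1)}_{\ell,j}|\le C\gamma L^{\nu-1}\sum_{\ell\neq 0}\langle\ell\rangle^{-\tau}\le C\gamma L^{\nu-1}
\]
since $\tau>\nu$.

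For $R^{(2)}_{\ell,j,k}$ the case $j=k$, $\ell\neq 0$ reduces to the Diophantine condition on $\cO_0$ (because $2\gamma^{3/2}\le\gamma$ and $\langle\ell\rangle^\tau\ge\langle\ell\rangle^\nu$), while $\ell=0$, $j\neq k$ is controlled by the uniform gap $|d_j-d_k|\ge c$ coming directly from \eqref{Djei} for $\e$ small. For $j\neq k$ and $\ell\neq 0$ the key ingredient is the dichotomy exploiting the mismatched scales $\gamma$ versus $\gamma^{3/2}$. Writing, via \eqref{Djei},
\[
d_j-d_k=m(j-k)+\eta_{j,k},\qquad \eta_{j,k}:=3m\Big(\tfrac{j}{1+j^2}-\tfrac{k}{1+k^2}\Big)+r_j-r_k,
\]
the bounds on $r_j$ give $|\eta_{j,k}|\le C(|j|^{-1}+|k|^{-1})$ and $|\eta_{j,k}|^{lip}\le C\e\gamma^{-3/2}$. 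If $\min(|j|,|k|)>C_1\langle\ell\rangle^{\tau}\gamma^{-1}$ then $|\eta_{j,k}|<(\gamma-\gamma^{3/2})\langle\ell\rangle^{-\tau}$, so for $\omega\in\Omega_1$ and the nonzero integer $p:=j-k$ the inequality $|\omega\cdot\ell-mp|\ge 2\gamma\langle\ell\rangle^{-\tau}$ upgrades to the second-order one, and $R^{(2)}_{\ell,j,k}\cap\Omega_1=\emptyset$. The only contributing pairs therefore satisfy $\min(|j|,|k|)\le C_1\langle\ell\rangle^{\tau}\gamma^{-1}$ and, from $|m(j-k)|\le|\omega\cdot\ell|+|\eta|+1$, also $|j-k|\le CL\langle\ell\rangle$. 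The same one-parameter argument, with the $|\eta|^{lip}$ correction controlled by $\e\gamma^{-5/2}\ll 1$, gives
\[
|R^{(2)}_{\ell,j,k}|\le C\gamma^{3/2}L^{\nu-1}/(|\ell|\langle\ell\rangle^{\tau}).
\]

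The main obstacle is the summation over admissible triples: a crude count yields $O(L\langle\ell\rangle^{\tau+1}\gamma^{-1})$ pairs per $\ell$, giving a divergent series. The sharp count is the content of Lemma \ref{delpiero}; it exploits the rigidity imposed on the diagonal variable $p=j-k$ by the first-order Melnikov condition (for each $\omega$, the integer values of $mp$ making $\omega\cdot\ell+mp$ bounded form a uniformly bounded set) together with the polynomial tail $\langle\ell\rangle^{-\tau}$, and delivers a convergent bound once $\tau\ge 2\nu+6$, yielding $\sum|R^{(2)}_{\ell,j,k}|\le C\gamma L^{\nu-1}$. Combining this with the first-order contribution proves \eqref{stimedimisuraTeo}.
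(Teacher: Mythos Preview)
Your treatment of $\Omega_1$ and the one-dimensional Lipschitz/Fubini mechanism for individual bad sets is correct and matches Lemma~\ref{singolo}. The gap is in your resolution of the divergent sum for $\Omega_2$.

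Your dichotomy --- absorb $R^{(2)}_{\ell,j,k}$ into $\cO_0\setminus\Omega_1$ once $\min(|j|,|k|)>C_1\langle\ell\rangle^{\tau}\gamma^{-1}$ --- is forced to use exactly the threshold $\langle\ell\rangle^{\tau}\gamma^{-1}$, because the inclusion into $\Omega_1^c$ requires $|\eta_{j,k}|\lesssim\gamma\langle\ell\rangle^{-\tau}$. As you compute, this leaves $O(\gamma^{-1}\langle\ell\rangle^{\tau+1})$ pairs per $\ell$ in the complementary regime, each of measure $O(\gamma^{3/2}L^{\nu-1}\langle\ell\rangle^{-\tau-1})$, hence a contribution $O(\gamma^{1/2}L^{\nu-1})$ per $\ell$ and a divergent sum. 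This is not an artefact of crude counting: the threshold is dictated by the inclusion you are after, and no sharper count of admissible $(j,k)$ at that scale is available (the constraint on $p=j-k$ is already Lemma~\ref{BrexitDP}, which you have used).

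Your description of Lemma~\ref{delpiero} is therefore not what that lemma provides. It is not a refined count on $p=j-k$; it is an inclusion using an \emph{intermediate} exponent $\tau_1=\nu+2<\tau$: for $|j|,|k|\ge\mathtt{C}\langle\ell\rangle^{\tau_1}\gamma^{-1/2}$ one has $R_{\ell jk}(\gamma^{3/2},\tau)\subseteq Q_{\ell,j-k}(\gamma,\tau_1)$. These auxiliary $Q$-sets are \emph{larger} than those defining $\Omega_1^c$ (exponent $\tau_1$, not $\tau$), so this is not absorption into $\cO_0\setminus\Omega_1$; but their measure is still $O(\gamma L^{\nu-1}\langle\ell\rangle^{-\tau_1})$ by Lemma~\ref{singolo}, and since they depend only on $j-k$ with $|j-k|\le C|\ell|$, their sum is $\le C\gamma L^{\nu-1}\sum_\ell\langle\ell\rangle^{1-\tau_1}<\infty$. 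In the complementary regime $\min(|j|,|k|)\le\mathtt{C}\langle\ell\rangle^{\tau_1}\gamma^{-1/2}$ there are now only $O(\gamma^{-1/2}\langle\ell\rangle^{\tau_1+1})$ pairs per $\ell$, and the resulting $\gamma^{3/2}\cdot\gamma^{-1/2}\langle\ell\rangle^{\tau_1+1-\tau}$ sums since $\tau>\tau_1+1+\nu$. The two pairs of scales, $\gamma$ versus $\gamma^{3/2}$ and $\tau$ versus $\tau_1$, are balanced precisely so that both halves converge; your single-scale version cannot close.
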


\noindent
We discuss the key ideas to prove the above result.
Recalling \eqref{prime}-\eqref{seconde} 
we may write
\begin{equation}\label{UnionDP}
\calO_0 \setminus \mathcal{O}_{\infty}
=\bigcup_{\ell\in\mathbb{Z}^{\nu}, j, k\in \mathbb{Z}\setminus\{0\}} \Big( R_{\ell j k} \cup Q_{\ell j }\Big)
\end{equation}
where 
\begin{equation}\label{BadSetsDP}
\begin{aligned}
&R_{\ell j k}:=\{ \omega\in\calO_0 : \lvert  \omega\cdot \ell+d_j-d_k\rvert< 2\,\gamma^{3/2}\, \langle \ell \rangle^{-\tau} \},\\
&Q_{\ell j }:=\{ \omega\in\calO_0 : \lvert \omega\cdot \ell+m j \rvert< 2\gamma \langle \ell \rangle^{-\tau} \}
\end{aligned}
\end{equation}
where $d_{j}$ are given in \eqref{FinalEigenvaluesDP}.
Since, by \eqref{0Meln} and $\gamma>\gamma^{3/2}$, $R_{\ell j k}=\emptyset$ for $j=k$, in the sequel we assume that $j\neq k$.

\noindent
The strategy of the proof of Theorem \ref{stimedimisura} is the following.

\medskip
\noindent
$(i)$
 Since the union in \eqref{UnionDP} runs over infinite numbers of indices $\ell,j,k$, we first need
some relation between them which is given in Lemma \ref{BrexitDP}. 
Note that, since the dispersion law $j\mapsto j ({1+j^2})^{-1}{(4+j^2)} $ is asymptotically linear, for  fixed $\ell$ there are infinitely many non-empty bad sets 
$R_{\ell j k}$
to be considered. 
It is well known that if the dispersion law grows as $j^{d}$, $d>1$ as $j\to\infty$
then, thanks to  good separation properties of the linear frequencies, there are only a finite number of
sets to be considered for any fixed $\ell\in \Z^{\nu}$. This is the key difficulty to deal with.
% allows to study only, for fixed $\ell$, a finite number  
%This is why the estimate 
%for these sets is 
%much harder than for the sets $Q_{\ell,j}$.

\medskip
\noindent
$(ii)$
 We provide the estimates of each  ``bad'' set in \eqref{BadSetsDP} 
when $\ell\in \Z^{\nu},j,k\in \mathbb{Z}\setminus\{0\}$.
This is done in Lemma \ref{singolo}.

\medskip
\noindent
$(iii)$
 We deal with the problem of the summability in $j,k$. We show (in Lemma \ref{delpiero}) 
that, if $|k|,|j|\gg |\ell|$, then the sets
$R_{\ell,j,k}$  are included in sets of  type $Q_{\ell,j-k}$, which depends only on the difference $j-k$
and so are finite for fixed $\ell $.

\section{Functional Setting}\label{sezione functional setting}

In this Section we introduce some notations, definitions and technical tools which will be used along the paper. In particular we introduce rigorously the spaces and the classes of operators on which we work.

We refer to the Appendix $A$ in \cite{FGMP}
%\ref{lemmitecnicifunzioni} 
for technical lemmata on the tameness properties of the Lipschitz and Sobolev norms in \eqref{space},\eqref{tazza10}.

\paragraph{Linear Tame operators.} 
\begin{defi}[{\bf $\s$-Tame operators}]\label{TameConstants}
Given  $\s\geq 0$ we say that 
a linear operator $A$ is $\sigma$-\textit{tame} w.r.t. a non-decreasing sequence $\{\mathfrak M_A(\s,s)\}_{s=s_0}^\mathcal{S}$ (with possibly $\mathcal{S}=+\infty$) if:
\begin{equation}\label{SigmaTame}
\lVert A u \rVert_{s}\le \mathfrak{M}_A(\s,s) \lVert u \rVert_{s_0+\sigma}+\mathfrak{M}_A(\s,s_0) \lVert u \rVert_{s+\sigma} \qquad u\in H^s,
\end{equation} 
for any $s_0\le s\le \mathcal{S}$. We call $\mathfrak{M}_A(\s,s)$  a {\sc tame constant} for the operator $A$. When the index $\s$ is not relevant 
we write $\gotM_{A}(\s,s)=\gotM_{A}(s)$. 
\end{defi}

\begin{defi}[{\bf Lip-$\s$-Tame operators}]\label{LipTameConstants}
Let $\s\geq 0$ and $A=A(\oo)$ be a linear operator defined for $\oo\in \calO\subset \mathbb{R}^{\nu}$.
Let us define
\begin{equation}\label{defDELTAomega}
\Delta_{\oo,\oo'}A:=\frac{A(\oo)-A(\oo')}{|\oo-\oo'|}, \quad \oo,\oo'\in \calO.
\end{equation}
Then $A$ is \emph{Lip-$\s$-tame} w.r.t. a non-decreasing sequence $\{\mathfrak M_A(\s,s)\}_{s=s_0}^\mathcal{S}$ if
 the following estimate holds
\begin{equation}\label{lipTAME}
\sup_{\oo\in \calO}\|Au\|_{s},\g\sup_{\oo\neq\oo'}\|(\Delta_{\oo,\oo'}A)\|_{s-1}\leq _{s}\gotM^{\g}_{A}(\s,s)\|u\|_{s_0+\s}+
\gotM^{\g}_{A}(\s,s)\|u\|_{s+\s}, \quad u\in H^{s},
\end{equation}
%and for $\s<0$ if the operator
%\[
%\langle D_{x}\rangle^{-\frac{\s}{2}}A\langle D_{x}\rangle^{-\frac{\s}{2}}
%\]
%is $0$-tame according to \eqref{lipTAME}. 
%Thus if $\s<0$
%\begin{equation}
%\gotM_{A}(\s,s):=\gotM_{\langle D_{x}\rangle^{-\frac{\s}{2}}A\langle D_{x}\rangle^{-\frac{\s}{2}}}(0,s).
%\end{equation}
%where the functions $s\mapsto \mathfrak{M}^{\g}_A(\s,s)$ are non-decreasing in $s$. 
We call $\mathfrak{M}^{\gamma}_A(\s,s)$ a {\sc Lip-tame constant } of the operator $A$. 
When the index $\s$ is not relevant 
we write $\gotM^{\gamma}_{A}(\s,s)=\gotM^{\gamma}_{A}(s)$. 

\end{defi}

\paragraph{Modulo-tame operators and majorant norms.}
The modulo-tame operators are introduced in Section $2.2$ of \cite{BM1}. Note that we are interested only in the Lipschitz variation of the operators respect to the parameters of the problem whereas in \cite{BM1} the authors need to control also higher order derivatives.
% The main difference with the work \cite{BM1} is that in the KAM reducibility procedure we involve modulo-tame operators which regularize in space. This fact holds also in  \cite{BM2}.

\begin{defi}
Let $u\in H^s$, $s\geq 0$, we define the majorant function
$
\underline{u} (\varphi, x):=\sum_{\ell\in\mathbb{Z}^{\nu}, j\in\mathbb{Z}} \lvert u_{\ell j} \rvert e^{\mathrm{i}(\ell\cdot \varphi+j x)}.
$
Note that $\lVert u \rVert_s=\lVert \underline{u} \rVert_s$.
\end{defi}
\begin{defi}[{\bf Majorant operator}]
Let $A\in\mathcal{L}(H^s)$ and recall its matrix representation \eqref{cervino}.
We define the majorant matrix $\underline{A}$ as the matrix with entries
\[
(\underline{A})_j^{j'}(\ell):=\lvert (A)_{j}^{j'}(\ell) \rvert \qquad j, j'\in\mathbb{Z},\,\,\ell\in\mathbb{Z}^{\nu}.
\]
\end{defi}

\noindent
We consider the majorant operatorial norms 
\begin{equation}\label{majorantnorm}
\|\un M\|_{\mathcal L(H^s)}:= \sup_{\lVert u \rVert_s\le 1} \lVert\un Mu \rVert_{s}.
\end{equation}
We have a partial ordering relation in the set of the infinite dimensional matrices, i.e.
if
\begin{equation}\label{partialorder}
M \preceq N \Leftrightarrow |M_{j}^{j'}(\ell)|\leq |N_{j}^{j'}(\ell)|\;\;\forall j,j',\ell\; \Rightarrow \lVert \un{M} \rVert_{\mathcal L(H^s)}\le \lVert \un{N} \rVert_{\mathcal L(H^s)}\,,\quad \lVert  {M}u \rVert_s\le \lVert  \un{M}\,\un u \rVert_s \le \lVert\un{N}\, \un u \rVert_s.
\end{equation}

Since we are working on a majorant norm we have the continuity of the projections on monomial subspace, in particular we define the following functor acting on the matrices
\[
\Pi_K M:=
\begin{cases}
 M_{j}^{j'}(\ell) \qquad \qquad \text{if} \; |\ell|\le K, \\
0 \qquad \qquad \qquad \mbox{otherwise}
\end{cases}
\qquad \qquad  \Pi_K^\perp:= \mathrm{I}-\Pi_K.
\]
%$$\partial_{\f_h}:\,M\to (\partial_{\f_h} M)_{j}^{j'}(\ell) = \ii \ell_hM_{j}^{j'}(\ell)\,,%\quad \mathtt{d}_{x}:\,M\to (\mathtt{d}_{x} M)_{j}^{j'}(\ell) = \ii (j-j')M_{j}^{j'}(\ell)
%$$
%if one wishes for a more intrinsic definition 
%$$
%\partial_{\f_h} M= {\rm ad} \left(\frac{\partial}{\partial {\f_h}} \right)M\,.\quad % \mathtt{d}_{x} M= {\rm ad}\left(\frac{\partial}{\partial{x}} \right) M.
%$$
%Note that it is in general not true that $\partial_{\f_h}$ map bounded matrices into bounded matrices, finally  $\partial_{\f_h}$ satisfies the Leibniz rule
%$$
%\partial_{\f_h}(MN)=\partial_{\f_h}(M)N+M\partial_{\f_h}(N).
%$$
Finally we define for $\mathtt b_0\in \N$
\begin{equation}\label{funtore}
(\langle \pa_\f \rangle^{\mathtt b_0} M )_{j}^{j'}(\ell)  =  \langle \ell  \rangle^{\mathtt b_0} M_j^{j'}(\ell).
\end{equation}
If $A=A(\omega)$ is an operator depending on a parameter $\omega$, we control the Lipschitz variation, see formula \ref{defDELTAomega}. In the sequel let $1>\g>\g^{3/2}>0$ be fixed constants.
%Let $\lD=$ diag$(\langle j \rangle)$ we define
\begin{defi}[{\bf Lip-$\sigma$-modulo tame}]\label{def:op-tame} 
Let $\s\geq 0$. A  linear operator $ A := A(\omega) $, $\omega\in \calO\subset\mathbb{R}^{\nu}$,  is  
Lip-$\s$-modulo-tame  w.r.t. an increasing sequence $\{	{\mathfrak M}_{A}^{\sharp, \g^{3/2}} (s) \}_{s=s_0}^{\mathcal{S}}$ if 
the majorant operators $  \un{ A }, \un{\Delta_{\omega,\omega'} A}$ are Lip-$\s$-tame w.r.t. these constants, i.e. they 
satisfy the following weighted tame estimates:  
for $\s\geq 0$,  for all %$\omega\neq \omega'\in \cO$,
 $ s \geq s_0 $ and  for any $u \in H^{s} $,  
\begin{equation}\label{CK0-tame}	
\sup_{\omega\in\calO}\| \un{A} u\|_s,
 \sup_{\omega\neq \omega'\in \cO}{\g^{3/2}}  \|\un{\Delta_{\omega,\omega'} A}  u\|_s 
 \leq  
{\mathfrak M}_{A}^{\sharp, \g^{3/2}} (\s,s_0) \| u \|_{s+\s} +
{\mathfrak M}_{A}^{\sharp, \g^{3/2}} (\s,s) \| u \|_{s_0+\s} 
\end{equation}
where  the functions $ s \mapsto  {\mathfrak M}_{A}^{{\sharp, \g^{3/2}}} (\s, s)  \geq 0  $ 
are non-decreasing in $ s $. 
The constant $ {\mathfrak M}_A^{{\sharp, \g^{3/2}}} (\s,s) $ 
is called the {\sc modulo-tame constant} of the operator $ A $. When the index $\s$ is not relevant 
we write $ {\mathfrak M}_{A}^{{\sharp, \g^{3/2}}} (\s, s) = {\mathfrak M}_{A}^{{\sharp, \g^{3/2}}} (s) $. 
\end{defi}
\begin{defi}\label{menounomodulotame}
We say that $A$ is Lip-$-1$-modulo tame if 	
$\langle D_x\rangle^{1/2}{A}  \langle D_x\rangle^{1/2}$ 
is Lip-$0$-modulo tame. We denote
\begin{equation}\label{anagrafe} 
\begin{aligned}
&\mathfrak M^{{\sharp, \g^{3/2}}}_{A}(s):=  
\mathfrak M^{\sharp, \g^{3/2}}_{ \langle D_x \rangle^{1/2}A \langle D_x \rangle^{1/2}}(0,s), \qquad \mathfrak M^{\sharp, \g^{3/2}}_{A}(s,a):= 
\mathfrak M^{\sharp, \g^{3/2}}_{\langle \pa_\f \rangle^{a}  \langle D_x \rangle^{1/2}A \langle D_x \rangle^{1/2}}(0, s), 
\quad a\ge 0.
\end{aligned}
\end{equation}
\end{defi}

In the following we shall systematically use $-1$ modulo-tame operators. 
We refer the reader to Appendix \ref{lemmitecnicitame} for the properties of Tame and Modulo-tame operators.
 
\paragraph{Pseudo differential operators properties.}
We now collect some classical results about pseudo differential operators 
introduced in Def. \ref{pseudoR}  adapted to our setting.

\smallskip

\emph{Composition of pseudo differential operators.}
One of the fundamental properties of pseudo differential operators
is the following: 
given two pseudo differential operators  $\op(a)\in OPS^{m}$ and $\op(b)\in OPS^{m'}$, for some $m,m'\in\mathbb{R}$,
the composition $\op(a)\circ\op(b)$ is a pseudo differential operator of order $m+m'$. In particular 
\begin{equation}\label{compo}
\op(a)\circ\op(b)=\op(a\# b), 
\end{equation}
where the symbol of the composition is given by 
\begin{equation}\label{comp1}
(a\# b)(x,\x)=\sum_{j\in\mathbb{Z}}a(x,\x+j)\hat{b}_{j}(\x)e^{\mathrm{i} jx}=
\sum_{k,j\in\mathbb{Z}} \hat{a}_{k-j}(\x+j)\hat{b}_{j}(\x)e^{\mathrm{i} kx}.
\end{equation}
Here the $\hat{\cdot}$ denotes the Fourier transform of the symbols $a(x,\x)$ and $b(x, \xi)$ in the variable $x$.
The symbol $a\# b$ has the following asymptotic expansion:
for any $N\geq1$ one can write
\begin{equation}\label{comp2}
\begin{aligned}
(a\# b)(x,\x)&=\sum_{n=0}^{N-1}\frac{1}{n! \mathrm{i}^{n}}\del_{\x}^{n}a(x,\x)\del_{x}^{n}b(x,\x)
+r_{N}(x,\x),\qquad r_{N}\in S^{m+m'-N},\\
r_{N}(x,\x)&=\frac{1}{(N-1)!\mathrm{i}^{N}}\int_{0}^{1}(1-\tau)^{N}\sum_{j\in\mathbb{Z}}
(\del_{\x}^{N}a)(x,\x+\tau j)\widehat{\del_{x}^{N}b}(j,\x)e^{\mathrm{i} jx}d \tau.
\end{aligned}
\end{equation}

\begin{defi}\label{cancelletti}
Let $N\in\mathbb{N}$, $0\le k \le N$, $a\in S^m$ and $b\in S^{m'}$, we define (see \eqref{comp2})
\begin{equation}
a\#_{k} b:=\frac{1}{k! \rm i^{k}} (\partial_{\xi}^k a)(\partial_x^k b), \qquad a\#_{< N} b:=\sum_{k=0}^{N-1} a\#_k b, \qquad a\#_{\geq N} b:=r_N.
\end{equation}
\end{defi}

\emph{Adjoint operator.}
Let $A:=\op(a)\in OPS^{m}$. Then its $L^2$-adjoint $A^*$ is a pseudo differential operator such that
\begin{equation}\label{adj}
A^{*}=\op(a^{*}), \qquad a^{*}(x,\x)=\ol{\sum_{j\in\mathbb{Z}}\hat{a}_{j}(\x-j)e^{\mathrm{i} jx}}.
\end{equation}

\emph{Parameter family of pseudo differential operators.} We shall deal also  with pseudo differential operators depending on parameters
$\f\in\T^{\nu}$:
$$
(Au)(\f,x)=\sum_{j\in\mathbb{Z}}a(\f,x,j) u_{j}e^{\mathrm{i} jx}, \quad a(\f,x,j)\in S^{m}.
$$
The symbol $a(\varphi, x, \xi)$ is $C^{\infty}$ smooth also in the variable $\varphi$. We still denote 
$
A:=A(\varphi)=\op(a(\varphi, \cdot))=\op(a).
$
For the symbols of the composition operator with $\op(b(\varphi, x, \xi))$ and the $L^2$-adjoint we have the following formulas
\begin{equation}
\begin{aligned}
&(a\# b)(\varphi, x,\xi)=\sum_{j\in\mathbb{Z}} a(\varphi, x, \xi+j) \hat{b}(\varphi, j, \xi)\,e^{\mathrm{i} j x}=\sum_{\substack{j, j'\in\mathbb{Z},\\\ell, \ell'\in\mathbb{Z}^{\nu}}} \hat{a}(\ell-\ell', j'-j, \xi+j)\hat{b}(\ell', j, \xi)\,e^{\mathrm{i}(\ell \cdot \varphi+j x)},\\
& a^*(\varphi, x, \xi)=\overline{\sum_{j\in\mathbb{Z}} \hat{a}(\varphi, j, \xi-j)\,e^{\mathrm{i}j x}}=\overline{\sum_{\ell\in \mathbb{Z}^{\nu}, j\in\mathbb{Z}} \hat{a}(\ell, j, \xi-j)\,e^{\mathrm{i}(\ell\cdot \varphi+j x)}}.
\end{aligned}
\end{equation}

\paragraph{Classes of Smoothing Remainders.}
The KAM scheme performed in Section \ref{SezioneDiagonalization} is based on an abstract reducibility algorithm which works in the space of modulo-tame operators. In order to control the majorant norm \eqref{majorantnorm} of the remainder of the regularization procedure it is useful to introduce a class of linear ``tame'' smoothing operators.
\begin{defi}\label{ellerho}
Fix $s_0\geq (\nu+1)/2$ and $p,\,\mathcal{S}\in \mathbb{N}$ with $s_0\le p< \mathcal{S}$ with possibly $\mathcal{S}=+\infty$.
Fix $\rho\in\mathbb{N}$,  with $\rho\geq 3$ 
and consider any subset $\calO$ of $\R^\nu$. 
We denote by $\gotL_{\rho, p}=\gotL_{\rho, p}(\calO)=\gotL_{\rho,p}(\calO)$ 
the set of the linear operators 
$A=A(\omega)\colon H^s(\T^{\nu+1})\to H^{s}(\T^{\nu+1})$, $\omega\in \calO$ with the following properties:

\vspace{0.5em}
\noindent
$\bullet$ the operator $A$ is Lipschitz in $\omega$,

\vspace{0.5em}
\noindent		
$\bullet$  the operators $\partial_{\varphi}^{\vec{\tb}} A$,  $[\partial_{\varphi}^{\vec{\tb}} A, \partial_x]$, for all 
$\vec{\tb}=(\tb_1,\ldots,\tb_\nu)\in \mathbb{N}^{\nu}$ with 
$0\le |\vec{\tb}| \leq \rho-2$  have the following properties, 
for any $s_0\le s\le \mathcal{S}$, with possibly $\mathcal{S}=+\infty$:
\begin{itemize}
		
\item[(i)] 
for any $m_{1},m_{2}\in \mathbb{R}$, $m_{1},m_{2}\geq0$
and $m_{1}+m_{2}=\rho-|\vec{\tb}|$ 
one has that  the operator
$\langle D_{x}\rangle^{m_{1}} \del_{\f}^{\vec{\mathtt{b}}}A \langle D_{x}\rangle^{m_{2}}$
is Lip-$0$-tame according to Def. \ref{LipTameConstants} 
and we set
\begin{equation}\label{megaTame2}
\gotM^{\gamma}_{\del_{\f}^{\vec{\mathtt{b}}}A}(-\rho+|\vec{\tb}|,s):=
\sup_{\substack{m_{1}+m_{2}=\rho-|\vec{\tb}|\\
m_{1},m_{2}\geq0}}\gotM^{\gamma}_{\langle D_{x}\rangle^{m_{1}} \del_{\f}^{\vec{\mathtt{b}}}A 
\langle D_{x}\rangle^{m_{2}}}(0,s);
\end{equation}
		
\item[(ii)]
for any $m_{1},m_{2}\in \mathbb{R}$, $m_{1},m_{2}\geq0$
and $m_{1}+m_{2}=\rho- |\vec{\tb}|-1$ 
one has that  
$\langle D_{x}\rangle^{m_{1}} [\del_{\f}^{\vec{\mathtt{b}}}A,\del_{x}] \langle D_{x}\rangle^{m_{2}}$
is Lip-$0$-tame according to Def. \ref{LipTameConstants} 
and we set
\begin{equation}\label{megaTame4}
\gotM^{\gamma}_{[\del_{\f}^{\vec{\mathtt{b}}}A,\del_{x}]}(-\rho+|\vec{\tb}|+1,s):=
\sup_{\substack{m_{1}+m_{2}=\rho-|\vec{\tb}|-1\\
m_{1},m_{2}\geq0}}\gotM^{\gamma}_{\langle D_{x}\rangle^{m_{1}} 
[\del_{\f}^{\vec{\mathtt{b}}}A,\del_{x}] \langle D_{x}\rangle^{m_{2}}}(0,s).
\end{equation}
\end{itemize}
We define for $0\le \tb\le \rho-2$
\begin{equation}\label{Mdritta}
\begin{aligned}\mathbb{M}^{\gamma}_{A}(s, \mathtt{b}):=&\max_{0\leq |\vec{\tb}|\leq \tb}\max\left(
\gotM_{\del_{\f}^{\vec{\mathtt{b}}}A}^{\gamma}(-\rho+|\vec{\tb}|,s),
\gotM_{\del_{\f}^{\vec{\mathtt{b}}}[A,\del_{x}]}^{\gamma}(-\rho+|\vec{\tb}|+1,s)\right).
\end{aligned}
\end{equation}
Assume now that  the set $\calO$ 
and the operator $A$ depend on 
$i=i(\oo)$, and are well defined for $\oo\in \calO_0\subseteq\Omega_{\e}$
for all $i$ satisfying \eqref{ipopiccolezza}.
We consider  $i_{1}=i_{1}(\oo)$, $i_{2}=i_{2}(\oo)$ and 
for $\oo\in \calO(i_1)\cap\calO(i_2)$
we define
\begin{equation}\label{DELTA12}
\Delta_{12}A:=A(i_1)-A(i_2).
\end{equation}
We require the following:

\vspace{0.5em}
\noindent		
$\bullet$ The operators
$\del_{\f}^{\vec{\mathtt{b}}}\Delta_{12}A $, 
$[\del_{\f}^{\vec{\mathtt{b}}}\Delta_{12}A ,\del_{x}]$, for  $0\le |\vec{\tb}|\leq \rho-3$, 
have the following properties, for any $s_0\le s\le \mathcal{S}$, 
with possibly $\mathcal{S}=+\infty$:

\begin{itemize}
\item[(iii)] 
for any $m_{1},m_{2}\in \mathbb{R}$, $m_{1},m_{2}\geq0$
and $m_{1}+m_{2}=\rho-|\vec{\tb}|-1$ 
one has that  
$\langle D_{x}\rangle^{m_{1}} \del_{\f}^{\vec{\mathtt{b}}} \Delta_{12}A  \langle D_{x}\rangle^{m_{2}}$
is  bounded on $H^{p}$ into itself. More precisely there is a positive constant 
$\gotN_{\del_{\f}^{\vec{\mathtt{b}}}\Delta_{12}A }(-\rho+|\vec{\tb}|+1,p)$ such that, for any $h\in H^{p}$,
we have
\begin{equation}\label{megalipTame2}
\sup_{\substack{m_{1}+m_{2}=\rho-|\vec{\tb}|-1\\
m_{1},m_{2}\geq0}}
\|\langle D_{x}\rangle^{m_{1}}
\del_{\f}^{\vec{\mathtt{b}}}\Delta_{12}A\langle D_{x}\rangle^{m_{2}} h\|_{p}\leq 
\gotN_{\del_{\f}^{\vec{\mathtt{b}}}\Delta_{12}A }(-\rho+|\vec{\tb}|+1,p)\|h\|_{p};
\end{equation}
		
\item[(iv)] 
for any $m_{1},m_{2}\in \mathbb{R}$, $m_{1},m_{2}\geq0$
and $m_{1}+m_{2}=\rho-|\vec{\tb}|-2$ 
one has that  
$\langle D_{x}\rangle^{m_{1}} [\del_{\f}^{\vec{\mathtt{b}}} \Delta_{12}A ,\del_{x}] 
\langle D_{x}\rangle^{m_{2}}$
is  bounded on $H^{p}$ into itself. More precisely there is a positive constant 
$\gotN_{[\del_{\f}^{\vec{\mathtt{b}}}\Delta_{12}A,\del_{x}] }(-\rho+|\vec{\tb}|+2,p)$ such that for any 
$h\in H^{p}$ one has
\begin{equation}\label{megalipTame4}
\sup_{\substack{m_{1}+m_{2}=\rho-|\vec{\tb}|-2\\
m_{1},m_{2}\geq0}}
\|\langle D_{x}\rangle^{m_{1}}[
\del_{\f}^{\vec{\mathtt{b}}}\Delta_{12}A,\del_{x}]\langle D_{x}\rangle^{m_{2}} h\|_{p}\leq 
\gotN_{[\del_{\f}^{\vec{\mathtt{b}}}\Delta_{12}A, \del_{x}] }(-\rho+|\vec{\tb}|+2,p)\|h\|_{p}.
\end{equation}
\end{itemize}
		
We define for $0\le \tb\le \rho-3$
\begin{equation}\label{Mdrittaconlai}
\begin{aligned}\mathbb{M}_{\Delta_{12}A }(p, \mathtt{b}):=&\max_{0\le |\vec{\tb}|\leq \tb}\max \left(
\gotN_{\del_{\f}^{\vec{\mathtt{b}}}\Delta_{12}A }(-\rho+|\vec{\tb}|+1,p),
\gotN_{\del_{\f}^{\vec{\mathtt{b}}}[\Delta_{12}A ,\del_{x}]}(-\rho+|\vec{\tb}|+2,p)\right).
\end{aligned}
\end{equation}

By construction one has that 
$\mathbb{M}^{\gamma}_{A}(s, \mathtt{b}_1)\leq 
\mathbb{M}^{\gamma}_{A}(s, \mathtt{b}_2)$
 if $\tb_1\le \tb_2\le \rho-2$ and 
$\mathbb{M}_{\Delta_{12}A }(p, \mathtt{b}_1)\leq 
\mathbb{M}_{\Delta_{12}A }(p, \mathtt{b}_2)$
if $\mathtt{b}_1\leq \mathtt{b}_2\leq \rho-3$.
\end{defi}

For the properties of the classes of operators we introduced above, we refer to Appendix \ref{restismooth}.

\section{Regularization procedure}\label{reg:procedure}

The aim of this section is to prove Theorem \ref{risultatosez8}.

%The goal of this section is to provide the tools we need to construct changes of coordinates that conjugate $\mathcal{L}_{\omega}$ in \eqref{LomegaDP} to an operator with constant coefficient at order one (the highest order of differentiation).

\subsection{Flow of hyperbolic Pseudo differential PDEs}\label{flowPDE}

First we analyze the structure of the flow map that we use to conjugate the operator \eqref{LomegaDP} to a diagonal operator plus a smoothing term.

\medskip

We study the  flow $\Psi^{\tau}$ of the vector field generated by the Hamiltonian
\begin{equation}\label{pseudo}
S(\tau,\f,u)=\frac{1}{2} \int 	b(\tau,\f,x) u^2 dx
\qquad
b(\tau,\f,x):=\frac{\be(\f,x)}{1+\tau \be_{x}(\f,x)}
\end{equation}	
and $\beta$ is some smooth function.
We first need to show that $\Psi^{\tau}$ is well defined as map on $H^{s}$
(see Proposition \ref{DPdiffeo}).
Then
we study  the structure of $\Psi^{\tau}\calL_\oo (\Psi^{\tau})^{-1}$, see Proposition
\ref{ConjugationLemma}.

\noindent
 The  flow associated to the Hamiltonian \eqref{pseudo} is given by
 \begin{equation}\label{diffeotot}
\del_{\tau}\Psi^{\tau}(u)=(J\circ b) \Psi^{\tau}(u),\qquad \; \Psi^{0}u=u,
\end{equation}
%\begin{equation}\label{diffeotot}
%\left\{
%\begin{aligned}
%&\del_{\tau}\Psi^{\tau}(u)=(J\circ b) \Psi^{\tau}(u),\\
%&\Psi^{0}u=u,
%\end{aligned}\right.
%\end{equation}
where $b(\tau,\f,x)$ is defined  in \eqref{pseudo} with $\beta\in C^{\infty}(\T^{\nu+1})$ to be determined.\\
In the following proposition we prove that the flow of \eqref{diffeotot} $\Psi^{\tau}=\mathcal{C}^{\tau}\circ \mathcal{A}^{\tau}$, where $\mathcal{A}^{\tau}$ is the operator 
\begin{equation}\label{ignobel}
\begin{aligned}
&\mathcal{A}^{\tau}h(\varphi, x):=(1+\tau \beta_x(\varphi, x)) h(\varphi, x+\tau\beta(\varphi, x)), \qquad \quad \,\, \varphi\in \T^{\nu},\, x\in \T,\\
&(\mathcal{A}^{\tau})^{-1}h(\varphi, y):=(1+ \tilde{\beta}_y(\tau,\varphi, y))\,h(\varphi, y+\tilde{\beta}(\tau, \varphi, y)), \quad \varphi\in \T^{\nu},\, y\in \T,
\end{aligned}
\end{equation}
where $\tilde{\be}(\tau;x,\x)$ is such that
\[
x\mapsto y=x+\tau\be(\f,x) \; \Leftrightarrow \; y\mapsto x=y+\tilde{\be}(\tau,\f,x), \;\; \tau\in[0,1],
\]
and $C^{\tau}$ is the sum of a pseudo differential operator of order $-1$ with a smoothing remainder belonging to the class $\gotL_{\rho, p}$ for any $\rho\in\mathbb{N}$, $\rho\geq 3$, $s_0\le p\le p_0(\rho)$ provided that $\beta$ satisfies an appropriate $\rho$-smallness condition (see \eqref{flow1}).

%First 
%we note that $J=\partial_x+3 \Lambda\partial_x$ with
%\begin{equation}\label{Helmotz}
% \Lambda:=(1-\partial_{xx})^{-1}
%\end{equation}
% and w
 First we define
\begin{equation}\label{pasta6}
  \Lambda:=(1-\partial_{xx})^{-1},
 \qquad  \mathtt{X}:=\partial_x\circ b \qquad b:=\frac{\beta}{1+\tau \beta_x}.
\end{equation}
 We remark that the torus diffeomorphism $\mathcal{A}^{\tau}$ satisfies 
 \begin{equation}\label{flussoKDV}
\del_{\tau}\mathcal{A}^{\tau}=\mathtt{X}\mathcal{A}^{\tau},\qquad \mathcal{A}^{0}=\mathrm{I}.
\end{equation}
%\begin{equation}\label{flussoKDV}
%\left\{\begin{aligned}
%&\del_{\tau}\mathcal{A}^{\tau}=\mathtt{X}\mathcal{A}^{\tau},\\
%&\mathcal{A}^{0}=\mathrm{I}.
%\end{aligned}\right.\,
%\end{equation}
We refer to the Appendix \ref{someprop} for some properties of the operator $\mathcal{A}^{\tau}$ in \eqref{ignobel}. 
\begin{prop}\label{DPdiffeo}
Let $\calO \subseteq \R^\nu$ be a compact set. Fix $\rho\geq 3$, $\mathcal{S}> s_0$  large enough  and consider a function 
$\beta:=\beta(\omega, \mathfrak{I}(\omega))\in C^{\infty}(\T^{\nu+1})$, Lipschitz in 
$\omega\in \calO$ and  in the variable 
$\mathfrak{I}$.
There exist  $\su=\su(\rho)>0$ $\su\ge \tilde{\s}=\tilde{\s}(\rho)>0$  and $1>\delta=\delta(\rho,\mathcal{S})>0$ such that if
\begin{equation}\label{flow1}
\| \beta\|^{\gamma, \calO}_{s_0+\su}\leq \delta, %\quad \| \beta\|^{\gamma, \calO}_{p+\tilde{\s}}\leq1,
\end{equation}
%for $p+\tilde{\s}\leq s_0+\s_1$,
then, for any $\f\in \T^{\nu}$, the equation \eqref{diffeotot}
has a unique solution $\Psi^{\tau}(\f)$ 
in the space 
\[
C^{0}([0,1];H^{s}_{x})\cap C^{1}([0,1];H_x^{s-1}), \quad \forall s_0\le s\le \mathcal{S}.
\]
Moreover, for any $s_0\le p \le s_0+\s_1-\tilde\s$, one  has $\Psi^{\tau}=\mathcal{A}^{\tau}\circ \mathcal{C}^{\tau}$ where $\mathcal{A}^{\tau}$ is defined in \eqref{ignobel} and
\begin{equation}\label{ignobel2}
\mathcal{C}^{\tau}=\Theta^{\tau}+R^{\tau}(\varphi), \qquad \Theta^{\tau}:=\op(1+\vartheta(\tau, \varphi, x, \xi))
\end{equation}
with (recall \eqref{norma}), for any $s\geq s_0$,
\begin{equation}\label{varthetaStima}
\lvert \vartheta \rvert^{\gamma, \calO}_{-1, s, \alpha} \le_{s,\alpha,\rho} \lVert \beta \rVert^{\g, \calO}_{s+\su},
\qquad
\rvert \Delta_{12} \vartheta \lvert_{-1, p, \alpha} \le_{p, \alpha, \rho}  \lVert \Delta_{12} \beta \rVert_{p+\su}.
\end{equation}
and  $R^{\tau}(\varphi)\in \gotL_{\rho,p}(\calO)$ (see Def. \ref{ellerho}) with, for $s_0\le s\le \mathcal{S}$, 
\begin{equation}\label{ignobel3}
\mathbb{M}^{\gamma}_{R^{\tau}}(s,\mathtt{b})\le_{s,\alpha,\rho} \lVert\beta\rVert^{\gamma, \calO}_{s+\su}, \quad 
0\le \tb \le \rho-2, 
\qquad
\mathbb{M}_{\Delta_{12} R^{\tau} }(p,\mathtt{b})\le_{p,\rho} \lVert \Delta_{12} \beta  \rVert_{p+\su},
\quad  0\le \tb  \le \rho-3.
\end{equation}
\end{prop}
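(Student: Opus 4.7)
The plan rests on the decomposition $\Psi^{\tau}=\mathcal{A}^{\tau}\circ \mathcal{C}^{\tau}$ being forced by the structure of $J=\partial_x+3\Lambda\partial_x$: the leading transport part $\partial_x\circ b$ is absorbed exactly by $\mathcal{A}^{\tau}$ (the known flow of $\mathtt{X}$, cf.\ \eqref{flussoKDV}), while the residual piece $3\Lambda\partial_x\circ b$ is of order $-1$ and will generate $\mathcal{C}^{\tau}$. The smallness \eqref{flow1} guarantees, for $\delta$ small, that $1+\tau\beta_x\ge 1/2$ on $[0,1]\times\T^{\nu+1}$, so that $b$ in \eqref{pasta6} is smooth, $\mathcal{A}^{\tau}$ is a well-defined family of torus diffeomorphisms, and $J\circ b$ is a first-order pseudo differential operator with real principal symbol $\mathrm{i}\,b(\tau,\varphi,x)\xi$. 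Standard hyperbolic theory (together with the appendix estimates on $\mathcal{A}^{\tau}$) then yields existence and uniqueness of $\Psi^{\tau}$ in $C^{0}([0,1];H^{s}_x)\cap C^{1}([0,1];H^{s-1}_x)$ for all $s_0\le s\le \mathcal{S}$.

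Setting $\mathcal{C}^{\tau}:=(\mathcal{A}^{\tau})^{-1}\Psi^{\tau}$ and differentiating in $\tau$, using $\partial_{\tau}\mathcal{A}^{\tau}=\partial_x\circ b\,\mathcal{A}^{\tau}$ together with $\partial_{\tau}\Psi^{\tau}=(\partial_x+3\Lambda\partial_x)\circ b\,\Psi^{\tau}$, the transport part cancels and one gets
\begin{equation*}
\partial_{\tau}\mathcal{C}^{\tau}=\mathcal{B}(\tau)\mathcal{C}^{\tau},\qquad
\mathcal{B}(\tau):=3(\mathcal{A}^{\tau})^{-1}\Lambda\partial_x\circ b\,\mathcal{A}^{\tau},\qquad \mathcal{C}^{0}=\mathrm{I}.
\end{equation*}
Since $\Lambda\partial_x\in OPS^{-1}$ and $\mathcal{A}^{\tau}$ is the flow of the first-order operator $\mathtt{X}$, the quantitative Egorov theorem (Theorem \ref{EgorovQuantitativo}) applied to $\Lambda\partial_x\circ b$ gives $\mathcal{B}(\tau)=\op(\mathtt{b}(\tau,\varphi,x,\xi))+\mathcal{R}_{\mathcal{B}}(\tau)$, with $\mathtt{b}\in S^{-1}$ tamely controlled by $\|\beta\|^{\gamma,\calO}_{s+\sigma_1}$ and $\mathcal{R}_{\mathcal{B}}(\tau)\in \gotL_{\rho,p}$ satisfying the estimates of Theorem \ref{risultatosez8}-style.

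With the Egorov decomposition of $\mathcal{B}(\tau)$ in hand, I construct $\vartheta(\tau)$ by solving the symbolic ODE
\begin{equation*}
\partial_{\tau}(1+\vartheta)=\mathtt{b}(\tau)\,\#_{<\rho}(1+\vartheta),\qquad \vartheta(0)=0,
\end{equation*}
truncated at order $\rho$ using Definition \ref{cancelletti}. Since $\mathtt{b}\in S^{-1}$, each extra $\#_{k}$-term lowers the order and the truncation error lies in $S^{-\rho}$; a contraction/Picard argument in the Fr\'echet space of $S^{-1}$-symbols, combined with the quantitative bounds of Egorov, yields $\vartheta\in S^{-1}$ together with the tame estimate $|\vartheta|^{\gamma,\calO}_{-1,s,\alpha}\lesssim \|\beta\|^{\gamma,\calO}_{s+\sigma_1}$ and the variation bound on $\Delta_{12}\vartheta$. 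Setting $\Theta^{\tau}:=\op(1+\vartheta)$ and $R^{\tau}:=\mathcal{C}^{\tau}-\Theta^{\tau}$, the remainder solves
\begin{equation*}
\partial_{\tau}R^{\tau}=\mathcal{B}(\tau)R^{\tau}+F(\tau),\qquad
F(\tau):=\mathcal{B}(\tau)\Theta^{\tau}-\partial_{\tau}\Theta^{\tau},\qquad R^{0}=0,
\end{equation*}
where $F(\tau)$ is $\rho$-smoothing by the symbolic construction together with $\mathcal{R}_{\mathcal{B}}(\tau)\Theta^{\tau}\in \gotL_{\rho,p}$ (closure under composition). Since $\mathcal{B}(\tau)$ is bounded on every $H^s$, Duhamel and Gronwall propagate the smoothing of $F$ to $R^{\tau}$, and the resulting operator lies in $\gotL_{\rho,p}$ with the bounds \eqref{ignobel3}.

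The main technical obstacle is not the bare existence of the decomposition but the \emph{sharp} tame estimates on $\mathbb{M}^{\gamma}_{R^{\tau}}(s,\mathtt{b})$: one must control $\langle D_x\rangle^{m_1}\partial_{\varphi}^{\vec{\mathtt{b}}}R^{\tau}\langle D_x\rangle^{m_2}$ and its commutator with $\partial_x$ for all admissible $m_1,m_2,\vec{\mathtt{b}}$, with loss of derivatives only in $\beta$ of a fixed order $\sigma_1=\sigma_1(\rho)$ independent of $s$. This is handled inductively on $|\vec{\mathtt{b}}|$: differentiating the equation for $R^{\tau}$ in $\varphi$ or commuting with $\partial_x$ produces source terms involving lower-order $\partial_{\varphi}^{\vec{\mathtt{b}}'}R^{\tau}$ multiplied by derivatives of $\beta$ and by commutators among pseudo differential operators; the algebraic closure properties of $\gotL_{\rho,p}$ (Appendix \ref{restismooth}) and of $OPS^{-1}$ under $[\cdot,\partial_x]$ let the induction close, and interpolation in the appendix converts tame bounds on $(s_0+\sigma_1)$-norms of $\beta$ into the announced bounds on $\|\beta\|^{\gamma,\calO}_{s+\sigma_1}$. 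The estimates for $\Delta_{12}R^{\tau}$ follow by running the same scheme on the differences, with one extra derivative lost in $\rho$ (hence the restriction $0\le\mathtt{b}\le\rho-3$).
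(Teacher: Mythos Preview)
Your proposal is correct and follows the paper's architecture step for step: the same factorization $\Psi^{\tau}=\mathcal{A}^{\tau}\circ\mathcal{C}^{\tau}$, the same evolution equation $\partial_{\tau}\mathcal{C}^{\tau}=L^{\tau}\mathcal{C}^{\tau}$ with $L^{\tau}=(\mathcal{A}^{\tau})^{-1}(3\Lambda\partial_x\circ b)\mathcal{A}^{\tau}\in OPS^{-1}$, the same parametrix $\Theta^{\tau}=\op(1+\vartheta)$ built by a truncated symbolic ODE, the same equation $\partial_{\tau}R^{\tau}=L^{\tau}R^{\tau}+Q^{\tau}$ for the remainder, and the same inductive Gronwall argument on $\langle D_x\rangle^{m_1}\partial_{\varphi}^{\vec{\mathtt{b}}}R^{\tau}\langle D_x\rangle^{m_2}$ to land in $\gotL_{\rho,p}$.

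The one genuine difference is how you obtain the decomposition $L^{\tau}=\op(l)+R_{\rho}$. You invoke the quantitative Egorov theorem (Theorem~\ref{EgorovQuantitativo}); the paper instead computes the conjugate \emph{explicitly}, exploiting the concrete form $\Lambda=(1-\partial_{xx})^{-1}$ to write $(\mathcal{A}^{\tau})^{-1}\Lambda\mathcal{A}^{\tau}=(\mathrm{I}-\Lambda\mathfrak{R})^{-1}\Lambda$ with $\mathfrak{R}$ an explicit first-order operator (formulae \eqref{Ltau}--\eqref{sissagames}), and then applies only the elementary composition/inversion Lemmata~\ref{James} and~\ref{InvertibilityUtile}. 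Your route is legitimate (Egorov's proof does not depend on Proposition~\ref{DPdiffeo}, so there is no circularity, and the direction of conjugation is immaterial since $(\mathcal{A}^{\tau})^{-1}$ is itself the flow of a transport field), but note two small points: Egorov as stated treats a $\tau$-independent symbol $w$, whereas your $\Lambda\partial_x\circ b(\tau)$ depends on $\tau$, so you must apply it pointwise in $\tau$ with uniform bounds; and the paper's explicit computation keeps the proposition logically prior to Egorov and yields slightly sharper constants. A second, cosmetic difference: you sketch a Picard/contraction argument for $\vartheta$, while the paper solves the triangular system \eqref{sis2} order by order via explicit time integrals---both give the same $\vartheta=\sum_{k=1}^{\rho-1}\vartheta_{-k}$ with the bounds \eqref{varthetaStima}.
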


\begin{proof}
Let us reformulate the problem \eqref{diffeotot} as
$\Psi^{\tau}=\mathcal{A}^{\tau}\circ C^{\tau}$ where $C^{\tau}:=(\mathcal{A}^{\tau})^{-1}\circ \Psi^{\tau}$
satisfies the following system
\begin{equation}\label{sis}
\partial_{\tau} C^{\tau} u=L^{\tau} C^{\tau} u,\qquad 
C^0 u=u,
\end{equation}
where $L^{\tau}=\op(l(\tau, \varphi, x, \xi))$ is a pseudo differential operator  of order $-1$ of the form
\begin{equation}\label{Ltau}
L^{\tau}:=\mathcal{A}^{\tau} \Big(3 \Lambda \partial_x \circ b(\tau) \Big)(\mathcal{A}^{\tau})^{-1}= -\Big( \mathrm{I}-\Lambda\mathfrak{R}\Big)^{-1}\circ \Lambda\circ g(\tau, \varphi, x)\circ \partial_x\circ \tilde{\beta}(\varphi, x)
\end{equation}
where (recall \eqref{pasta6})
\begin{equation}\label{sissagames}
\begin{aligned}
 g(\tau, \varphi, x)&:=3(1+\tilde{\beta}_x^2(\varphi, x)), 
\qquad \mathfrak{R}:=\op(f_0(\varphi, x)+f_1(\varphi) \mathrm{i} \xi),\\
f_0(\varphi, x)&:=\tilde{\beta}_x^2+2 \tilde{\beta}_x-\frac{(1+\tilde{\beta}_x^2)}{2}\partial_{xx}
\left(\frac{1}{(1+\tilde{\beta}_x)^2} \right), \;\;\;
f_1(\varphi, x):=-\frac{3}{2}(1+\tilde{\beta}_x)^2\,\partial_x \left(\frac{1}{(1+\tilde{\beta}_x)^2} \right).
\end{aligned}
\end{equation}
\textbf{Analysis of $L^{\tau}$}.
The following estimates hold
\begin{equation}\label{ironman}
\begin{aligned}
&\lVert g \rVert^{\g, \calO}_{s}\le_s (1+\lVert \beta \rVert^{\g, \calO}_{s+1}\lVert \beta \rVert^{\g, \calO}_{s_0+1}), 
\qquad 
\lVert f_0 \rVert^{\g, \calO}_s+\lVert f_1 \rVert^{\g, \calO}_s
+\lvert f_0 + f_1\,\mathrm{i}\xi\rvert^{\g, \calO}_{1, s, \alpha}
\le_s \lVert \beta \rVert^{\g, \calO}_{s+3}, \\
& \lVert  \Delta_{12} g  \rVert_{p}+\lVert \Delta_{12} f_0  \rVert_{p}
+\lVert \Delta_{12} f_1  \rVert_{p}
\le_{p}\lVert  \Delta_{12} \beta  \rVert_{p+3}\\
\end{aligned}
\end{equation}
By the fact that $L^{\tau}$ in \eqref{Ltau} is one smoothing in space, the problem \eqref{sis} is locally well-posed in $H^s(\T_x)$.
By the composition Lemma \ref{James} we have that $\mathrm{I}-\Lambda \mathfrak{R}=\mathrm{I}-(\op(r)+R)$
with (see \eqref{ironman})
\begin{equation}\label{ironman2}
\lvert r \rvert^{\g, \calO}_{-1,s, \alpha}\le_{s, \alpha, \rho} \lVert \beta \rVert^{\g, \calO}_{s+\s_0}, \quad \mathbb{M}^{\gamma}_R(s,\tb)\le 
_{s,  \rho} \lVert \beta \rVert^{\g, \calO}_{s+\s_0}\,,\quad 0\le \tb \le \rho-2, 
\end{equation}
\begin{equation}\label{limonoff}
\lvert \Delta_{12} r \rvert _{-1,p, \alpha}\le_{p, \alpha, \rho} \lVert \beta \rVert_{p+\s_0} \quad \mathbb{M}_{\Delta_{12} R}(p,\tb)\le 
_{p,  \rho} \lVert \beta \rVert_{p+\s_0}\,,\quad 0\le \tb \le \rho-3
\end{equation}
for some $\s_0>0$.
By Lemma \ref{InvertibilityUtile}, Lemma \ref{James} and \eqref{ironman2} we have that 
$( \mathrm{I}-\Lambda\mathfrak{R})^{-1}=\mathrm{I}+\op(\tilde{r})+\tilde{R}$, $\Lambda\circ g \circ \partial_x\circ \tilde{\beta}=\op(d)+Q_{\rho}$ with bounds on the symbols and the tame constants similar to \eqref{ironman2}, \eqref{limonoff} with possibly larger $\s_0$.
%\begin{equation*}
%  \lvert \tilde{r} \rvert^{\g, \calO}_{-1, s, \alpha}\le C(s, \alpha, \rho) 
%  \lVert \beta \rVert^{\g, \calO}_{s+ \rho^2-3\rho+9}, 
%  \qquad
%\mathbb{M}_{\tilde{R}}^{\gamma}(s,\tb)\le  
%C(s, \alpha, \rho)\lVert \beta \rVert^{\g, \calO}_{s+\rho^2-3\rho+9}\,,\quad 0\le \tb \le \rho-2.
%\end{equation*}
%By Lemma \ref{James} we have $\Lambda\circ g \circ \partial_x\circ \tilde{\beta}=\op(d)+Q_{\rho}$ with
%\begin{equation*}
%\lvert d \rvert^{\g, \calO}_{-1, s, \alpha}\le \lVert \beta \rVert^{\g, \calO}_{s+\rho+1}, \qquad 
%\mathbb{M}^{\gamma}_{Q_{\rho}}(s,\tb)\le \lVert \beta \rVert^{\g, \calO}_{s+2+2\rho} ,,\quad 0\le \tb \le \rho-2.
%\end{equation*}
Then 
\begin{equation*}
\begin{aligned}
L^{\tau} = &(\mathrm{I}+\op(\tilde{r})+\tilde{R})\circ (\op(d)+Q_{\rho})
%=\op(d)+\op(\tilde{r}\# d)+Q_{\rho}\\
%&+\op(\tilde{r})\circ Q_{\rho}+\tilde{R}\circ \op(d)+\tilde{R}\circ Q_{\rho}
\stackrel{{\rm Lemma}\; \ref{James}}{=}\op(l)+R_{\rho}
\end{aligned}
\end{equation*}
%\begin{equation}
%\begin{aligned}
%L^{\tau} = &(\mathrm{I}+\op(\tilde{r})+\tilde{R})\circ (\op(d)+Q_{\rho})=\op(d)+\op(\tilde{r}\# d)+Q_{\rho}\\
%&+\op(\tilde{r})\circ Q_{\rho}+\tilde{R}\circ \op(d)+\tilde{R}\circ Q_{\rho}\stackrel{\mbox{Lemma}\; \ref{James}}{=}\op(d+\tilde{d})+R_{\rho}
%\end{aligned}
%\end{equation}
where
\begin{equation}\label{ellepiccolo} 
\lvert l \rvert^{\g, \calO}_{-1, s, \alpha}\le_{s,\alpha,\rho} \lVert \beta \rVert^{\g, \calO}_{s+\tilde{\s}_1}, \qquad
 \lvert \Delta_{12} l \rvert_{-1, p, \alpha} \le_{p,\rho} \lVert \Delta_{12} \beta  \rVert_{p+\tilde{\s}_1}.
\end{equation}
\begin{equation}\label{tir2}
 \mathbb{M}^{\gamma}_{R_{\rho}}(s,\mathtt{b})\le_{s,\rho} \lVert \beta \rVert^{\g, \calO}_{s+\tilde{\s}_1},\,\,0\le \tb \le \rho-2,
\qquad \mathbb{M}_{\Delta_{12}R_{\rho} }(p,\mathtt{b})\leq _{p,\rho}
\lVert \Delta_{12} \beta  \rVert_{p+\tilde{\s}_1},\,\,0\le \tb \le \rho-3,
\end{equation}
for some constant $\tilde{\s}_1=\tilde{\s}_1(\rho)$. Note that in principle we get a slightly different constant in each inequality, we
are just taking the biggest of them for simplicity.

\smallskip

\noindent
\textbf{Approximate solution of \eqref{sis}}.  
Now we look for an approximate solution 
$\Theta^{\tau}=\op(1+\vartheta(\tau, \varphi, x, \xi))$
 for the system \eqref{sis}.
In order to do that we  look for a symbol $\vartheta=\sum_{k=1}^{\rho-1} \vartheta_{-k}(\tau, \varphi, x, \xi)$ such that
\begin{equation*}
\partial_{\tau} \vartheta=l+l\# \vartheta+S^{-\rho},\qquad 
\vartheta(0, \varphi, x, \xi)=0.
\end{equation*}
We solve it recursively as follows:
\begin{equation}\label{sis2}
\begin{cases}
\partial_{\tau} \vartheta_{-1}=l,\\
\vartheta_{-1}(0, \varphi, x, \xi)=0,
\end{cases}
\qquad 
\begin{cases}
\partial_{\tau} \vartheta_{-k}=\mathtt{r}_{-k},\qquad   1<k\le \rho-1  \\
\vartheta_{-k}(0, \varphi, x, \xi)=0,
\end{cases}
\end{equation}
where 
\begin{equation}
\mathtt{r}_{-k}:=\sum_{j=1}^{k-1} l\#_{k-1-j} \vartheta_{-j}\in S^{-k}.
\end{equation}
Hence we have
\begin{equation}
\vartheta_{-1}(\tau)=\int_0^{\tau} l(s)\,ds, \qquad  \vartheta_{-k}(\tau)=\int_0^{\tau} \mathtt{r}_{-k}(s)\,ds.
\end{equation}
By recursion we have that
\begin{equation}
\lvert\vartheta_{-k}\rvert^{\g, \calO}_{-k, s, \alpha}\le_{s, \alpha, k}  \lVert \beta \rVert^{\g, \calO}_{s+k +\tilde{\s}_1}(\lVert \beta \rVert_{s_0+k+\tilde{\s}_1}^{\g, \calO})^{k-1}, \qquad 1\le k \le \rho-1,
\end{equation}
\begin{equation}
\begin{aligned}
\rvert \Delta_{12} \vartheta_{-k} \lvert_{-k, p, \alpha} 
&\le_{p, \alpha, k} 
%\lVert \beta \rVert_{s+k+\tilde{\s}_1} \lVert \Delta_{12} \beta  \rVert_{s_0+k+\tilde{\s}_1} 
%\lVert \beta \rVert_{s_0+k+\tilde{\s}_1}^{k-2}\\
%&
\lVert \beta \rVert^{k-1}_{p+k+\tilde{\s}_1}\lVert \Delta_{12} \beta  \rVert_{p+k+\tilde{\s}_1},
\end{aligned}
\end{equation}
and so we get \eqref{varthetaStima}. %, \eqref{varthetaStima2}.
We write $C^{\tau}=\Theta^{\tau}+R^{\tau}$, where $R^{\tau}$ is an  operator which satisfies the equation
\begin{equation}\label{locke}
\partial_{\tau} R^{\tau}=L^{\tau}R^{\tau}+Q^{\tau}, \qquad {\rm with} \qquad
R^{0}=0,
\end{equation}
where 
\begin{equation}
Q^{\tau}:=\op(\mathtt{q}(\tau))+ R_\rho \Theta^{\tau}, \quad \mathtt{q}(\tau):=\sum_{k=1}^{\rho-1}l\#_{\geq \rho-1-k} \theta_{-k}\in S^{-\rho}
\end{equation}
and by Lemma \ref{INCLUSIONEpseudoInclasseL}
\begin{equation}\label{vudu}
\mathbb{M}^{\gamma}_{\op(\mathtt{q})}(s, \tb)\le_{s, \rho}  \lVert \beta \rVert^{\g, \calO}_{s+\tilde{\s}_2}\lVert \beta \rVert^{\g, \calO}_{s_0+\tilde{\s}_2}
\end{equation}
with $\tilde{\s}_2:=\tilde{\s}_2(\rho)>\tilde{\s}_1$. 
By Lemma \ref{idealeds},
the operator $Q^{\tau}$ belongs to $\gotL_{\rho,p}(\calO)$ and 
we have the following bounds
\begin{equation}\label{tir}
\mathbb{M}^{\gamma}_{Q^{\tau}}(s,\mathtt{b}) 
\le \lVert \beta \rVert_{s+\tilde{\s}_2}^{\g, \calO}, \qquad
\mathbb{M}_{\Delta_{12} Q^{\tau} }(p,\mathtt{b})
\le \lVert \Delta_{12} \beta \rVert_{p+\tilde{\s}_2} .
\end{equation}
Note that these bounds hold uniformly for $\tau\in [0, 1]$.
Now we have to prove that $R^{\tau}$ is belongs to the class $\gotL_{\rho, p}$
(see Def. \ref{ellerho}). 
By this fact we will deduce that $C^{\tau}$ and its derivatives are tame on $H^s(\T^{\nu+1})$.

\vspace{0.8em}
\noindent \textbf{Estimates for the remainder $R^{\tau}$}. We prove the bounds \eqref{ignobel3}, i.e. %and \eqref{stimadei}
 we show that $R^{\tau}$ belongs to $\gotL_{\rho,p}(\calO)$ in Def. \ref{ellerho} for $\tau\in[0,1]$. 
We use the integral formulation for the problem \eqref{locke}, namely
\begin{equation}\label{santana10}
R^{\tau}=\int_0^{\tau} (L^{t} R^{t}+Q^t)\,dt.
\end{equation}
We start by showing that $R^{\tau}$ satisfies item $(i)$
of Definition \ref{ellerho} with $\vec\tb=0$.
Let $m_{1},m_{2}\in \mathbb{R}$, $m_{1},m_{2}\geq0$
and $m_{1}+m_{2}=\rho$. We check that the operator 
$\langle D_{x}\rangle^{m_1}R^{\tau}\langle D_{x}\rangle^{m_{2}}$ is Lip-$0$-tame according to Definition \ref{LipTameConstants}.
 We have
 \begin{equation}\label{santana}
\begin{aligned}
\langle D_{x}\rangle^{m_1}R^{\tau}\langle D_{x}\rangle^{m_{2}}&=
\int_{0}^{\tau}
\langle D_{x}\rangle^{m_1}L^{t}  \langle D_{x}\rangle^{-m_1}  
\langle D_{x}\rangle^{m_1}R^{t}\langle D_{x}\rangle^{m_2}dt
+\int_{0}^{\tau}\langle D_{x}\rangle^{m_1}Q^{t}  \langle D_{x}\rangle^{m_2}dt.
\end{aligned}
\end{equation}
By \eqref{tir} we have, for $s_0\le s\le \mathcal{S}$, that
\begin{equation}\label{santana2}
\|\int_{0}^{\tau}
\langle D_{x}\rangle^{m_1}Q^{t}  \langle D_{x}\rangle^{m_2} u\,dt \|_{s}^{\gamma,\calO}\leq_s
\lVert \beta \rVert^{\g, \calO}_{s+\tilde{\s}_2}\|u\|_{s_0}+
\lVert \beta \rVert^{\g, \calO}_{s_0+\tilde{\s}_2}\|u\|_{s},
\end{equation}
for $\tau\in [0,1]$, $u\in H^{s} $. Moreover, by recalling the definition of $L^{t}$ in \eqref{ellepiccolo}, by using the fact that
$R_{\rho}$ in \eqref{tir2} is in the class $\gotL_{\rho, p}$
and using the estimates \eqref{ellepiccolo} on the symbol $l$
we claim that
\begin{equation}\label{santana3}
\|\int_{0}^{\tau}
\langle D_{x}\rangle^{m_1}L^{t}  \langle D_{x}\rangle^{-m_1}\,u\,dt\|_{s}^{\gamma,\calO}\leq_{s,\rho}
\|\be\|_{s+\tilde{\s}_1}^{\gamma,\calO}
\|u\|_{s_0}+
\|\be\|_{s_0+\tilde{\s}_1}^{\gamma,\calO}\|u\|_{s}.
\end{equation}
Indeed the bound for $\op(l)$ are trivial. In order to treat the remainder $R_{\rho}$ we note that
\[
\langle D_x \rangle^{m_1} R_{\rho} \langle D_x \rangle^{-m_1}=\langle D_x \rangle^{m_1} R_{\rho} \langle D_x \rangle^{\rho-m_1}\langle D_x \rangle^{-\rho}
\]
and $\langle D_x \rangle^{m_1} R_{\rho} \langle D_x \rangle^{\rho-m_1}$ is Lip-$0$-tame, since $R_{\rho}\in\gotL_{\rho, p}$, moreover $\langle D_x \rangle^{-\rho}\in \gotL_{\rho, p}$. Then by Lemma \ref{lem: 2.3.6} our claim follows.
By using bounds \eqref{santana2} and \eqref{santana3} with $s=s_0$ one obtains
\begin{equation}\label{santana4}
\begin{aligned}
\sup_{\tau\in [0,1]}\|\langle D_{x}\rangle^{m_1}R^{\tau}\langle D_{x}\rangle^{m_{2}}
u\|_{s_0}^{\gamma,\calO}&\leq_{\rho}
\|\be\|^{\gamma,\calO}_{s_0+\tilde{\s}_1}
\sup_{\tau\in [0,1]}\|\langle D_{x}\rangle^{m_1}R^{\tau}\langle D_{x}\rangle^{m_{2}}
u\|_{s_0}^{\gamma,\calO}+\lVert \beta \rVert_{s_0+\tilde{\s}_2}\|u\|_{s_0},
\end{aligned}
\end{equation}
hence, by \eqref{tir} and for $\delta$ in \eqref{flow1} small enough, one gets
\begin{equation}\label{santana5}
\sup_{\tau\in [0,1]}\|\langle D_{x}\rangle^{m_1}R^{\tau}\langle D_{x}\rangle^{m_{2}}
u\|_{s_0}^{\gamma,\calO}\leq_{s,\rho}
\|\be\|^{\gamma,\calO}_{s_0+\tilde{\s}_2}\|u\|_{s_0},
\end{equation}
for any $u\in H^{s}$.
Now for any $s_0\le s \le \mathcal{S}$,  by \eqref{santana2}, \eqref{santana3}, the smallness of $\be$ in \eqref{flow1} and estimate \eqref{santana5}, we have
\begin{equation*}
\sup_{\tau\in [0,1]}\|\langle D_{x}\rangle^{m_1}R^{\tau}\langle D_{x}\rangle^{m_{2}}
u\|_{s}^{\gamma,\calO}\leq_{s,\rho}
\|\be\|^{\gamma,\calO}_{s_0+\tilde{\s}_2}\|u\|_{s}+
\|\be\|^{\gamma,\calO}_{s+\tilde{\s}_2}\|u\|_{s_0}.
\end{equation*}
This means that 
\begin{equation}\label{santana12}
\sup_{\tau\in [0,1]}\gotM^{\gamma}_{R^{\tau}}(-\rho,s)
\leq_{s,\rho}\|\be\|^{\gamma,\calO}_{s+\tilde{\s}_2}.
\end{equation}
For $\vec{\tb}\in \mathbb{N}^{\nu}$
with 
$|\vec{\tb}|=\mathtt{b}\leq \rho-2$, 
we consider the operator 
$\del_{\f}^{\vec{\mathtt{b}}}R^{\tau}$
and we show that  $\langle D_{x}\rangle^{m_1} \del_{\f_{m}}^{\mathtt{b}}R^{\tau}
\langle D_{x}\rangle^{m_2}$ is Lip-$0$-tame for any
$m_{1},m_{2}\in \mathbb{R}$, $m_{1},m_{2}\geq0$
and $m_{1}+m_{2}=\rho-\mathtt{b}$.
We prove that
\begin{equation}\label{santana11}
\gotM_{\langle D_{x}\rangle^{m_1}\del_{\f}^{\vec{\mathtt{b}}}R^{\tau}\langle D_{x}\rangle^{m_2}}^{\gamma}(0,s)\leq_{s,\rho}\|\be\|^{\gamma,\calO}_{s+\tilde{\s}_3}
, \quad m_{1}+m_2
=\rho-\mathtt{b},
\end{equation}
for some $\tilde{\s}_3:=\tilde{\s}_3(\rho)\geq \tilde{\s}_2>0$,
by induction on $0\leq \mathtt{b}\leq \rho-1$.
For $\mathtt{b}=0$ the bound follows by \eqref{santana12}.
Assume now that \eqref{santana11} holds for any $\tilde{\mathtt{b}}$ such that $0\leq \tilde{\mathtt{b}}<\mathtt{b}\leq \rho-2$. We show \eqref{santana11} for 
$\mathtt{b}=\tilde{\mathtt{b}}+1$.
By \eqref{santana10} we have
\begin{equation}\label{santana8}
\begin{aligned}
\langle D_{x}\rangle^{m_1}\del_{\f}^{\vec{\mathtt{b}}}R^{\tau}\langle D_{x}\rangle^{m_2}&=
\sum_{\vec{\mathtt{b}_1}+\vec{\mathtt{b}_2}=\vec{\mathtt{b}}}C(|\vec{\mathtt{b}_1}|,|\vec{\mathtt{b}_2}|)\int_{0}^{\tau}
\langle D_{x}\rangle^{m_1}(\del_{\f}^{\vec{\mathtt{b}_1}}L^{t})          
\del_{\f}^{\vec{\mathtt{b_2}}}(R^{t})
\langle D_{x}\rangle^{m_2}dt\\
&+\int_{0}^{\tau}\langle D_{x}\rangle^{m_1}
(\del_{\f}^{\vec{\mathtt{b}}}Q^{t})
\langle D_{x}\rangle^{m_2}dt.
\end{aligned}
\end{equation}
By \eqref{tir} we know that, for any $t\in [0,1]$, the operator
$\langle D_{x}\rangle^{m_1}
(\del_{\f}^{\vec{\mathtt{b}}}Q^{t})
\langle D_{x}\rangle^{m_2}$ is Lip-$0$-tame.
We write
\begin{equation}\label{santana13}
\langle D_{x}\rangle^{m_1}(\del_{\f}^{\vec{\mathtt{b}_1}}L^{t})          
\del_{\f}^{\vec{\mathtt{b_2}}}(R^{t})
\langle D_{x}\rangle^{m_2}=
\langle D_{x}\rangle^{m_1}(\del_{\f}^{\vec{\mathtt{b}_1}}L^{t})    
\langle D_{x}\rangle^{-m_1-|\vec{\mathtt{b}_1}|}
\langle D_{x}\rangle^{m_1+|\vec{\mathtt{b}_1}|}
\del_{\f}^{\vec{\mathtt{b_2}}}(R^{t})
\langle D_{x}\rangle^{m_2}.
\end{equation}
We study the case
 $|\vec{\mathtt{b}_{2}}|\leq \mathtt{b}-1$.
By the inductive hypothesis we have that 
$\langle D_{x}\rangle^{m_1+|\vec{\mathtt{b}_1}|}
\del_{\f}^{\vec{\mathtt{b_2}}}(R^{t})
\langle D_{x}\rangle^{m_2}$
is Lip-$0$-tame since $m_1+|\vec{\mathtt{b}_1}|+m_2=\rho-|\vec{\mathtt{b}_{2}}|$, hence the bound \eqref{santana11} holds for $\mathtt{b}=|\vec{\mathtt{b}_{2}}|$.
By reasoning as for the proof of the bound \eqref{santana3} we have
\begin{equation}\label{santana14}
\|
\langle D_{x}\rangle^{m_1}(\del_{\f}^{\vec{\mathtt{b}_1}}L^{t})    
\langle D_{x}\rangle^{-m_1-|\vec{\mathtt{b}_1}|}u
\|_{s}^{\gamma,\calO}\leq_{s,\rho}
\|\be\|_{s+\tilde{\s}_3}^{\gamma,\calO}\|u\|_{s_0}+
\|\be\|_{s_0+\tilde{\s}_3}^{\gamma,\calO}\|u\|_{s},
\end{equation}
for $u\in H^{s}$, $s_0\le s\le \mathcal{S}$.
By \eqref{santana14}, the inductive hypothesis
on $\del_{\f}^{\vec{\mathtt{b}_2}}R^{\tau}$
 and \eqref{tir}
we get
\begin{equation}\label{santana15}
\gotM^{\gamma}_{
\langle D_{x}\rangle^{m_1}(\del_{\f}^{\vec{\mathtt{b}_1}}L^{t})          
\del_{\f}^{\vec{\mathtt{b_2}}}(R^{t})
\langle D_{x}\rangle^{m_2}
}(0,s)\leq_{s,\rho}\|\be\|^{\gamma,\calO}_{s+\tilde{\s}_3}.
\end{equation}
Note also that
By Lemma \ref{PROP},
bounds \eqref{tir2} and \eqref{ellepiccolo} we have that \eqref{santana14}
holds for $\mathtt{b}_1=0$. Hence
\begin{equation}\label{santana16}
 \begin{aligned}
\sup_{\tau\in [0,1]}\|
\langle D_{x}\rangle^{m_1}\del_{\f}^{\vec{\mathtt{b}}}R^{\tau}\langle D_{x}\rangle^{m_2}
u\|_{s}^{\gamma,\calO}
&\stackrel{(\ref{santana14})}{\leq_{s,\rho}}
\|\be\|^{\gamma,\calO}_{s+\tilde{\s}_3}
\sup_{\tau\in [0,1]}\|
\langle D_{x}\rangle^{m_1}\del_{\f}^{\vec{\mathtt{b}}}R^{\tau}\langle D_{x}\rangle^{m_2}
u\|_{s_0}^{\gamma,\calO}\\
&+
\|\be\|^{\gamma,\calO}_{s_0+\tilde{\s}_3}
\sup_{\tau\in [0,1]}\|
\langle D_{x}\rangle^{m_1}\del_{\f}^{\vec{\mathtt{b}}}R^{\tau}\langle D_{x}\rangle^{m_2}
u\|_{s}^{\gamma,\calO}\\
&+\|\be\|^{\gamma,\calO}_{s+\tilde{\s}_3}\|u\|_{s_0}+
\|\be\|^{\gamma,\calO}_{s_0+\tilde{\s}_3}\|u\|_{s}.
\end{aligned} 
\end{equation}
Hence using  \eqref{santana16} for $s=s_0$ and the smallness of $\be$ in 
\eqref{flow1}  
we get
\begin{equation}\label{santana18}
 \sup_{\tau\in [0,1]}\|
\langle D_{x}\rangle^{m_1}\del_{\f}^{\vec{\mathtt{b}}}R^{\tau}\langle D_{x}\rangle^{m_2}
u\|_{s_0}^{\gamma,\calO}\leq_{s,\rho}
\|\be\|^{\gamma,\calO}_{s_0+\tilde{\s}_3}\|u\|_{s_0}.
 \end{equation}
 Then using again \eqref{santana18} one obtains the bound for any $s_0\le s \le \mathcal{S}$
 \begin{equation}\label{santana19}
\sup_{\tau\in [0,1]}\gotM^{\gamma}_{R^{\tau}}(-\rho+\mathtt{b},s):=
\sup_{\tau\in [0,1]}\sup_{\substack{m_1+m_2=\rho-\mathtt{b}\\
m_1,m_2\geq0\\
|\vec{\tb}|\leq \tb}}\gotM^{\gamma}_{\langle D_{x}\rangle^{m_1} \del^{\vec{\tb}}_{\f}R^{\tau}
\langle D_{x}\rangle^{m_2}}(0,s)
\leq_{s,\rho}\|\be\|^{\gamma,\calO}_{s+\tilde{\s}_3}.
 \end{equation}
The estimates for $\mathfrak{M}_{[R^{\tau}, \partial_x]}(s)$ 
and $\mathfrak{M}_{[\partial_{\varphi}^{\vec{\tb}}R^{\tau}, \partial_x]}(s)$ follow by the same arguments. We have obtained the estimate for $\mathbb{M}^{\g}_{R^{\tau}}(s, \tb)$ in \eqref{ignobel3}.
\noindent
The estimate on the Lipschitz variation with respect to the variable $i$ \eqref{ignobel3}
%\eqref{stimadei} 
follows by
 by Leibnitz rule and by \eqref{ignobel3} for $R^{\tau}$,
\eqref{ellepiccolo}, \eqref{tir} as in the previous cases.
We proved \eqref{ignobel3} with $\su=\tilde{\s}_3$.
\end{proof}

\begin{coro}\label{CoroDPdiffeo}
Fix $n \in \mathbb{N}$.
There exists $\s=\s(\rho)$ such that, if $\| \beta\|^{\gamma, \calO}_{s_0+\s}\leq 1$, then
 the flow $\Psi^{\tau}(\f)$ of \eqref{diffeotot} 
satisfies 
for $s\in [s_0, \mathcal{S}]$,
%\begin{equation}\label{flow2bis}
%\sup_{\tau\in[0,1]} 
%\|\Psi^{\tau}(\f)u\|_{s}\leq C(s)
%\|u\|_{s}
% \quad \forall\; 0\leq s\leq s_0+1,
%\end{equation}
\begin{equation}\label{flow2}
\begin{aligned}
&\sup_{\tau\in[0,1]} \lVert \Psi^{\tau} u \rVert_s^{\gamma, \calO}
+\sup_{\tau\in[0,1]} \lVert ({\Psi}^{\tau})^{*} u \rVert_s^{\gamma, \calO}
\le_s \left(
\|u\|_{s}+\| b\|_{s+\s}^{\gamma, \calO}\|u\|_{s_0}
\right),
\end{aligned}
\end{equation}
\begin{equation*}
\sup_{\tau\in[0,1]} \lVert (\Psi^{\tau}-\mathrm{I})u \rVert_s^{\gamma, \calO}+
\sup_{\tau\in[0,1]} \lVert (({\Psi}^{\tau})^{*}-\mathrm{I})u \rVert_s^{\gamma, \calO}
\le_s \left(
\lVert \be \rVert_{s_0+\s}^{\gamma, \calO} \|u\|_{s+1}
+\| \be\|_{s+\s}^{\gamma, \calO}\|u\|_{s_0+1}
\right).
\end{equation*}
%\begin{equation}\label{flow22}
%\begin{aligned}
%&\sup_{\tau\in[0,1]} \lVert ({\Psi}^{\tau})^{*} u \rVert_s^{\gamma, \calO}\le_s \left(
%\|u\|_{s}^{\gamma, \calO}+\| \be\|_{s+\s}^{\gamma, \calO}\|u\|_{s_0}^{\gamma, \calO}
%\right)
%\end{aligned}
%\end{equation}
%\begin{equation}\label{flow23}
%\begin{aligned}
%&\sup_{\tau\in[0,1]} \lVert (({\Psi}^{\tau})^{*}-\mathrm{I})u \rVert_s^{\gamma, \calO}\le_s \left(
%\lVert \be \rVert_{s_0+\s}^{\gamma, \calO} \|u\|_{s+1}^{\gamma, \calO}+\| \be\|_{s+\s}^{\gamma, \calO}\|u\|^{\gamma, \calO}_{s_0+1}
%\right).
%\end{aligned}
%\end{equation}
%
%\begin{equation}\label{cicciopuffo}
%\sup_{\tau\in[0,1]}\lVert \partial_{\varphi}^{\mathtt{b}} \Psi^{\tau}(\varphi) u \rVert_s^{\gamma, \calO}\le_s \Big( \lVert u \rVert_{s+\mathtt{b}}^{\gamma, \calO}+\lVert \beta\rVert_{s+\mu}^{\gamma, \calO} \lVert u \rVert_{s_0}^{\gamma, \calO}  \Big),
%\end{equation}
For any $|\al |\leq n$, $m_1,m_{2}\in \mathbb{R}$ such that $m_1+m_{2}=|\al|$, for any 
$s\geq s_0$ there exist $\mu_*=\mu_*(|\al|,m_1,m_2)$, $\s_*=\s_*(|\al|,m_1,m_2)$ and $\delta=\delta(m_1,s)$
such that if $
\|\be\|^{\gamma,\calO}_{s_0+\mu_*}\leq\delta,
$ and $\|\be\|^{\gamma,\calO}_{p+\s_*}\leq1$ for $p+\s_{*}\leq s_0+\mu_{*} $,
then one has
\begin{equation*}%\label{cicciopuffo}
\sup_{\tau\in[0,1]}\|\langle D_{x}\rangle^{-m_1}\del_{\f}^{\al}\Psi^{\tau}(\f)
\langle D_{x}\rangle^{-m_2}u\|^{\gamma,\calO}_{s}
\leq_{s,\mathtt{b},m_1,m_2}
\| u \|^{\gamma,\calO}_{s}+\|\be\|^{\gamma,\calO}_{s+\mu_*}\| u\|^{\gamma,\calO}_{s_0}
\end{equation*}
%and for $m_{1}+m_2=|\al|+1$,
\begin{equation*}
\sup_{\tau\in[0,1]}  \| \langle D_{x}\rangle^{-m_1}\del_{\f}^{\al}\Delta_{12}\Psi^{\tau}(\f) 
\langle D_{x}\rangle^{-m_2}u\|_{p} \leq_{p,\mathtt{b},m_1,m_2}
\lVert u \rVert_{p}\lVert \Delta_{12} \be   \rVert_{p+\mu_*}, \quad m_{1}+m_2=|\al|+1.
\end{equation*}
\end{coro}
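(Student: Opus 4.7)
The strategy is to use the factorization $\Psi^{\tau}=\mathcal{A}^{\tau}\circ \mathcal{C}^{\tau}$ established in Proposition \ref{DPdiffeo}, together with $\mathcal{C}^{\tau}=\Theta^{\tau}+R^{\tau}$, where $\Theta^{\tau}={\rm Op}(1+\vartheta)$ has symbol controlled by \eqref{varthetaStima} and $R^{\tau}\in\gotL_{\rho,p}$ satisfies \eqref{ignobel3}. For the basic estimate \eqref{flow2}, I apply the standard tame bound for the torus diffeomorphism $\mathcal{A}^{\tau}$ (whose bounds are collected in Appendix \ref{someprop}), then compose with the tame bound of $\Theta^{\tau}$ (which, since $\vartheta\in S^{-1}$, is bounded on $H^s$) and the Lip-$0$-tame bound on $R^{\tau}$ contained in $\mathbb{M}^{\gamma}_{R^{\tau}}(s,0)$. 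For the adjoint $(\Psi^{\tau})^{*}=(\mathcal{C}^{\tau})^{*}(\mathcal{A}^{\tau})^{*}$, the adjoint of the torus diffeomorphism is again a diffeomorphism (up to a multiplicative factor, by the change of variables formula), while $(\Theta^{\tau})^{*}$ has the adjoint symbol \eqref{adj} whose seminorms are controlled by $|\vartheta|^{\gamma,\calO}_{-1,s,\alpha}$.

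For the bound on $\Psi^{\tau}-\mathrm{I}$, I write
\[
\Psi^{\tau}-\mathrm{I}=(\mathcal{A}^{\tau}-\mathrm{I})+\mathcal{A}^{\tau}(\mathcal{C}^{\tau}-\mathrm{I}).
\]
The first piece $\mathcal{A}^{\tau}-\mathrm{I}$ is controlled by Taylor expansion of $u(\varphi,x+\tau\beta)-u(\varphi,x)$: this loses one $x$-derivative (hence $\|u\|_{s+1}$ on the right) and gains the smallness factor $\|\beta\|^{\gamma,\calO}_{s_0+\sigma}$. The second piece is the composition of the tame $\mathcal{A}^{\tau}$ with $\mathcal{C}^{\tau}-\mathrm{I}={\rm Op}(\vartheta)+R^{\tau}$, both of which carry the small factor $\|\beta\|^{\gamma,\calO}_{s+\sigma}$ by \eqref{varthetaStima} and \eqref{ignobel3}, and are moreover order $-1$ (respectively $-\rho$) in space, so the derivative loss comes solely from the $\mathcal{A}^{\tau}$ factor. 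The analogous decomposition works for $(\Psi^{\tau})^{*}-\mathrm{I}$.

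To prove the derivative estimates, I expand $\partial_{\varphi}^{\alpha}\Psi^{\tau}$ by the Leibniz rule as a sum of products of $\partial_{\varphi}^{\alpha_1}\mathcal{A}^{\tau}$ and $\partial_{\varphi}^{\alpha_2}\mathcal{C}^{\tau}$ with $\alpha_1+\alpha_2=\alpha$. Each $\varphi$-derivative hitting $\mathcal{A}^{\tau}$ produces, via the chain rule applied to $u(\varphi,x+\tau\beta(\varphi,x))$, at most one additional factor of $\partial_x$ (multiplied by $\varphi$-derivatives of $\beta$); hence $\partial_{\varphi}^{\alpha_1}\mathcal{A}^{\tau}$ is a finite combination of operators of the form $(\text{multiplier})\cdot\mathcal{A}^{\tau}\cdot \partial_{x}^{k}$ with $k\le |\alpha_1|$. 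For the factor $\partial_{\varphi}^{\alpha_2}\mathcal{C}^{\tau}$, the pseudo-differential part $\partial_{\varphi}^{\alpha_2}\Theta^{\tau}$ remains in $OPS^{-1}\subset OPS^{0}$ with seminorms controlled by \eqref{varthetaStima}, while for $\partial_{\varphi}^{\alpha_2}R^{\tau}$ the Lip-$0$-tame bounds in \eqref{ignobel3} (applied with $|\vec{\mathtt{b}}|=|\alpha_2|\le \rho-2$) provide the appropriate smoothing. After inserting $\langle D_x\rangle^{-m_1}$ on the left and $\langle D_x\rangle^{-m_2}$ on the right with $m_1+m_2=|\alpha|$, the $k+j\le|\alpha|$ spatial derivatives emerging from $\mathcal{A}^{\tau}$ and $\Theta^{\tau}$ are absorbed after commuting $\langle D_x\rangle^{-m_1}$ through $\mathcal{A}^{\tau}$ (which costs only lower-order tame errors, by commutator estimates for torus diffeomorphisms).

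The main obstacle is the bookkeeping in the last step: commuting $\langle D_x\rangle^{-m_1}$ through $\mathcal{A}^{\tau}$ and distributing the $m_1+m_2=|\alpha|$ available weights so as to absorb the spatial derivatives that land on each Leibniz factor, while keeping the loss in $\beta$ to finitely many derivatives $\mu_*(|\alpha|,m_1,m_2)$. The Lipschitz variation $\Delta_{12}\Psi^{\tau}$ is handled by the same scheme applied to $\Delta_{12}(\mathcal{A}^{\tau}\mathcal{C}^{\tau})=(\Delta_{12}\mathcal{A}^{\tau})\mathcal{C}^{\tau}+\mathcal{A}^{\tau}(\Delta_{12}\mathcal{C}^{\tau})$, using the Lipschitz parts of \eqref{varthetaStima} and \eqref{ignobel3}; the additional derivative ($m_1+m_2=|\alpha|+1$) arises because $\Delta_{12}\mathcal{A}^{\tau}$ loses one further $x$-derivative when one Taylor-expands $u(x+\tau\beta_1)-u(x+\tau\beta_2)$ against $\Delta_{12}\beta$. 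Throughout, the smallness thresholds $\|\beta\|^{\gamma,\calO}_{s_0+\mu_*}\le\delta$ are used to reabsorb high-norm terms into low-norm ones at $s=s_0$, exactly as in the fixed-point step \eqref{santana5}--\eqref{santana18} in the proof of Proposition \ref{DPdiffeo}.
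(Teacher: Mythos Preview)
Your proposal is correct and follows essentially the same route as the paper: factorize $\Psi^{\tau}=\mathcal{A}^{\tau}\circ\mathcal{C}^{\tau}$ via Proposition~\ref{DPdiffeo}, then combine the tame bounds on $\mathcal{A}^{\tau}$ from Lemmata~\ref{bastalapasta} and~\ref{buttalapasta2} with the symbol bound~\eqref{varthetaStima} on $\Theta^{\tau}$ and the $\gotL_{\rho,p}$ bound~\eqref{ignobel3} on $R^{\tau}$. The only minor divergence is your treatment of the adjoint: you factor $(\Psi^{\tau})^{*}=(\mathcal{C}^{\tau})^{*}(\mathcal{A}^{\tau})^{*}$ and bound each piece, whereas the paper instead observes that $(\Psi^{\tau})^{*}$ is itself the flow of a hyperbolic PDE of the same type (generated by $(J\circ b)^{*}$) and simply reruns Proposition~\ref{DPdiffeo} for it; the paper's route sidesteps having to argue separately that $(R^{\tau})^{*}\in\gotL_{\rho,p}$, but your approach works as well since the defining conditions in Definition~\ref{ellerho} are symmetric in $m_1,m_2$.
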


\begin{proof} The estimates on $\Psi^{\tau}$ follow by using  Lemmata \ref{bastalapasta}, \ref{buttalapasta2} and the result of Proposition \ref{DPdiffeo}.
In order to prove the bounds \eqref{flow2} for the adjoint $(\Psi^{\tau})^{*}$ it is sufficient to reformulate 
Proposition \ref{DPdiffeo} in terms of $(\Psi^{\tau})^{*}$. 
\end{proof}

\subsection{Quantitative Egorov analysis}

The system \eqref{diffeotot} %\eqref{rosso}
 is an Hyperbolic PDE, thus we shall use a version of Egorov Theorem 
to study how pseudo differential operators change under the flow $\Psi^{\tau}$.
This is the content of Theorem \ref{EgorovQuantitativo} 
which provides precise estimates for the transformed operators. 

\paragraph{Notation.} Consider an integer $n\in \N$.
To simplify the notation for now on we shall write, 
$\Sigma^{*}_{n}$ the sum over indexes $k_1,k_2,k_3\in\N$ such that 
$k_1<n$, $k_1+k_2+k_3=n$ and $k_1+k_2\geq 1$.
%\[
%\sum_{n}^{*}=\sum_{\substack{k_1,k_2,k_3\in \N, \; k_1< n \\ k_1+k_2+k_3=n,\; k_1+k_2\geq 1}}.
%\] 

%Before to do that we need 

\smallskip

We need the following lemma. 

\begin{lem}\label{Lemmino}
Let $\calO$ be a subset of $\mathbb{R}^{\nu}$. Let $A$ be the operator defined for $w\in S^m$ as
\begin{equation}
A w=w(f(x), g(x) \xi), \quad f(x):=x+\beta(x), \quad g(x):=(1+\beta_{x}(x))^{-1}
\end{equation}
for some smooth function $\beta$ such that
$
\lVert \beta \rVert^{\g, \calO}_{2s_0+2}< 1$. Then $A$ is bounded, namely $A w\in S^m$ and 
\begin{equation}\label{stima}
\lvert A w \rvert^{\g, \calO}_{m, s, \alpha}\le \lvert w \rvert^{\g, \calO}_{m, s, \alpha}
+\sum_{s}^{*}
\lvert w \rvert^{\g, \calO}_{m, k_1, \alpha+k_2} \lVert \beta \rVert^{\g, \calO}_{k_3+s_0+2}.
\end{equation}
for $s\ge 0$. For $s=s_0$  it is convenient to consider the rougher estimate
$\lvert A w \rvert^{\g, \calO}_{m, s_0, \alpha}\le \lvert w \rvert^{\g, \calO}_{m, s_0, \alpha+s_0}$.
%\begin{equation}\label{normabassa}
%\lvert A w \rvert^{\g, \calO}_{m, s_0, \alpha}\le \lvert w \rvert^{\g, \calO}_{m, s_0, \alpha+s_0}.
%\end{equation}
\end{lem}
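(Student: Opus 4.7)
My plan is to reduce the bound on $|Aw|^{\gamma,\calO}_{m,s,\alpha}$ to a composition estimate for $w$ in the $x$-Sobolev norm, uniformly in $\xi$. Since $\partial_\xi$ acts only on the second slot of $w$ and pulls out a factor $g(x)$, one has the explicit formula
\[
\partial_\xi^\beta (Aw)(x,\xi)=g(x)^{\beta}\,(\partial_\xi^\beta w)(f(x),g(x)\xi),\qquad 0\le\beta\le\alpha,
\]
so by the definition \eqref{norma2} it suffices to bound $\langle\xi\rangle^{-m+\beta}\|g^{\beta}(\partial_\xi^{\beta} w)(f,g\xi)\|^{\gamma,\calO}_s$ by the right-hand side of \eqref{stima} uniformly in $\xi$. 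The factor $g^{\beta}=(1+\beta_x)^{-\beta}$ is dealt with first: from the smallness $\|\beta\|^{\gamma,\calO}_{2s_0+2}<1$ and a Neumann/Moser argument, $\|g^{\beta}\|^{\gamma,\calO}_s\le_{s,\beta} 1+\|\beta\|^{\gamma,\calO}_{s+1}$, and the tame product estimate then moves the problem to controlling $\|(\partial_\xi^\beta w)(f,g\xi)\|^{\gamma,\calO}_s$.

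For the latter, I would apply a Fa\`a di Bruno expansion of $\partial_x^k \partial_\varphi^{\vec j}\bigl[(\partial_\xi^\beta w)(f,g\xi)\bigr]$, writing each term as a sum over $p,q\ge 0$ with $p+q\le k+|\vec j|$, of contributions of the form
\[
(\partial_y^p\partial_\eta^q\partial_\xi^\beta w)(f(x),g(x)\xi)\cdot \xi^{q}\cdot P_{p,q}\bigl(\partial^{(\,\cdot\,)} f,\,\partial^{(\,\cdot\,)} g\bigr),
\]
where $P_{p,q}$ is a universal polynomial of total order $k+|\vec j|-p$ in the $(\varphi,x)$-derivatives of $f$ and $g$. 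The symbol bound $|\partial_y^p\partial_\eta^q\partial_\xi^\beta w(y,\eta)|\le_p |w|^{\gamma,\calO}_{m,s_0,\beta+q}\langle\eta\rangle^{m-\beta-q}$, together with the elementary inequality $|\xi|^q\langle g(x)\xi\rangle^{m-\beta-q}\le_{\beta,q}\langle\xi\rangle^{m-\beta}$ (valid since $g$ is bounded above and below by the smallness of $\beta$), isolates the weight $\langle\xi\rangle^{m-\beta}$ and reduces the matter to an $H^s$ estimate on an $x$-dependent product. The latter is handled by iterated tame products and the standard Moser composition estimate $\|u\circ f\|_s\le_s \|u\|_s+\|\beta\|^{\gamma,\calO}_{s+1}\|u\|_{s_0}$, available under the smallness hypothesis, together with $\|g\|^{\gamma,\calO}_{s+1}\le_s 1+\|\beta\|^{\gamma,\calO}_{s+2}$.

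The principal obstacle is the careful bookkeeping of how the $s$ derivatives are allocated across the three kinds of factors appearing in each term of the Fa\`a di Bruno expansion: the symbol $w$ evaluated at $(f,g\xi)$, its composition with $f=\mathrm{id}+\beta$, and the polynomial in derivatives of $\beta$. This is precisely the content of the sum $\sum_s^{*}$ in \eqref{stima}: $k_1$ counts the effective number of derivatives landing on $w$ through its spatial slot, contributing an $|w|^{\gamma,\calO}_{m,k_1,\alpha+k_2}$; $k_2$ records the extra $\xi$-derivatives accumulated on $w$ via the $\eta$-slot (whence the shift $\alpha+k_2$); and $k_3$ collects the remaining derivatives absorbed by $\beta$, with the extra $s_0+2$ in the final index $\|\beta\|^{\gamma,\calO}_{k_3+s_0+2}$ accounting for the one derivative needed to pass from $\beta$ to $g$, a further derivative from the composition estimate, and the $s_0$ of Sobolev regularity used to control the low-norm factors in the tame products. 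The condition $k_1+k_2\ge 1$ isolates the strictly composed contributions; the leading term $|w|^{\gamma,\calO}_{m,s,\alpha}$ on the right side of \eqref{stima} corresponds to the trivial case in which all $s$ derivatives fall on the outer factor $g^{\alpha}w^{(\alpha)}(f,g\xi)$ without hitting any derivative of $\beta$. The rougher $s=s_0$ bound follows by placing all derivatives in the $H^{s_0}$ slot at each step of the tame product and invoking the Sobolev embedding on the low-norm factors. Finally, the Lipschitz dependence in $\omega\in\calO$ is inherited by differencing each factor $f,g,w$ in turn and repeating the same argument, using the weighted norm $\|\cdot\|^{\gamma,\calO}$ throughout, which is why the smallness is formulated in the $\gamma$-weighted $H^{2s_0+2}$ norm of $\beta$.
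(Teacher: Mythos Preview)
Your approach via the Fa\`a di Bruno expansion is exactly the one the paper takes (the paper's proof simply invokes Lemma~\ref{Lemminobis}, whose proof is precisely this expansion of $D_\xi^\alpha D^s\,w(f,g\xi)$ followed by a term-by-term estimate). So the overall strategy is correct and matches the paper.

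There is, however, one point where your description is off and would not produce the bound \eqref{stima} as stated. You write a \emph{pointwise} symbol bound
\[
|\partial_y^p\partial_\eta^q\partial_\xi^\beta w(y,\eta)|\le_p |w|^{\gamma,\calO}_{m,s_0,\beta+q}\langle\eta\rangle^{m-\beta-q},
\]
and then say this ``reduces the matter to an $H^s$ estimate on an $x$-dependent product''. First, the index is wrong: Sobolev embedding gives $|w|_{m,\,p+s_0,\,\beta+q}$, not $|w|_{m,s_0,\beta+q}$. More importantly, if you actually control the $w$-factor pointwise, the remaining $H^s$-norm falls entirely on the $\beta$-factors, which would yield terms of the shape $|w|_{m,\,k_1+s_0,\,\alpha+k_2}\|\beta\|_{k_3+2}$ rather than $|w|_{m,k_1,\alpha+k_2}\|\beta\|_{k_3+s_0+2}$. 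That is the wrong allocation for \eqref{stima}. The paper's route is the opposite: in each Fa\`a di Bruno term one keeps the factor $(\partial_y^r\partial_\eta^{q}\partial_\xi^\beta w)(f,g\xi)$ in $L^2$ (after the change of variable $y=f(x)$, whose Jacobian is $\sim 1$) and places all derivatives of $f,g$ in $W^{\cdot,\infty}$; the $+s_0+2$ on $\|\beta\|$ then comes from the Sobolev embedding $H^{s_0}\hookrightarrow L^\infty$ applied to the $\beta$-factors (plus two derivatives to pass from $\beta$ to $D^2\beta$). The $\xi$-weight is extracted not by a pointwise bound on $w$ but from the seminorm definition itself: $\|\partial_\eta^{q+\beta}w(\cdot,\eta)\|_{L^2}\le |w|_{m,0,\beta+q}\langle\eta\rangle^{m-\beta-q}$ uniformly in $\eta$, together with $\langle g\xi\rangle\sim\langle\xi\rangle$ since $g\in[1/2,2]$. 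With that fix, your outline coincides with the paper's proof.
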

\begin{proof}
%Lemma \ref{Lemmino} 
It follows directly by Lemma \ref{Lemminobis} in Appendix \ref{lemmitecnici}.
\end{proof}

\begin{teor}[{\bf Egorov}]\label{EgorovQuantitativo}
Fix $\rho \geq 3$, $p\geq s_0$, $m\in\mathbb{R}$ with $\rho+m> 0$. Let $w(x, \xi)\in S^m$ with 
$w=w(\omega, \mathfrak{I}(\omega))$, Lipschitz in $\omega\in \calO\subseteq\R^\nu$ and in the variable $\mathfrak{I}$. 
Let $\mathcal{A}^{\tau}$ be the flow of the system \eqref{flussoKDV}.
There exist $\su:=\su(m, \rho)$ and $\delta:=\delta(m, \rho)$ such that, if
\begin{equation}
\lVert \beta \rVert^{\gamma, \calO}_{s_0+\su}<\delta,
\end{equation}
then $\mathcal{A}^{\tau} \op(w) (\mathcal{A}^{\tau})^{-1}=\op(q(x, \xi))+R$ where $q\in S^m$ and 
$R\in\gotL_{\rho,p}(\calO)$. Moreover,  one has that 
 the following estimates hold:
\begin{align}
\lvert q \rvert^{\gamma, \calO}_{m, s, \alpha}&\le_{m, s, \alpha, \rho}  
\lvert w \rvert^{\gamma, \calO}_{m, s, \alpha+\su}+
\sum_{s}^{*} 
\lvert w \rvert^{\gamma, \calO}_{m, k_1, \alpha+k_2+\su} 
\lVert \beta \rVert^{\gamma, \calO}_{k_3+\su},\label{zeppelin}\\
\lvert \Delta_{12} q   \rvert_{m, p, \alpha} 
&\le_{m, p, \alpha, \rho} 
\lvert w \rvert_{m, p+1, \alpha+\su}\lVert \Delta_{12} \beta   \rVert_{p+1}
%+\lvert w \rvert_{m, s_0+1, \alpha+\su}\lVert \Delta_{12} \beta   \rVert_{s+1}
+\lvert \Delta_{12} w   \rvert_{m, p, \alpha+\su}\nonumber\\
&+\sum_{p+1}^{*}  
\lvert w \rvert_{m, k_1, \alpha+k_2+\su}\lVert \beta \rVert_{k_3+\su} \lVert \Delta_{12} \beta   \rVert_{s_0+1}
+\sum_{p}^{*}  
\lvert \Delta_{12} w   \rvert_{m, k_1, k_2+\alpha+\su}\lVert \beta \rVert_{k_3+\su}.\label{deeppurple}
\end{align}
Furthermore for any $\tb\le \rho-2$ and $s_0\le s\le \mathcal{S}$
\begin{equation}\label{CostanteTameR}
\begin{aligned}
\mathbb{M}^{\g}_{R}(s, \tb) &\le_{s, m, \rho}   \lvert w \rvert^{\gamma, \calO}_{m, s+\rho, \su}+\sum_{s+\rho}^{*} 
\lvert w \rvert^{\gamma, \calO}_{m, k_1, k_2+\su}\lVert \beta \rVert^{\gamma, \calO}_{k_3+\su},
\end{aligned}
\end{equation}
and for any $\tb\le \rho-3$,  %$s_0\le s\le \mathcal{S}$
\begin{equation}\label{CostanteTameRdei}
\begin{aligned}
\mathbb{M}_{\Delta_{12} R  }(p, \tb) &\le_{m, p, \rho}  
%\Big( 
\lvert w \rvert_{m, p+\rho, \su}\lVert \Delta_{12} \beta   \rVert_{p+\su}
%+\lVert \Delta_{12} \beta   \rVert_{s+\su}\lvert w \rvert_{m, s_0+\rho, \su}
+\lvert \Delta_{12} w   \rvert_{m, s+\rho, \su}\\
&+\sum_{p+\rho}^{*}  
\lvert w \rvert_{m, k_1, k_2+\su} \lVert \beta \rVert_{k_3+\su}\lVert \Delta_{12} \beta   \rVert_{s_0+\su}+\sum_{p+\rho}^{*} 
\lvert \Delta_{12} w   \rvert_{m, k_1, k_2+\su}\lVert \beta \rVert_{k_3+\su}.
\end{aligned}
\end{equation}
\end{teor}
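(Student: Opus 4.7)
The strategy I would follow is the standard Egorov-type expansion outlined earlier in the paper. Setting $P(\tau) := \mathcal{A}^\tau \op(w)(\mathcal{A}^\tau)^{-1}$, a direct differentiation in $\tau$ using \eqref{flussoKDV} yields the Heisenberg equation $\partial_\tau P(\tau) = [\mathtt{X}, P(\tau)]$ with $P(0) = \op(w)$, where $\mathtt{X} = \partial_x \circ b = \op(\mathrm{i} b\xi + b_x)$. The goal is to approximate $P(\tau)$ by a pseudo-differential operator $\op(q(\tau))$ with $q \in S^m$, modulo a remainder $R \in \gotL_{\rho,p}$. I would make the ansatz $q = \sum_{k=0}^{\rho-1} q_{m-k}$ with $q_{m-k} \in S^{m-k}$ and equate orders in the Moyal expansion \eqref{comp2}. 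This produces a triangular system consisting of the principal transport equation $\partial_\tau q_m = \{q_m, b\xi\}$ with $q_m(0) = w$, together with inhomogeneous transport equations $\partial_\tau q_{m-k} = \{q_{m-k}, b\xi\} + F_k(q_m, \ldots, q_{m-k+1})$ with $q_{m-k}(0) = 0$ for $1 \le k \le \rho-1$, where each $F_k$ is a local differential polynomial with coefficients involving finitely many derivatives of $b$.

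The principal equation is a Hamilton equation whose bicharacteristics are exactly the cotangent lift of the torus diffeomorphism $f(x) = x + \tau\beta(x)$. Its solution is $q_m(\tau, x, \xi) = w\bigl(f^{-1}(x),\,(1+\tau\beta_x(f^{-1}(x)))\,\xi\bigr)$, for which \eqref{zeppelin} is exactly the content of Lemma \ref{Lemmino} (with the $\su$ appearing there absorbing the $s_0+2$). The lower-order symbols $q_{m-k}$ are obtained by Duhamel against the same characteristic flow; an induction on $k$ combined with the tame product/composition estimates for symbols and repeated use of Lemma \ref{Lemmino} on each summand of $F_k$ produces \eqref{zeppelin} for all of them, with a $\su = \su(m, \rho)$ that grows mildly with $\rho$. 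The Lipschitz-in-$\mathfrak{I}$ bounds \eqref{deeppurple} follow from the same construction by telescoping the formulas for $q_{m-k}$ in the variable $\mathfrak{I}$ and applying the already-derived tame calculus to each piece.

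It remains to place $R(\tau) := P(\tau) - \op(q(\tau))$ in the class $\gotL_{\rho, p}$. By construction $R$ solves an inhomogeneous Heisenberg equation $\partial_\tau R = [\mathtt{X}, R] + Z(\tau)$ with $R(0) = 0$, where $Z \in OPS^{m-\rho}$ is the tail of the Moyal expansion; its symbol inherits the quantitative bound appearing on the right-hand side of \eqref{CostanteTameR} via the remainder integral in \eqref{comp2}. I would then integrate by Duhamel against the propagator of the homogeneous equation (which is conjugation by $\mathcal{A}^\tau$), and establish the required tame estimates on $\langle D_x\rangle^{m_1}\partial_\varphi^{\vec{\tb}} R\, \langle D_x\rangle^{m_2}$ and its commutator with $\partial_x$ by the same induction-on-$|\vec{\tb}|$ scheme used in the proof of Proposition \ref{DPdiffeo} (the bounds \eqref{santana14}--\eqref{santana19} apply essentially verbatim), invoking Corollary \ref{CoroDPdiffeo} to control the conjugations by $\mathcal{A}^{\pm t}$ and Lemma \ref{James} to absorb the $\langle D_x\rangle^{m_i}$ weights into the symbolic calculus for $Z$. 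The estimate \eqref{CostanteTameRdei} is obtained by differentiating the Duhamel formula in $\mathfrak{I}$ and using the already-established estimates together with \eqref{deeppurple}.

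The main obstacle is the sharp bookkeeping of $\varphi$- and $x$-derivative losses across the $\rho$ iterations, especially for \eqref{CostanteTameRdei}, which requires control of $\Delta_{12}R$ in a \emph{low} norm at $p$ but with a loss $\su$ that is independent of $p$; this uniformity is dictated by the KAM application downstream. The fact that $\mathtt{X}$ is of order one means that a naive estimate could lose a derivative at every iteration of the Duhamel scheme, so treating $R$ merely as an operator of order $m - \rho$ is not enough to land it in $\gotL_{\rho,p}$. The key structural fact that prevents this explosion is the torus-diffeomorphism nature of the characteristic flow: Lemma \ref{Lemmino} ensures that at each symbolic order one loses only finitely many derivatives of $\beta$, and the smallness assumption $\lVert\beta\rVert_{s_0+\su}^{\gamma,\calO}<\delta$ then absorbs the accumulated losses after $\rho$ iterations, yielding the sharp constants on the right-hand sides of \eqref{zeppelin}--\eqref{CostanteTameRdei}.
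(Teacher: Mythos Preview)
Your overall strategy matches the paper's proof: set up the Heisenberg equation, expand $q$ as a sum of symbols of decreasing order, solve the resulting transport equations along characteristics via Lemma~\ref{Lemmino}, iterate for the lower orders, and treat the remainder by Duhamel with propagator given by conjugation by $\mathcal{A}^\tau$.

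There is one genuine slip. You truncate the expansion at $k=\rho-1$, which leaves a tail $Z\in OPS^{m-\rho}$. But to place the remainder in $\gotL_{\rho,p}$ via Lemma~\ref{INCLUSIONEpseudoInclasseL} you need it in $OPS^{-\rho}$, not $OPS^{m-\rho}$; when $m>0$ (the case of interest for the DP operator, where one conjugates order-one terms) your truncation is too short and the tail is not smoothing enough. The paper truncates at $k=m+\rho-1$ (see \eqref{balconata0}), so that the forcing term $\mathcal{M}$ in the remainder equation genuinely lies in $OPS^{-\rho}$. This is exactly why the hypothesis $\rho+m>0$ appears in the statement: it guarantees that the truncation index is at least~$1$.

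A secondary point concerns how you close the remainder estimate. You propose to control $\langle D_x\rangle^{m_1}\partial_\varphi^{\vec\tb}R\,\langle D_x\rangle^{m_2}$ by the induction-on-$|\vec\tb|$ scheme of Proposition~\ref{DPdiffeo}, citing \eqref{santana14}--\eqref{santana19}. That argument works there because the generator $L^\tau$ is of order $-1$, so $\langle D_x\rangle^{m_1}L^\tau\langle D_x\rangle^{-m_1}$ is bounded and a Gronwall closes. Here the generator $\mathtt{X}=\partial_x\circ b$ is of order $+1$, and the analogous step fails. The paper instead writes the explicit Duhamel formula
\[
R(\tau)=\int_0^\tau \mathcal{A}^\tau(\mathcal{A}^s)^{-1}\,\mathcal{M}\,\mathcal{A}^s(\mathcal{A}^\tau)^{-1}\,ds
\]
(equation \eqref{blacksabbath}) and then applies Lemma~\ref{preparailsugo}, which says precisely that conjugation by $\mathcal{A}^\tau$ preserves $\gotL_{\rho,p}$ with tame bounds. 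Once $\mathcal{M}\in\gotL_{\rho,p}$ (from the corrected truncation and Lemma~\ref{INCLUSIONEpseudoInclasseL}), this yields \eqref{CostanteTameR}--\eqref{CostanteTameRdei} directly, without a separate induction.
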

\begin{proof}
The operator $P(\tau):=\mathcal{A}^{\tau} \op(w) (\mathcal{A}^{\tau})^{-1}$ satisfies the Heisenberg equation
\begin{equation}\label{ars}
\begin{cases}
\partial_{\tau} P(\tau)=[\mathtt{X}, P(\tau)], \qquad \mathtt{X}=\partial_x\circ b=:\op(\chi),\\
P(0)=\op(w).
\end{cases}
\end{equation}
We construct an approximate solution of \eqref{ars} by considering a pseudo differential operator $\op(q)$ with 
\begin{equation}\label{balconata0}
q=\sum_{k=0}^{m+\rho-1} q_{m-k}(x, \xi)
\end{equation}
such that  (see \eqref{ars} and note that $\chi:=b\,\mathrm{i}\xi+b_x$)
\begin{equation}\label{ars2}
\begin{cases}
\partial_{\tau} q_m=\{b \xi, q_m \},\\
q_m(0)=w
\end{cases}
\qquad
\begin{cases}
\partial_{\tau} q_{m-k}=\{b \xi, q_{m-k} \}+r_{m-k}\,\\
q_{m-k} (0)=0
\end{cases} \, \qquad k\ge 1
\end{equation}
where for $k\ge 1$ (recall \eqref{tazza6}), denoting by $\mathtt{w}=\mathtt{w}(h, k):=k-h+1$,
\begin{align*}
r_{m-k}:&= \frac{1}{\mathrm{i}}\{ b_x, q_{m-k+1}\}-\sum_{h=0}^{k-1}q_{m-h}\#_{\mathtt{w}} \chi\\
& =-\frac{1}{\mathrm{i}} \partial_{\xi} q_{m-k+1}\,b_{xx}-\sum_{h=0}^{k-1} \frac{1}{\mathrm{i}^{\mathtt{w}}(\mathtt{w})!}(\partial_{\xi}^{\mathtt{w}} q_{m-h})(\partial_{x}^{\mathtt{w}} \chi)\,\in S^{m-k}\,.
\end{align*}
By Lemma \ref{James}, or directly by interpolation, one has 
\begin{equation}\label{stimaResti}
\lvert r_{m-k} \rvert^{\gamma, \calO}_{m-k, s, \alpha}\le \sum_{h=0}^{k-1}  \lvert q_{m-h} \rvert^{\gamma, \calO}_{m-h, s, \alpha+\mathtt{w}} +\sum_{h=0}^{k-1}  \lvert q_{m-h} \rvert^{\gamma, \calO}_{m-h, s_0, \alpha+\mathtt{w}} \lVert \beta \rVert^{\gamma, \calO}_{s+\mathtt{w}+2},
\end{equation}
\begin{equation}\label{anagrafe00}
\begin{aligned}
\lvert \Delta_{12} r_{m-k}  \rvert_{m-k, p, \alpha} &\le 
\sum_{h=0}^{k-1}  \lvert \Delta_{12} q_{m-h}  \rvert_{m-h, p, \alpha+\mathtt{w}} 
+\sum_{h=0}^{k-1}  \lvert \Delta_{12} q_{m-h}   \rvert_{m-h, s_0, \alpha+\mathtt{w}} \lVert \beta \rVert_{p+\mathtt{w}+2}\\
&+\sum_{h=0}^{k-1}  \lvert  q_{m-h} \rvert_{m-h, p, \alpha+\mathtt{w}} 
\lVert \Delta_{12} \beta   \rVert_{p+\mathtt{w}+2}.
%\\
%&+\sum_{h=0}^{k-1}  \lvert  q_{m-h} \rvert_{m-h, s_0, \alpha+k+3-h} 
%\lVert \Delta_{12} \beta  \rVert_{s+k+3-h}.
\end{aligned}
\end{equation}
Hence we can solve \eqref{ars2} iteratively. 
Let us denote by $\gamma^{\tau_0,\tau}(x,\x)$ the solution of the characteristic system
\begin{equation}\label{charsys}
\left\{\begin{aligned}
&\frac{d}{ds}x(s)=-b(s,x(s))\\
&\frac{d}{ds}\x(s)=b_{x}(s,x(s))\x(s)
\end{aligned}\right.
\end{equation}
with initial condition $\gamma^{\tau_0,\tau_0}=(x,\x)$.
Then the first equation in \eqref{ars2} has the solution
\begin{equation}\label{ars3}
q_m(\tau, x, \xi)=w(\gamma^{\tau, 0}(x, \xi))
\end{equation}
where 
\begin{equation}
\gamma^{\tau, 0}(x, \xi)=\big(f(\tau, x), \xi g(\tau, x)\big), \qquad f(\tau, x):=x+\tau\beta(x), \quad g(\tau, x):=\frac{1}{1+\tau\beta_x(x)}.
\end{equation}
Hence by Lemma \ref{Lemmino} we have
\begin{equation}\label{Qm}
\lvert q_m \rvert^{\gamma, \calO}_{m, s, \alpha}\le_{s, \alpha} \lvert w \rvert^{\gamma, \calO}_{m, s, \alpha}
+\sum_{s}^{*} 
\lvert w \rvert^{\gamma, \calO}_{m, k_1, \alpha+k_2} \lVert \beta \rVert^{\gamma, \calO}_{k_3+s_0+2}.
\end{equation}
For any $k \geq 1$, the solution of \eqref{ars2} is
\begin{equation}\label{gnomo}
q_{m-k}(\tau, x, \xi)=\int_0^{\tau} r_{m-k}(\gamma^{0, t} \gamma^{\tau, 0}(x, \xi)) \,dt.
\end{equation}
We observe that 
\begin{equation}
\gamma^{0, t} \gamma^{\tau, 0}(x, \xi)=(\tilde{f}, \tilde{g}\,\xi)
\end{equation}
with
\begin{equation}\label{freccina}
\tilde{f}(t, \tau, x):=x+\tau \beta(x)+\tilde{\beta}(t, x+\tau \beta(x)), \qquad \tilde{g}(t, \tau, x):= \frac{1+t\beta_x(\tilde{f}(t,\tau, x))}{1+\tau \beta_x(x)}.
\end{equation}
Thus if $\tilde{A} r:=r(\tilde{f}, \tilde{g}\,\xi)$ we have (recall that $\tau\in [0, 1]$)
\begin{equation}
\lvert q_{m-k} \rvert^{\gamma, \calO}_{m-k, s, \alpha}\le_{s, \alpha} 
\lvert \tilde{A} r_{m-k} \rvert^{\gamma, \calO}_{m-k, s, \alpha}, 
\quad \lvert q_{m-k} \rvert^{\gamma, \calO}_{m-k, s_0, \alpha}\le_{\alpha} 
\lvert \tilde{A} r_{m-k} \rvert^{\gamma, \calO}_{m-k, s_0, \alpha}\le 
\lvert r_{m-k} \rvert^{\gamma, \calO}_{m-k, s_0, \alpha+s_0}
\end{equation}
and by Lemma \ref{Lemmino} with $A\rightsquigarrow \tilde{A}$
\begin{equation}\label{interpol}
\lvert q_{m-k} \rvert^{\gamma, \calO}_{m-k, s, \alpha}\le_{s, \alpha} 
\lvert r_{m-k} \rvert^{\gamma, \calO}_{m-k, s, \alpha}
+\sum_{s}^{*} 
\lvert r_{m-k} \rvert^{\gamma, \calO}_{m-k, k_1, \alpha+k_2} \lVert \beta \rVert^{\gamma, \calO}_{k_3+s_0+2}.
\end{equation}
We want to prove inductively,  for $k=0,\dots, m+\rho$,
%(here we are dropping the constraints $s_1<s, s_1$, $ p, s_3\geq 0$ and 
%$s_1+p\geq 1$ in the sum over $s_1,p,s_3$ only to shorten the notations)
\begin{equation}\label{stimaQ}
\begin{aligned}
\lvert q_{m-k} \rvert^{\gamma, \calO}_{m-k, s, \alpha} 
\le_{s, \alpha, \rho} &\lvert w \rvert^{\gamma, \calO}_{m, s, \alpha+ 2 k}
+\sum_{s}^{*} \lvert w \rvert^{\gamma, \calO}_{m, k_1, \alpha+k_2+k(s_0+2)} 
\lVert \beta \rVert^{\gamma, \calO}_{k_3+s_0+2+k}, \\
 \lvert q_{m-k} \rvert^{\gamma, \calO}_{m-k, s_0, \alpha} \le_{\alpha, \rho}& \lvert w \rvert^{\gamma, \calO}_{m, s_0, \alpha+s_0+ k(s_0+2)}.
\end{aligned}
\end{equation}
For $k=0$ this is proved in \eqref{Qm}. Now assume that \eqref{stimaQ} holds, up to some $k-1\ge 0$.
We use \eqref{stimaResti} to bound $q_{m-k}$. First we give a bound for $r_{m-k}$ in terms of the norm of the symbol $w$. To shorten the formulas let us denote $\mathtt{t}:=s_0+2$.
 %by substituing the inductive hypothesis \eqref{stimaQ}. We have

\noindent
By \eqref{stimaResti} and the inductive hypothesis \eqref{stimaQ} we get
\begin{equation}\label{interpol2}
\begin{aligned}
\lvert r_{m-k} \rvert^{\gamma, \calO}_{m-k, s, \alpha} 
%&\le_{s, \alpha} 
%\sum_{h=0}^{k-1} 
% \lvert w \rvert^{\gamma, \calO}_{m, s, \alpha+ h+k+1}
% + \sum_{h=0}^{k-1} \lvert w \rvert^{\gamma, \calO}_{m, s_0, \alpha+k+1-h+s_0+ h(s_0+2)}\lVert \beta \rVert^{\gamma, \calO}_{s+k+3-h}
% \\
%&+\sum_{h=0}^{k-1} \sum_{s_1+p+s_3=s} \lvert w \rvert^{\gamma, \calO}_{m, s_1, \alpha+p+h(s_0+2)+k+1-h} \lVert \beta \rVert^{\gamma, \calO}_{s_3+s_0+2+h}  \\
%%&+ \sum_{h=0}^{k-1} \lvert w \rvert^{\gamma, \calO}_{m, s_0, \alpha+k+1-h+s_0+ h(s_0+2)}\lVert \beta 
%%\rVert^{\gamma, \calO}_{s+k+1-h} \\
%&\le_{s, \alpha} \sum_{h=0}^{k-1}  \lvert w \rvert^{\gamma, \calO}_{m, s, \alpha+ h+k+1}
%+\sum_{\substack{h : k\ge 2h-1}}
%\lvert w \rvert^{\gamma, \calO}_{m, s_0, \alpha+k+1-h+s_0+ h(s_0+2)}
%\lVert \beta \rVert^{\gamma, \calO}_{s+k+3-h}  \\
%&+\sum_{\substack{s_1+p+s_3=s, \\h : k\ge 2h-1}} \lvert w \rvert^{\gamma, \calO}_{m, s_1, \alpha+k+1-h+p+h(s_0+2)+s_0} \lVert \beta \rVert^{\gamma, \calO}_{s_3+s_0+k-h+3} \\
%&+\sum_{\substack{ h : k<  2h-1}}  \lvert w \rvert^{\gamma, \calO}_{m, s_0, \alpha+k+1-h+s_0+ h(s_0+2)}\lVert \beta \rVert^{\gamma, \calO}_{s+k+3-h} \\
%&+\sum_{\substack{s_1+p+s_3=s, \\ h : k<  2h-1}} \lvert w \rvert^{\gamma, \calO}_{m, s_1, \alpha+ k+1-h+p+h(s_0+2)+s_0} \lVert \beta \rVert^{\gamma, \calO}_{s_3+s_0+2+h}\\
&
\le_{s,\al,\rho}  \lvert w \rvert^{\gamma, \calO}_{m, s, \alpha+ 2k}+\sum_{s}^{*} 
\lvert w \rvert^{\gamma, \calO}_{m, k_1, \alpha+k_2+k\mathtt{t}} \lVert \beta \rVert^{\gamma, \calO}_{k_3+\mathtt{t}+k}. 
\end{aligned}
\end{equation}
Then by \eqref{interpol} and \eqref{interpol2}
\begin{equation*}
\begin{aligned}
\lvert q_{m-k} \rvert^{\gamma, \calO}_{m-k, s, \alpha} &
\le_{ s, \alpha, k} 
\sum_{s}^{*} 
\Big( \sum_{n_1+n_2+n_3=k_1+k} \lvert w \rvert^{\gamma, \calO}_{m, n_1, \alpha+n_2+k \mathtt{t}+k_2} \lVert \beta \rVert^{\gamma, \calO}_{n_3+\mathtt{t}+k} \Big) \lVert \beta \rVert^{\gamma, \calO}_{k_3+\mathtt{t}}\\
&+\lvert w \rvert^{\gamma, \calO}_{m, s, \alpha+ 2k}+\sum_{s}^{*} \lvert w \rvert^{\gamma, \calO}_{m, k_1, \alpha+k_2+k \mathtt{t}} 
\lVert \beta \rVert^{\gamma, \calO}_{k_3+\mathtt{t}+k} +\sum_{s}^{*} 
\lvert w \rvert^{\gamma, \calO}_{m, k_1, \alpha+k_2+2 k} 
\lVert \beta \rVert^{\gamma, \calO}_{k_3+\mathtt{t}}\\
&
\le_{s, \alpha, k}  \lvert w \rvert^{\gamma, \calO}_{m, s, \alpha+2 k}+\sum_{s}^{*} 
\lvert w \rvert^{\gamma, \calO}_{m, k_1, \alpha+k_2+k \mathtt{t}} 
\lVert \beta \rVert^{\gamma, \calO}_{k_3+\mathtt{t}+k}
\end{aligned}
\end{equation*}
%\begin{equation*}
%\begin{aligned}
%\lvert q_{m-k} \rvert^{\gamma, \calO}_{m-k, s, \alpha} &\le_{ s, \alpha, k} \lvert w \rvert^{\gamma, \calO}_{m, s, \alpha+ 2k}+\sum_{s_1+p+s_3=s} \lvert w \rvert^{\gamma, \calO}_{m, s_1, \alpha+p+k(s_0+2)} \lVert \beta \rVert^{\gamma, \calO}_{s_3+s_0+2+k} \\
%&+\sum_{\substack{s_1+p+s_3=s,\\ s_1<s, s_1, p, s_3\geq 0,\\ s_1+p\geq 1}} \lvert w \rvert^{\gamma, \calO}_{m, s_1, \alpha+p+2 k} \lVert \beta \rVert^{\gamma, \calO}_{s_3+s_0+2}\\
%&+\sum_{\substack{s_1+p+s_3=s,\\ s_1<s, s_1, p, s_3\geq 0,\\ s_1+p\geq 1}} \Big( \sum_{n_1+n_2+n_3=s_1+k} \lvert w \rvert^{\gamma, \calO}_{m, n_1, \alpha+n_2+k(s_0+2)+p} \lVert \beta \rVert^{\gamma, \calO}_{n_3+s_0+2+k} \Big) \lVert \beta \rVert^{\gamma, \calO}_{s_3+s_0+2}\\
%&\le_{s, \alpha, k}  \lvert w \rvert^{\gamma, \calO}_{m, s, \alpha+2 k}+\sum_{s_1+p+s_3=s+k} \lvert w \rvert^{\gamma, \calO}_{m, s_1, \alpha+p+k(s_0+2)} \lVert \beta \rVert^{\gamma, \calO}_{s_3+s_0+2}\\
%&+\sum_{\substack{s_1+p+s_3=s,\\ s_1<s, s_1, p, s_3\geq 0,\\ s_1+p\geq 1}}  \sum_{n_1+n_2+n_3=s_1+k} \lvert w \rvert^{\gamma, \calO}_{m, n_1, \alpha+n_2+k(s_0+2)+p} \lVert \beta \rVert^{\gamma, \calO}_{n_3+s_3+s_0+2+k}  \lVert \beta \rVert^{\gamma, \calO}_{s_0+2}\\
%&\le_{s, \alpha, k}  \lvert w \rvert^{\gamma, \calO}_{m, s, \alpha+2 k}+\sum_{s_1+p+s_3=s} \lvert w \rvert^{\gamma, \calO}_{m, s_1, \alpha+p+k(s_0+2)} \lVert \beta \rVert^{\gamma, \calO}_{s_3+s_0+2+k}
%\end{aligned}
%\end{equation*}
that is the estimate \eqref{stimaQ}.
By \eqref{gnomo} we have
\begin{equation}\label{gnomo2}
\Delta_{12} q_{m-k}(\tau, x, \xi)  =\int_0^{\tau} \Delta_{12} \big(r_{m-k}(\gamma^{0, s} \gamma^{\tau, 0}(x, \xi)) \big) \,ds
\end{equation}
and recalling \eqref{freccina}
\begin{equation}\label{balconata}
\begin{aligned}
\lvert \Delta_{12} q_{m-k} \rvert_{m-k, s, \alpha} \le_{s, \alpha}  &\lvert \tilde{A}( \partial_x  r_{m-k})\,(\Delta_{12} \tilde{f}  ) \rvert_{m-k, s, \alpha}+\lvert \tilde{A} ( \partial_{\xi}  r_{m-k})\,(\Delta_{12} \tilde{g} \,\xi  ) \rvert_{m-k, s, \alpha}\\
&+\lvert \tilde{A} (\Delta_{12} r_{m-k}  ) \rvert_{m-k, s, \alpha}.
\end{aligned}
\end{equation}
The first two terms of the right hand side in \eqref{balconata} are bounded by \eqref{interpol2} and 
Lemma $A.1$ in Appendix $A$ of \cite{FGMP}.
%\ref{TameProdLemma}.
%\eqref{TameProduct}. 
For the last summand we proceed by induction as above using \eqref{anagrafe00}. We obtain
\begin{equation}
\begin{aligned}
\lvert \Delta_{12} q_{m-k}   \rvert_{m-k, p, \alpha} &\le  
\lvert w \rvert_{m, p+1, \alpha+2 k+1}\lVert \Delta_{12} \beta   \rVert_{p+1}\\
&+\sum_{p+1}^{*} \lvert w \rvert_{m, k_1, \alpha+k_2+s_0+1+k \mathtt{t}}\lVert \beta \rVert_{k_3+s_0+\mathtt{t}+k} \lVert \Delta_{12} \beta   \rVert_{s_0+1}\\
&+\lvert w \rvert_{m, s_0+1, \alpha+s_0+1+k \mathtt{t}}
\lVert \Delta_{12} \beta   \rVert_{s_0+1}+\lvert \Delta_{12} w   \rvert_{m, p, \alpha+2 k}\\
&+\sum_{p}^{*} \lvert \Delta_{12} w  \rvert_{m, k_1, k_2+\alpha+k \mathtt{t}}\lVert \beta \rVert_{k_3+ s_0+\mathtt{t}+k}.\\
%\lvert \Delta_{12} q_{m-k}   \rvert_{m-k, s, \alpha} &\le  \lvert w \rvert_{m, s+1, \alpha+2 k+1}\lVert \Delta_{12} \beta   \rVert_{s_0+1}\\
%&+\sum_{s_1+p+s_3=s+1} \lvert w \rvert_{m, s_1, \alpha+p+s_0+1+k(s_0+2)}\lVert \beta \rVert_{s_3+2s_0+2+k} \lVert \Delta_{12} \beta   \rVert_{s_0+1}\\
%&+\lvert w \rvert_{m, s_0+1, \alpha+s_0+1+k(s_0+2)}\lVert \Delta_{12} \beta   \rVert_{s+1}+\lvert \Delta_{12} w   \rvert_{m, s, \alpha+2 k}\\
%&+\sum_{s_1+p+s_3=s} \lvert \Delta_{12} w  \rvert_{m, s_1, p+\alpha+k(s_0+2)}\lVert \beta \rVert_{s_3+2 s_0+2+k}.
\end{aligned}
\end{equation}
%\textcolor{red}{DA CONTROLLARE}

\noindent
Then we have \eqref{zeppelin} and \eqref{deeppurple}.
Now we have (recall \eqref{balconata0})
\begin{equation}
P(\tau)=Q+R, \qquad Q=\op(q)\in OPS^m
\end{equation}
and by the construction of $Q$ we get that
\begin{equation}
\begin{cases}
\partial_{\tau} R(\tau)=[\mathtt{X}, R]+\mathcal{M},\\
R(0)=0
\end{cases}
\end{equation}
where 
\begin{equation}
\mathcal{M}=-\op\Big(\mathrm{i}\{b_x, q_{-\rho+1}\}+\sum_{k=0}^{m+\rho-1} q_{m-k}\#_{\geq m-k+1+\rho} \chi\Big)\in OPS^{-\rho}.
\end{equation}
By Lemma \ref{INCLUSIONEpseudoInclasseL} we deduce 
that $\mathcal{M}\in \gotL_{\rho, p}$ and using \eqref{tazza1} (recall also the Definition \eqref{cancelletti}) 
we have for all $s_0\le s\le \mathcal{S}$ 
\begin{align}
\mathbb{M}^{\g}_{\mathcal{M}}(s, \tb)
&\le_{s, \rho, m}  \lvert w \rvert^{\gamma, \calO}_{m, s+\rho, \su}+\sum_{s+\rho}^{*}
\lvert w \rvert^{\gamma, \calO}_{m, k_1, k_2+\su}\lVert \beta \rVert^{\gamma, \calO}_{k_3+\su}, \quad \tb\le \rho-2,\label{anagrafe44}\\
\mathbb{M}_{\Delta_{12} \mathcal{M}  }(p, \tb) &\le_{p} 
%\lvert w \rvert_{m, s+\rho, \su}\lVert \Delta_{12} \beta   \rVert_{s_0+\su}+
\lvert w \rvert_{m, p+\su, \su}\lVert \Delta_{12} \beta   \rVert_{p+\su}
+\lVert \Delta_{12} \beta   \rVert_{s_0+\su}\sum_{p+\rho}^{*}
\lvert w \rvert_{m, k_1, k_2+\su}\lVert \beta \rVert_{k_3+\su}\nonumber\\
&+\lvert \Delta_{12} w   \rvert_{m, p+\su, \su}+\sum_{p+\rho}^{*}
\lvert \Delta_{12} w   \rvert_{m, k_1, k_2+\su}\lVert \beta \rVert_{p+\su}, \quad \tb\le \rho-3\label{anagrafe4}
\end{align}
for some $\su>0$.
If $V(\tau):=R(\tau) \mathcal{A}^{\tau}$ then
it solves
$\partial_{\tau} V = \mathtt{X} V+\mathcal{M} \mathcal{A}^{\tau}$
and so 
\begin{equation}\label{blacksabbath}
V^{\tau}=\int_0^{\tau} \mathcal{A}^{\tau} (\mathcal{A}^s)^{-1} \mathcal{M} \mathcal{A}^{s}\,ds \quad \Rightarrow \quad R(\tau)=\int_0^{\tau} \mathcal{A}^{\tau} (\mathcal{A}^s)^{-1} \mathcal{M} \mathcal{A}^{s} (\mathcal{A}^{\tau})^{-1}\,ds.
\end{equation}
By Lemma \ref{preparailsugo} $R^{\tau}\in \gotL_{\rho, p}$ for any $\tau\in [0, 1]$.
By \eqref{casalotti} we have that, for any $\tau\in [0, 1]$, taking $\su$ possibly larger than before in order 
to fit the assumptions of Lemma \ref{preparailsugo},
\begin{equation}
\mathbb{M}^{\gamma}_{R^{\tau}}(s, \tb)\le_s \mathbb{M}^{\gamma}_{\mathcal{M}}(s)+\lVert \beta \rVert^{\gamma,\calO}_{s+\su}\mathbb{M}^{\gamma}_{\mathcal{M}}(s_0).
\end{equation}
Then by Leibniz rule and Lemma \ref{buttalapasta60} we have by \eqref{anagrafe4}
\begin{equation*}
\begin{aligned}
\mathbb{M}_{\Delta_{12} R  }(s, \tb) &\le_s 
%\mathbb{M}^{\g}_{\mathcal{M}}(s, \tb)\lVert \Delta_{12} \beta  \rVert_{s_0}+
\mathbb{M}^{\g}_{\mathcal{M}}(p, \tb)\lVert \Delta_{12} \beta  \rVert_{p}+
\mathbb{M}^{\g}_{\mathcal{M}}(p, \tb)\lVert \Delta_{12} \beta  \rVert_{p}\lVert \beta \rVert_{p+\su}\\
&+\mathbb{M}_{\Delta_{12} \mathcal{M}  }(p, \tb)+
\mathbb{M}_{\Delta_{12} \mathcal{M}  }(p, \tb) \lVert \beta  \rVert_{p+\su}.
%&\le_s  \lvert w \rvert_{m, s+\rho, \su}\lVert \Delta_{12} \beta   \rVert_{s_0+\su}+\sum_{s_1+p+s_3=s+\rho} \lvert w \rvert_{m, s_1, p+\su} \lVert \beta \rVert_{s_3+\su}
%\lVert \Delta_{12} \beta   \rVert_{s_0+\su}\\
%&+\lVert \Delta_{12} \beta   \rVert_{s+\su}\lvert w \rvert_{m, s_0+\rho, \su}+\lvert \Delta_{12} w   \rvert_{m, s+\rho, \su}\\
%&+\sum_{s_1+p+s_3=s+\rho}\lvert \Delta_{12} w   \rvert_{m, s_1, p+\su}\lVert \beta \rVert_{s+\su}.
\end{aligned}
\end{equation*}
We obtain \eqref{CostanteTameR} and \eqref{CostanteTameRdei} by using respectively \eqref{anagrafe44} and \eqref{anagrafe4}.
\end{proof}

\subsection{Conjugation of a class of first order operators}\label{conjFirst}

In this Section we prove an important abstract conjugation Lemma which is needed 
to prove Theorem \ref{risultatosez8}. 
We shall also recall a Moser-like theorem for first order linear operators
(see Proposition \ref{moser})
which has been proved in \cite{FGMP}.

\paragraph{A conjugation Lemma for a class of pseudo differential operators.}

\noindent
The following proposition describes the structure of an operator like $\mathcal{L}_{\omega}$ conjugated by the flow of a system like 
\eqref{diffeotot}.

\begin{prop}[\textbf{Conjugation}]\label{ConjugationLemma}
Let $\mathcal{O}$ be a subset of $\R^\nu$. Fix $\rho\geq 3$, $\alpha\in \mathbb{N}$, $p\geq s_0$ 
and consider a linear operator
\begin{equation}\label{onizuka6}
\mathcal{L}:= \oo\cdot\del_{\f}-J \circ (m+a(\varphi, x)) +\mathcal{Q}
\end{equation}
where $m=m(\omega)$ is a real constant,
$a=a(\omega,\mathfrak{I}(\omega))\in C^\infty(\T^{\nu+1})$ is real valued, 
both are Lipschitz in $\omega\in \calO$ and $a $ is Lipschitz in the variable $\mathfrak{I}$.
Moreover $\calQ=\op(\mathtt{q}(\varphi, x, \xi))+\widehat{\calQ}$ with  $\widehat{\mathcal{Q}}\in\gotL_{\rho,p}(\calO)$  
and $\tq=\tq(\omega,\mathfrak{I}(\omega))\in S^{-1}$ satisfying
\begin{equation}\label{docq}
\lvert \mathtt{q} \rvert_{-1, s, \alpha}^{\gamma, \calO}\le_{s, \alpha} \tk_1+ \tk_2 \lVert \mathtt{p} \rVert^{\g,\calO}_{s+\sigma_2},
\end{equation}
\begin{equation}\label{docqi}
\begin{aligned}
\lvert \Delta_{12} \mathtt{q}  \rvert_{-1, p, \alpha} &\le_{p, \alpha} \tk_3\,\lVert \Delta_{12} \mathtt{p}  \rVert_{p+\sigma_2}
(1+\lVert \mathtt{p} \rVert_{p+\sigma_2}).
\end{aligned}
\end{equation}
Here $\tk_1, \tk_2, \tk_3,\sigma_2>0$ are constants depending on $\mathtt{q}$ while 
$\mathtt{p}=\mathtt{p}(\omega,\mathfrak{I}(\omega))\in C^\infty(\T^{\nu+1})$, 
is Lipschitz in $\omega$ and in the variable $\mathfrak{I}$ .

\noindent
There are $\s_{3}=\s_{3}(\rho)\geq \tilde{\s_2}=\tilde{\s_2}(\rho)>0$ and $\delta_*:=\delta_*(\rho)\in (0, 1)$ such that, if
\begin{equation}\label{filini}
\lVert \bt \rVert^{\gamma, \calO}_{s_0+\s_3}+\lVert a \rVert^{\gamma, \calO}_{s_0+\s_3}+\tk_2 \lVert \mathtt{p}\rVert^{\gamma, \calO}_{s_0+\s_3}+\tk_1+\mathbb{M}^{\g}_{\widehat{\mathcal{Q}}}(s_0, \tb) \le \delta_*\,,
\end{equation}
%for some $\s_3:=\s_3(\rho)$ large enough and $\delta_*:=\delta_*(\rho)$ small enough, 
the following holds for $p\leq s_0+\s_{3}-\tilde{\s_2}$.
%so that in particular \eqref{flow1} is satisfied.
Consider $\Psi:=\Psi^{1}$ 
the flow at time one of the system \eqref{diffeotot}, where $b$ is defined in \eqref{pasta6}. 
Then we have
\begin{equation}\label{ellepiu}
\mathcal{L}_+:=\Psi \mathcal{L}\Psi^{-1}=\oo\cdot\del_{\f}-J \circ (m+ a_+(\varphi, x))+\mathcal{Q}_+
\end{equation}
\begin{equation}\label{Round}
m+a_+(\varphi, x):=-(\omega\cdot\del_{\f} \tilde{\beta})(\varphi, x+\beta(\varphi, x))+(m+a(\varphi, x+\beta(\varphi, x)))(1+\tilde{\beta}_x(\varphi, x+\beta(\varphi, x)))
\end{equation}
with $\tilde{\beta}$ the function such that $x+\tilde{\beta}(\varphi, x)$ is the inverse of the diffeomorphism of the torus $x\mapsto x+\beta(\varphi, x)$.
The operator $\mathcal{Q}_+:=\op(\mathtt{q}_+(\varphi, x, \xi))+\widehat{\mathcal{Q}}_+$, with
\begin{equation}\label{lapiuimportante}
\begin{aligned}
\lvert \mathtt{q}_+ \rvert^{\g, \calO}_{-1, s, \alpha} &\le_{s, \alpha, \rho} \tk_1+\tk_2 \lVert \mathtt{p}\rVert^{\g, \calO}_{s+\sigma_3}+\lVert \beta \rVert^{\g, \calO}_{s+\sigma_3}+\lVert a \rVert^{\g, \calO}_{s+\sigma_3},\\
\lvert \Delta_{12} \mathtt{q}_+ \rvert_{-1, p, \alpha} &\le_{p, \alpha, \rho} 
%\big(\tk_1+\tk_2 \lVert p\rVert_{s+\sigma_3}
%+\lVert \beta \rVert_{s+\sigma_3}+\lVert a \rVert_{s+\sigma_3}\big) 
%\lVert \Delta_{12} \beta   \rVert_{s_0+\sigma_3}\\
%&
%+
\mathtt{k}_3(\lVert \Delta_{12} \mathtt{p} \rVert_{p+\sigma_3}+\lVert \Delta_{12} \mathtt{p} \rVert_{p+\sigma_3}\lVert \mathtt{p} \rVert_{p+\sigma_3})+\lVert \Delta_{12}\beta \rVert_{p+\sigma_3}+\lVert \Delta_{12} a \rVert_{p+\sigma_3}\\
%&+\big(\mathtt{k}_3\lVert \Delta_{12} p \rVert_{s_0+\sigma_3}+\lVert \Delta_{12}\beta \rVert_{s_0+\sigma_3}+\lVert 
%\Delta_{12} a \rVert_{s_0+\sigma_3}\big) \lVert \beta \rVert_{s+\sigma_3}
\end{aligned}
\end{equation}
and
$\widehat{\mathcal{Q}}_+\in\gotL_{\rho,p}(\calO)$
with, for $s_0\le s \le \mathcal{S}$,
\begin{equation}\label{JeTame}
\mathbb{M}^{\g}_{\widehat{\mathcal{Q}}_+}(s, \tb)\le_{s, \rho} \mathbb{M}^{\g}_{\widehat{\calQ}}(s, \tb)
+ \|\be\|^{\g,\calO}_{s+\sigma_3} +\tk_1+\tk_2 \|\mathtt{p}\|^{\g,\calO}_{s+\sigma_3}+\|a\|^{\g,\calO}_{s+\sigma_3},
\quad \tb\le \rho-2,
\end{equation}
%for any $\tb\le \rho-2$ and
\begin{equation}\label{ServelloniMazzantiVienDalMare}
\begin{aligned}
\mathbb{M}_{\Delta_{12} \widehat{\calQ}_+  }(p, \tb) &\le_{p,\rho} \mathbb{M}_{\Delta_{12} \widehat{\calQ}}(p, \tb)
+\mathtt{k}_3\lVert \Delta_{12} \mathtt{p} \rVert_{p+\sigma_3}(1+\lVert \mathtt{p} \rVert_{p+\sigma_3})
+\lVert \Delta_{12}\beta \rVert_{p+\sigma_3}+\lVert \Delta_{12} a \rVert_{p+\sigma_3}
\end{aligned}
\end{equation}
for any $\tb\le \rho-3$.
\end{prop}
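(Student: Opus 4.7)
The plan is to exploit the factorization $\Psi = \mathcal{A}\circ \mathcal{C}$ from Proposition \ref{DPdiffeo}, where $\mathcal{A}:=\mathcal{A}^{1}$ is the torus diffeomorphism \eqref{ignobel} and $\mathcal{C}:=\mathcal{C}^{1} = \Theta + R^{1}$ with $\Theta = \op(1+\vartheta)$, $\vartheta\in S^{-1}$, and $R^{1}\in \gotL_{\rho,p}$. Since $J = \partial_{x}+3\Lambda\partial_{x}$ with $\Lambda\partial_{x}\in OPS^{-1}$, I would first decompose
\[
-J\circ (m+a) = -(m+a)\partial_{x}-a_{x} + \op(w_{a}),
\]
with $w_{a}\in S^{-1}$ of size comparable to $\|a\|^{\g,\calO}_{s+\al}+|m|$. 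This splits $\mathcal{L}$ into a pure transport part plus pseudo-differential and smoothing remainders, each of which will transform independently.

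First I would handle the conjugation by $\mathcal{A}$. For the transport piece $\omega\cdot\partial_{\varphi}-(m+a)\partial_{x}$, the direct change-of-variables computation with the torus diffeomorphism (already treated in \cite{FGMP}) produces $\omega\cdot\partial_{\varphi}-(m+a_{+})\partial_{x}$ with $a_{+}$ given by \eqref{Round}. For the $S^{-1}$ pieces $\op(w_{a})+\op(\mathtt{q})$, I would apply the quantitative Egorov Theorem \ref{EgorovQuantitativo} with $m=-1$: this yields a new symbol in $S^{-1}$ controlled via \eqref{zeppelin}-\eqref{deeppurple}, together with a smoothing remainder in $\gotL_{\rho,p}$ controlled by \eqref{CostanteTameR}-\eqref{CostanteTameRdei}. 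For the already-smoothing term $\widehat{\mathcal{Q}}$, the ideal property of $\gotL_{\rho,p}$ under conjugation by $\mathcal{A}$ (Lemma \ref{preparailsugo} in the appendix) keeps the class invariant.

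Next I would conjugate the resulting operator by $\mathcal{C}$. Writing $\mathcal{C} = \mathrm{I} + \op(\vartheta)+R^{1}$, I expand $\mathcal{C} \mathcal{L}' \mathcal{C}^{-1} = \mathcal{L}' + [\mathcal{C}-\mathrm{I},\mathcal{L}']\mathcal{C}^{-1}$. The commutator of $\op(\vartheta)\in OPS^{-1}$ with the first-order part $-(m+a_{+})\partial_{x}$ again lies in $OPS^{-1}$ by symbol calculus (Lemma \ref{James}), and commutators with the $OPS^{-1}$ part stay in $OPS^{-2}\subset OPS^{-1}$; commutators involving $R^{1}$ or acting on smoothing operators remain in $\gotL_{\rho,p}$ by ideal properties. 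Crucially, because $\Psi$ is the symplectic flow of the Hamiltonian vector field \eqref{diffeotot}-\eqref{pseudo} and $\mathcal{L}$ is Hamiltonian modulo $\omega\cdot\partial_{\varphi}$, the transformed first-order part must reorganize as $-J\circ(m+a_{+})$: the discrepancy $-(m+a_{+})\partial_{x}+J\circ(m+a_{+}) = 3\Lambda\partial_{x}\circ(m+a_{+})\in OPS^{-1}$ is absorbed into $\op(\mathtt{q}_{+})$.

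The quantitative estimates \eqref{lapiuimportante}-\eqref{ServelloniMazzantiVienDalMare} then follow by collecting the bounds at each step, using \eqref{docq}-\eqref{docqi} for the initial data, Proposition \ref{DPdiffeo} for $\Psi$, and Theorem \ref{EgorovQuantitativo} for the Egorov step, with $\s_{3}$ chosen large enough to absorb derivative losses and $\tilde{\s}_{2}$ governing the required low regularity for the Lipschitz-in-$\mathfrak{I}$ variations. The main obstacle is the bookkeeping: one must track symbols in $S^{-1}$ and remainders in $\gotL_{\rho,p}$ simultaneously through a long chain of compositions while keeping sharp tame estimates. A secondary delicate point is to rigorously verify that the first-order output is exactly $-J\circ(m+a_{+})$ and not merely $-(m+a_{+})\partial_{x}$ modulo lower order, which relies structurally on the symplecticity of $\Psi$ and the assumed Hamiltonian character of $\mathcal{Q}$.
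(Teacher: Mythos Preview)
Your overall strategy matches the paper's: factor $\Psi=\mathcal{A}\circ\mathcal{C}$ via Proposition~\ref{DPdiffeo}, conjugate by the two factors separately, and use the Egorov theorem together with the ideal properties of $\gotL_{\rho,p}$ to control the lower-order pieces. However there is a genuine ordering slip. Since $\Psi=\mathcal{A}\circ\mathcal{C}$, one has
\[
\Psi\,\mathcal{L}\,\Psi^{-1}=\mathcal{A}\bigl(\mathcal{C}\,\mathcal{L}\,\mathcal{C}^{-1}\bigr)\mathcal{A}^{-1},
\]
so the \emph{inner} conjugation must be by $\mathcal{C}$ and the \emph{outer} one by $\mathcal{A}$. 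Your plan does them in the opposite order, producing $(\mathcal{C}\mathcal{A})\,\mathcal{L}\,(\mathcal{C}\mathcal{A})^{-1}$, which is not $\Psi\mathcal{L}\Psi^{-1}$. The paper follows the correct order: it first computes $\Theta^{\tau}\mathcal{L}(\Theta^{\tau})^{-1}=\mathcal{L}+\op(r_0)+\mathcal{R}_0$ with $r_0\in S^{-1}$ and $\mathcal{R}_0\in\gotL_{\rho,p}$ (via commutators of $\op(\vartheta)$ with $\omega\cdot\partial_\varphi$, $J\circ(m+a)$, $\op(\tq)$, $\widehat{\calQ}$, using Lemma~\ref{James} and Lemma~\ref{idealeds}), and only then applies $\mathcal{A}^{\tau}$ on the outside; the discrepancy between $W^{\tau}=\mathcal{A}^{\tau}\Theta^{\tau}$ and $\Psi^{\tau}=\mathcal{A}^{\tau}(\Theta^{\tau}+R^{\tau})$ is handled separately as an additional $\gotL_{\rho,p}$ correction. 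Fixing the order in your outline is straightforward and the estimates go through in the same way, but as written the argument does not compute the claimed object.

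A second, smaller point: the paper does not invoke symplecticity to obtain the form $-J\circ(m+a_{+})$. Instead it computes $\mathcal{A}^{\tau}(J\circ(m+a))(\mathcal{A}^{\tau})^{-1}$ explicitly, using the algebraic identity $\mathcal{A}^{\tau}\Lambda(\mathcal{A}^{\tau})^{-1}=(\mathrm{I}-\Lambda\gotR)^{-1}\Lambda\circ g$ from the proof of Proposition~\ref{DPdiffeo} (see \eqref{Ltau}--\eqref{sissagames}); this directly yields $J\circ\mathcal{T}_{\tau\beta}\bigl((1+\tilde\beta_x)(m+a)\bigr)$ plus an explicit $OPS^{-1}+\gotL_{\rho,p}$ remainder $\mathsf{R}^{(2)}$. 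Your alternative---split $J=\partial_x+3\Lambda\partial_x$, transport the $\partial_x$ piece, then regroup the discrepancy $3\Lambda\partial_x\circ(m+a_+)\in OPS^{-1}$ into $\op(\tq_+)$---is also valid and arguably more transparent, but the reorganization is a purely algebraic fact and does not require or use the symplecticity of $\Psi$.
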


\begin{proof}
Let $\Psi^{\tau}$ be the flow in \eqref{diffeotot}. We can write 
$\Psi^{\tau}:=\mathcal{A}^{\tau}\circ(\Theta^{\tau}+R^{\tau})$,
where $\mathcal{A}^{\tau}$ is defined in \eqref{ignobel}, and $\Theta^{\tau},R^{\tau}$
given by Prop. \ref{DPdiffeo} in \eqref{ignobel2}.
We define the map
$W^{\tau}:=\mathcal{A}^{\tau}\circ \Theta^{\tau}$.
 We claim that setting $\widehat R^\tau = (\Theta^\tau)^{-1} R^{\tau}$ we have
 \begin{equation*}%\label{claimDifferenza}
 \begin{aligned}
 S^{\tau}&:=W^{\tau} \mathcal{L}^0 (W^{\tau})^{-1}-\Psi^{\tau} \mathcal{L}^0 (\Psi^{\tau})^{-1}=
 %W^\tau \big(\mathcal{L}^0 - (I+\widehat R^\tau) \mathcal{L}^0 (I+\widehat R^\tau)^{-1}\big)(W^\tau)^{-1} \\ &=
 %&=\mathcal{A}^{\tau} \Big( \Theta^{\tau} \mathcal{L} (\Theta^{\tau})^{-1}+
 %R^{\tau}\mathcal{L} (\Theta^{\tau})^{-1} \Big)(\mathcal{A}^{\tau})^{-1}(\Theta^{\tau})^{-1}R^{\tau}\\
 %&+\mathcal{A}^{\tau} \Big(R^{\tau}\mathcal{L} (\Theta^{\tau})^{-1} \Big)(\mathcal{A}^{\tau})^{-1}
 \mathcal A^{\tau}\Theta^{\tau}  [\calL^{0},\widehat R^\tau](I+\widehat R^\tau)^{-1}(\Theta^{\tau})^{-1}(\mathcal A^{\tau})^{-1}\in \gotL_{\rho, p}\,,
 \end{aligned}
 \end{equation*}
 and
 $\sup_{\tau\in[0,1]}\mathbb{M}^{\gamma}_{S^{\tau}}(s,\tb),\sup_{\tau\in[0,1]} \mathbb{M}_{\Delta_{12} S^{\tau}  }(s,\tb)$ satisfy bounds \eqref{JeTame} and \eqref{ServelloniMazzantiVienDalMare}. We first study the conjugation of $\mathcal{L}^0$ by $W^{\tau}$.  In order to prove our claim we just have to note that $\widehat R^\tau\in \gotL_{\rho+1, p}$ by Lemma \ref{idealeds},
 moreover, by formula  \eqref{miserialadra} , $ [\omega\cdot \partial_{\varphi},\widehat R^\tau]= \omega\cdot\del_{\f}\widehat R^\tau$ and $ [\del_x,\widehat R^\tau]\in \gotL_{\rho, p}$. This means that $[\calL^{0},\widehat R^\tau]\in \gotL_{\rho, p}$, so that our claim follows by Lemmata \ref{chiusuracompoclasseL}, \ref{idealeds}, \ref{InvertibilityUtile} and \ref{preparailsugo} .

\smallskip

\noindent \textbf{Conjugation by $\Theta^{\tau}$.}
By Lemma \ref{InvertibilityUtile} we have $(\Theta^{\tau})^{-1}:=\mathrm{I}-\op(\tilde{\vartheta})+\mathtt{R}_{\rho}$, 
with
%\begin{equation}\label{milan}
%\lvert \tilde\vartheta \rvert^{\g, \calO}_{-1, s, \alpha}\le_{s, \alpha, \rho}\lVert \beta \rVert^{\g, \calO}_{s+\gotd_0}\,
%\end{equation}
%potremmo stimare con beta alla ro, invece che lineare
%and for $s_0\le s \le \mathcal{S}$, 
\begin{equation}\label{inter}
\begin{aligned}
&\lvert \tilde\vartheta \rvert^{\g, \calO}_{-1, s, \alpha}\le_{s, \alpha, \rho}\lVert \beta \rVert^{\g, \calO}_{s+\gotd_0},\,\qquad \mathbb{M}^{\gamma}_{\mathtt{R}_{\rho}}(s, \tb)\le_{s,  \rho} \lVert \beta \rVert^{\g, \calO}_{s+\gotd_0}, \quad \tb\le 0\le \rho-2,\\
&\lvert \Delta_{12}  \tilde\vartheta \rvert_{-1, p, \alpha}\le_{p, \alpha} \lVert \Delta_{12} \beta  \rVert_{p+\gotd_0},  \qquad \mathbb{M}_{\Delta_{12} \mathtt{R}_{\rho}  }(p, \tb)\le_{p,  \rho} \lVert \Delta_{12} \beta  \rVert_{p+\gotd_0}
%+\lVert \beta \rVert_{s+\gotd_0} \lVert \Delta_{12} \beta   \rVert_{s_0+\gotd_0} 
\quad 0\le \tb\le \rho-3,
\end{aligned}
\end{equation}
 for $s_0\le s \le \mathcal{S}$ and
for some $\gotd_0=\gotd_0(\rho)$. Throughout the proof we shall denote by $\gotd_i$ an increasing sequence of constants, depending on $\rho$, which keeps track of the loss of derivatives in our procedure. Moreover we shall omit writing the constraints $s_0\le s\le \mathcal{S}$, $0\le \tb\le \rho-2$, $0\le \tb\le \rho-3$ when we write the bounds for the operators belonging to $\gotL_{\rho, p}$.

We wish to compute
\begin{equation*}
\Theta^\tau B (\Theta^\tau)^{-1}= %B+ [\Theta^\tau, B] (\Theta^\tau)^{-1}= 
B+ [\op(\vartheta), B] \op(1-\tilde\vartheta)  +[\op(\vartheta), B] \mathtt R_\rho
\end{equation*}
for $B= \omega\cdot\partial_{\varphi}, J\circ (m+a), \op(\tq),\widehat \calQ$. 
\\
Let us start by studying the commutator  $ [\op(\vartheta), B]$, our purpose is to write it as a pseudo differential term plus a remainder in $\gotL_{\rho, p}$.
We have (recalling the Definition \ref{cancelletti} and formula \eqref{starcontro})
\begin{align}
[\op(\vartheta), \oo\cdot\del_{\f}]  & =  - \op(\oo\cdot\del_{\f} \vartheta)\\
  [\op(\vartheta), J\circ (m+a)] & = \op\big(\vartheta\star_{<\rho+1}(\omega(\xi)\#_{<\rho+1}(m+a))\big)\label{tetaJ}\\  & + \op\big(\vartheta\star_{\ge \rho+1}(\omega(\xi)\#(m+a)) + \vartheta\star_{< \rho+1}(\omega(\xi)\#_{\ge \rho+1}(m+a)) \big)\notag\\
  [\op(\vartheta), \op(\tq)]& = \op\big(\vartheta\star_{<\rho-1}\tq\big)+\op\big(\vartheta\star_{\ge \rho-1}\tq) \big)\,.\label{tetaq}
\end{align}
Here $\omega(\xi)$ is the symbol of the Fourier multiplier $J=\partial_x+3 \Lambda\partial_x$ , i.e.
$\omega(\xi):=\mathrm{i}\xi+3 \frac{\mathrm{i} \xi}{1+\xi^2}$.
One  can directly verify that all the symbols above are in $S^{-1}$, indeed the commutator of two pseudo differential operators has as order the sum of the orders minus one.
By Lemma \ref{idealeds} we verify that $[\op(\vartheta), \widehat{\cQ}], [\op(\vartheta), B] \mathtt R_\rho\in \gotL_{\rho, p}$ for all choices of $B$. By Lemma \ref{INCLUSIONEpseudoInclasseL} and \eqref{comp2} we have that the second summands in 
\eqref{tetaJ} and \eqref{tetaq} belong to $\gotL_{\rho, p}$.
We have proved that
\[
[\op(\vartheta), B]= \op(r_B) + R_B \,,\quad r_B\in S^{-1}\,,\quad R_B\in \gotL_{\rho, p}\,.
\]
Using \eqref{varthetaStima}, \eqref{docq} and \eqref{filini}, we have by \eqref{crawford} 
\begin{equation}\label{stimaB}
|r_B|^{\g,\calO}_{-1,s,\al} \le_{s,\al,\rho} \|\be\|^{\g,\calO}_{s+\gotd_1} + \|\be\|^{\g,\calO}_{s_0+\gotd_1}(\tk_1+\tk_2 \|p\|^{\g,\calO}_{s+\gotd_1}+\|a\|^{\g,\calO}_{s+\gotd_1}).
\end{equation}
Similarly, by \eqref{jamal} we have
\begin{equation}\label{stimaBresto}
\mathbb M_{R_B}^{\g}(s,\tb) \le_{s,\rho} \|\be\|^{\g,\calO}_{s+\gotd_1} + \|\be\|^{\g,\calO}_{s_0+\gotd_1}(\tk_1+\tk_2 \|p\|^{\g,\calO}_{s+\gotd_1}+\|a\|^{\g,\calO}_{s+\gotd_1}+ \mathbb M_{\widehat{\mathcal Q}}^{\g}(s,\tb)).
\end{equation}
Analogously by \eqref{crawford2} and \eqref{jamal2} we have
\begin{equation*}
|\Delta_{12}r_B|_{-1,p,\al} \le_{p,\al,\rho} 
\|\Delta_{12}\be\|_{p+\gotd_1} +  \|\be\|_{p+\gotd_1}(\mathtt{k}_3(\|\Delta_{12}p\|_{p+\gotd_1}+\lVert \Delta_{12} p \rVert_{s_0+\gotd_1}
\lVert p \rVert_{p+\gotd_1})+\|\Delta_{12}a\|_{p+\gotd_1}.
\end{equation*}
Similarly, by \eqref{jamal} we have
\begin{align}
\mathbb M_{\Delta_{12}R_B}(p,\tb) &\le_{p,\rho} \|\Delta_{12}\be\|_{p+\gotd_1} + 
%\|\Delta_{12}\be\|_{s_0+\gotd_1}(\tk_1+\tk_2 \|p\|_{s+\gotd_1}+\|a\|_{s+\gotd_1} 
%+ \mathbb M^\g_{\widehat{\mathcal Q}}(s,\tb))
\notag\\
%&+\|\be\|_{s+\gotd_1}(\mathtt{k}_3\|\Delta_{12}p\|_{s_0+\gotd_1}+\|\Delta_{12}a\|_{s_0+\gotd_1}+ \mathbb %M_{\Delta_{12}\widehat{\mathcal Q}}(s_0,\tb)) \label{stimaBrestoi}\\
&+\|\be\|_{p+\gotd_1}(\mathtt{k}_3(\|\Delta_{12}p\|_{p+\gotd_1}+\lVert \Delta_{12} p \rVert_{p+\gotd_1}
\lVert p \rVert_{p+\gotd_1})+\|\Delta_{12}a\|_{p+\gotd_1}+ \mathbb M_{\Delta_{12}\widehat{\mathcal Q}}(p,\tb)).\label{stimaBrestoi}
%\notag
\end{align}
By Lemmata \ref{James}, \ref{idealeds} and  \ref{chiusuracompoclasseL} we have that 
\[
[\op(\vartheta), B]\op(1-\tilde\theta)= \op(\tilde r_B) + \tilde R_B \,,\quad \tilde r_B\in S^{-1}\,,\quad \tilde R_B\in \gotL_{\rho, p}\,,
\]
and $\tilde r_B,\tilde R_B$ satisfy bounds like \eqref{stimaB}-\eqref{stimaBrestoi}, with possibly a larger $\gotd_1$.
Analogously, by Lemmata \ref{idealeds} and  \ref{chiusuracompoclasseL}, we have that $[\op(\theta),B]{\mathtt R}_\rho\in \gotL_{\rho, p}$ satisfies estimates like \eqref{stimaBresto}, \eqref{stimaBrestoi}.
We conclude that
\[
\Theta^{\tau} \mathcal{L}^{0} (\Theta^{\tau})^{-1}=\mathcal{L}^0+\op(r_0)+\mathcal{R}_0
\]
where $r_0\in S^{-1}$, $\mathcal{R}_0\in\gotL_{\rho, p}$ and satisfy the bounds \eqref{stimaB}-\eqref{stimaBrestoi} with possibly larger $\gotd_1$.

\smallskip

\noindent \textbf{Conjugation by $\mathcal{A}^{\tau}$.} We proved that
\begin{equation}\label{inbianco}
W^{\tau} \mathcal{L}^{0} (W^{\tau})^{-1}=\mathcal{A}^{\tau}\mathcal{L}^0 (\mathcal{A}^{\tau})^{-1}+\mathcal{A}^{\tau}\op(r_0)  (\mathcal{A}^{\tau})^{-1}+\mathcal{A}^{\tau}\mathcal{R}_0  (\mathcal{A}^{\tau})^{-1}.
\end{equation}
%
%First we note that
%\begin{equation}
%\mathcal{D}_{\omega} (\mathcal{A}^{\tau})^{-1}=(\mathcal A^\tau)^{-1} \mathcal{D}_{\omega}+ (\mathcal{D}_{\omega}\tilde\bt_x) \mathcal{T}_{\tilde\bt} + (1+\tilde\bt_x)(\mathcal{D}_{\omega}\tilde\bt) \mathcal{T}_{\tilde\bt} \del_x
%\end{equation}
%consequently
By an explicit computation one has that
\begin{equation*}
\begin{aligned}
\mathcal{A}^{\tau} \mathcal{D}_{\omega} (\mathcal{A}^{\tau})^{-1} =
%&= \mathcal{D}_{\omega} + \del_x \circ(\mathcal{T}_{\tau\bt} \mathcal D_\omega \tilde\bt )  \\
%&=
\mathcal{D}_{\omega}+J\circ (\mathcal{T}_{\tau\bt} \mathcal{D}_{\omega}\tilde{\beta})+\op(r_1)+\mathcal{R}_1
\end{aligned}
\end{equation*}
where $r_1\in S^{-1}$, $\mathcal{R}_1\in \gotL_{\rho, p}$ are  defined by
\begin{equation}
r_1:=-3 (\mathrm{i}\xi/(1+\xi^2)) \#_{< \rho-1} \mathcal{T}_{\tau \beta} (\mathcal{D}_{\omega}\tilde{\beta}),\quad \mathcal{R}_1:=-3\op( (\mathrm{i}\xi/(1+\xi^2)) \#_{\ge \rho-1} \mathcal{T}_{\tau \beta} (\mathcal{D}_{\omega}\tilde{\beta})),
\end{equation}
and, by \eqref{crawford},\eqref{crawford2}, \eqref{jamal}, \eqref{jamal2},  satisfy the following bounds
\begin{equation*}
\lvert r_1 \rvert^{\g, \calO}_{-1,s, \alpha}+\mathbb{M}^{\gamma}_{\mathcal{R}_1}(s,\tb)
\le_{s, \alpha,\rho} \lVert \beta \rVert^{\g, \calO}_{s+\gotd_2},
\qquad 
\lvert \Delta_{12}  r_1   \rvert_{-1,p, \alpha}+
\mathbb{M}_{\Delta_{12} \mathcal{R}_1 }(p,\tb)
\le_{p, \alpha,\rho} \lVert \Delta_{12} \beta   \rVert_{p+ \gotd_2}.
\end{equation*}
Moreover
\begin{equation}
\mathcal{A}^{\tau} (J\circ (m+a)) (\mathcal{A}^{\tau})^{-1}=J\circ\mathcal{T}_{\tau\beta}\Big((1+\tilde{\beta}_x)(m+a)  \Big)+\mathsf{R}^{(2)}
\end{equation}
where 
\begin{equation}
\begin{aligned}
\mathsf{R}^{(2)}&:=  \Big( (1-\Lambda \mathfrak{R})^{-1} -1\Big)\circ \Lambda \circ g \circ \partial_x \circ \mathcal{T}_{\tau \beta}\Big((1+\tilde{\beta}_x)(m+a)  \Big)\\
&+ \Big( (1-\Lambda \mathfrak{R})^{-1} -1\Big)\circ \Lambda \circ \left(g-3\right)\circ \partial_x \circ \mathcal{T}_{\tau \beta}\Big((1+\tilde{\beta}_x)(m+a)  \Big)\\
&+ \Big( (1-\Lambda \mathfrak{R})^{-1} \Big)\circ \Lambda \circ \left(g-3\right)\circ \partial_x \circ \mathcal{T}_{\tau \beta}\Big((1+\tilde{\beta}_x)(m+a)  \Big)\\
\end{aligned}
\end{equation}
with $g$ and $\mathfrak{R}$ defined in \eqref{sissagames}. In particular $\mathsf{R}^{(2)}=\op(r_2)+\mathcal{R}_2$, $r_2\in S^{-1}$, $\mathcal{R}_2\in \gotL_{\rho, p}$ and satisfy the following bounds
\begin{equation*}
\begin{aligned}
\lvert r_2 \rvert^{\g, \calO}_{-1, s, \alpha}+\mathbb{M}^{\gamma}_{\mathcal{R}_2}(s, \tb)
 &\le_{s, \alpha, \rho} \lVert \beta \rVert^{\g, \calO}_{s+\gotd_3}+\lVert \beta \rVert^{\g, \calO}_{s_0+\gotd_3}\lVert a \rVert^{\g, \calO}_{s+\gotd_3},\\
\lvert \Delta_{12} r_2   \rvert_{-1, p, \alpha}+
\mathbb{M}_{\Delta_{12} \mathcal{R}_2  }(p, \tb)
 &\le_{p, \alpha, \rho} \lVert \Delta_{12} \beta   \rVert_{p+\gotd_3}.
%+\lVert \Delta_{12} \beta   \rVert_{s_0+\gotd_3}(\lVert a \rVert_{s+\gotd_3}+\lVert \beta \rVert_{s+\gotd_3})\\
%+\lVert \Delta_{12} a \rVert_{p+\gotd_3} \lVert \beta \rVert_{p+\gotd_3}\\
%%+\lVert \Delta_{12} a \rVert_{s+\gotd_3} \lVert \beta \rVert_{s_0+\gotd_3}\\
%\mathbb{M}^{\gamma}_{\mathcal{R}_2}(s, \tb) &\le_{s, \rho}\lVert \beta \rVert^{\g, \calO}_{s+\gotd_3}+\lVert \beta \rVert^{\g, \calO}_{s_0+\gotd_3}\lVert a \rVert^{\g, \calO}_{s+\gotd_3},\\
%\mathbb{M}_{\Delta_{12} \mathcal{R}_2  }(p, \tb)  &\le_{p, \rho} \lVert \Delta_{12} \beta   \rVert_{p+\gotd_3}
%%+\lVert \Delta_{12} \beta   \rVert_{s_0+\gotd_3}(\lVert a \rVert_{s+\gotd_3}+\lVert \beta \rVert_{s+\gotd_3})\\
%+\lVert \Delta_{12} a \rVert_{p+\gotd_3} \lVert \beta \rVert_{p+\gotd_3}.
%%+\lVert \Delta_{12} a \rVert_{s+\gotd_3} \lVert \beta \rVert_{s_0+\gotd_3}.
\end{aligned}
\end{equation*}
Then,  by \eqref{inbianco}, we conclude
\begin{align}
&W^{\tau} \mathcal{L}^0 (W^{\tau})^{-1}=\mathcal{D}_{\omega}-J\circ (m+a_+)+\calQ_*,\label{ipa}\\
\calQ_* &:=\mathcal{A}^{\tau} \op(\mathtt{q}+r_0) (\mathcal{A}^{\tau})^{-1}+\mathcal{A}^{\tau}(\widehat{\mathcal{Q}}+\mathcal{R}_0) (\mathcal{A}^{\tau})^{-1}+\op(r_1+r_2)+\mathcal{R}_1+\mathcal{R}_2.\label{ipa2}
\end{align}
By Theorem \ref{EgorovQuantitativo} and Lemma \ref{preparailsugo} we have
\begin{equation}\label{ipa3}
\mathcal{A}^{\tau} \op(\mathtt{q}+r_0) (\mathcal{A}^{\tau})^{-1}=\op(r_3)+\mathcal{R}_3, \quad \mathcal{A}^{\tau} (\widehat{\mathcal{Q}}+\mathcal{R}_0)(\mathcal{A}^{\tau})^{-1}=\mathcal{R}_4
\end{equation}
where $r_3\in S^{-1}$ and $\mathcal{R}_3$, $\mathcal{R}_4\in \gotL_{\rho, p}$. In order to bound $r_3$ we use \eqref{zeppelin} with $w=\mathtt{q}+r_0$ so that
\begin{equation}
\lvert w \rvert^{\g, \calO}_{-1, s, \alpha}\le_{s, \alpha, \rho} \tk_1+\tk_2 \lVert p\rVert^{\g, \calO}_{s+\gotd_4}+\lVert \beta \rVert^{\g, \calO}_{s+\gotd_4}+\lVert a \rVert^{\g, \calO}_{s+\gotd_4}.
\end{equation}
Note that in the formula \eqref{zeppelin} (recall the notations used in formula \eqref{zeppelin} and the fact that 
$k_1$, $k_2$, $k_3\geq 0$ and $k_1+k_2+k_3=s$) 
we have by interpolation
\begin{equation*}
\begin{aligned}
\lvert w \rvert^{\gamma, \calO}_{-1, k_1, \alpha+k_2+\su} \lVert \beta \rVert^{\gamma, \calO}_{k_3+\su}
%\le (\tk_1+\tk_2 \lVert p\rVert^{\g, \calO}_{s_1+\gotd_4}+\lVert \beta \rVert^{\g, \calO}_{s_1+\gotd_4}
%+\lVert a \rVert^{\g, \calO}_{s_1+\gotd_4}) \lVert \beta \rVert^{\gamma, \calO}_{s_3+\su}\\
%& 
&\le_s  (\tk_2 \lVert p\rVert^{\g, \calO}_{s+\gotd_5}+\lVert \beta \rVert^{\g, \calO}_{s+\gotd_5}+\lVert a \rVert^{\g, \calO}_{s+\gotd_5})\lVert \beta \rVert^{\gamma, \calO}_{s_0+\gotd_5}\\
&+\lVert \beta \rVert_{s+\gotd_5}(\tk_1+\tk_2 \lVert p\rVert^{\g, \calO}_{s_0+\gotd_5}+\lVert \beta \rVert^{\g, \calO}_{s_0+\gotd_5}+\lVert a \rVert^{\g, \calO}_{s_0+\gotd_5}).
\end{aligned}
\end{equation*}
Thus we get by \eqref{filini}
\begin{equation*}
\begin{aligned}
\lvert r_3 \rvert^{\g, \calO}_{-1, s, \alpha} +\mathbb{M}^{\gamma}_{\mathcal{R}_3}(s, \tb)
&\le_{s, \alpha, \rho} \tk_1+\tk_2 \lVert p\rVert^{\g, \calO}_{s+\gotd_5}
+\lVert \beta \rVert^{\g, \calO}_{s+\gotd_5}+\lVert a \rVert^{\g, \calO}_{s+\gotd_5},\\
\lvert \Delta_{12} r_3   \rvert_{-1, p, \alpha} +
 \mathbb{M}_{\Delta_{12} \mathcal{R}_3}(p, \tb) &\le_{p, \alpha, \rho} 
%\big(\tk_1+\tk_2 \lVert p\rVert_{s+\gotd_5}+\lVert \beta \rVert_{s+\gotd_5}
%+\lVert a \rVert_{s+\gotd_5}\big) \lVert \Delta_{12} \beta   \rVert_{s_0+\gotd_5}\\
%&+
\mathtt{k}_3(\lVert \Delta_{12} p \rVert_{p+\gotd_5}+\lVert \Delta_{12} p \rVert_{s_0+\gotd_5}\lVert p \rVert_{p+\gotd_5})+\lVert \Delta_{12}\beta \rVert_{p+\gotd_5}+\lVert \Delta_{12} a \rVert_{p+\gotd_5}.
%&+\big(\mathtt{k}_3\lVert \Delta_{12} p \rVert_{s_0+\gotd_5}+\lVert \Delta_{12}\beta \rVert_{s_0+\gotd_5}+\lVert \Delta_{12} a %\rVert_{s_0+\gotd_5}\big) \lVert \beta \rVert_{s+\gotd_5},\\
%\mathbb{M}^{\gamma}_{\mathcal{R}_3}(s, \tb) &\le_{s, \rho} \tk_1+\tk_2 \lVert p\rVert^{\g, \calO}_{s+\gotd_5}+\lVert \beta \rVert^{\g, \calO}_{s+\gotd_5}+\lVert a \rVert^{\g, \calO}_{s+\gotd_5},\\
% \mathbb{M}_{\Delta_{12} \mathcal{R}_3}(p, \tb)    &\le_{p,\rho} 
% %\big(\tk_1+\tk_2 \lVert p\rVert_{s+\gotd_5}+\lVert \beta \rVert_{s+\gotd_5}
% %+\lVert a \rVert_{s+\gotd_5}\big) \lVert \Delta_{12} \beta   \rVert_{s_0+\gotd_5}\\
%%&+
%\mathtt{k}_3(\lVert \Delta_{12} p \rVert_{p+\gotd_5}+\lVert \Delta_{12} p \rVert_{s_0+\gotd_5}\lVert p \rVert_{p+\gotd_5})+\lVert \Delta_{12}\beta \rVert_{p+\gotd_5}+\lVert \Delta_{12} a \rVert_{p+\gotd_5}.
%\\
%&+\big(\mathtt{k}_3\lVert \Delta_{12} p \rVert_{s_0+\gotd_5}
%+\lVert \Delta_{12} a \rVert_{s_0+\gotd_5}\big) \lVert \beta \rVert_{s+\gotd_5}.
\end{aligned}
\end{equation*}
Moreover by \eqref{filini}
\begin{equation*}
\begin{aligned}
\mathbb{M}^{\g}_{\mathcal{R}_4}(s, \tb)&\le_{s, \rho} \mathbb{M}^{\g}_{\widehat{\calQ}}(s, \tb)+ \|\be\|^{\g,\calO}_{s+\gotd_6} + \|\be\|^{\g,\calO}_{s_0+\gotd_6}(\tk_1+\tk_2 \|p\|^{\g,\calO}_{s+\gotd_6}+\|a\|^{\g,\calO}_{s+\gotd_6}),\\
\mathbb{M}_{\Delta_{12} \mathcal{R}_4}(p, \tb) &\le_{p, \rho}   \mathbb{M}_{\Delta_{12}\widehat{\calQ}}(s, \tb)+ \| \Delta_{12}\be\|_{s+\gotd_6}  \\ 
%&+ \|\Delta_{12}\be\|_{s_0+\gotd_6}(\tk_1+\tk_2 \|p\|_{s+\gotd_6}+
%\|a\|_{s+\gotd_6}+\|\bt\|_{s+\gotd_6} +
%\mathbb{M}^{\g}_{\widehat\cQ}(s, \tb))\\
 &+ \|\be\|_{p+\gotd_6}(\mathtt{k}_3( \|\Delta_{12}p\|_{p+\gotd_6}+ \|\Delta_{12}p\|_{p+\gotd_6}\|p\|_{p+\gotd_6})+ \|\Delta_{12}a\|_{p+\gotd_6}.\\
 % &+ \|\bt\|_{s+\gotd_6}(\mathtt{k}_3\|\Delta_{12}p\|_{s_0+\gotd_6}+\|\Delta_{12}a\|_{s_0+\gotd_6}+\mathbb{M}_{\Delta_{12} %\widehat\cQ}(s_0, \tb))
\end{aligned}
\end{equation*}
By \eqref{ipa2} and \eqref{ipa3} $\calQ_*$ in \eqref{ipa} is
\begin{equation*}
\calQ_*=\op(\mathtt{q}_+)+\widehat{\mathcal{Q}}_*, \quad \mathtt{q}_+:=r_1+r_2+r_3, \quad \widehat{\mathcal{Q}}_*:=\mathcal{R}_1+\mathcal{R}_2+\mathcal{R}_3+\mathcal{R}_4.
\end{equation*}
In particular, by the discussion above we have that 
%with the following bounds 
the bounds
\eqref{lapiuimportante} hold with $\s_{3}\geq \gotd_5$ while
bounds \eqref{JeTame} and \eqref{ServelloniMazzantiVienDalMare} hold with $\s_{3}\geq\gotd_6$.
This concludes the proof.
\end{proof}

\paragraph{Straightening theorem.}\label{tiraddrizzotutto}

By Proposition \ref{ConjugationLemma} the coefficient $a_+$ of the transformed operator $\mathcal{L}_+=\Psi \mathcal{L} \Psi^{-1}$ (see \eqref{ellepiu}) is given by \eqref{Round}.
The aim of this section is to find a function $\beta$ (see \eqref{pseudo}), or equivalently a flow $\Psi$ of \eqref{diffeotot}, such that $a_+$ is a constant, namely such that the following equation is solved (recall \eqref{ignobel})
\begin{equation}\label{cubalibera}
\omega\cdot\partial_{\varphi} \tilde{\beta}-(m+a)(1+\tilde{\beta}_x)=\mbox{constant}.
\end{equation}
This issue is tantamount to finding a change of coordinates that straightens the $1$-order vector field
\[
\omega\cdot \frac{\partial}{\partial \varphi}-(m+a(\varphi, x)) \frac{\partial}{\partial x}.
\]
This is the content of the following proposition.
Actually this is a classical result on vector fields on a torus (\cite{M2}), but for our purposes we need a version which provides quantitative tame estimates on the Sobolev norms.

\begin{prop}\label{moser}
Let $\cO_0\subseteq \mathbb{R}^{\nu}$ be a compact set. Consider for $\omega\in \cO_0$ a Lipschitz family of vector fields  on $\T^{\nu+1}$
\begin{equation}\label{mao1}
\begin{aligned}
&X_0:=	\omega\cdot\frac{\partial}{\partial \varphi} - (m_0 + a_0(x,\varphi;\omega))\frac{\partial}{\partial x} \,,\quad \frac23<m_0<\frac32\,, |m_0|^{lip}\le M_0<1/2\\
& a_0\in H^s(\T^{\nu+1},\R)\quad  \forall s\geq s_0.
\end{aligned}
\end{equation}
Moreover $a_0(x,\f;\oo)=a_0(x,\varphi,\mathfrak{I}(\oo);\omega)$ and it is  Lipschitz in the variable $\mathfrak{I}$.
There exists $\delta_\star=\delta_\star(s_1)>0$ and $s_1 \geq s_0+2\tau+4$  such that, for any $\g>0$  if
\begin{equation}\label{picci}
C(s_1)\g^{-1} \|a_0\|^{\g,\mathcal O_0}_{s_1} :=\delta \le \delta_\star 
\end{equation}
then there exists a Lipschitz  function $m_{\infty}(\omega)=m_{\infty}(\oo,\mathfrak{I}(\oo))$ with  
$1/2<m_{\infty}< 2\,$ and  $ \lvert m_{\infty}-m_0\rvert^{\gamma}\leq \gamma \delta $ with 
$\forall\omega\in \Omega_{\varepsilon}\,$
%\begin{equation}\label{tordo4}
%\frac12<m_{\infty}< 2\,,\quad \forall\omega\in \Omega_{\varepsilon}\,,\qquad \lvert m_{\infty}-m_0\rvert^{\gamma}\leq \gamma \delta_{\star}, 
%\end{equation}
such that in the set
\begin{equation}\label{buoneacque}
\calP_{\infty}^{2 \gamma}=\calP_{\infty}^{2\gamma}(\mathfrak{I}):= \left\{\omega\in \cO_0: \; |\omega\cdot \ell - m_{\infty}(\omega) j |>\frac{2\g }{\langle \ell\rangle^{\tau}}\,,\;\forall \ell \in \Z^\nu, \,\,\forall j\in \mathbb{Z}\setminus\{0\}\right\}
\end{equation}
the following holds. For all $\omega\in \calP_{\infty}^{2 \gamma}$ one has $ |\Delta_{12}m_{\infty} |\leq 2 \lvert \Delta_{12} \langle a_0 \rangle  \rvert$ 
%\begin{equation}\label{orcocane}
%\;\;\; |\Delta_{12}m_{\infty} |\leq 2 \lvert \Delta_{12} \langle a_0 \rangle  \rvert
%\end{equation}
and there exists a smooth map %(la vogliamo definita solo su $\cO_\g$ o la vogliamo estesa su tutto?)
\begin{equation}\label{perunpugnodidollari}
\beta^{(\infty)}: \calP_0\times \T^{\nu+1}\to \R \,,\quad  \|\beta^{(\infty)} \|^{\g,\mathcal O_0}_{s} \le_s  \g^{-1}\|a_0\|^{\g,\mathcal O_0}_{s+2\tau+4},
\quad \forall s\geq s_0
\end{equation}
so that $\Psi^{(\infty)}: (\f,x) \mapsto (\f,x+\beta^{(\infty)}(\f,x))$ is a diffeomorphism of $\T^{\nu+1}$ and for all 
$\omega\in \calP_{\infty}^{2 \gamma}$
\begin{equation}\label{tordo6}
\Psi^{(\infty)}_* X_0 := \omega\cdot\frac{\partial}{\partial \varphi} + (\Psi^{(\infty)})^{-1}\big(\omega\cdot \partial_{\varphi}\beta^{(\infty)} - (m_0+a_0)(1+\beta^{(\infty)}_x)\big)\frac{\partial}{\partial x}= \omega\cdot\frac{\partial}{\partial \varphi} - m_\infty (\omega)\frac{\partial}{\partial x}.
\end{equation}
\end{prop}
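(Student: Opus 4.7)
The plan is a Nash--Moser/KAM iteration that conjugates $X_0$ to the constant coefficient field $\omega\cdot\partial_\varphi-m_\infty\partial_x$ through a convergent composition of small torus diffeomorphisms $(\varphi,x)\mapsto(\varphi,x+\beta_n(\varphi,x))$. Because the transport speed is modified along the scheme, we must iteratively solve a small divisor equation whose divisors themselves depend on the step, and propagate the inclusion of the final Cantor set into all the intermediate ones.

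At step $n$ I would assume $X_n=\omega\cdot\partial_\varphi-(m_n+a_n(\varphi,x))\partial_x$ with $a_n$ small in some norm $\|\cdot\|_{s_1}^{\gamma}$ and search for $\beta_n$ so that the push-forward $(\Psi_{\beta_n})_*X_n$ has the same form with $a_{n+1}=O(a_n^2)$ and $m_{n+1}=m_n+\langle a_n\rangle$, where $\langle\cdot\rangle$ denotes the space--time mean over $\T^{\nu+1}$. Linearising the push-forward identity one obtains the homological equation
\[
\omega\cdot\partial_\varphi\beta_n-m_n\,\partial_x\beta_n=a_n-\langle a_n\rangle,
\]
which is solved in Fourier by $(\widehat{\beta_n})_{\ell,j}=(\widehat{a_n})_{\ell,j}/[\mathrm{i}(\omega\cdot\ell-m_n j)]$ for $(\ell,j)\ne(0,0)$ on the Cantor set $\calP_n^{2\gamma}:=\{\omega:|\omega\cdot\ell-m_n j|>2\gamma\langle\ell\rangle^{-\tau}\}$. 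Standard Whitney--Lipschitz extension across $\omega$ produces the tame estimate $\|\beta_n\|_s^{\gamma,\calP_n^{2\gamma}}\le_s \gamma^{-1}\|a_n\|_{s+2\tau+1}^{\gamma}$, where the $2\tau$ originates from the squared small divisor in the Lipschitz variation (one $\langle\ell\rangle^{\tau}$ for $\sup$ and one more for $lip$). The new remainder is then the nonlinear error of the push-forward, which by a direct computation is genuinely quadratic, namely bounded by $\|a_n\|_s\|\beta_n\|_{s_0+1}+\|a_n\|_{s_0}\|\beta_n\|_{s+1}$ plus tame corrections of the same form.

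Convergence of $\Psi^{(\infty)}=\lim_{n\to\infty}\Psi_{\beta_n}\circ\cdots\circ\Psi_{\beta_0}$ is enforced by Moser ultraviolet truncations $\Pi_{N_n}$ with $N_n=N_0^{\chi^n}$ for some $\chi\in(1,2)$: one solves the homological equation with $\Pi_{N_n}a_n$ in place of $a_n$ and controls the tail $\Pi_{N_n}^\perp a_n$ by the smoothness of $a_0$. Iterating the inequality $\|a_{n+1}\|_{s_1}^\gamma\le C(\|a_n\|_{s_1}^\gamma)^2+C N_n^{-\kappa}\|a_0\|_{s_1+\mu}^\gamma$ under the smallness assumption \eqref{picci} yields geometric convergence of $\|a_n\|_{s_1}^\gamma$ to zero and Cauchy convergence of $\{\beta_n\}$, whence \eqref{perunpugnodidollari}. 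Since $|m_{n+1}-m_n|\le\|a_n\|_{s_0}$ is summable, $m_n\to m_\infty$ with $|m_\infty-m_0|^\gamma\le C\gamma\delta$; the Lipschitz bound $|\Delta_{12}m_\infty|\le 2|\Delta_{12}\langle a_0\rangle|$ follows because at each step the increment $\Delta_{12}\langle a_n\rangle$ is quadratically smaller than $\Delta_{12}\langle a_0\rangle$, so the telescopic sum is dominated by its first term.

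The main obstacle is to show that the prescribed final Cantor set $\calP_\infty^{2\gamma}(\mathfrak I)$ is contained in every intermediate $\calP_n^{2\gamma}$, so that all the maps $\Psi_{\beta_n}$ are simultaneously well defined on it. The inclusion is proved by a standard telescoping estimate of the form $|(\omega\cdot\ell-m_n j)-(\omega\cdot\ell-m_\infty j)|=|m_\infty-m_n||j|$ combined with the truncation $|\ell|\le N_n$ and the smallness of $|m_\infty-m_n|\le C\gamma\delta$; here the balance between the decay rate of $|m_\infty-m_n|$ and the size of $N_n$ is delicate and is precisely what forces the $2\tau+4$ loss of derivatives in \eqref{perunpugnodidollari}. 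Once this inclusion is secured, the remaining estimates (tame bounds for $\beta^{(\infty)}$ and its Lipschitz variations in $\omega$ and in $\mathfrak I$) follow from the telescopic sum of the single-step tame inequalities together with the Moser composition lemma for torus diffeomorphisms.
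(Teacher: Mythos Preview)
Your proposal is correct and follows the standard Nash--Moser/KAM iteration for straightening a vector field on the torus, which is precisely the content of the reference \cite{FGMP} that the paper cites in lieu of a proof. The paper does not provide an independent argument here; it simply defers to Corollary~3.6 of \cite{FGMP}, whose scheme (iterative solution of the homological equation with ultraviolet truncations, quadratic convergence of the remainders, and the Cantor set inclusion $\calP_\infty^{2\gamma}\subseteq\calP_n^{2\gamma}$ via the telescoping estimate on $|m_\infty-m_n|$) matches your outline.
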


\begin{proof}
We refer to Corollary $3.6$ of \cite{FGMP} which is a generalization of Proposition \ref{moser} in the case $x\in \T^d$ with $d\geq 1$.
\end{proof}
%\red{Note that in our case $m_0=1$ and hence the Lipschitz constant $M_0=0$.}
\begin{lem}\label{Anagrafe6}
Under the assumption of Proposition \ref{moser}, the function $\beta^{(\infty)}$ defined in the Proposition \ref{moser} satisfies the following estimate on the variation of the variable $i(\oo)$: 
$\lVert \Delta_{12} \beta^{(\infty)}  \rVert_{p}\le C \gamma^{-1} \lVert \Delta_{12} a_0   \rVert_{p+\s}$
%\begin{equation}\label{orcocane2}
%\lVert \Delta_{12} \beta^{(\infty)}  \rVert_{p}\le C \gamma^{-1} \lVert \Delta_{12} a_0   \rVert_{p+\s}
%\end{equation}
for some $\s>0$ such that $p+\s<s_1$.
\end{lem}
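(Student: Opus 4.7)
The plan is to revisit the iterative (KAM-type) construction of $\beta^{(\infty)}$ from Corollary $3.6$ of \cite{FGMP}, and, in parallel with the Lipschitz-in-$\omega$ estimate already produced there, track at each step the $\Delta_{12}$-variation of the relevant functions. Recall that the scheme produces a sequence $X_n = \omega\cdot\partial_\varphi - (m_n+a_n(\varphi,x))\partial_x$ where $a_0$ is the given datum, $m_{n+1}=m_n+\langle a_n\rangle$, and $a_{n+1}$ is the coefficient of the vector field conjugated by a change of variables $(\f,x)\mapsto (\f,x+\hat\beta_n(\f,x))$, with $\hat\beta_n$ obtained by solving
\[
\omega\cdot\partial_\varphi \hat\beta_n = a_n-\langle a_n\rangle.
\]
The final straightening is $\beta^{(\infty)}=\lim_{n\to\infty}\beta^{(n)}$ with $\beta^{(n)}$ a suitable composition of the $\hat\beta_k$, $k\le n$. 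Since each $\hat\beta_n$ and $m_n$ depends on $\mathfrak{I}$ only through $a_0$ (and the previously constructed $\hat\beta_k$), the difference operator $\Delta_{12}$ can be propagated along the iteration.

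The key step is to show inductively that there exist constants $N_n$ with $N_n \to N_\infty < \infty$ such that
\[
\lVert \Delta_{12}a_n\rVert_{p}\le N_n\,\varepsilon_n\lVert\Delta_{12}a_0\rVert_{p+\sigma},\qquad
\lVert \Delta_{12}\hat\beta_n\rVert_{p}\le N_n\gamma^{-1}\lVert\Delta_{12}a_0\rVert_{p+\sigma},
\]
where $\varepsilon_n:=\|a_n\|^{\gamma,\calO_0}_{s_1}$ is the super-exponentially decreasing sequence produced by the scheme. The bound for $\hat\beta_n$ comes from differentiating its defining cohomological equation: applying $\Delta_{12}$ gives
\[
\omega\cdot\partial_\varphi (\Delta_{12}\hat\beta_n)=\Delta_{12}a_n-\Delta_{12}\langle a_n\rangle,
\]
which is inverted via the same diophantine lower bound on $|\omega\cdot\ell|$ used in \cite{FGMP}, producing the $\gamma^{-1}$ loss of a fixed number $\tau+1$ of derivatives. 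The bound for $\Delta_{12}a_{n+1}$ then follows from writing $a_{n+1}$ as the pushforward of $a_n$ by the diffeomorphism generated by $\hat\beta_n$, applying the Leibniz/composition estimates used in \cite{FGMP}, and using the already established $\|\hat\beta_n\|_{p+\sigma}\le_s \gamma^{-1}\varepsilon_n$ together with the inductive hypothesis. The quadratic smallness of $\varepsilon_n$ absorbs all the losses of derivatives incurred by iterated interpolation, exactly as in the standard Lipschitz-in-$\omega$ estimate.

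The main subtle point, as in \cite{FGMP}, is that the loss of derivatives $\sigma$ must be fixed, independent of $n$, so one must work in the low-norm scale $H^p$ (with $p+\sigma \le s_1$) where the smallness \eqref{picci} ensures the convergence of the scheme for $\Delta_{12}$-quantities; no additional regularity of $\mathfrak{I}_1-\mathfrak{I}_2$ beyond that already required by \eqref{ipopiccolezza} is needed. Passing to the limit $n\to\infty$ in the telescopic sum $\beta^{(\infty)}-\beta^{(n)}=\sum_{k\ge n}(\beta^{(k+1)}-\beta^{(k)})$ and using the super-exponential decay of $\varepsilon_n$, we obtain
\[
\lVert \Delta_{12}\beta^{(\infty)}\rVert_p \le C\gamma^{-1}\lVert\Delta_{12}a_0\rVert_{p+\sigma},
\]
which is the claim, with $\sigma$ coinciding (up to a universal multiple) with the loss $2\tau+4$ appearing in \eqref{perunpugnodidollari}. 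The only genuine obstacle is bookkeeping: ensuring that the constants $N_n$ remain uniformly bounded under the smallness \eqref{picci}; this follows from the same telescoping argument used in \cite{FGMP} for the sup-Lipschitz norm in $\omega$.
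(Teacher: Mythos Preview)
The paper's own proof is simply a citation: it refers to Corollary~3.3 of \cite{FGMP} and says nothing further. Your proposal instead sketches what that argument actually contains---namely, running the KAM iteration that produces $\beta^{(\infty)}$ and propagating $\Delta_{12}$ through each step---so the approaches coincide in substance; you are just unpacking the cited result. One minor correction: the paper points to Corollary~3.3 of \cite{FGMP} for the $\Delta_{12}$-estimate, while Corollary~3.6 is the existence statement underlying Proposition~\ref{moser}; you conflate the two references.

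There is also a small glitch in your inductive hypothesis. You write $\lVert \Delta_{12}a_n\rVert_{p}\le N_n\,\varepsilon_n\lVert\Delta_{12}a_0\rVert_{p+\sigma}$, but for $n=0$ this reads $\lVert\Delta_{12}a_0\rVert_p\le N_0\varepsilon_0\lVert\Delta_{12}a_0\rVert_{p+\sigma}$ with $\varepsilon_0$ small, which is false. The correct formulation is either to start the bound at $n\ge 1$, or to drop the $\varepsilon_n$ factor and instead track separately that $\lVert\Delta_{12}a_n\rVert_p$ decreases (at the same super-exponential rate as $\varepsilon_n$) while $\lVert\Delta_{12}\hat\beta_n\rVert_p\le C\gamma^{-1}\lVert\Delta_{12}a_n\rVert_{p+\tau+1}$. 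Once this is fixed the telescoping argument goes through exactly as you describe, and the final constant $\sigma$ is indeed of the order $2\tau+4$ appearing in \eqref{perunpugnodidollari}.
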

\begin{proof}
We refer to Corollary $3.3$ of \cite{FGMP}.
\end{proof}

\subsection{Proof of Theorem \ref{risultatosez8}}

Consider the vector field
\begin{equation}\label{vectorA}
\omega\cdot\frac{\partial}{\partial \varphi} - (1 + a(x,\varphi;\omega))\frac{\partial}{\partial x}
\end{equation}
for $\omega\in\calO_0$ given in \eqref{0Meln}.
By taking $\mu$ in \eqref{ipopiccolezza} large enough and
$\e$ in \eqref{opQ2} small enough
 %by recalling the assumption \eqref{condizionepiccolezza} 
we have that the condition \eqref{picci} is satisfied. Thus 
we apply Proposition \ref{moser} with $a_0\rightsquigarrow a$ in \eqref{vectorA} and $m_0\rightsquigarrow 1$.
Then there exist a constant $m(\omega)=m_{\infty}(\omega)$ 
and a function $\tilde{\beta}(\omega)$ defined on the set 
$\calP_{\infty}^{2\gamma}=\Omega_1$ (see \eqref{buoneacque} and  \eqref{prime}) 
such that (recall \eqref{tordo6})
\begin{equation}\label{liverpool}
\mathcal{T}_{\tilde{\beta}}^{-1}\Big(  \omega\cdot \partial_{\varphi} \tilde{\beta} - (1+a)(1+\tilde{\beta}_x)  \Big) =-m.
\end{equation}
Let $\beta$ be the function such that $(\varphi,x)\mapsto (\varphi,x+\beta(\varphi, x))$ is the inverse diffeomorphism  of $(\varphi,x)\mapsto (\varphi,x+\tilde{\beta}(\varphi, x))$ and let $\Psi^{\tau}$ be the flow of the Hamiltonian PDE
\begin{equation*}
u_{\tau}=\big(J \circ b(\tau)\big)\,u, \qquad b(\tau):=b(\tau, \varphi, x)=\frac{\beta}{1+\tau \beta_x}.
\end{equation*}
%where
%\begin{equation}\label{saintetienne}
%b(\tau):=b(\tau, \varphi, x)=\frac{\beta}{1+\tau \beta_x}.
%% \qquad \lVert \beta\rVert_s^{\g, \calO_0}\le_s \g^{-1}\lVert a \rVert_{s+2\tau+4}^{\g, \calO_{\infty}^{2 \g}}, \quad \forall s\geq s_0.
%\end{equation}
Let us call $\Phi_1:=\Psi^{1}$ and recall that $\Phi_1=\Phi_1(\omega)$ is defined for $\omega\in \Omega_1$. We apply Proposition \ref{ConjugationLemma} to $\mathcal{L}_{\omega}$ in \eqref{LomegaDP} and we get
\begin{equation}\label{BrigitteBardot}
\Phi_1\,\mathcal{L}_{\omega}\,\Phi^{-1}_1=\mathcal{D}_{\omega}-J\circ(1+a_+)+\mathcal{R},
\end{equation}
where, by \eqref{Round} and \eqref{liverpool},
\[
a_+(\varphi, x)=m-1
\]
and $\mathcal{R}=\op(\mathtt{r})+\widehat{\mathcal{R}}$, $\mathtt{r}=\mathtt{r}(\omega)\in S^{-1}$, $\widehat{\mathcal{R}}\in\gotL_{\rho, p}(\Omega_1)$.
Hence we have
\begin{equation}\label{BrigitteBardot1000}
\Phi_1\,\mathcal{L}_{\omega}\,\Phi^{-1}_1=\mathcal{D}_{\omega}-m J+\mathcal{R}.
\end{equation}
By \eqref{opQ2} one has that Proposition \ref{moser} implies
%\eqref{tordo4}, \eqref{picci} and \eqref{orcocane} we obtain 
\eqref{clinica100}, \eqref{clinica1000}.
By \eqref{opQ2}, \eqref{perunpugnodidollari} the bound \eqref{lapiuimportante} reads as
\[
\lvert \mathtt{r} \rvert^{\g, \Omega_1}_{-1, s, \alpha}\le \varepsilon \g^{-1} \lVert \mathfrak{I}\rVert^{\g, \cO_0}_{s+\hat{\s}}, \quad \lvert \Delta_{12} \mathtt{r}  \rvert_{-1, p, \alpha}\le_{p} 
\varepsilon \gamma^{-1} (1+\lVert \mathfrak{I} \rVert_{p+\hat{\s}})
\lVert \mathfrak{I}_1-\mathfrak{I}_2 \rVert_{p+\hat{\s}},
\]
for some $\hat{\s}>0$, since $\mathtt{k}_1=0$, $\mathtt{k_2}=\varepsilon$,$\mathtt{p}=\mathfrak{I}$.
Moreover by \eqref{JeTame}, since $\mathtt{k}_3=\varepsilon$, for $0\le \tb\le \rho-2$ and $s_0\le s\le \mathcal{S}$
\begin{equation}\label{capoinb}
\mathbb{M}^{\gamma}_{\widehat{\mathcal{R}}}(s, \tb) \le_s \varepsilon \g^{-1}\lVert \mathfrak{I}\rVert_{s+\hat{\s}}^{\gamma, \calO_0}
\end{equation}
and by \eqref{ServelloniMazzantiVienDalMare}, and Lemma \ref{Anagrafe6}, % \eqref{orcocane2}
for $0\le \tb\le \rho-3$ and $s_0\le s\le \mathcal{S}$, we get
\begin{equation}\label{capoinb2}
\begin{aligned}
\mathbb{M}_{ \Delta_{12} \widehat{\mathcal{R}}  }(p, \tb)&\le_{p} 
\varepsilon \gamma^{-1} (1+\lVert \mathfrak{I} \rVert_{p+\hat{\s}})
\lVert \mathfrak{I}_1-\mathfrak{I}_2 \rVert_{p+\hat{\s}}.
%\varepsilon \gamma^{-1}(\lVert i_1-i_2 \rVert_{s+
%\hat{\s}}
%+
%\lVert \mathfrak{I}_{\delta} \rVert_{s+\hat{\s}}\lVert i_1-i_2 \rVert_{s_0+\hat{\s}}),
\end{aligned}
\end{equation}
The bound \eqref{grecia} follows by Corollary \ref{CoroDPdiffeo}, in particular by \eqref{flow2}, and \eqref{perunpugnodidollari}.

\section{Diagonalization}\label{SezioneDiagonalization}

The aim of this section is to prove Theorem \ref{ReducibilityDP}. We first provide an abstract result for $-1$-modulo tame operators.

\subsection{A KAM reducibility  result for modulo-tame vector fields}

\noindent
We say that a bounded linear operator $\mathbf B=\mathbf B(\f)$ 
is Hamiltonian  if $\mathbf B(\f) u$ is a linear  Hamiltonian vector field w.r.t. the symplectic form $J$. 
This means that
the corresponding Hamiltonian  
$\frac12(u, J^{-1}\mathbf B(\f) u )$ is a real quadratic function 
provided that $u_j=\bar u_{-j}$ and $\f\in \T^\nu$.
In matrix elements this means that 
\[
(J^{-1}\mathbf B(\f))_j^{j'}=  (J^{-1}\mathbf B(\f))_{j'}^{j}\,,
\quad \overline{(J^{-1}\mathbf B)_j^{j'}(\ell)}=  (J^{-1}\mathbf B)_{-j}^{-j'}(-\ell)
\]
or more explicitely:
\begin{equation}\label{lemettoanchio}
\mathbf B_j^{j'}(\f)=  -\frac{\omega(j)}{\omega( j')} \mathbf B_{-j'}^{-j}(\f)\,,
\quad \overline{\mathbf B_j^{j'}(\ell)}=  \mathbf B_{-j}^{-j'}(-\ell).
\end{equation}
This representation is convenient in the present setting because it keeps track of the Hamiltonian structure and
\[
\mathcal B= \frac12(u,J^{-1}\mathbf B(\f) u )\,,\quad  
\mathcal G= \frac12(u,J^{-1}\mathbf G(\f) u )\;\Rightarrow 
\{\mathcal B,\mathcal G\}= \frac12(u, J^{-1}[\mathbf B,\mathbf G] u ).
\]
We introduce the following parameters 
\begin{equation}\label{parametriKAM}
\tau= 2\nu+6,\quad \tb_0:=6\tau+6. 
\end{equation}
In order to prove the Theorem \ref{ReducibilityDP} we need to work in the class of Lip-$-1$-majorant tame operators (see Definition \ref{LipTameConstants})
and the proof is based on an abstract reducibility scheme for a class of tame operators.

We investigate the reducibility of a Hamiltonian operator of the form 
\begin{equation}\label{natali}
\mathbf{M}_0= \cD_0+\cP_0\,,\quad \cD_0= {\rm diag}(\mathrm{i} \, d_j^{(0)})\,,\quad d_j^{(0)}= m \left(\frac{j(4+j^2)}{1+j^2}\right).	
\end{equation}
Here the functions $d_j^{(0)}$ are well defined and Lipschitz in the set $\calO_0$, 
$ |m-1|^{\g,\cO_0} \le C\e$, 
while  $\cP_0$ is defined and Lipschitz in $\omega$ belonging to the set $\Omega_1$.
  We fix 
\begin{equation}\label{anagrafe2}
\ta:=6\tau+4, \qquad {\tau_1:=2\tau+2}, 
\end{equation}
we require that  $\cP_0,\langle \partial_{\f}\rangle^{\tb_0} \cP_0$ are Lip- $-1$- modulo tame, with modulo-tame constants denoted by 
$\mathfrak M^{{{\sharp, \g^{3/2}}}}_{\cP_0}(s)$ and 
$\mathfrak M^{{{\sharp, \g^{3/2}}}}_{\cP_0}(s,{\mathtt b_0})$ 
respectively (recall Definitions \ref{def:op-tame}, \ref{menounomodulotame}), in the set $\Omega_1$.  
Moreover $m$ and $\cP_0$ and the set $\Omega_1$  
depend on $\mathfrak{I}=\mathfrak{I}(\omega)$ and satisfy the bounds 
\begin{align}
&   |\Delta_{12} m |\le K_1  \|\mathfrak{I}_1-\mathfrak{I}_2\|_{s_0+\s} \label{nomidimerda2} \\
&\|\lD^{1/2}\und{\Delta_{12}\cP_0 }\lD^{1/2}\|_{\mathcal L(H^{s_0})}, \,\, \|\lD^{1/2}\und{ \Delta_{12}\langle \partial_{\f}\rangle^{{\mathtt b_0}} \cP_0  }\lD^{1/2}\|_{\mathcal L(H^{s_0})} \le   K_2  \|\mathfrak{I}_1-\mathfrak{I}_2\|_{s_0+\s}\,,\notag
\end{align}
for some $\s,K_1,K_2>0$, 
for all $\omega\in \Omega_1(\mathfrak{I}_1)\cap \Omega_1(\mathfrak{I}_2)$ with 
\begin{equation}\label{constaintCiao}
K_1,\mathfrak M^{{{\sharp, \g^{3/2}}}}_{\cP_0}(s_0),\mathfrak M^{{{\sharp, \g^{3/2}}}}_{\cP_0}(s_0,{\mathtt b_0})\le K_2.
\end{equation}
We recall that
$\|\cdot \|_{\mathcal L(H^{s_0})}$ is the operatorial norm.
We associate to the operator \eqref{natali} the Hamiltonian
\[ 
\mathcal H_0(\eta, u):= \omega\cdot \eta + \frac12 (u, J^{-1} \mathbf{M}_0 u)_{L^2(\T_x)}.
\]
\begin{prop}[{\bf Iterative reduction}]\label{iterazione riducibilita}
Let $\s>0$ be the loss of derivatives in \eqref{nomidimerda2} and
consider an operator of the form \eqref{natali}.
For all $s \in [s_0, \mathcal{S}]$,
there is $ N_0 := N_0 (\mathcal{S}, {{\mathtt b_0}}) >0$ such that, if 
\begin{equation}\label{KAM smallness condition1}
N_0^{\tau_1}  {\mathfrak M}^{{{\sharp, \g^{3/2}}}}_{\cP_0}(s_0, {{\mathtt b_0}}) \gamma^{- 3/2} \leq 1\,, 
%\quad {\mathfrak M}^{{{\sharp, \g^{3/2}}}}_{0}(s, \tb):= {\mathfrak M}^{{{\sharp, \g^{3/2}}}}_{\cP_0}(s, \tb)
\end{equation}
 (recall \eqref{anagrafe2}) then, for all $k\geq 0$: 

\smallskip
\noindent
${\bf(S1)}_{k}$ there exists a sequence  of Hamiltonian operators
\begin{equation}\label{cal L nu}
\mathbf{M}_k= {\cD}_k + {\cP}_k\,, \quad 
\quad {\cal D}_k := {\rm diag}_{j \in \mathbb{Z}\setminus\{0\}} (\mathrm{i} \, d_j^{(k)})\,,
\end{equation}
with $d_j^{(k)}$ defined for $\omega\in\calO_0$ and 
\begin{equation}\label{mu j nu}
d_j^{(k)}(\omega) := d_j^{(0)} + r_j^{(k)}(\omega)\,,\quad \,r_j^{(0)} :=0 \;,\,\, r_j^{(k)}\in\R \,,\,\,\; r^{(k)}_j= - r^{(k)}_{-j}.
\end{equation}
The operators $\cP_k$ are defined for $k\geq 1$ in a set $\Omega_{k}^{\g^{3/2}}:=\Omega_{k}^{\g^{3/2}}(\mathfrak{I})$ defined as
\begin{align}\label{Omega nu + 1 gamma}
\Omega_k^{\g^{3/2}} & := 
\Big\{\omega \in \Omega_{k - 1}^{\g^{3/2}}  \, :  \, |\omega \cdot \ell  + d_j^{(k - 1)} -  d_{j'}^{(k - 1)}| \geq  
\frac{{\g^{3/2}}}{\langle \ell\rangle^\tau},\,\forall \lvert \ell \rvert\le N_{k-1},\, \forall j, j'\in \mathbb{Z}\setminus\{0\},\,(j, j', \ell)\neq (j, j, 0)\Big\}
\end{align}
where $\Omega_{0}^{\g^{3/2}}:=\Omega_{1}$ and $N_k:=N_0^{(3/2)^k}$.
Moreover  $\cP_k $ and  
	$ \langle\partial_\f \rangle^{{\mathtt b_0} }  {\cal P}_k$
	are $-1$-modulo-tame  with modulo-tame constants respectively
\begin{equation}\label{def:msharp}
{\mathfrak M}_k^{{{\sharp, \g^{3/2}}}} (s) :=  {\mathfrak M}_{{\cP}_k}^{{{\sharp, \g^{3/2}}}} (s) \, , \quad
{\mathfrak M}_k^{{{\sharp, \g^{3/2}}}} (s, {{\mathtt b_0}}) := 
{\mathfrak M}_{  {\cP}_k}^{{{\sharp, \g^{3/2}}}} (s,\tb_0),  \quad k\geq 0
\end{equation}
for all $s \in [s_0, \mathcal{S}] $. Setting $N_{-1}=1$, we have
\begin{equation}\label{stimaM}
{\mathfrak M}_k^{{{\sharp, \g^{3/2}}}} (s) \leq 
{\mathfrak M}^{{{\sharp, \g^{3/2}}}} _0 (s, {{\mathtt b_0}}) N_{k - 1}^{- {\mathtt a}}\,,\quad 
{\mathfrak M}_k^{{{\sharp, \g^{3/2}}}} ( s, {\mathtt b_0}) \leq  {\mathfrak M}^{{{\sharp, \g^{3/2}}}} _0 (s, {{\mathtt b_0}}) N_{k - 1}\,,
\end{equation}
while for all  $k\ge 1$ 
\begin{equation}\label{stima cal R nu}
\begin{aligned}
&\langle j\rangle |d_j^{(k)}- d_j^{(k-1)}|\le 
\mathfrak M_{0}^{{\sharp, \g^{3/2}}} (s_0, {{\mathtt b_0}})  N_{k - 2}^{- {\mathtt a}}.
\end{aligned}
\end{equation}

\noindent	
${\bf(S2)}_{k}$  For $ k \geq 1 $, 
there exists a linear symplectic change of variables $\cQ_{k - 1}$, defined in $\Omega_{k}^{\g^{3/2}}$ and  such that 
\begin{equation}\label{coniugionu+1}
{\mathbf M}_k := \cQ_{k-1} \omega\cdot \partial_\f \cQ_{k-1}^{-1} + \cQ_{k-1}\mathbf{M}_{k-1}\cQ_{k-1}^{-1} . 
\end{equation}
The operators $\Psi_{k - 1} :=\cQ_{k-1}- \mathrm{I}$
%\begin{equation}\label{volgo}
%\Psi_{k - 1} :=\cQ_{k-1}- \mathrm{I}
%\end{equation}
and $ \langle \partial_\f \rangle^{{\mathtt b_0}}  \Psi_{k-1} $,
are $ -1$-modulo-tame with modulo-tame constants satisfying, for all $s \in [s_0, \mathcal{S}] $,
\begin{equation}\label{tame Psi nu - 1}
{\mathfrak M}_{\Psi_{k - 1}}^{{{\sharp, \g^{3/2}}}} \! (s) \leq \g^{-3/2} 
N_{k - 1}^{{\tau_1}} N_{k - 2}^{- \mathtt a} {\mathfrak M}_0^{{\sharp, \g^{3/2}}} (s, {{\mathtt b_0}}) \, , \quad
{\mathfrak M}_{  \Psi_{k - 1}}^{{{\sharp, \g^{3/2}}}} \! (s,\tb_0) \leq  
\g^{-3/2} N_{k - 1}^{{\tau_1}} N_{k - 2}  {\mathfrak M}_0^{{\sharp, \g^{3/2}}} (s, {{\mathtt b_0}})\, . 
\end{equation}

\noindent		
${\bf(S3)}_{k}$ Let $ \mathfrak{I}_1(\omega )$, $ \mathfrak{I}_2(\omega) $ such that  
$ {\cP}_0(\mathfrak{I}_1)$,  ${\cP}_0(\mathfrak{I}_2 )$ satisfy \eqref{nomidimerda2}.
Then for all $\omega \in \Omega_k^{\gamma_1}(\mathfrak{I}_1) \cap \Omega_k^{\gamma_2}(\mathfrak{I}_2)$
with $\gamma_1, \gamma_2 \in [\g^{3/2}/2, 2 \g^{3/2}]$ we have
\begin{align}\label{stima R nu i1 i2}
& \! \! \| \lD^{1/2}\underline{\Delta_{12}{\cP}_k }\lD^{1/2} \|_{\calL(H^{s_0})}  \leq  
 K_2N_{k - 1}^{- \mathtt a}\|\mathfrak{I}_1-\mathfrak{I}_2\|_{s_0 +  \s }, \\ 
& \label{stima R nu i1 i2 norma alta}
\! \! \|  \lD^{1/2}\underline{\langle \partial_\f \rangle^{{\mathtt b_0}}\Delta_{12}{\cP}_k }\lD^{1/2} \|_{\calL(H^{s_0})} 
\leq K_2
 N_{k - 1} \|\mathfrak{I}_1-\mathfrak{I}_2\|_{ s_0 +  \s }\,.
\end{align}
Moreover for all $k = 1, \ldots , n$, for all $j \in { S}^c$, 
\begin{align}\label{r nu - 1 r nu i1 i2}
& \lr{j}\big|\Delta_{12}r_j^{(k)}  - \Delta_{12} r_j^{(k - 1)}   \big| \leq 
\| \lD^{1/2}\underline{\Delta_{12}{\cP}_k }\lD^{1/2} \|_{\calL(H^{s_0})} \,, \\
& \lr{j}\ |\Delta_{12} r_j^{(k)}  | \leq  
K_2 \|\mathfrak{I}_1-\mathfrak{I}_2   \|_{ s_0  + \s }\,. \label{r nu i1 - r nu i2}
\end{align}
		
\noindent		
${\bf(S4)}_{k}$ Let $\mathfrak{I}_1$, $\mathfrak{I}_2$ be like in ${\bf(S3)}_{k}$ and $0 < \rho < \g^{3/2}/2$. Then 
\begin{equation}\label{tab}
K_2  N_{k - 1}^{\tau +1} \|\mathfrak{I}_1 - \mathfrak{I}_2 \|_{ s_0 + \s } 
\leq \rho \quad \Longrightarrow \quad 
\Omega_k^{\g^{3/2}}(\mathfrak{I}_1) \subseteq \Omega_k^{\g^{3/2} - \rho}(\mathfrak{I}_2) \, . 
\end{equation}	
\end{prop}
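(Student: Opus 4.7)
\medskip

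\noindent\textbf{Proof proposal.}
The plan is a standard quadratic KAM iteration in the class of $-1$-modulo-tame operators, adapted from the scheme of \cite{BBHM}, proved by induction on $k$. The base case $k=0$ is tautological, with $\mathcal{Q}_{-1}=\mathrm{I}$ and $\Omega_0^{\gamma^{3/2}}=\Omega_1$; the hypotheses \eqref{nomidimerda2}--\eqref{constaintCiao} give $(S1)_0$--$(S4)_0$ directly. The inductive step $k\rightsquigarrow k+1$ consists of constructing a symplectic transformation $\mathcal{Q}_k=\exp(\Psi_k)$, generated by a Hamiltonian quadratic form, that eliminates the off-diagonal part of $\mathcal{P}_k$ truncated at frequency $N_k$, and then deriving all the quantitative bounds.

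The conjugate is formally
\[
\mathcal{Q}_k\mathbf{M}_k\mathcal{Q}_k^{-1}+\mathcal{Q}_k(\omega\cdot\partial_{\varphi})\mathcal{Q}_k^{-1}=\mathcal{D}_k+\Pi_{N_k}^\perp\mathcal{P}_k+[\mathcal{P}_k]+\omega\cdot\partial_\varphi \Psi_k+[\mathcal{D}_k,\Psi_k]+\Pi_{N_k}(\mathcal{P}_k-[\mathcal{P}_k])+\tfrac{1}{2}[\Psi_k,\Pi_{N_k}\mathcal{P}_k+[\mathcal{P}_k]]+\cdots,
\]
where $[\mathcal{P}_k]:=\mathrm{diag}_{j}(\mathcal{P}_k)_j^j(0)$. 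One then chooses $\Psi_k$ to solve the \emph{homological equation}
\[
\omega\cdot\partial_\varphi \Psi_k+[\mathcal{D}_k,\Psi_k]+\Pi_{N_k}(\mathcal{P}_k-[\mathcal{P}_k])=0,
\]
i.e.\ in Fourier
\[
(\Psi_k)_j^{j'}(\ell)=\frac{-(\mathcal{P}_k)_j^{j'}(\ell)}{\mathrm{i}(\omega\cdot\ell+d_j^{(k)}-d_{j'}^{(k)})},\qquad |\ell|\le N_k,\ (j,j',\ell)\neq(j,j,0),
\]
and zero otherwise. On $\Omega_{k+1}^{\gamma^{3/2}}$ the denominators are bounded below by $\gamma^{3/2}\langle\ell\rangle^{-\tau}$. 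Estimating $\Psi_k$ as a $-1$-modulo-tame operator costs one factor $\gamma^{-3/2}N_k^{\tau}$ in the sup norm and an additional factor $N_k$ in the Lipschitz variation (which is where the exponent $\tau_1=2\tau+2$ in \eqref{tame Psi nu - 1} appears); the gain $N_{k-1}^{-\tb_0}$ on $\mathfrak M_k^{\sharp,\gamma^{3/2}}(s)$ comes from trading  $\langle\partial_\varphi\rangle^{\tb_0}$ off $\mathcal{P}_k$. Setting
\[
d_j^{(k+1)}:=d_j^{(k)}+\tfrac{1}{\mathrm{i}}(\mathcal{P}_k)_j^j(0),\qquad \mathcal{D}_{k+1}:=\mathcal{D}_k+[\mathcal{P}_k],
\]
gives the new diagonal; reality of $r_j^{(k+1)}$ and the symmetry $r_j^{(k+1)}=-r_{-j}^{(k+1)}$ follow from \eqref{lemettoanchio} and from the fact that $\mathcal{P}_k$ is Hamiltonian (an inductive consequence of $\mathcal{Q}_k$ being symplectic). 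The bound \eqref{stima cal R nu} then follows from $\langle j\rangle\,|d_j^{(k+1)}-d_j^{(k)}|\le \mathfrak M^{\sharp,\gamma^{3/2}}_k(s_0)$ and \eqref{stimaM} at step $k$.

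The new perturbation is
\[
\mathcal{P}_{k+1}=\Pi_{N_k}^\perp\mathcal{P}_k+\int_0^1 e^{-t\Psi_k}\bigl[\Psi_k,\Pi_{N_k}\mathcal{P}_k-(1-t)(\mathcal{P}_k-[\mathcal{P}_k])\bigr]e^{t\Psi_k}\,dt.
\]
Using the algebra and interpolation properties of $-1$-modulo-tame operators  (standard from \cite{BBHM}), the smoothing estimate
$\mathfrak M^{\sharp,\gamma^{3/2}}_{\Pi_{N_k}^\perp\mathcal{P}_k}(s)\le N_k^{-\tb_0}\mathfrak M^{\sharp,\gamma^{3/2}}_k(s,\tb_0)$, and the commutator/flow bounds together with \eqref{tame Psi nu - 1}, one obtains
\[
\mathfrak M^{\sharp,\gamma^{3/2}}_{k+1}(s)\le_s N_k^{-\tb_0}\mathfrak M^{\sharp,\gamma^{3/2}}_k(s,\tb_0)+\gamma^{-3/2}N_k^{\tau_1}\mathfrak M^{\sharp,\gamma^{3/2}}_k(s_0)\mathfrak M^{\sharp,\gamma^{3/2}}_k(s),
\]
and an analogous estimate for $\mathfrak M^{\sharp,\gamma^{3/2}}_{k+1}(s,\tb_0)$ with a factor $N_k$ instead of $N_k^{-\tb_0}$ on the truncated part. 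With $N_k=N_0^{(3/2)^k}$ and $\ta,\tb_0,\tau_1$ chosen as in \eqref{parametriKAM}--\eqref{anagrafe2}, the smallness condition \eqref{KAM smallness condition1} makes the quadratic term dominate, and a standard induction (see \cite{BBHM,Airy}) propagates \eqref{stimaM}. Preservation of the Hamiltonian structure is automatic since $\Psi_k$ is a Hamiltonian vector field ($[\mathcal D_k,\Psi_k]$ and $\Pi_{N_k}(\mathcal P_k-[\mathcal P_k])$ are both Hamiltonian, so $\omega\cdot\partial_\varphi\Psi_k$ is, hence $\Psi_k$ is).

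For $(S3)_{k+1}$ and $(S4)_{k+1}$, one differentiates the homological equation in the variable $\mathfrak{I}$: writing $\Delta_{12}\Psi_k$ as a superposition of $\Delta_{12}\mathcal{P}_k$, $\Delta_{12}\mathcal{D}_k$ and $\Delta_{12}m$, and using \eqref{nomidimerda2} together with the inductive bounds \eqref{stima R nu i1 i2}--\eqref{r nu i1 - r nu i2} at step $k$, one controls the $\calL(H^{s_0})$-operator norm of the majorants, which suffices since the low norm $s_0$ is what enters the Melnikov conditions. The inclusion \eqref{tab} at step $k+1$ is then a routine triangle-inequality argument: on $\Omega_{k+1}^{\gamma^{3/2}}(\mathfrak I_1)$ one has $|\omega\cdot\ell+d_j^{(k)}(\mathfrak I_1)-d_{j'}^{(k)}(\mathfrak I_1)|\ge \gamma^{3/2}\langle\ell\rangle^{-\tau}$ for $|\ell|\le N_k$, and \eqref{r nu i1 - r nu i2} combined with the assumption in \eqref{tab} replaces $\mathfrak I_1$ by $\mathfrak I_2$ at the price of $\rho\langle\ell\rangle^{-\tau}$.

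The main obstacle is the bookkeeping of the tame constants for $\Psi_k$ and $\mathcal{P}_{k+1}$ in both the low and high norms simultaneously, and in particular the combinatorial check that the exponents $\ta$, $\tb_0$, $\tau_1$ in \eqref{parametriKAM}--\eqref{anagrafe2} are consistent with the super-exponential growth $N_{k+1}=N_k^{3/2}$. This is purely algebraic and identical to the computation in \cite{BBHM}, so only a sketch is needed.
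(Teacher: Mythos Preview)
Your proposal is correct and follows essentially the same route as the paper. The paper isolates a single ``KAM step'' lemma (Lemma~\ref{KAMstep}) --- solving the homological equation \eqref{soluzioneomologica} for a generator $\mathcal A$, setting $\mathcal Q=e^{\mathcal A}$, and deriving the bounds \eqref{P+}--\eqref{P+ib} on $\mathcal P^+$ via the Lie series expansion \eqref{pipiu} --- and then states that Proposition~\ref{iterazione riducibilita} follows by iterating this step; your sketch does exactly this inline. One small notational slip: in the paper $\Psi_{k-1}:=\mathcal Q_{k-1}-\mathrm{I}$, not the generator (which is called $\mathcal A$), though the two satisfy the same modulo-tame bounds up to a constant by Lemma~\ref{KAMstep}(ii), so this does not affect the argument.
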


%The  Proposition \ref{iterazione riducibilita}  is standard hence we omit the proof.
%For completeness we refer the reader to Appendix \ref{appendiceKAM} for a proof.

\vspace{0.9em}
\noindent
The Proposition \ref{iterazione riducibilita} is proved by applying repeatedly the following {\bf KAM reduction procedure} :

\smallskip

\noindent
Fix any $N\gg 1$	and consider any operator of the form 
\[
\mathbf M= \mathcal D(\omega) +\mathcal P(\f,\omega )\,,\qquad
\mathcal D(\omega)= {\rm diag} (\mathrm{i} \,d_j(\omega))_{j\in \Z}\,,\quad d_j= d^{(0)}_j+ r_j, \quad d_j^{(0)}:=m(\omega)\,\frac{j (4+j^2)}{(1+j^2)}.
\]
Here the $m, r_j\in \R$  are well defined and Lipschitz for $\omega\in \calO_0$ 
with % \red{$|m|^{ lip,\calO_0} \le C\g^{-1}\e \ll 1$   % MI SEMBRA CHE QUESTO SERVA PER \eqref{maggio}}
\begin{equation}\label{miseria}
|1-m|^{\g,\calO_0} \le C\e\,,\quad r_j= - r_{-j}\,,\quad  \sup_j \langle j\rangle |r_j|^{{\g^{3/2},\calO_0}} <  
2\, \mathfrak{M}^{\sharp, \g^{3/2}}_{\cP_0}(s_0, \tb_0) .
\end{equation}
Assume that (recall \eqref{0Meln}, \eqref{prime})  
in a set $\cO\equiv\cO(\mathfrak{I})\subseteq \Omega_1(\mathfrak{I})\subseteq \calO_0$ 
the operators $\cP,\langle \pa_\f \rangle^{{\mathtt b_0}} \cP$ are  Hamiltonian, real 
and  $-1$-modulo tame with 
\begin{equation}\label{piccolo0}
\g^{-3/2} N^{2\tau+2}\mathfrak M^{\sharp, \g^{3/2}}_{\cP}(s_0, \tb_0)<1 \,. %\,,\quad \mathfrak M^{{{\sharp, \g^{3/2}}}}_{\langle \pa_\f \rangle^{{\mathtt b_0}} \cP}(s):= 
\end{equation}
Assume finally that $d_j=d_j(\mathfrak{I})$, 
$\cP(\mathfrak{I}),\langle \pa_\f \rangle^{{\mathtt b_0}} \cP(\mathfrak{I})$ 
are Lipschitz  w.r.t. $\mathfrak{I}$ namely for all $\omega\in\cO(\mathfrak{I}_1)\cap \cO(\mathfrak{I}_2)$
\begin{equation}\label{miserialip}
\begin{aligned}
&\lvert \Delta_{12} m \rvert\le K_1\, \|\mathfrak{I}_1-\mathfrak{I}_2\|_{s_0+\s},  \quad \sup_j \langle j\rangle |\Delta_{12} r_j| 
< 2\,K_0  \|\mathfrak{I}_1-\mathfrak{I}_2\|_{s_0+\s}\,\\
& \|\lD^{1/2}\und{ \Delta_{12}\langle \partial_{\f}\rangle^{a} \cP  }\lD^{1/2}\|_{\mathcal L(H^{s_0})} 
\le K_2 \|\mathfrak{I}_1-\mathfrak{I}_2\|_{s_0+\s} \,,\quad a=0,\tb_0
\end{aligned}
\end{equation}
for some constants $K_1\le K_0$ (recall  $K_2$  in \eqref{nomidimerda2}).
Let us define $\cC\equiv \cC_{\cD}^{({\g^{3/2}} ,\tau,N,\cO)}$  as
\begin{equation}\label{Cantonà}
\cC:=\{\omega\in\cO\,:\,\, |\omega\cdot \ell +d _{j}-d _{j'}|> \frac{{\g^{3/2}}}{ \langle\ell\rangle^{\tau}} \,,\quad \forall (\ell,j,j')\neq (0,j,j),\; |\ell|\le N,\,j,j'\in \Z\setminus\{0\}\}.
\end{equation}
For  $\omega\in\cC$ let  $\cA(\f)$ be defined as follows
\begin{equation}\label{soluzioneomologica}
\mathcal A_j^{j'}(\ell)= 
\dfrac{\mathcal P_j^{j'}(\ell) }{\mathrm{i} (\omega\cdot \ell + d _j-d _{j'})}, \quad {\rm for} \quad |\ell| \le N,\;\;\;{\rm and}
\qquad  \mathcal A_j^{j'}(\ell)= 0\quad \text{otherwise}.
\end{equation}
%\begin{equation}\label{soluzioneomologica}
%\mathcal A_j^{j'}(\ell)= \begin{cases}
%\dfrac{\mathcal P_j^{j'}(\ell) }{\mathrm{i} (\omega\cdot \ell + d _j-d _{j'})} & |\ell|\le N,\\ 0& \text{otherwise}.
%\end{cases}
%\end{equation}
\begin{lem}[{\bf KAM step}]\label{KAMstep}
The following holds:

\smallskip
\noindent
$(i)$  The operator $\cA$ in \eqref{soluzioneomologica} is a Hamiltonian, $-1$-modulo tame  
matrix with the bounds
\begin{align}
& \mathfrak M^{{{\sharp, \g^{3/2}}}}_\cA(s,a) \le \g^{-3/2}  N^{2\tau+1}
\mathfrak M^{{{\sharp, \g^{3/2}}}}_\cP(s,a)\,,  \label{costantetameA}\\ \label{costantetamedelta12A}
& \|\lD^{1/2}\und{ \Delta_{12}\langle \partial_{\f}\rangle^{a} \cA  }\lD^{1/2}\|_{\mathcal L(H^{s_0})} \le  
C\,\g^{-3/2}N^{2\tau+1} \big(K_2 +{ K_0\,\g^{-3/2} }
\mathfrak M^{{{\sharp, \g^{3/2}}}}_\cP(s_0,a) \big) \|\mathfrak{I}_1-\mathfrak{I}_2\|_{s_0+\s})\,,
\end{align}
for $ a=0,{\mathtt b_0}$, for all $\omega\in \cC(\mathfrak{I}_1)\cap\cC(\mathfrak{I}_2)$ and for some $\s>0$.

\smallskip
\noindent
$(ii)$ The operator  $\cQ=e^\cA:=\sum_{k\geq 0} \frac{\mathcal{A}^k}{k!}$ 
is well defined and invertible, moreover $\Psi= \cQ-\mathrm{I}$  
 is a $-1$-modulo tame operator with the bounds
\begin{equation}
\begin{aligned}
&\mathfrak M^{{{\sharp, \g^{3/2}}}}_{\cQ-\mathrm{I}}(s,a) \le 
2 \mathfrak M^{{{\sharp, \g^{3/2}}}}_\cA(s,a)\le 2\g^{-3/2} N^{2\tau+1}\mathfrak M^{{{\sharp, \g^{3/2}}}}_\cP(s,a)\,,\\
&\|\lD^{1/2}\und{ \Delta_{12}\langle \partial_{\f}\rangle^{a} \cQ  }
\lD^{1/2}\|_{\mathcal L(H^{s_0})} \le  2\g^{-3/2}N^{2\tau+1} 
\big(K_2 +K_0\g^{-3/2}\mathfrak M^{{{\sharp, \g^{3/2}}}}_\cP(s_0,a) \big) 
\|\mathfrak{I}_1-\mathfrak{I}_2\|_{s_0+\s})\,,\notag
\end{aligned}
\end{equation}
for $ a=0,{\mathtt b_0}$ and for some $\s>0$.
Finally $z\to \cQ z$   is a  symplectic change of variables 
generated by the time one flow of the Hamiltonian $\mathcal S_0 = \frac12(z, J^{-1} \cA z)$.

\smallskip
\noindent
$(iii)$   Set, for  $\omega\in\cC$ (see \eqref{Cantonà}), 
\begin{equation}\label{coniuga+}
\cQ (\omega\cdot \partial_\f \cQ^{-1}) + \cQ\left(\mathcal D(\omega) 
+\mathcal P(\f,\omega )\right)\cQ^{-1}:=  \mathbf{M}_+=
\mathcal D^+(\omega) +\mathcal P^+(\f,\omega )
\end{equation}
where $\mathcal D^+(\omega)= \mathrm{diag}(\mathrm{i} \, d^+_j)$ 
is Hamiltonian, diagonal, independent of $\f$ and defined for all $\omega\in \calO_0$ with

%\red{CONTOLLARE $\g$}
\begin{equation}\label{d+}
\begin{aligned}
&d^+_j= d^{(0)}_j+ r^+_j\,,\quad r^+_j= -r^+_{-j}\,,
\quad \sup_j \langle j\rangle  |r _j-r ^+_j|^{\g^{3/2} ,\cO_0} \le  
\mathfrak M^{{{\sharp, \g^{3/2}}}}_\cP(s_0)\,, \\
&\sup_j \langle j\rangle  |\Delta_{12}(r _j-r ^+_j) | \le  
K_2 \| \mathfrak{I}_1-\mathfrak{I}_2\|_{s_0+\s}\,,\quad 
\forall \omega\in \cC(\mathfrak{I}_1)\cap \cC(\mathfrak{I}_2).
\end{aligned}	
\end{equation}		 
For $\omega\in \cC$  we have the bounds
\begin{align}	
{\mathfrak M}_{\cP^+}^{{{\sharp, \g^{3/2}}}} (s ) &\leq  
N^{- {{\mathtt b_0}}} {\mathfrak M}_\cP^{{{\sharp, \g^{3/2}}}} (s, {{\mathtt b_0}}) 
+C(s) N^{2\tau+1} \g^{-3/2} {\mathfrak M}_\cP^{{{\sharp, \g^{3/2}}}} (s) 
{\mathfrak M}_\cP^{{{\sharp, \g^{3/2}}}} (s_0)\,.\label{P+}\\
{\mathfrak M}_{\cP^+}^{{{\sharp, \g^{3/2}}}} (s, {{\mathtt b_0}}) &\leq 
{\mathfrak M}_\cP^{{{\sharp, \g^{3/2}}}} (s, {\mathtt b_0}) \label{P+b}\\
&+ N^{2\tau+1} \g^{-3/2} C(s,\tb_0) 
\left({\mathfrak M}_\cP^{{{\sharp, \g^{3/2}}}} (s, {\mathtt b_0}) 
{\mathfrak M}^{{{\sharp, \g^{3/2}}}}_\cP(s_0) 
+  {\mathfrak M}_\cP^{{{\sharp, \g^{3/2}}}} (s_0, {\mathtt b_0}) {\mathfrak M}_\cP^{{{\sharp, \g^{3/2}}}} (s)\right)  \,.\notag
\end{align}
Moreover for all $\omega\in \cC(\mathfrak{I}_1)\cap \cC(\mathfrak{I}_2)$
\begin{align}\label{P+i}
\|\und{\Delta_{12}\cP^+ }\|_{\mathcal L(H^{s_0})} 
&\leq  N^{- {{\mathtt b_0}}} K_2 \|\mathfrak{I}_1-\mathfrak{I}_2\|_{s+\s} \\
&+  C(s_0)N^{2\tau+1} \g^{-3/2} {\mathfrak M}_\cP^{{{\sharp, \g^{3/2}}}} (s_0) 
\left(K_2+\g^{-3/2} {\mathfrak M}^{{{\sharp, \g^{3/2}}}}_\cP(s_0) K_0 \right)  
\|\mathfrak{I}_1-\mathfrak{I}_2\|_{s+\s}\,\notag\\
\|\und{\Delta_{12}\langle \partial_{\f}\rangle^{{\mathtt b_0}}\cP^+ }\|_{\mathcal L(H^{s_0})} 
&\leq  K_2  \|\mathfrak{I}_1-\mathfrak{I}_2\|_{s+\s} 
+  N^{2\tau+1} \g^{-3/2} C(s_0,\tb_0) 
\left({\mathfrak M}_\cP^{{{\sharp, \g^{3/2}}}} (s_0,{\mathtt b_0}) K_2 \right. \label{P+ib}\\
&\!\!\!\!\!\!\!\!+ {\mathfrak M}_\cP^{{{\sharp, \g^{3/2}}}} (s_0) \left(K_2
+ \g^{-3/2} {\mathfrak M}^{{{\sharp, \g^{3/2}}}}_\cP(s_0,\tb_0) K_0 \right) \notag\\
&\!\!\!\!\!\!\!\!+\left.\g^{-3/2}N^{2\tau+1} {\mathfrak M}_\cP^{{{\sharp, \g^{3/2}}}} (s_0)
{\mathfrak M}_\cP^{{{\sharp, \g^{3/2}}}} (s_0,\tb_0) \left( K_2 
+ \g^{-3/2} {\mathfrak M}^{{{\sharp, \g^{3/2}}}}_\cP(s_0) K_0 \right) \right) 
\|\mathfrak{I}_1-\mathfrak{I}_2\|_{s+\s}\,\notag
\end{align}
for some $\s>0$.
 The  action of $\cQ$ on the Hamiltonian $\mathcal H$ is given by (see \eqref{coniuga+})
\[
\mathcal H_+:= e^{\{\mathcal S_0,\cdot\} }\mathcal H = \omega\cdot \eta + \frac12(w, J^{-1} \mathbf M^+ w).
\]
\end{lem}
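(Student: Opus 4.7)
\medskip

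\noindent
\textbf{Proof plan.} The strategy follows the standard KAM step scheme adapted to the class of $-1$-modulo-tame operators, in the spirit of \cite{BBHM}, where the novelty is carrying Lipschitz estimates with respect to the parameter $\mathfrak{I}$ through the procedure. By construction $\calA$ in \eqref{soluzioneomologica} is the (truncated) solution of the homological equation
\[
\omega\cdot\partial_\varphi \calA + [\calD,\calA] = \Pi_N \calP - [\Pi_N\calP]
\]
where $[\Pi_N\calP]$ denotes the diagonal $(j,j,0)$-component. Item (i) follows by dividing the matrix entries of $\Pi_N \calP$ by the small divisors $\mathrm{i}(\omega\cdot\ell+d_j-d_{j'})$, which on the set $\calC$ are bounded below by $\gamma^{3/2}\langle\ell\rangle^{-\tau}$: since $|\ell|\le N$, a factor $\gamma^{-3/2}N^{\tau}$ appears, and the extra $N^{\tau+1}$ in \eqref{costantetameA} comes from differentiating in $\varphi$ up to $\tb_0$ (needed to pass from pure majorant norms on $\langle \partial_\varphi\rangle^{\tb_0}\calA$ to the tame estimate on $\calA$ itself, see Def.~\ref{menounomodulotame}). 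The Hamiltonian character of $\calA$ is inherited from $\calP$ via the relations \eqref{lemettoanchio} and the fact that $\omega\cdot\ell+d_j-d_{j'}$ is antisymmetric under $(j,j',\ell)\mapsto(-j',-j,-\ell)$. For the Lipschitz variation $\Delta_{12}\calA$ one differentiates the quotient $\calP_j^{j'}(\ell)/(\mathrm{i}(\omega\cdot\ell+d_j-d_{j'}))$: the numerator contributes $K_2\|\mathfrak{I}_1-\mathfrak{I}_2\|$ and the denominator variation is controlled by $|\Delta_{12}(d_j-d_{j'})|\le 2\sup_j\langle j\rangle|\Delta_{12}r_j|\cdot 1\le K_0 \|\mathfrak{I}_1-\mathfrak{I}_2\|$, producing the two contributions in \eqref{costantetamedelta12A} after one more division by $\gamma^{3/2}\langle\ell\rangle^{-\tau}$.

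\medskip

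\noindent
For item (ii), the smallness assumption \eqref{piccolo0} together with the bound $\mathfrak{M}^{\sharp,\gamma^{3/2}}_\calA(s_0,\tb_0)\le \gamma^{-3/2}N^{2\tau+1}\mathfrak{M}^{\sharp,\gamma^{3/2}}_\calP(s_0,\tb_0)\le 1$ guarantees convergence of the exponential series $\calQ=\sum_{k\ge 0}\calA^k/k!$. The algebra properties of $-1$-modulo-tame operators (composition, iteration) stated in the Appendix yield
\[
\mathfrak M^{\sharp,\gamma^{3/2}}_{\calQ-\mathrm{I}}(s,a)\le \sum_{k\ge 1}\frac{1}{k!}\mathfrak M^{\sharp,\gamma^{3/2}}_{\calA^k}(s,a)\le 2\mathfrak M^{\sharp,\gamma^{3/2}}_\calA(s,a),
\]
where the geometric series collapses because $\mathfrak M^{\sharp,\gamma^{3/2}}_\calA(s_0,a)\ll 1$. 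The variation $\und{\Delta_{12}\calQ}$ is estimated term by term using a telescopic identity $\Delta_{12}(\calA^k)=\sum_{j=0}^{k-1}\calA_1^j(\Delta_{12}\calA)\calA_2^{k-1-j}$. Finally symplecticity of $\calQ=e^\calA$ follows because $\calA = -J^{-1}\nabla^2 \mathcal S_0$ with $\mathcal S_0=\tfrac12(z,J^{-1}\calA z)$ a real quadratic Hamiltonian (the Hamiltonian nature of $\calA$ from (i) is essential here).

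\medskip

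\noindent
Item (iii) is the core computation. Expanding via the Lie series and using the homological equation,
\[
\calQ \mathbf M \calQ^{-1} + \calQ\,\omega\cdot\partial_\varphi \calQ^{-1}
= \calD + [\Pi_N \calP] + \Pi_N^\perp \calP + \sum_{k\ge 1}\frac{\mathrm{ad}^k_\calA}{(k+1)!}\bigl((k+1)\calP - [\Pi_N\calP] - \Pi_N^\perp\calP\bigr).
\]
The new diagonal part $\calD^+$ is $\calD + [\Pi_N\calP]$, so that $r_j^+-r_j$ equals (up to $-\mathrm{i}$) the diagonal matrix element $\calP_j^{j}(0)$; its bound \eqref{d+} follows from the very definition of the modulo-tame constant at $s_0$, while the Lipschitz estimate comes from \eqref{miserialip}. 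The new perturbation $\calP^+$ then consists of two pieces: the smoothed-out tail $\Pi_N^\perp\calP$, whose $s_0$-modulo-tame norm is bounded using $\|\langle\partial_\varphi\rangle^{\tb_0}\Pi_N^\perp\calP\|\le N^{-\tb_0}\mathfrak{M}^{\sharp,\gamma^{3/2}}_\calP(s,\tb_0)$ (giving the $N^{-\tb_0}$ gain in \eqref{P+}, \eqref{P+i}); and the quadratic-in-$\calP$ remainder, which combines the bound on $\calA$ from (i) with the algebra of modulo-tame constants to produce the factor $N^{2\tau+1}\gamma^{-3/2}\mathfrak{M}^{\sharp,\gamma^{3/2}}_\calP(s)\mathfrak{M}^{\sharp,\gamma^{3/2}}_\calP(s_0)$. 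The bounds \eqref{P+b} on $\langle\partial_\varphi\rangle^{\tb_0}\calP^+$ follow by distributing the $\tb_0$ derivatives using the Leibniz rule on the Lie series and interpolating. The Lipschitz-in-$\mathfrak{I}$ estimates \eqref{P+i}, \eqref{P+ib} combine all of the above with \eqref{costantetamedelta12A}: the main bookkeeping effort is tracking how each of the three ``variation channels'' (variation of $\calP$, of $\calA$, and of $\calA^2$) contributes, and this is precisely the origin of the three-term structure of \eqref{P+ib}.

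\medskip

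\noindent
\textbf{Main obstacle.} The routine parts — convergence of the exponential and the basic homological estimate — are standard. The delicate point is the simultaneous control of the high-$s$ and Lipschitz-in-$\mathfrak{I}$ modulo-tame norms of $\langle\partial_\varphi\rangle^{\tb_0}\calP^+$ in \eqref{P+b}, \eqref{P+ib}: here the derivatives in $\varphi$ must be distributed across the commutators $\mathrm{ad}^k_\calA\calP$ without losing the smoothing gain $N^{-\tb_0}$ on the main term, and one must exploit the super-exponential decay $1/(k+1)!$ of the Lie series to avoid double-counting the high norm on $\calA$. The key technical tool is the algebra property of $-1$-modulo-tame operators together with the interpolation bound on $\mathfrak M^{\sharp,\gamma^{3/2}}(s,\tb_0)$, applied inductively along the Lie series.
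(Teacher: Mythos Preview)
Your overall architecture matches the paper's, but there are two genuine gaps in part (i) that would prevent the argument from going through as written.

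\medskip

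\textbf{First gap: source of the $N^{2\tau+1}$ factor.} You attribute the extra $N^{\tau+1}$ in \eqref{costantetameA} to ``differentiating in $\varphi$ up to $\tb_0$''. This is not correct. The constants $\mathfrak M^{\sharp,\gamma^{3/2}}_{\cA}(s,a)$ for $a=0$ and $a=\tb_0$ are separate quantities (Definition~\ref{menounomodulotame}), and the $N^{2\tau+1}$ appears for \emph{both}. The actual mechanism is the Lipschitz-in-$\omega$ part of the modulo-tame constant: writing
\[
\Delta_{\omega,\omega'}\cA_j^{j'}(\ell)=\frac{\Delta_{\omega,\omega'}\cP_j^{j'}(\ell)}{\mathrm{i}(\omega\cdot\ell+d_j-d_{j'})}
-\mathrm{i}\,\frac{\cP_j^{j'}(\ell)\big((\omega-\omega')\cdot\ell/|\omega-\omega'|+\Delta_{\omega,\omega'}(d_j-d_{j'})\big)}{(\omega\cdot\ell+d_j-d_{j'})^2},
\]
the second term carries a squared denominator ($\gamma^{-3}N^{2\tau}$) times an extra $|\ell|\le N$ from the numerator. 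After the $\gamma^{3/2}$ weight in \eqref{CK0-tame} this yields $\gamma^{-3/2}N^{2\tau+1}$. The sup part alone only produces $\gamma^{-3/2}N^\tau$.

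\medskip

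\textbf{Second gap: the variation $\Delta_{12}(d_j-d_{j'})$ is not bounded as you claim.} You write $|\Delta_{12}(d_j-d_{j'})|\le 2\sup_j\langle j\rangle|\Delta_{12}r_j|$. But $d_j=m\,\omega(j)+r_j$, so
\[
\Delta_{12}(d_j-d_{j'})=(\Delta_{12}m)\big(\omega(j)-\omega(j')\big)+\Delta_{12}r_j-\Delta_{12}r_{j'},
\]
and $|\omega(j)-\omega(j')|$ is \emph{unbounded} in $j,j'$. Your inequality is therefore false as stated, and the estimate \eqref{costantetamedelta12A} would fail. The paper's remedy is to bound the \emph{ratio} $|\Delta_{12}m|\,|\omega(j)-\omega(j')|/|\omega\cdot\ell+d_j-d_{j'}|$ directly: one observes that if $|\omega(j)-\omega(j')|>C|\ell|$ then $|\omega\cdot\ell+d_j-d_{j'}|\gtrsim|\omega(j)-\omega(j')|$ (the diagonal part dominates), so the ratio is $O(1)$; otherwise $|\omega(j)-\omega(j')|\le C|\ell|\le CN$ and one uses the small-divisor bound on the denominator. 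In either case the ratio is $\lesssim \gamma^{-3/2}N^{\tau+1}K_1$, and together with the $r_j$ contribution and one more division by the small divisor this gives \eqref{costantetamedelta12A}. This cancellation between numerator and denominator is the point you are missing, and it is exactly where the asymptotically linear dispersion makes the DP case more delicate than, say, KdV.

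\medskip

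The remaining parts (ii)--(iii) are essentially correct in spirit; your Lie-series formula for $\cP^+$ has slightly different combinatorics from the paper's \eqref{pipiu} but the resulting estimates are the same, and the paper handles the extension of $d_j^+$ to all of $\calO_0$ via Kirszbraun, which you should mention.
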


\begin{proof}
\emph{Proof of $(i)$}: First we prove that $\mathcal{A}$ is a $-1$-modulo tame operator. By \eqref{Cantonà}, \eqref{soluzioneomologica} (recall \eqref{partialorder}, \eqref{funtore})
\[
\langle \partial_\f\rangle^a\cA \preceq \g^{-3/2} N^\tau	
\langle \partial_\f\rangle^a  \cP \,,\quad {\rm for}\; a=0,{\mathtt b_0},
\]
while
\[
\langle \partial_\f\rangle^a \Delta_{\omega,\omega'}\cA \preceq 
\g^{-3/2} N^\tau \langle \partial_\f\rangle^a \Delta_{\omega,\omega'} \cP 
+ \g^{-3} N^{2\tau+1} 	\langle \partial_\f\rangle^a  \cP \,,\quad {\rm for}\; a=0,{\mathtt b_0}
\]
since 
\[
\Delta_{\omega, \omega'} \mathcal{A}_j^{j'}(\ell)=
\frac{\Delta_{\omega, \omega'} \cP_j^{j'}(\ell)}{\mathrm{i}\big(\omega\cdot \ell +d_j-d_{j'} \big)}
-\mathrm{i}\frac{\cP_j^{j'}(\ell)\,
\big([(\omega-\omega')\cdot \ell/(|\omega-\omega'|)]
+\Delta_{\omega, \omega'}(d_j-d_{j'}) \big)}{\big(\omega\cdot \ell +d_j-d_{j'} \big)^2}.
\]
By Lemma \ref{proprietatame}-{\it (i)} and \eqref{miseria}, \eqref{piccolo0} we deduce \eqref{costantetameA}.
 The bounds \eqref{costantetamedelta12A} come from applying the Leibniz rule and by \eqref{miserialip}
\begin{equation}\label{pargolo}
|\Delta_{12}\cA_j^{j'}(\ell) |\leq \frac{|\Delta_{12}\cP_j^{j'}(\ell) |}{|\omega\cdot \ell + d_j -d_{j'}|} 
+\frac{ |\cP_j^{j'}(\ell)| |\Delta_{12} d_j  - \Delta_{12} d_{j'}  | }{(\omega\cdot \ell + d_j -d_{j'})^2} .
\end{equation}
We remark that in the second summand (recall that $K_1\le K_0$) 
\begin{align*}
\frac{ \lvert \Delta_{12} d_j  - \Delta_{12} d_{j'} \rvert}{|\omega\cdot \ell + d_j -d_{j'}|} 
&\le |\Delta_{12} m  |\frac{|\omega(j)-\omega(j')|}{|\omega\cdot \ell + d_j -d_{j'}|}
+ \frac{|\Delta_{12} r_j | +|\Delta_{12} r_{j'} |}{|\omega\cdot \ell + d_j -d_{j'}|} \\	
&\stackrel{(\ref{miserialip}),(\ref{nomidimerda2})}{\le} 
C\, \g^{-3/2}(K_1 N^{\tau+1} 
+  N^\tau  K_0)\| \mathfrak{I}_1-\mathfrak{I}_2\|_{s_0+\s}
 \le C  \g^{-3/2}N^{\tau+1} K_0 \|\mathfrak{I}_1-\mathfrak{I}_2\|_{s_0+\s}.
\end{align*}
The  estimate on the first summand follows from  the estimates on $\Delta_{12} m  $ and the fact that if $|\omega(j)-\omega(j')|> C |\ell|$ with $C>1$ then $|\omega\cdot \ell + d_j -d_{j'}|> \tilde{C}|\omega(j)-\omega(j')|$ with $\tilde{C}>0$; the estimate on the second summand comes from \eqref{miseria}, \eqref{piccolo0}.
In conclusion we get (recall \eqref{miserialip} for the definition of $K_2$)
\[
\|\lD^{1/2}\und{ \Delta_{12}\langle \partial_{\f}\rangle^{a} \cA  }\lD^{1/2}\|_{\mathcal L(H^{s_0})}
\le  C \big(\g^{-3/2}N^\tau K_2 +  {\g^{-3}}N^{2\tau+1}  
K_0\, \mathfrak M^{{{\sharp, \g^{3/2}}}}_\cP(s_0,a) \big)\|\mathfrak{I}_1-\mathfrak{I}_2\|_{s_0+\s}
\]	
for all $\omega\in\cC(\mathfrak{I}_1)\cap\cC(\mathfrak{I}_2)$.
The fact that $\cA$ is Hamiltonian  
follows from \eqref{lemettoanchio} and 
from the fact that $d_j$ is odd in $j$ (recall  \eqref{natali}) and $\mathcal{P}$ is Hamiltonian.
	
\smallskip	

\emph{Proof of $(ii)$}:	
By the boundness of $\mathcal{A}$, the bound on its modulo-tame constant and the smallness condition \eqref{piccolo0} we have that $\mathcal{Q}$ is well defined and invertible. The bounds are a consequence of 
Lemma \ref{proprietatame} ({\it iv})-({\it v}), the smallness condition \eqref{piccolo0} and the estimates proved in statement $(i)$. 

\smallskip

\emph{Proof of $(iii)$}:	
We start by observing that 
\begin{equation}\label{marvel}
\mathcal D^+ +\mathcal P^+ =  
\mathcal D +\mathcal P -\omega\cdot\partial_\f \cA +[\cA, \mathcal D +\mathcal P]+
\sum_{k\ge 2} \frac{{\rm ad}(\cA)^k}{k!} ( \mathcal D +\mathcal P )-
\sum_{k\ge 2} \frac{{\rm ad}(\cA)^{k-1}}{k!} ( \omega\cdot\partial_\f \cA  ).
\end{equation}
Again by definition, $\cA$ solves the equation 
\[
\omega\cdot\partial_\f \cA +[  \mathcal{D} , \mathcal{A}] =  
\Pi_N \mathcal P -[\cP]
\]
where $[\mathcal{P}]$ is the diagonal matrix with $j$-th eigenvalue $\mathcal{P}_j^j(0)$.
Substituting in \eqref{marvel} we get
\begin{equation}\label{marvel2}
\mathcal D^+ +\mathcal P^+ =  
\mathcal D +[\cP] + \Pi_N^\perp \cP+
\sum_{k\ge 1} \frac{{\rm ad}(\cA)^k}{k!} ( \mathcal P )-
\sum_{k\ge 2} \frac{{\rm ad}(\cA)^{k-1}}{k!} ( \Pi_N \mathcal P -[\cP]\  ).
\end{equation}
By the reality condition \eqref{lemettoanchio} we get 
$\overline{\cP_j^j(0)}= \cP_{-j}^{-j}(0)= - \cP_{j}^{j}(0)$, 
which shows that $\cP_{j}^{j}(0)$ is real and odd in $j$.
By Kirtzbraun Theorem we extend $\cP_j^j(0)$ to the whole 
$\calO_0$ 
preserving the $|\cdot|^{\g^{3/2}}$ norm.
We set 
\[
d^+_j= d _j+ (\cP_j^j(0))^{\rm Ext}=d_j^{(0)}+r_j+(\cP_j^j(0))^{\rm Ext}\,,\quad r_j^+:=r_j+(\cP_j^j(0))^{\rm Ext}
\]
where $(\cdot)^{\rm Ext}$ denotes the extension of the eigenvalue 
at $\cO_0$, so that the bound \eqref{d+} follows, 
by Lemma \ref{proprietatame} - {\it (i)} and the bounds  \eqref{miserialip} on $\cP$ and $\Delta_{12}\cP $. Now for $\omega\in \cC$
\begin{equation}\label{pipiu}
\mathcal P^+=\Pi_N^\perp \mathcal P +
\sum_{k\ge 1} \frac{{\rm ad}(\cA)^k}{k!} ( \mathcal P )-
\sum_{k\ge 2} \frac{{\rm ad}(\cA)^{k-1}}{k!} ( \Pi_N \mathcal P -[\cP]\  ).
\end{equation}
By Lemma \ref{proprietatame}-{\it (iv)} we have
\begin{equation}
{\mathfrak M}_{({\rm ad}\cA)^k\cP}^{{{\sharp, \g^{3/2}}}} (s) \leq 
C(s)^k   \Big( ({\mathfrak M}_{\cA}^{{{\sharp, \g^{3/2}}}} (s_0))^{k}{\mathfrak M}_{ \cP }^{{{\sharp, \g^{3/2}}}} (s)+
k({\mathfrak M}_{A}^{{{\sharp, \g^{3/2}}}} (s_0))^{k-1}{\mathfrak M}_{ \cA}^{{{\sharp, \g^{3/2}}}} (s) 
{\mathfrak M}_{\cP}^{{{\sharp, \g^{3/2}}}} (s_0) \Big)
\end{equation}
which implies \eqref{P+}, by using also \ref{proprietatame}{\it (iii)}.
Finally 
\begin{equation}\label{adAkB2}
\begin{aligned}
{\mathfrak M}_{({\rm ad}\cA)^k\cP}^{{{\sharp, \g^{3/2}}}} (s,{\mathtt b_0}) &\leq 
C(s,{{\mathtt b_0}})^k   \Big( ({\mathfrak M}_{\cA}^{{{\sharp, \g^{3/2}}}} (s_0))^{k}
{\mathfrak M}_{ \cP }^{{{\sharp, \g^{3/2}}}} (s,{\mathtt b_0})\\
&+k({\mathfrak M}_{\cA}^{{{\sharp, \g^{3/2}}}} (s_0))^{k-1}
\left({\mathfrak M}_{ \cA}^{{{\sharp, \g^{3/2}}}} (s,{\mathtt b_0}) 
{\mathfrak M}_{\cP}^{{{\sharp, \g^{3/2}}}} (s_0) + 
{\mathfrak M}_{\cA }^{{{\sharp, \g^{3/2}}}} (s_0,{\mathtt b_0}) 
{\mathfrak M}_{\cP}^{{{\sharp, \g^{3/2}}}} (s)\right)  \\  
&+  k(k-1)({\mathfrak M}_{\cA}^{{{\sharp, \g^{3/2}}}} (s_0))^{k-2}
{\mathfrak M}_{\cA}^{{{\sharp, \g^{3/2}}}} (s){\mathfrak M}_{ \cA}^{{{\sharp, \g^{3/2}}}} (s_0,{\mathtt b_0})  {\mathfrak M}_{ \cP}^{{{\sharp, \g^{3/2}}}} (s_0) \Big)
\end{aligned}
\end{equation}
which implies \eqref{P+b}.
In order to obtain the bounds \eqref{P+i} and \eqref{P+ib} on  $\Delta_{12}$,  
we just apply Leibniz rule repeatedly in \eqref{pipiu} 
and then procede as before. More precisely we have for all 
$\omega\in \cC(\mathfrak{I}_1)\cap \cC(\mathfrak{I}_2)$
\[
\Delta_{12} ({{\rm ad}(\cA)^{k}}  \mathcal P)   =  
{{\rm ad}(\cA)^{k}}  \Delta_{12}\mathcal P    
+ \sum_{k_1+k_2= k-1} {{\rm ad}(\cA)^{k_1}{\rm ad}(\Delta_{12}\cA    ){\rm ad}(\cA)^{k_2}} 
 \mathcal P. \quad \footnote{Recall the usual convention that
$a (\Delta_{12} b) c \equiv a(\mathfrak{I}_1) (\Delta_{12} b) c(\mathfrak{I}_2)$.}
\]
Now we note that 
$\|\lD^{1/2}\und\cA\lD^{1/2}\|_{\mathcal L(H^{s_0})}\le  {\mathfrak M}_{\cA}^{{{\sharp, \g^{3/2}}}} (s_0) $ and that for any matrices $A,B$ we have
\[
\|\lD^{1/2}\und{{\rm ad}(A)B}\lD^{1/2}\|_{\mathcal L(H^{s_0})}\le C(s_0) 
\|\lD^{1/2}\und A\lD^{1/2}\|_{\mathcal L(H^{s_0})} 
\|\lD^{1/2}\und B\lD^{1/2}\|_{\mathcal L(H^{s_0})}.
\]	
This implies that for all $\omega\in \cC(\mathfrak{I}_1)\cap \cC(\mathfrak{I}_2)$ 
(recall \eqref{miserialip} for the definition of $K_2$)
\begin{align}
&\|\lD^{1/2}\und{\Delta_{12} ({{\rm ad}(\cA)^{k}}  \mathcal P)  }
\lD^{1/2}\|_{\mathcal L(H^{s_0})}\le  (C(s_0) {\mathfrak M}_{\cA}^{{{\sharp, \g^{3/2}}}} (s_0))^k K_2 \label{prima} \\ 
&+ k C(s_0)^k( {\mathfrak M}_{\cA}^{{{\sharp, \g^{3/2}}}} (s_0))^{k-1}
\g^{-3/2}{\mathfrak M}_{\cP}^{{{\sharp, \g^{3/2}}}} (s_0)(N^\tau K_2 +  \g^{-3/2}N^{2\tau+1}  
K_0 \mathfrak M^{{{\sharp, \g^{3/2}}}}_\cP(s_0))
\|\mathfrak{I}_1-\mathfrak{I}_2\|_{s_0+\s}. \notag
\end{align}
Now by definition
\begin{align}\label{pipiu2}
\Delta_{12}	\mathcal P^+ =\Pi_N^\perp \Delta_{12}\mathcal P   +
\sum_{k \ge 1 }  \Delta_{12}(\frac{{\rm ad}(\cA)^{k}}{k!} \mathcal P)    -
\sum_{k\ge 2}  \Delta_{12} \big(\frac{{\rm ad}(\cA)^{k-1}}{k!} (\Pi_N \mathcal P -[\cP])\big) \,, 
\end{align}
so we use Lemma \ref{proprietatame}- {\it (iii)} 
in oder to bound the first summand and \eqref{prima} 
in order to bound the remaining ones. In the same way
\begin{align*}
\Delta_{12} \lr{\partial_\f}^{\tb_0}({{\rm ad}(\cA)^{k}}  \mathcal P)    
&=  {{\rm ad}(\cA)^{k}}  \Delta_{12}\lr{\partial_\f}^{\tb_0}\mathcal P   
+  \sum_{k_1+k_2= k-1} {{\rm ad}(\cA)^{k_1}{\rm ad}(\Delta_{12}\cA    ){\rm ad}(\cA)^{k_2}} 
 \lr{\partial_\f}^{\tb_0}\mathcal P  \\ &  +  \sum_{k_1+k_2= k-1} {{\rm ad}(\cA)^{k_1}
 {\rm ad}(\lr{\partial_\f}^{\tb_0}\cA  ){\rm ad}(\cA)^{k_2}} \Delta_{12} \mathcal P    \\ 
 & +\sum_{k_1+k_2= k-1} {{\rm ad}(\cA)^{k_1}{\rm ad}(\Delta_{12}
 \lr{\partial_\f}^{\tb_0}\cA    ){\rm ad}(\cA)^{k_2}}  \mathcal P\\ 
 &+\sum_{k_1+k_2+k_3= k-2} {\rm ad}(\cA)^{k_1}{\rm ad}
 (\lr{\partial_\f}^{\tb_0}\cA) {\rm ad}(\cA)^{k_2}{\rm ad}(\Delta_{12}\cA    ){\rm ad}(\cA)^{k_3}
  \mathcal P \\ 
  &+  \sum_{k_1+k_2+k_3= k-2} {\rm ad}(\cA)^{k_1}{\rm ad}(\Delta_{12}\cA  ) 
  {\rm ad}(\cA)^{k_2}{\rm ad}(\lr{\partial_\f}^{\tb_0}\cA  ){\rm ad}(\cA)^{k_3}\,,
\end{align*}
where the last two terms appear only if $k\ge 2$. We proceed as for \eqref{prima} and obtain the bound
\begin{align}
&\|\lD^{1/2}\und{\Delta_{12} \lr{\partial_\f}^{\tb_0}({{\rm ad}(\cA)^{k}}  
\mathcal P)  }\lD^{1/2}\|_{\mathcal L(H^{s_0})}\le  (C(s_0) {\mathfrak M}_{\cA}^{{{\sharp, \g^{3/2}}}} (s_0))^k
K_2 \label{vera} \\ 
&\quad+ k C(s_0)^k( {\mathfrak M}_{\cA}^{{{\sharp, \g^{3/2}}}} (s_0))^{k-1}\g^{-3/2}
{\mathfrak M}_{\cP}^{{{\sharp, \g^{3/2}}}} (s_0,\tb_0)(N^\tau K_2 
+  \g^{-3/2}N^{2\tau+1}  K_0 \mathfrak M^{{{\sharp, \g^{3/2}}}}_\cP(s_0))\notag \\ 
&\quad+ k C(s_0)^k( {\mathfrak M}_{\cA}^{{{\sharp, \g^{3/2}}}} (s_0))^{k-1}
{\mathfrak M}_{\cA}^{{{\sharp, \g^{3/2}}}} (s_0,\tb_0)K_2 \notag \\ 
&\quad+ k C(s_0)^k( {\mathfrak M}_{\cA}^{{{\sharp, \g^{3/2}}}} (s_0))^{k-1}\g^{-3/2}
{\mathfrak M}_{\cP}^{{{\sharp, \g^{3/2}}}} (s_0)(N^\tau K_2 
+  \g^{-3/2}N^{2\tau+1}  K_0 \mathfrak M^{{{\sharp, \g^{3/2}}}}_\cP(s_0,\tb_0))\notag \\
 &\quad+ 2 k(k-1) C(s_0)^k( {\mathfrak M}_{\cA}^{{{\sharp, \g^{3/2}}}} (s_0))^{k-2}{\mathfrak M}_{\cA}^{{{\sharp, \g^{3/2}}}} (s_0,\tb_0)\g^{-3/2}{\mathfrak M}_{\cP}^{{{\sharp, \g^{3/2}}}} (s_0)\\
&\quad(N^\tau K_2 +  \g^{-3/2}N^{2\tau+1}  K_0 
\mathfrak M^{{{\sharp, \g^{3/2}}}}_\cP(s_0)) \|\mathfrak{I}_1-\mathfrak{I}_2\|_{s_0+\s} \notag
\end{align}
from which one deduces the \eqref{P+ib}.
\end{proof}

\subsection{Proof of Theorem \ref{ReducibilityDP}}

In this section we conclude the proof of Theorem \ref{ReducibilityDP}. We first provide a preliminary result.

\begin{lem}\label{natalino10}
Consider $\rho:=s_0+\tb_0+3$, $p=s_0$ and the operator $\calL_{\omega}^{+}$ (see \eqref{operatorefinale}) in Theorem \ref{risultatosez8} .
We have that $\mathcal{P}_0:=\mathcal{R}$ (with $\mathcal{R}$ in \eqref{operatorefinale})
is $-1$-modulo-tame with modulo-tame constants satisfying 
 the \eqref{constaintCiao}
  with 
  \begin{equation}\label{fixparam}
  \s:=\mu_1,\quad  K_1:=\e, \quad K_{2}:=\e\gamma^{-1},
  \end{equation}
where $\mu_1$ is given by Theorem \ref{risultatosez8}.

\noindent
 Moreover the constant $m$ and the operator  $\cP_0$  satisfy, 
 for all $\omega\in \Omega_1(\mathfrak{I}_1)\cap \Omega_1(\mathfrak{I}_2)$, the bounds \eqref{nomidimerda2}.
  \end{lem}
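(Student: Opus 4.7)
The plan is to decompose $\mathcal{P}_0=\mathcal{R}=\op(r)+\widehat{\mathcal{R}}$ as furnished by Theorem \ref{risultatosez8}, verify the $-1$-modulo tame bounds on each summand (and on its $\tb_0$-th angular derivatives), and then read off the constants $K_1,K_2,\s$ directly from \eqref{clinica100}--\eqref{pasqua200}. The point of the choice $\rho:=s_0+\tb_0+3$ is that it satisfies both $\rho\geq 3$ and $\tb_0\leq \rho-2$, so Theorem \ref{risultatosez8} indeed applies and the $\tb_0$ angular derivatives of $\widehat{\mathcal R}$ are still in the smoothing class.

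For the pseudo-differential piece $\op(r)$ with $r\in S^{-1}$ the operator $\langle D_x\rangle^{1/2}\op(r)\langle D_x\rangle^{1/2}$ is a pseudo-differential operator of order $0$ by the composition formula \eqref{compo}--\eqref{comp2}, and therefore it (and $\langle\partial_\f\rangle^{\tb_0}$ applied to it) is Lip-$0$-modulo-tame with modulo-tame constant controlled by $|r|^{\gamma,\Omega_1}_{-1,s_0+\tb_0,\al}$ for a suitable fixed $\al$; this is the content of the standard tameness lemmata for pseudo-differential operators recalled in the appendix. For the smoothing remainder $\widehat{\mathcal R}\in \gotL_{\rho,p}$ one invokes Lemma \ref{LemmaAggancio}, which is precisely the bridge between the class $\gotL_{\rho,p}$ of Definition \ref{ellerho} and the class of Lip-$-1$-modulo-tame operators of Definition \ref{menounomodulotame}: since $\tb_0\leq \rho-2$, the tame constants $\mathbb{M}^\gamma_{\widehat{\mathcal R}}(s_0,0)$ and $\mathbb{M}^\gamma_{\widehat{\mathcal R}}(s_0,\tb_0)$ dominate $\mathfrak M^{\sharp,\gamma^{3/2}}_{\widehat{\mathcal R}}(s_0)$ and $\mathfrak M^{\sharp,\gamma^{3/2}}_{\widehat{\mathcal R}}(s_0,\tb_0)$, respectively. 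Here the harmless replacement $\gamma\rightsquigarrow \gamma^{3/2}$ is allowed because $\gamma^{3/2}<\gamma$ when $\gamma\in(0,1)$, so that $\|\cdot\|^{\gamma^{3/2}}\le \|\cdot\|^\gamma$.

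Summing the two contributions and using \eqref{pasqua200} at $s=s_0$ together with the normalization \eqref{ipopiccolezza}, one obtains
\[
\mathfrak M^{\sharp,\gamma^{3/2}}_{\mathcal P_0}(s_0)+\mathfrak M^{\sharp,\gamma^{3/2}}_{\mathcal P_0}(s_0,\tb_0)\le C\,\e\,\gamma^{-1}\,\|\mathfrak I\|^{\gamma,\calO_0}_{s_0+\mu_1}\le C\,\e\,\gamma^{-1}.
\]
Combined with \eqref{clinica100}, which gives $|m-1|^{\gamma,\calO_0}\leq C\e$, this identifies the natural choice $K_1=\e$, $K_2=\e\gamma^{-1}$, $\s=\mu_1$: in particular $K_1\leq K_2$ (because $\gamma<1$) and the three quantities appearing in \eqref{constaintCiao} are all controlled by $CK_2$.

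For the Lipschitz dependence on $\mathfrak I$, the bound on $\Delta_{12}m$ in \eqref{nomidimerda2} is exactly \eqref{clinica1000}; while the bounds on $\Delta_{12}\cP_0$ and on $\Delta_{12}\langle\partial_\f\rangle^{\tb_0}\cP_0$ follow, through the same two-summand decomposition and Lemma \ref{LemmaAggancio} applied to $\Delta_{12}\widehat{\mathcal R}\in \gotL_{\rho,p}$ (note $\tb_0\leq \rho-3$ since $\rho=s_0+\tb_0+3\geq \tb_0+3$), from the Lipschitz part of \eqref{pasqua200}, which yields precisely the constant $\e\gamma^{-1}\|\mathfrak I_1-\mathfrak I_2\|_{s_0+\mu_1}$, i.e. $K_2\|\mathfrak I_1-\mathfrak I_2\|_{s_0+\s}$. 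The main bookkeeping issue, and the only nontrivial point, is ensuring the chain of inequalities $\tb_0\leq \rho-3\leq \rho-2$ so that the $\tb_0$-derivative bounds coming from $\gotL_{\rho,p}$ are indeed available in both the sup and the Lipschitz parts; this is guaranteed by the specific choice $\rho=s_0+\tb_0+3$.
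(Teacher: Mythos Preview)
Your proposal is correct and follows essentially the same route as the paper: split $\mathcal R=\op(r)+\widehat{\mathcal R}$, pass to $-1$-modulo-tame constants via Lemma~\ref{LemmaAggancio}, and then read off $K_1=\e$, $K_2=\e\gamma^{-1}$, $\s=\mu_1$ from \eqref{clinica100}--\eqref{clinica1000}--\eqref{pasqua200} together with \eqref{ipopiccolezza}. One small imprecision: for the pseudo-differential piece you write that $\langle D_x\rangle^{1/2}\op(r)\langle D_x\rangle^{1/2}\in OPS^0$ ``is Lip-$0$-modulo-tame \dots\ this is the content of the standard tameness lemmata''; but Lemma~\ref{actiononsobolev} gives only \emph{tameness}, not \emph{modulo}-tameness, and the passage to the majorant norm genuinely needs the commutator argument of Lemma~\ref{LemmaAggancio} --- specifically part~(ii), which covers $\op(r)$ with $r\in S^{-1}$ directly. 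Citing Lemma~\ref{LemmaAggancio}(ii) there (just as you cite part~(i) for $\widehat{\mathcal R}$) makes the argument watertight.
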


\begin{proof} 
Recalling the form of $\mathcal{R}$ in Theorem \ref{risultatosez8} we have that
 Lemma \ref{LemmaAggancio} implies that $\cP_0$  is $-1$-modulo tame with modulo tame constants satisfying 
 (recalling the Definition \ref{Mdritta2} and the fact that $\g^{3/2}<\g$) 
 \begin{equation}\label{MaledettaCondizioneKAM}
\mathfrak{M}^{{\sharp, \g^{3/2}}}_{\mathcal{P}_0}(-1, s), \mathfrak{M}^{{\sharp, \g^{3/2}}}_{\mathcal{P}_0}(-1, s, \tb_0)\le_s 
\mathbb{B}^{\g}_{\mathcal{R}}(s, s_0+\tb_0)\stackrel{(\ref{pasta10})}{\leq}\mathbb{M}^{\gamma}_{\mathcal{R}}(s,\rho-2)
\end{equation}
which implies 
 \begin{equation}\label{PiccolezzaperKamredDP1000}
  \mathfrak{M}^{\sharp, \g^{3/2}}_{\cP_0}(s,\tb_0)\le \mathbb{M}^{\g}_{\mathcal{R}}(s, \tb), \qquad
 \mathfrak{M}^{\sharp, \g^{3/2}}_{\cP_0}(s_0,\tb_0)\le \e \g^{-1}.
 \end{equation}
Using \eqref{ipopiccolezza},  \eqref{pasqua200}   one gets the \eqref{constaintCiao} with the parameters fixed in \eqref{fixparam}.
In the same way, by Lemma \ref{LemmaAggancio}, \eqref{clinica1000}, \eqref{pasqua200}, \eqref{ipopiccolezza} we get the \eqref{nomidimerda2}.
\end{proof}

\begin{proof}[{\bf Proof of Theorem \ref{ReducibilityDP}}]
We want to apply Proposition \ref{iterazione riducibilita} to the operator $\mathcal{L}_{\omega}^+$ in \eqref{operatorefinale} 
(see also Theorem \ref{risultatosez8}).
It is convenient to remark that $\mathcal{L}^{+}_{\omega}$ 
gives the dynamics of a quadratic time-dependent Hamiltonian. 
Passing to the extended phase space, $\mathcal{L}^{+}_{\omega}$ 
corresponds to the Hamiltonian
\[
\mathcal H:=\mathcal{H}( \eta, u)= \omega\cdot \eta + \frac12 (u,J^{-1}\mathbf{M}_0 u)_{L^2(\T_x)}\,,\qquad \mathbf{M}_0= \cD_0+\cP_0
\]
where  
\begin{equation}\label{natalino}
 \cD_0= {\rm diag}(\mathrm{i} \, d_j^{(0)})_{j\in \mathbb{Z}\setminus\{0\}}\,,\quad d_j^{(0)}= m \left(\frac{j(4+j^2)}{1+j^2}\right), 
\quad \mathcal{P}_0:=\mathcal{R}.
\end{equation}
By Lemma \ref{natalino10} we have that $m$ and $\mathcal{P}_0$
 satisfy \eqref{nomidimerda2}, \eqref{constaintCiao} with the choice of parameters in 
 \eqref{fixparam}. 
Then the smallness assumption  \eqref{KAM smallness condition1} follows
by the smallness condition on $\e$ in \eqref{PiccolezzaperKamredDP}
provided that $N_0$ in formula \eqref{PiccolezzaperKamredDP} is chosen as in Proposition \ref{iterazione riducibilita}.
We can conclude that Proposition \ref{iterazione riducibilita} applies to $\mathcal{L}^{+}_{\omega}$ 
in \eqref{operatorefinale}.

\noindent
By \eqref{stima cal R nu} we have that the sequence $(d_j^k)_{k\in\mathbb{N}}$ in \eqref{mu j nu} is Cauchy, hence the limit $d_j^{\infty}=d_j^{(0)}+r_j^{\infty}$ exists and, also by \eqref{mu j nu}, $r_j^{\infty}$ satisfies \eqref{stimeautovalfinaliDP}.\\
Now we claim that (recall \eqref{prime}-\eqref{seconde} and \eqref{Omega nu + 1 gamma})
\begin{equation}\label{4.48}
\calO_{\infty}\subseteq \bigcap_{k\geq 0} \Omega_k^{\g^{3/2}}.
\end{equation}
Indeed we have for $\lvert \ell \rvert\le N_{k}$
\begin{align*}
\lvert \omega\cdot \ell+d_j^k-d_{j'}^{k}\rvert &\geq \lvert \omega\cdot \ell+d_j^{\infty}-d_{j'}^{\infty}\rvert-\lvert r_j^k-r_j^{\infty} \rvert-\lvert r_{j'}^k-r_{j'}^{\infty} \rvert\\
&\stackrel{ (\ref{stima cal R nu})}{\geq} \frac{2 \g^{3/2}}{\langle \ell\rangle^{\tau}}-\frac{ \mathfrak{M}^{{\sharp, \g^{3/2}}}_{0}(s_0,\tb_0)\,}{N_{k-2}^{\ta}}\geq \frac{\g^{3/2}}{\langle \ell \rangle^{\tau}}
\end{align*}
since $\mathfrak{M}^{{\sharp, \g^{3/2}}}_{0}(s_0,\tb_0)\leq \g^{3/2} N_0^{-\tau_1}$ and $ \langle \ell \rangle^{\tau}\le N_k^\tau\le N_{k-2}^{\ta}$
due to \eqref{anagrafe2}.
We conclude  that $\calO_{\infty}\subseteq  \Omega_{k+1}^{\g^{3/2}}$. Thus the sequence $(\Psi_k)_{k\in\mathbb{N}}$ (recall 
item $({\bf S2})_{k}$ in Prop. \ref{iterazione riducibilita}%\eqref{volgo}
) is well defined on $\calO_{\infty}$.

\noindent
We define 
\[
\Phi_k=\calQ_0\circ \dots \circ \calQ_k.
\]
We claim that there exists $\Phi_{\infty}:=\lim_{k\to\infty} \Phi_k$ in the topology induced by the operatorial norm. 
First we note that, by using \eqref{tame Psi nu - 1} and \eqref{KAM smallness condition1}, for any $k$ we have
\begin{equation}
\mathfrak{M}^{\sharp, \g^{3/2}}_{\Phi_k}(s)\le \sum_{j=0}^{k} \left(\mathfrak{M}^{\sharp, \g^{3/2}}_{\calQ_j}(s) \prod_{i\neq j} \mathfrak{M}^{\sharp, \g^{3/2}}_{\calQ_i}(s_0)\right)\le 2\sum_{j=0}^{k} \mathfrak{M}^{\sharp, \g^{3/2}}_{\calQ_j}(s)\le C \big(1+\max_{j=0, \dots, k}\mathfrak{M}^{\sharp, \g^{3/2}}_{\Psi_j}(s)\big).
\end{equation}
By Lemmata \ref{proprietatame} and \ref{LemmaAggancio} we have
\begin{align*}
\mathfrak{M}^{\g^{3/2}}_{\Phi_k - \Phi_{k-1}}(s, \tb_0) &\le_s 
\mathfrak{M}^{\sharp, \g^{3/2}}_{\Phi_k - \Phi_{k-1}}(s, \tb_0)  \le_s
\mathfrak{M}^{\sharp, \g^{3/2}}_{\Psi_k}(s, \tb_0)+\mathfrak{M}^{\sharp, \g^{3/2}}_{\Psi_k}(s_0, \tb_0) 
\max_{j=0,\dots, k} \mathfrak{M}^{\sharp, \g^{3/2}}_{\Psi_j}(s, \tb_0)\\
&+\mathfrak{M}^{\sharp, \g^{3/2}}_{\Psi_k}(s, \tb_0) \max_{j=0,\dots, k} 
\mathfrak{M}^{\g^{3/2}, \sharp}_{\Psi_j}(s_0, \tb_0)\stackrel{(\ref{tame Psi nu - 1})}{\le_s} 
 N_k^{\tau_1} N_{k-1}^{-\mathtt{a}} \mathfrak{M}^{{\sharp, \g^{3/2}}}_{0}(s, \tb_0) \g^{-3/2}.
\end{align*}
Thus by
\[
\lVert (\Phi_{k+m}-\Phi_k)h\rVert^{\g^{3/2}, \calO_{\infty}}_s \le 
\sum_{j=k}^{k+m} \lVert (\Phi_{j}-\Phi_{j-1})h\rVert^{\g^{3/2}, \calO_{\infty}}_s
\]
and by \eqref{MaledettaCondizioneKAM} we have that (recall \eqref{KAM smallness condition1} and \eqref{anagrafe2})
\[
\mathfrak{M}^{\g^{3/2}}_{\Phi_{k+m}-\Phi_k}(s, \tb_0) \le_{s_0, \tb_0} \mathbb{M}^{\gamma}_{\mathcal{R}}(s,\rho-2)   N_k^{\tau_1} N_{k-1}^{-\mathtt{a}}  \g^{-3/2} \stackrel{(\ref{pasqua200}), (\ref{PiccolezzaperKamredDP1000})}{\le_{s_0, \tb_0}} 
\varepsilon \gamma^{-1}\lVert \mathfrak{I}\rVert_{s+\mu_1}^{\gamma, \cO_0}  N_k^{-2( \tau+(1/3))},
\]
hence $(\Phi_k)_{k\in\mathbb{N}}$ is a Cauchy sequence in $\mathcal{L}(H^s)$ and for $\Phi_{\infty}$ the estimate \eqref{grano} holds. The operators $\Phi_k$ are close to the identity, hence the same is true for $\Phi_{\infty}$ and by Neumann series it is invertible. One can  prove that for $\Phi_{\infty}^{-1}$ the estimate \eqref{grano} holds.
%The \eqref{r nu - 1 r nu i1 i2}, \eqref{stima R nu i1 i2} and  \eqref{stima R nu i1 i2 norma alta} imply
% the bound \eqref{stimaLipR}.
 
  Let us prove the \eqref{stimaLipR}.
 We first show that, for any $n\in \N$ one has
 \begin{equation}\label{stimaENNE}
 \langle j\rangle| r_j(\mathfrak{I}_1)-r_j (\mathfrak{I}_2)|\leq \e\gamma^{-1}\|\mathfrak{I}_1-\mathfrak{I}_2\|_{s_0+\s}+
 \e\gamma^{-1}CN_{n-1}^{-\mathtt{a}},
 \end{equation}
 with $N_n$ defined in Prop. \ref{iterazione riducibilita}. This would implies the thesis.
For $k=n+1$ one can estimates
 \begin{equation*}
 \begin{aligned}
| r_j(\mathfrak{I}_1)-r_j (\mathfrak{I}_2)|&\leq   | r_j(\mathfrak{I}_1)-r_j^{(k)} (\mathfrak{I}_1)|+
| r^{(k)}_j(\mathfrak{I}_1)-r^{(k)}_j (\mathfrak{I}_2)|+  | r_j^{(k)}(\mathfrak{I}_2)-r_j (\mathfrak{I}_2)|\\
%&\leq \langle j\rangle^{-1}\e\gamma^{-1}(\|\mathfrak{I}_1-\mathfrak{I}_2\|_{s_0+\s}+CN_{n-1}^{-\mathtt{a}})
 \end{aligned}
 \end{equation*}
by using  \eqref{stima cal R nu}, \eqref{r nu i1 - r nu i2}, with $K_2\sim \e\gamma^{-1}$ to get the \eqref{stimaENNE}.

\end{proof}

\section{Measure estimates and conclusions}\label{measure}

%We recall that it is well known that $meas([L,2L]^{\nu}\setminus\calO_0)\leq C\gamma L^{\nu-1}$ (see \eqref{0Meln}).

Here we conclude the proof of Theorem \ref{teoMainRed} by showing that Theorem  \ref{stimedimisura}
 holds. 
 We first need some preliminary results. Let us  define
 \begin{equation}\label{omeghinoj}
 \omega(j):=\frac{4+j^{2}}{1+j^{2}} j\,,
 \end{equation}
 and remark that if $j\neq k$ (both non-zero)
\begin{equation}
\label{prat}
|\omega(j)-\omega(k)| = |j-k| \Big|1+ 3\frac{1-jk}{(1+j^2)(1+k^2)}\Big|\geq \frac1{2} |j-k|\,.
\end{equation}
Recall that $\tau> 2\nu +1$ is fixed  in \eqref{0Meln}.
\begin{lem}\label{BrexitDP}
	If $R_{\ell j k}\neq \emptyset$,  
	then $\lvert \ell \rvert\geq C_1 \lvert \omega(j)-\omega(k)|$% \rvert\geq \frac{C_1}{2}\,\lvert j-k \rvert$ 
	for some constant $C_1>0$. 
	
	\noindent
	If $Q_{\ell j}\neq \emptyset$ then $\lvert \ell \rvert\geq C_2 \lvert j \rvert$ for some constant $C_2>0$. 
\end{lem}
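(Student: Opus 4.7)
The plan is to derive both estimates directly from the triangle inequality applied to the definition of the bad sets, exploiting that $\omega \in [L, 2L]^\nu$ is bounded, that $|m-1| \leq C\varepsilon$ is small, and that the correctors $r_j$ satisfy $\langle j \rangle |r_j| \leq C\varepsilon$.

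First I would treat $Q_{\ell j}$. If $\omega \in Q_{\ell j}$, then from $|\omega\cdot\ell + m j| < 2\gamma \langle \ell \rangle^{-\tau} \leq 2\gamma$ and $|\omega \cdot \ell| \leq 2 L \nu |\ell|$, I get $|m| |j| \leq 2 L\nu|\ell| + 2\gamma$. Since $|m| \geq 1/2$ for $\varepsilon$ small, this yields $|j| \leq 4 L\nu |\ell| + 4\gamma$. The case $\ell = 0$ is impossible for $\gamma$ small, since $|j| \geq 1 > 4\gamma$; for $\ell \neq 0$, $|\ell| \geq 1$ and we absorb the constant $4\gamma$ into $C_2$, giving $|\ell| \geq C_2 |j|$.

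Next I would treat $R_{\ell j k}$, assuming $j\neq k$ as noted. Writing $d_j - d_k = m(\omega(j)-\omega(k)) + (r_j - r_k)$ and using $|r_j|, |r_k| \leq C\varepsilon$, the definition of $R_{\ell j k}$ combined with $|\omega\cdot\ell|\le 2L\nu|\ell|$ yields
\[
|m||\omega(j)-\omega(k)| \leq 2L\nu |\ell| + 2\gamma^{3/2} + 2 C\varepsilon.
\]
Again using $|m|\geq 1/2$, this gives $|\omega(j)-\omega(k)| \leq 4 L\nu |\ell| + C'(\gamma^{3/2}+\varepsilon)$. For $\ell = 0$, the right side is less than $1/2$ for $\gamma$ and $\varepsilon$ sufficiently small, but by \eqref{prat} we have $|\omega(j)-\omega(k)| \geq \tfrac{1}{2}|j-k| \geq 1/2$ whenever $j\neq k$, a contradiction; hence $R_{0 j k} = \emptyset$. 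For $\ell \neq 0$ we have $|\ell| \geq 1$ so the constant term is bounded by a multiple of $|\ell|$, and we obtain $|\omega(j)-\omega(k)| \leq C^{-1}_1 |\ell|$ as claimed.

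I do not expect any serious obstacle: both claims are elementary consequences of the triangle inequality and the boundedness of $\omega$, once one observes that the $\ell = 0$ cases are vacuous thanks to the smallness of $\gamma$, $\varepsilon$ and the gap estimate \eqref{prat}. The only point requiring mild attention is keeping track of the constants to ensure that they are absolute (independent of $\omega, j, k, \ell$).
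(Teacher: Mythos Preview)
Your proposal is correct and follows essentially the same triangle-inequality approach as the paper: bound $|m||\omega(j)-\omega(k)|$ (resp.\ $|m||j|$) from above by $|\omega\cdot\ell|$ plus small error terms, then use $|m|\ge 1/2$ and $|\omega\cdot\ell|\lesssim|\ell|$. The only cosmetic difference is that for $Q_{\ell j}$ the paper disposes of the case $|mj|\ge 2|\omega\cdot\ell|$ via the diophantine lower bound $|\omega\cdot\ell|\ge 2\gamma\langle\ell\rangle^{-\tau}$ from $\mathcal{O}_0$, whereas you treat the case $\ell=0$ directly by comparing $|j|\ge 1$ with $4\gamma$; your variant is marginally more elementary since it avoids invoking the diophantine condition.
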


\begin{proof}
	Since $|\omega||\ell|\geq |\omega\cdot\ell|$  our first claim follows,  setting $C_1:=(8\lvert {\omega} \rvert)^{-1}$, provided that we prove 
	\[
	8\lvert {\omega}\cdot \ell\rvert\geq \lvert \omega(j)-\omega(k)\rvert
	\]%. Then this would imply
	%\begin{equation}
	%\lvert \ell \rvert\geq C_1 \lvert \omega(j)-\omega(k)\rvert, \qquad C_1:=(8\lvert {\omega} \rvert)^{-1}.
	%\end{equation}
If $R_{\ell j k}\neq \emptyset$, then there exist $\omega$ such that 
\begin{equation}
\lvert d_j(\omega)-d_k(\omega) \rvert< 
2 \gamma^{3/2}\langle \ell \rangle^{-\tau}+2 \lvert {\omega}\cdot \ell \rvert.
\end{equation}
Moreover, recall \eqref{clinica100} and \eqref{stimeautovalfinaliDP}, we get
\begin{equation}
\begin{aligned}
\lvert d_j(\omega)-d_k(\omega) \rvert &\geq \lvert m \rvert \lvert \omega(j)-\omega(k) \rvert
-\lvert r_j(\omega) \rvert-\lvert r_k(\omega) \rvert 
%\geq 
%\frac{1}{2}\lvert \omega(j)-\omega(k) \rvert- C\varepsilon \left( \frac{1}{\lvert j \rvert}+\frac{1}{\lvert k \rvert} \right)\\
%&
\geq \frac{1}{3} \lvert \omega(j)-\omega(k) \rvert.
\end{aligned}
\end{equation}
Thus, for $\varepsilon$ small enough
\[
2 \lvert {\omega} \rvert \lvert \ell \rvert\geq 2 \lvert {\omega}\cdot \ell \rvert\geq 
\left( \frac{1}{3}-\frac{2 \gamma^{3/2}}{\langle \ell \rangle^{\tau}\lvert \omega(j)-\omega(k) \rvert} \right)
\lvert \omega(j)-\omega(k) \rvert\geq \frac{1}{4}\lvert \omega(j)-\omega(k) \rvert
\]
and this proves the first claim on $R_{\ell j k}$. 
If $\lvert m j \rvert\geq 2 \lvert \omega\cdot \ell \rvert$ then by \eqref{0Meln}
\[
\lvert \omega\cdot \ell +m j \rvert\geq \lvert m \rvert \lvert j \rvert-\lvert \omega\cdot \ell \rvert\geq 2 \lvert \omega\cdot \ell \rvert- \lvert \omega\cdot \ell \rvert=\lvert \omega\cdot \ell \rvert\geq \gamma \langle \ell \rangle^{-\tau}.
\]
Hence if $Q_{\ell j}\neq \emptyset$ we have
$\lvert j \rvert\le 2\lvert \omega\cdot \ell \rvert \lvert m \rvert^{-1}\le C_{2}^{-1}\lvert \ell \rvert$, 
where $C_2:=\lvert m \rvert (4 \lvert {\omega} \rvert)^{-1}$.
This concludes the proof.
\end{proof}

By \eqref{BadSetsDP}, we have to bound the measure of the sublevels of the function $\omega\mapsto\phi(\omega)$ defined by
\begin{equation}\label{phi(omega)DP}
\begin{aligned}
&\phi_R(\omega):= \omega\cdot \ell+d_j(\omega)-d_k(\omega)=\omega\cdot \ell+\mathrm{i} m(\omega) (\omega(j)-\omega(k))
+(r_j-r_k)(\omega),\\
&\phi_Q(\omega):= \omega\cdot \ell+m(\omega) j.
\end{aligned}
\end{equation}
Note that $\phi$ also depends on $\ell, j, k, \mathfrak{I}$.

By Lemma \ref{BrexitDP} it is sufficient to study the measure of the resonant sets $R_{\ell j k}$ 
defined in \eqref{BadSetsDP} for $(\ell, j, k)\neq (0, j, j)$. In particular we will prove the following Lemma.

\begin{lem}\label{singolo}
Let us define for $\eta\in (0, 1)$ and $\sigma\in\mathbb{N}>0$ 
\begin{equation*}
R_{\ell j k}(\eta, \sigma):=\big\{ \omega\in \calO_0\; : \lvert \omega\cdot \ell+d_j-d_k\rvert\le {2\eta}\langle \ell \rangle^{-\sigma}\big\},\quad
Q_{\ell j }(\eta, \sigma):=\big\{ \omega\in \calO_0 : \lvert \omega\cdot \ell+m j \rvert\le {2\eta}{\langle \ell \rangle^{-\sigma}}\big\}.
\end{equation*}
Recalling that $\calO_0\in[-L,L]$,   we have that $\lvert R_{\ell j k}(\eta, \sigma) \rvert\le C L^{(\nu-1)} \eta \langle \ell \rangle^{-\sigma}$. The same holds for $Q_{\ell j}(\eta,\s)$.
\end{lem}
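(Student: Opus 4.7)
}

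The plan is to reduce the measure estimate to a one-dimensional sublevel-set bound via Fubini decomposition along the direction $\hat\ell := \ell/|\ell|$, after which one exploits a lower bound for the directional derivative $\partial_s\phi_R$ (resp.\ $\partial_s\phi_Q$). For $\ell\neq 0$, choose coordinates $\omega = s\hat\ell + v$ with $v\in\ell^\perp$; since $\calO_0\subset[L,2L]^{\nu}$, the cross-section $\{v \in \ell^{\perp}: s\hat\ell + v \in \calO_0 \text{ for some } s\}$ has $(\nu-1)$-dimensional measure $\leq C L^{\nu-1}$, so the result follows once one shows that for each fixed $v$ the set of $s$ with $|\phi_{R}|\leq 2\eta\langle\ell\rangle^{-\sigma}$ has one-dimensional measure $\leq 4\eta(c|\ell|\langle\ell\rangle^{\sigma})^{-1}$ for some $c>0$.

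The heart of the argument is the lower bound $|\partial_{s}\phi_{R}|\geq |\ell|/2$. Differentiating \eqref{phi(omega)DP} along $\hat\ell$ gives
\[
\partial_{s}\phi_{R} = |\ell| + (\partial_{s}m)(\omega(j)-\omega(k)) + \partial_{s}(r_{j}-r_{k}).
\]
By \eqref{clinica100}, $|\partial_{s}m|\leq |m|^{\text{lip}}\leq C\varepsilon\gamma^{-1}$, and by Lemma \ref{BrexitDP} (nonemptiness of $R_{\ell j k}$), $|\omega(j)-\omega(k)|\leq C_{1}^{-1}|\ell|$, so the second term is bounded by $C\varepsilon\gamma^{-1}|\ell|$. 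For the third, the weighted Lipschitz bound \eqref{stimeautovalfinaliDP} yields $|r_{j}|^{\text{lip}}\leq C\varepsilon\gamma^{-3/2}\langle j\rangle^{-1}$, so $|\partial_{s}(r_{j}-r_{k})|\leq C\varepsilon\gamma^{-3/2}$. Under the hypothesis $\varepsilon\gamma^{-5/2}\ll 1$ of Theorem \ref{teoMainRed}, both perturbation terms are much smaller than $|\ell|$ for $|\ell|\geq 1$, which secures $|\partial_{s}\phi_{R}|\geq |\ell|/2$. The Fubini argument then produces $|R_{\ell j k}(\eta,\sigma)|\leq C L^{\nu-1}\eta(|\ell|\langle\ell\rangle^{\sigma})^{-1}\leq C L^{\nu-1}\eta\langle\ell\rangle^{-\sigma}$. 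The analogous bound for $Q_{\ell j}$ is easier: $\partial_{s}\phi_{Q} = |\ell| + (\partial_{s}m)j$, and $|j|\leq C_{2}^{-1}|\ell|$ by Lemma \ref{BrexitDP}, so the Lipschitz bound on $m$ alone suffices.

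The case $\ell = 0$ is handled by non-degeneracy rather than by derivative estimates. For $R_{0 j k}$ with $j\neq k$, inequality \eqref{prat} gives $|\omega(j)-\omega(k)|\geq 1/2$, hence $|m(\omega(j)-\omega(k))|\geq 1/4$ (using $|m|\geq 1/2$ from \eqref{clinica100}), while $|r_{j}-r_{k}|\leq C\varepsilon$ by \eqref{stimeautovalfinaliDP}. Thus $|\phi_{R}|\geq 1/8$ for $\varepsilon$ small, so $R_{0 j k}(\eta,\sigma)=\emptyset$ once $\eta<1/16$; for larger $\eta$ the trivial bound $|R_{0 j k}|\leq |\calO_{0}|\leq C L^{\nu-1}\eta$ still applies (up to redefining $C$, since $\eta$ is then bounded below). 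The same reasoning disposes of $Q_{0 j}$.

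I expect the only genuine technical point to be the interplay between the different $\gamma$-scalings: the Lipschitz variation of $r_{j}-r_{k}$ produces a loss $\gamma^{-3/2}$ rather than $\gamma^{-1}$, and it is precisely this asymmetry that forces the second-order Melnikov conditions \eqref{seconde} to use constant $\gamma^{3/2}$ instead of $\gamma$. The rest is bookkeeping: one chooses the orthogonal decomposition, applies Fubini, and collects the factor $L^{\nu-1}$ from the cross-section.
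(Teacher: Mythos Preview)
Your proposal is correct and follows essentially the same approach as the paper: decompose $\omega=s\hat\ell+v$ with $v\in\ell^{\perp}$, establish the lower bound $|\ell|/2$ on the directional variation of $\phi_R$ using \eqref{clinica100}, \eqref{stimeautovalfinaliDP} and Lemma \ref{BrexitDP}, and conclude via Fubini. The one cosmetic point is that $m(\omega)$ and $r_j(\omega)$ are only known to be Lipschitz, not differentiable, so your $\partial_s\phi_R$ should be replaced by the finite difference $(\Psi_R(s_1)-\Psi_R(s_2))/(s_1-s_2)$ as the paper does in \eqref{maggio}; your estimates carry over verbatim, and your explicit treatment of the case $\ell=0$ is a welcome addition that the paper leaves implicit.
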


\begin{proof}
We give the proof of Lemma \ref{singolo} for the set $R_{\ell j k}$ (with $\ell\neq 0$)
which is the most difficult case.

\noindent
Split $\omega=s \hat{\ell}+v$ where $\hat{\ell}:=\ell /\lvert \ell \rvert$ and $v\cdot \ell=0$. 
Let $\Psi_R(s):=\phi_R(s \hat{\ell}+v)$, defined in \eqref{phi(omega)DP}.
By using \eqref{clinica100},\eqref{stimeautovalfinaliDP} and    Lemma \ref{BrexitDP} 
we have 
\begin{equation}\label{maggio}
\begin{aligned}
\lvert \Psi_R(s_1)-\Psi_R(s_2) \rvert&\geq \lvert s_1-s_2 \rvert \big( 
|\ell|-|j-k||m|^{lip,\calO_0}-(|r_{j}|^{lip,\calO_0}+|r_{k}|^{lip,\calO_0}) \big)
%&\stackrel{(\ref{clinica100}),(\ref{stimeautovalfinaliDP})}{\geq} \lvert s_1-s_2 \rvert \Big(    |\ell|-|j-k|C\e\gamma^{-1}
%-C\e\gamma^{-3/2}\max\{\langle j \rangle^{-1},\langle k \rangle^{-1}\} \Big)\\
{\geq}\frac{\lvert\ell\rvert}{2} \lvert s_1-s_2 \rvert 
\end{aligned}
\end{equation}
for $\e$ small enough (see \eqref{PiccolezzaperKamredDP}).
As a consequence, the set $\Delta_{\ell j k}:=\{ s: s \hat{\ell}+v\in R_{l j k}\}$ has Lebesgue measure
\[
\lvert \Delta_{\ell j k} \rvert\le {2}{\,\lvert \ell\rvert^{-1}} \,{4\,\eta\,}{\langle \ell \rangle^{-\s }}= {8\,\eta}{\langle \ell \rangle^{-\s -1}}.
\]
 The Lemma follows by Fubini's theorem.
\end{proof}

\begin{lem}\label{delpiero}
There exists $\mathtt C>0$ such that setting $\tau_1= \nu+2$ then, for all $j,k$ such that $\lvert j \rvert, \lvert k \rvert\geq \mathtt{C} \langle \ell \rangle^{\tau_1}\gamma^{-(1/2)}$, 
%$\lvert j \rvert, \lvert k \rvert\geq \mathtt{C} \langle \ell \rangle^{\tau_1}\gamma^{-(1/2)}$% with $\tau\geq \tau_1\geq \tau_2$ 
one has 
$R_{\ell j k}(\gamma^{3/2}, \tau)\subseteq Q_{\ell, j-k}(\gamma, \tau_1)$.
\end{lem}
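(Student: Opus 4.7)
\medskip

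\noindent\textit{Proof strategy for Lemma \ref{delpiero}.} The plan is to show that for $\omega\in R_{\ell j k}(\gamma^{3/2},\tau)$ the quantity $|\omega\cdot\ell+m(j-k)|$ is automatically small enough. Using the explicit form $d_j=m\,\omega(j)+r_j$ with $\omega(j)=j(4+j^2)/(1+j^2)$, I would decompose
\[
\omega\cdot\ell+m(j-k) = \bigl(\omega\cdot\ell+d_j-d_k\bigr)-m\bigl(\omega(j)-\omega(k)-(j-k)\bigr)-(r_j-r_k),
\]
so the goal is to control the last two ``error'' terms by $\tfrac{\gamma}{2}\langle\ell\rangle^{-\tau_1}$ each.

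The key algebraic step is the factorisation
\[
\omega(j)-\omega(k)-(j-k) \;=\; \frac{3(j-k)(1-jk)}{(1+j^2)(1+k^2)},
\]
which for $|j|,|k|\ge 2$ gives $|\omega(j)-\omega(k)-(j-k)|\le 12\,|j-k|/(|j||k|)$. The crucial observation is that the naive bound $\le C(|j|^{-1}+|k|^{-1})$ is not sufficient; the extra factor $|j-k|$ is decisive. Here I would invoke Lemma \ref{BrexitDP}, which together with \eqref{prat} yields $|j-k|\le 2\,|\ell|/C_1$ whenever $R_{\ell j k}\ne\emptyset$. Inserting the lower bound $|j|,|k|\ge \mathtt{C}\langle\ell\rangle^{\tau_1}\gamma^{-1/2}$ therefore produces
\[
|m|\,|\omega(j)-\omega(k)-(j-k)| \;\le\; \frac{C\,|\ell|\,\gamma}{\mathtt{C}^2\,\langle\ell\rangle^{2\tau_1}}\;\le\;\frac{C}{\mathtt{C}^2}\,\frac{\gamma}{\langle\ell\rangle^{\tau_1}},
\]
since $2\tau_1-1\ge\tau_1$ (recall $\tau_1=\nu+2\ge 3$). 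For $\mathtt{C}$ chosen large enough (independent of $\gamma$), this is bounded by $\gamma/(4\langle\ell\rangle^{\tau_1})$.

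For the $r_j-r_k$ term I would use the estimate $\langle j\rangle|r_j|\le C\varepsilon$ from \eqref{stimeautovalfinaliDP}, giving $|r_j-r_k|\le 2C\varepsilon\gamma^{1/2}/(\mathtt{C}\langle\ell\rangle^{\tau_1})$, and then exploit the smallness condition $\varepsilon\gamma^{-5/2}\ll 1$ of Theorem \ref{teoMainRed}, which forces $\varepsilon\gamma^{1/2}\le\gamma$ and hence $|r_j-r_k|\le \gamma/(4\langle\ell\rangle^{\tau_1})$. Finally, the remaining term $2\gamma^{3/2}\langle\ell\rangle^{-\tau}$ is absorbed using $\tau-\tau_1=\nu+4\ge 1$ and $\gamma<1$, which give $2\gamma^{3/2}\langle\ell\rangle^{-\tau}\le \gamma/(2\langle\ell\rangle^{\tau_1})$. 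Summing the three contributions yields $|\omega\cdot\ell+m(j-k)|\le \gamma\langle\ell\rangle^{-\tau_1}\le 2\gamma\langle\ell\rangle^{-\tau_1}$, i.e.\ $\omega\in Q_{\ell,j-k}(\gamma,\tau_1)$.

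The main (mild) obstacle is recognising that one must not use the crude bound on $\omega(j)-\omega(k)-(j-k)$ but rather the factorised form above, and couple it with the a priori relation $|j-k|\lesssim|\ell|$ coming from Lemma \ref{BrexitDP}. Without this refinement the required inequality would force $\mathtt{C}\sim\gamma^{-1/2}$, contradicting the claim that $\mathtt{C}$ is an absolute constant.
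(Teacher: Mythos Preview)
Your argument is correct and is essentially the same as the paper's, just stated in the direct rather than the contrapositive form: the paper assumes $\omega\notin Q_{\ell,j-k}(\gamma,\tau_1)$ and shows $|\omega\cdot\ell+d_j-d_k|\ge \gamma^{3/2}\langle\ell\rangle^{-\tau}$, but the decomposition and the three error terms (the $m(\omega(j)-\omega(k)-(j-k))$ piece controlled via $|j-k|\lesssim|\ell|$ and $|j||k|\gtrsim\mathtt{C}^2\langle\ell\rangle^{2\tau_1}\gamma^{-1}$, the $r_j-r_k$ piece via $|r_j|\le C\varepsilon\langle j\rangle^{-1}$ and $\varepsilon\gamma^{-1/2}\ll 1$, and the trivial $\gamma^{3/2}\langle\ell\rangle^{-\tau}$ term) are identical. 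Your remark that the factorised bound $|j-k|/(|j||k|)$ rather than $|j|^{-1}+|k|^{-1}$ is essential is exactly the point.
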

\begin{proof}
By \eqref{stimeautovalfinaliDP}, \eqref{clinica100}
we have (recall also \eqref{prat}) that
\begin{equation}
\begin{aligned}
\lvert \omega\cdot \ell +d_j-d_k\rvert 
%&\geq 
%\lvert \omega\cdot \ell +m(j-k) \rvert-\lvert m \rvert \lvert \omega(j)-j+k-\omega(k) \rvert
%-\lvert r_j\rvert-\lvert r_k\rvert	\\
&\geq \frac{2\gamma}{\langle \ell \rangle^{\tau_1}}-2 \lvert j-k \rvert \frac{C}{\lvert j \rvert \lvert k \rvert}-\frac{\tilde{C}\varepsilon}{\min\{ \lvert j \rvert, \lvert k \rvert \}}\geq \frac{2\gamma}{\langle \ell \rangle^{\tau_1}}
- \frac{C\gamma}{\mathtt{C}\langle \ell \rangle^{2\tau_1-1}}
-\frac{\tilde{C}\varepsilon \sqrt{\gamma}}{\mathtt{C} \langle \ell \rangle^{\tau_1}}
%\geq 
%\frac{\gamma}{\langle \ell \rangle^{\tau_2}} \Big(2-\frac{C}{2\mathtt{C}\langle \ell \rangle^{2\tau_1-\tau_2-1}}
%-\frac{\tilde{C}\varepsilon }{2\sqrt{\gamma}\mathtt{C} \langle \ell \rangle^{\tau_1-\tau_2}}  \Big)\\
%\geq \frac{\gamma}{\langle \ell \rangle^{\tau_2}} 
\geq \frac{\gamma^{3/2}}{\langle \ell \rangle^{\tau}}
\end{aligned}
\end{equation}
for $\mathtt{C}$ big enough and since $\varepsilon (\sqrt{\gamma})^{-1}\ll 1$.
\end{proof}

\begin{proof}[{\bf Proof of Theorem \ref{stimedimisura}}]
Let  $\tau>2\nu+4$.
We have 
\begin{equation*}
\left\lvert \bigcup_{\ell\in\mathbb{Z}^{\nu}, j, k\in \Z\setminus\{0\}} R_{\ell j k}  \right\rvert\le 
\sum_{\ell\in \Z^{\nu}, \lvert j \rvert, \lvert k \rvert\geq \mathtt{C}\langle \ell \rangle^{\tau_1}\gamma^{-(1/2)}}|R_{\ell j k}|
+\sum_{\ell\in \Z^{\nu}, \lvert j \rvert, \lvert k \rvert\leq \mathtt{C} \langle \ell \rangle^{\tau_1}\gamma^{-(1/2)}} |R_{\ell j k}|.
\end{equation*}
On one hand we have that, using Lemmata \ref{delpiero} and \ref{singolo}, 
\begin{equation*}
\begin{aligned}
\sum_{\ell\in \Z^{\nu}, \lvert j \rvert, \lvert k \rvert\geq \mathtt{C} \langle \ell \rangle^{\tau_1}\gamma^{-(1/2)}}|R_{\ell j k}|&\le 
C\sum_{j-k=h, \lvert h \rvert\le C \lvert \ell \rvert} L^{\nu-1}\gamma \langle \ell \rangle^{-\tau_1}\le 
C L^{\nu-1}\gamma\sum_{\ell\in\Z^{\nu}}  \langle \ell \rangle^{-(\tau_1-1)}\leq
\tilde{C} L^{\nu-1}\g ,
\end{aligned}
\end{equation*}
for some $\tilde{C}\geq C>0$.
On the other hand
\begin{equation*}
\begin{aligned}
\sum_{\substack{\ell\in \Z^{\nu}, \lvert j \rvert, \lvert k \rvert\leq \mathtt{C} \langle \ell \rangle^{\tau_1}\gamma^{-(1/2)},\\
 \lvert j-k \rvert 
\le C\lvert \ell \rvert} }|R_{\ell j k} | 
&
\le  C\gamma^{(3/2)} L^{\nu-1}  
\sum_{\ell\in\Z^{\nu}} \frac{\lvert \ell \rvert \langle \ell \rangle^{\tau_1}}{\sqrt{\gamma}\langle \ell \rangle^{\tau}} 
\le C \gamma L^{\nu-1} 
\sum_{\ell\in\Z^{\nu}}  \langle \ell \rangle^{-(\tau-\tau_1-1)}
\leq C \gamma L^{\nu-1}.
\end{aligned}
\end{equation*}
%We conclude by fixing $\tau_2=\tau_1=\nu+2$. Recalling \eqref{parametriKAM} we check that actually $\tau>\tau_1+\nu+2$.
The discussion above  implies estimates \eqref{stimedimisuraTeo}.
\end{proof}

\begin{proof}[{\bf Proof of Theorem \ref{teoMainRed}} ({\bf Reducibility})]
It is sufficient to set $\Phi:=\Phi_2\circ\Phi_{1}$ where $\Phi_1(\omega)$ is the map given in 
 Theorem \ref{risultatosez8} while $\Phi_2$ in Theorem \ref{ReducibilityDP}.
 The bound \eqref{grano1000}
 follows by \eqref{grecia} and \eqref{grano}. Theorem \ref{stimedimisura} provides the measure estimate
 on the set $\mathcal{O}_{\infty}$  in \eqref{grano1001}.
\end{proof}

\begin{proof}[{\bf Proof of Theorem \ref{almostRED}} ({\bf Almost Reducibility})]
Consider $\calL_{\omega}(\mathfrak{I}_1)$, $\calL_{\omega}(\mathfrak{I}_2)$ under the hypotheses of Theorem \ref{almostRED}.
 Theorems \ref{risultatosez8} and \ref{ReducibilityDP} applies to the operator $\calL_{\omega}(\mathfrak{I}_1)$ hence
the results of Theorem \ref{teoMainRed} holds for $\omega$ in the set $\Omega_{1}(\mathfrak{I}_1)$ (see \eqref{prime}). 
Recalling Remark \ref{Includo} let us assume that 
\begin{equation}\label{includo2}
\calO_{\infty}(\mathfrak{I}_1)\subset\Lambda_{N}(\mathfrak{I}_2)\stackrel{(\ref{prime100}), (\ref{seconde100})}{=}\Omega_1^{(N)} \cap \Omega_2^{(N)}.
\end{equation}
We show that the thesis will follows. 
%Indeed some waker results can be proved on $\calL_{\omega}(\mathfrak{I}_2)$
%for $\omega\in \Lambda_{N}(\mathfrak{I}_2)$.
Indeed we can apply the iterative Lemma $5.2$ in Section $5$ of \cite{FGMP}
for $n=1,2,\ldots,k<\infty$ where the  larger is $N$  the larger is $k$. Actually $k$ has to be chosen in such a way $N_{k}\equiv N$
where $N_{n}=N_{0}^{(\frac{3}{2})^{n}}$.
 Hence $\calL_{\omega}(\mathfrak{I}_2)$ can be conjugated to an operator of the form
\[
\widetilde{\calL}_{\omega}(\mathfrak{I}_2):=\oo\cdot\del_{\f}-m^{(N)}J-J\circ a^{(N)}(\mathfrak{I}_2;\f,x)
+\widetilde{\RR}^{(N)}(\mathfrak{I}_2)
\]
where the constant $m^{(N)}$ and the real function $a^{(N)}$ satisfy the bounds \eqref{emmeENNE}, \eqref{emmeENNE2} respectively.
The linear operator $\widetilde{\RR}^{(N)}={\rm Op}(\widetilde{r})+\widehat{\RR}^{+}$ where $\widetilde{r}\in S^{-1}$, 
$\widehat{\RR}^{+}\in\gotL_{\rho, p}$ and satisfy the hypotheses of Proposition \ref{iterazione riducibilita}.
For $\oo\in \Omega_{2}^{(N)}(\mathfrak{I}_2)$ one can iterate the procedure of Prop. \ref{iterazione riducibilita}
with $1\leq n\leq k<\infty$. It is important to note that the maps $\calQ_{n-1}$ given in $({\bf S2})_{n}$ are the identity plus
$\Psi_{n-1}$ a $-1$-modulo-tame operator. By \eqref{tame Psi nu - 1} and \eqref{emmeENNE2} on $a^{(N)}$ one has that
\[
\calQ_{n-1} \circ J\circ a^{(N)}(\f,x)\circ\calQ_{n-1}^{-1}=J\circ a^{(N)}(\f,x)+\calP_{n}
\]
with $\calP_{n}$ satisfying the second bound in \eqref{emmeENNE2} for any $n\leq k$. In other words these terms 
are already ``small'' and they are not to be taken into account in
 the
reducibility procedure. 
By the reasoning above one can prove \eqref{assoOpBIS} and \eqref{emmeENNE2}.
It remains to show that \eqref{includo2} and the \eqref{emmeENNE}. 
First we have $\Omega_1(\mathfrak{I}_1)\subset\Omega_{1}^{(N)}(\mathfrak{I}_2)$
 Remark $5.3$ in \cite{FGMP}. To show the inclusion $\Omega_2(\mathfrak{I}_1)\subset\Omega_{2}^{(N)}(\mathfrak{I}_2)$
 we reason as follows.

 \noindent
 We first note that, by Lemma \ref{BrexitDP}, if $|\omega(j)-\omega(k)|> C_1^{-1}|\ell|$ then 
 $R_{\ell j k}(\mathfrak{I}_1)= R_{\ell j k}(\mathfrak{I}_2)=\emptyset$ (recall \eqref{BadSetsDP}), so that our claim is trivial.  Otherwise,  if 
$|\omega(j)-\omega(k)|\le  C_1^{-1}|\ell|\le C_1^{-1} N\,$
we claim that for all $j, k\in\mathbb{Z}$ we have (recall \eqref{anagrafe2})
\begin{equation}\label{marathon1}
\lvert (d_j^{(N)}-d^{(N)}_k)(\mathfrak{I}_2)-(d_j-d_k)(\mathfrak{I}_1)\rvert
\le \varepsilon\gamma^{-1}N\big(\sup_{\oo\in \calO_0}\|\mathfrak{I}_1-\mathfrak{I}_2\|_{s_0+\mu}+ N^{-\frac{3}{2}\mathtt{a}}\big)
\qquad \forall\omega\in \calO_{\infty}(\mathfrak{I}_1).
\end{equation}
The \eqref{marathon1} imply the \eqref{emmeENNE}.
We now prove that \eqref{marathon1} implies that $\Omega_{2}(\mathfrak{I}_1)\subset\Omega_{2}^{(N)}(\mathfrak{I}_2)$. 
For all $j\neq k$, $\lvert \ell \rvert\le N$, $\omega\in\calO_{\infty}(\mathfrak{I}_1)$ by \eqref{marathon1} 
\begin{equation}
\begin{aligned}
&\lvert \omega\cdot \ell +d^{(N)}_j(\mathfrak{I}_2)-d^{(N)}_k(\mathfrak{I}_2)\rvert
\geq \lvert \omega\cdot \ell +d_j(\mathfrak{I}_1)-d_k(\mathfrak{I}_1)\rvert-
\lvert (d^{(N)}_j-d^{(N)}_k)(\mathfrak{I}_2)-(d_j-d_k)(\mathfrak{I}_1)\rvert\\
&\geq 2\gamma^{3/2}\langle \ell \rangle^{-\tau}-\varepsilon\gamma^{-1}N^{-\frac{3}{2}\mathtt{a}}
\geq 2(\gamma^{3/2}-\rho)\langle \ell \rangle^{-\tau}
\end{aligned}
\end{equation}
where we used \eqref{piccolezzaI1}.

\noindent
\textit{Proof of \eqref{marathon1}}. By \eqref{FinalEigenvaluesDP} (recalling \eqref{omeghinoj})
\begin{equation}
\begin{aligned}
(d_j^{(N)}-d_k^{(N)})(\mathfrak{I}_2)-(d_j-d_k)(\mathfrak{I}_1)&=(m^{(N)}(\mathfrak{I}_2)-m(\mathfrak{I}_1))
(\omega(j)-\omega(k))\\
&+(r^{(N)}_j(\mathfrak{I}_2)-r_j (\mathfrak{I}_1))+(r^{(N)}_k(\mathfrak{I}_2)-r_k (\mathfrak{I}_1)).
\end{aligned}
\end{equation}
Choose $k\in \N$ such that $N_{k-1}\equiv N$. In this way we have that $r_{j}^{(N)}(\mathfrak{I}_2)$ coincides with $r_{j}^{(k)}$
given in Proposition \ref{iterazione riducibilita}.
We apply Proposition \ref{iterazione riducibilita}-$({\bf S4})_k$ 
in order to conclude that
\begin{equation}\label{marathon2}
\Omega^{\gamma^{3/2}}_{k}(\mathfrak{I}_1)\subseteq \Omega_{k}^{\gamma^{3/2}-\rho}(\mathfrak{I}_2),
\end{equation}
since
the smallness condition in \eqref{tab} is satisfied
by \eqref{piccolezzaI1}.
Then by  \eqref{4.48} 
\begin{equation}
 \calO_{\infty}(\mathfrak{I}_1)\subseteq \bigcap_{j\geq 0}\Omega^{\gamma^{3/2}}_j(\mathfrak{I}_1)\subseteq 
 \Omega_{k}^{\gamma^{3/2}}(\mathfrak{I}_1)\stackrel{(\ref{marathon2})}{\subseteq} \Omega_{k}^{\gamma^{3/2}-\rho}(\mathfrak{I}_2).
\end{equation}
For all $\omega\in \calO_{\infty}(\mathfrak{I}_1)\subseteq \Omega_{k}^{\gamma^{3/2}}(\mathfrak{I}_1) \cap 
\Omega_{k}^{\gamma^{3/2}-\rho}(\mathfrak{I}_2)$, 
 we deduce by Proposition \ref{iterazione riducibilita}-$({\bf S3})_{k}$  
\begin{equation}\label{marathon3}
\begin{aligned}
\langle j \rangle\lvert r_j^{(k)}(\mathfrak{I}_2)-r_j^{(k)}(\mathfrak{I}_1) \rvert 
&\stackrel{(\ref{r nu i1 - r nu i2})}{\le}  \varepsilon\g^{-1} \lVert \mathfrak{I}_2-\mathfrak{I}_1 \rVert_{s_0+\sigma}.
\end{aligned}
\end{equation}
We have, by \eqref{stima cal R nu}, for any $k\in\mathbb{N}$
\begin{equation}\label{marathon4}
%\begin{aligned}
\langle j \rangle\lvert r_j(\mathfrak{I}_1)-r_j^{(n+1)}(\mathfrak{I}_1) \rvert 
\le \langle j \rangle \sum_{j\geq k} \lvert r_j^{(j+1)}(\mathfrak{I}_1)-r_j^{(j)}(\mathfrak{I}_1)\rvert
\le  {\mathfrak M}^{\sharp, \g_*}_{0} (s_0, {\mathtt b}) \sum_{j\geq n}  N_{j}^{- {\mathtt a}}\stackrel{(\ref{PiccolezzaperKamredDP1000})}{\le} \e\gamma^{-1}  N_k^{-\mathtt{a}}.
%\end{aligned}
\end{equation}
Therefore $\forall \omega\in\calO_{\infty}(\mathfrak{I}_1)$, 
$\forall j\in\mathbb{Z}$ we have (recall the choice of $k$ above)
\begin{equation*}
\begin{aligned}
&\langle j \rangle\lvert r_j^{(N)}(\mathfrak{I}_2)-r_j(\mathfrak{I}_1) \rvert\le\langle j \rangle
\Big( \lvert r_j^{(k)}(\mathfrak{I}_2)-r^{(k)}_j(\mathfrak{I}_1) \rvert+\lvert r_j(\mathfrak{I}_1)-r^{(k)}_j(\mathfrak{I}_1) \rvert\Big)\\&\stackrel{(\ref{marathon3}), (\ref{marathon4})}{\le} \varepsilon\g^{-1} \lVert \mathfrak{I}_1-\mathfrak{I}_1 \rVert_{s_0+\sigma}
+C\mathfrak{M}_{0}^{\sharp, \g_*} (s_0, {\mathtt b})  N_k^{-\mathtt{a}}.
\end{aligned}
\end{equation*}
Using similar reasonings, the iterative Lemma $5.2$ in Section $5$ of \cite{FGMP} and recalling $|j-k|\lesssim |\ell|$ 
one can prove
that
\begin{equation}
\begin{aligned}
\lvert m^{(N)}(\mathfrak{I}_2)-m(\mathfrak{I}_1) \rvert\lvert j \rvert{\le} & C \varepsilon
\lVert \mathfrak{I}_2-\mathfrak{I}_1\rVert_{s_0+2}\lvert \ell \rvert .
\end{aligned}
\end{equation}
This concludes the proof of  \eqref{marathon1}.
\end{proof}

\appendix

\section{Technical Lemmata}\label{lemmitecnici}

\subsection{Tame and Modulo-tame operators}\label{lemmitecnicitame}
In the following we collects some properties of operators 
which are ``Lip-tame'' or  ``Lip-modulo-tame'' according to Definitions 
\ref{LipTameConstants} and \ref{def:op-tame}.

\begin{lem}[{\bf Composition of Lip-Tame operators}]\label{lem: 2.3.6}
Let $A$ and $B$ be respectively Lip-$\s_A$-tame and Lip-$\s_B$-tame operators with tame constants respectively $\mathfrak{M}^{\g}_A(\s_A, s)$ and $\mathfrak{M}_B^{\g}(\s_B, s)$. Then the composition $A\circ B$ is a Lip-$(\s_A+\s_B)$-operator with
\begin{equation}
\mathfrak{M}^{\g}_{A\circ B} (\s_{A+B}, s)\le \mathfrak{M}^{\g}_A(\s_A, s)\mathfrak{M}_B^{\g}(\s_B, s_0+\s_A)+\mathfrak{M}^{\g}_A(\s_A, s_0)\mathfrak{M}_B^{\g}(\s_B, s+\s_A).
\end{equation}
The same holds for $\s$-tame operators. 
\end{lem}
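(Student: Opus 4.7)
The statement is the standard bilinear tame composition estimate, and my plan is to derive it in the textbook two-step way: apply the $A$-tame estimate to the vector $Bu$, and then apply the $B$-tame estimate to bound the two resulting $H^{\cdot}$ norms of $Bu$ in terms of $\|u\|$.

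\smallskip

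\noindent\emph{Step 1 (sup-norm part).} Fix $u\in H^{s+\sigma_A+\sigma_B}$ and $\omega\in\calO$. By Definition \ref{LipTameConstants} applied to $A$ (with the vector $Bu$ in place of $u$) and using that $s_0+\sigma_A\ge s_0$, one obtains
\[
\|ABu\|_{s}\le \mathfrak{M}^\gamma_A(\sigma_A,s)\,\|Bu\|_{s_0+\sigma_A}+\mathfrak{M}^\gamma_A(\sigma_A,s_0)\,\|Bu\|_{s+\sigma_A}.
\]
Next, applying Definition \ref{LipTameConstants} to $B$ at the two indices $s_0+\sigma_A$ and $s+\sigma_A$ (both $\ge s_0$) yields
\[
\|Bu\|_{s_0+\sigma_A}\le \mathfrak{M}^\gamma_B(\sigma_B,s_0+\sigma_A)\|u\|_{s_0+\sigma_B}+\mathfrak{M}^\gamma_B(\sigma_B,s_0)\|u\|_{s_0+\sigma_A+\sigma_B},
\]
and an analogous inequality with $s_0+\sigma_A$ replaced by $s+\sigma_A$. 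Inserting these into the previous display and using the embedding $\|u\|_{s_0+\sigma_B}\le \|u\|_{s_0+\sigma_A+\sigma_B}$ together with the monotonicity of the sequences $s\mapsto\mathfrak{M}^\gamma_A(\sigma_A,s)$ and $s\mapsto\mathfrak{M}^\gamma_B(\sigma_B,s)$ (built into Def.~\ref{LipTameConstants}), the cross terms $\mathfrak{M}^\gamma_A(\sigma_A,s)\mathfrak{M}^\gamma_B(\sigma_B,s_0)$ and $\mathfrak{M}^\gamma_A(\sigma_A,s_0)\mathfrak{M}^\gamma_B(\sigma_B,s_0)$ are absorbed into the two claimed products. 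This yields the desired tame bound with
\[
\mathfrak{M}^\gamma_{AB}(\sigma_A+\sigma_B,s)\le \mathfrak{M}^\gamma_A(\sigma_A,s)\mathfrak{M}^\gamma_B(\sigma_B,s_0+\sigma_A)+\mathfrak{M}^\gamma_A(\sigma_A,s_0)\mathfrak{M}^\gamma_B(\sigma_B,s+\sigma_A).
\]

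\smallskip

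\noindent\emph{Step 2 (Lipschitz variation).} For the second half of \eqref{lipTAME} I use the standard Leibniz-type identity
\[
\Delta_{\omega,\omega'}(AB)=(\Delta_{\omega,\omega'}A)\,B(\omega')+A(\omega)\,(\Delta_{\omega,\omega'}B),
\]
so that $\gamma\|\Delta_{\omega,\omega'}(AB)u\|_{s-1}$ is controlled by the sum of the two terms on the right-hand side. Each of these is estimated exactly as in Step~1, applying the tame bound for the $\Delta$-factor (whose weighted operator norm is controlled by $\mathfrak{M}^\gamma_A$ resp.\ $\mathfrak{M}^\gamma_B$ at the index $s-1$) and the sup bound for the undifferentiated factor. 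After the same monotonicity argument used above, the result is bounded by the same expression displayed in Step~1 (up to the shift $s\to s-1$ automatic in \eqref{lipTAME}).

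\smallskip

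\noindent\emph{Main obstacle.} There is no serious technical difficulty here; the only point that requires care is bookkeeping the four cross terms that arise after substituting the two $B$-tame estimates into the $A$-tame estimate, and checking that each of them is controlled by one of the two terms in the claimed bound through monotonicity of $\mathfrak{M}^\gamma_A(\sigma_A,\cdot)$, $\mathfrak{M}^\gamma_B(\sigma_B,\cdot)$ and of Sobolev norms in the smoothness index. The identical argument gives the statement for $\sigma$-tame (non-Lipschitz) operators, since the Lipschitz part is not used at any step outside the Leibniz decomposition of Step~2.
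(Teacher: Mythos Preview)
Your proof is correct and is exactly what the paper's one-line proof (``follows by the definitions and by using triangle inequalities'') is gesturing at: apply the $A$-tame bound to $Bu$, substitute the $B$-tame bounds at the two resulting indices, absorb the cross terms by monotonicity, and handle the Lipschitz part via the Leibniz decomposition $\Delta_{\omega,\omega'}(AB)=(\Delta_{\omega,\omega'}A)B+A(\Delta_{\omega,\omega'}B)$. Nothing more is needed.
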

\begin{proof}
The proof follows by the definitions and by using triangle inequalities.
\end{proof}
\begin{lem}
Let $A$ be a Lip-$\s$-tame operator. Let $u(\omega)$, $\omega\in \calO\subset \mathbb{R}^{\nu}$ be a $\omega$-parameter family of Sobolev functions $H^s$, for $s\geq s_0$. Then
\begin{equation}
\lVert A u \rVert_s^{\g, \calO}\le_s \mathfrak{M}_A^{\g}(\s, s) \lVert u \rVert_{s_0}^{\g, \calO}+\mathfrak{M}_A^{\g}(\s, s_0) \lVert u \rVert_{s}^{\g, \calO}.
\end{equation}
\begin{proof}
By definition \eqref{lipTAME} we have $\mathfrak{M}_A(\s, s)\le \mathfrak{M}^{\g}_A(\s, s)$ and $\lVert u \rVert_s\le \lVert u \rVert_s^{\g, \calO}$. Then the thesis follows by the triangle inequalities
\[
%\frac{1}{\lvert \omega-\omega' \rvert}
\lvert \omega-\omega' \rvert^{-1}\lVert A(\omega)u(\omega)-A(\omega')u(\omega') \rVert_s\le \lVert (\Delta_{\omega, \omega'} A) u (\omega) \rVert_s+\lVert A(\omega') \Delta_{\omega, \omega'} u \rVert_s.
\]
\end{proof}
\end{lem}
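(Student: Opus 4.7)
The plan is to unpack the weighted Lipschitz norm $\lVert Au\rVert_s^{\gamma,\calO}$ according to its definition \eqref{tazza10}, namely $\lVert Au\rVert_s^{\gamma,\calO}= \sup_{\omega} \lVert A(\omega) u(\omega)\rVert_s + \gamma \sup_{\omega\neq\omega'} \lVert \Delta_{\omega,\omega'}(Au)\rVert_{s-1}/|\omega-\omega'|$, and to estimate each of the two pieces separately by invoking the Lip-$\s$-tame assumption \eqref{lipTAME} in Definition \ref{LipTameConstants}. The estimate of the sup-part is essentially immediate: applying \eqref{lipTAME} pointwise in $\omega$ (with $A$ itself as operator) yields $\sup_\omega\lVert A u\rVert_s \le_s \mathfrak{M}_A^{\gamma}(\s,s)\sup_\omega\lVert u\rVert_{s_0+\s} + \mathfrak{M}_A^{\gamma}(\s,s_0)\sup_\omega\lVert u\rVert_{s+\s}$, which is controlled by the right-hand side in the statement.

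The key identity for the Lipschitz piece is the ``Leibniz-type'' splitting
\[
A(\omega)u(\omega)-A(\omega')u(\omega') \;=\; \bigl(A(\omega)-A(\omega')\bigr)u(\omega) \,+\, A(\omega')\bigl(u(\omega)-u(\omega')\bigr),
\]
so that, after division by $|\omega-\omega'|$,
\[
\Delta_{\omega,\omega'}(Au) \;=\; (\Delta_{\omega,\omega'} A)\,u(\omega) \;+\; A(\omega')\,\Delta_{\omega,\omega'} u .
\]
Measuring this in $\lVert\cdot\rVert_{s-1}$, the first summand is bounded by the second half of the Lip-tame inequality \eqref{lipTAME} applied at level $s-1$ (after multiplication by $\gamma$), while the second summand is bounded by the sup-part of the tame inequality, again at level $s-1$, applied to the function $\Delta_{\omega,\omega'} u$, whose relevant norms are controlled by $\gamma^{-1}\lVert u\rVert_{s-1}^{\mathrm{lip},\calO}$ and hence by $\gamma^{-1}\lVert u\rVert_s^{\gamma,\calO}$.

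Summing the two contributions and recombining the sup and Lipschitz parts by the very definition of $\lVert\cdot\rVert_{\bullet}^{\gamma,\calO}$ produces the claimed inequality, the monotonicity $\lVert u\rVert_{r}\le\lVert u\rVert_r^{\gamma,\calO}$ and $\mathfrak{M}_A(\s,r)\le\mathfrak{M}_A^{\gamma}(\s,r)$ being used at the last step to present the bound in the compact form stated. I expect no real obstacle here: the argument is a one-line triangle inequality combined with the two halves of \eqref{lipTAME}; the only mild bookkeeping point is that the Lipschitz part of the target norm is measured in $H^{s-1}$ rather than $H^{s}$, which is precisely what makes the decomposition close, since the commutator identity naturally trades one derivative of smoothness against one Lipschitz difference quotient.
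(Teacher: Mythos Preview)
Your proposal is correct and follows essentially the same route as the paper: the Leibniz splitting $\Delta_{\omega,\omega'}(Au)=(\Delta_{\omega,\omega'}A)u(\omega)+A(\omega')\Delta_{\omega,\omega'}u$ is exactly the triangle inequality the paper invokes, and the remaining steps are the same bookkeeping with \eqref{lipTAME} and the trivial inequalities $\mathfrak{M}_A(\s,s)\le\mathfrak{M}_A^{\gamma}(\s,s)$, $\lVert u\rVert_s\le\lVert u\rVert_s^{\gamma,\calO}$. Your version is in fact more explicit than the paper's about measuring the Lipschitz piece in $H^{s-1}$, which is the correct level given \eqref{tazza10}.
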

\begin{lem}\label{actiononsobolev}
Let $A=\op(a(\varphi, x, D))\in OPS^{0}$ be a family of pseudo differential operators which are Lipschitz in a parameter $\omega\in \calO\subset\mathbb{R}^{\nu}$. If $\lvert A \rvert^{\g, \calO}_{0, s, 0}<+\infty$ (recall \eqref{norma2}) then $A$ is a $0$-tame operator with
\begin{equation}
\mathfrak{M}^{\g}_{A}(\s, s)\le C(s) \lvert A \rvert^{\g, \calO}_{0, s, 0}.
\end{equation}
\end{lem}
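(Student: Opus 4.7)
The plan is to use the Fourier representation of $A$ and reduce the claim to a classical tame product estimate applied pointwise in the $x$-frequency. Writing $u(\varphi,x) = \sum_{j\in\Z} u_j(\varphi) e^{\im j x}$, the action of $A$ decomposes as
\[
Au(\varphi,x) = \sum_{j\in\Z} a(\varphi,x,j)\, u_j(\varphi)\, e^{\im j x},
\]
so each summand is the product of two functions in $H^s(\T^{\nu+1})$: the symbol $a(\cdot,\cdot,j)$ at the integer frequency $\xi=j$ and the monochromatic factor $u_j(\varphi) e^{\im j x}$.

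First I would apply the standard tame product estimate $\|fg\|_s \le_s \|f\|_s \|g\|_{s_0} + \|f\|_{s_0} \|g\|_s$ to each summand, using $\|a(\cdot,\cdot,j)\|_s \le |A|_{0,s,0}$ uniformly in $j$ (by Definition \ref{pseudoR}). A direct Fourier computation gives $\|u_j(\cdot) e^{\im j\cdot}\|_s^2 = \sum_\ell \langle \ell, j\rangle^{2s} |u_{\ell j}|^2$, so that Cauchy--Schwarz in $j$ with weight $\langle j\rangle^{-2\sigma}$ (for any $\sigma>1/2$), together with $\langle j\rangle^{2\sigma}\langle \ell,j\rangle^{2s} \le \langle \ell,j\rangle^{2(s+\sigma)}$, yields $\sum_j \|u_j(\cdot) e^{\im j \cdot}\|_r \le_\sigma \|u\|_{r+\sigma}$ for $r=s_0,s$. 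Inserting this in the triangle inequality $\|Au\|_s \le \sum_j \|a(\cdot,\cdot,j) u_j(\cdot) e^{\im j x}\|_s$ produces
\[
\|Au\|_s \le_s |A|_{0,s,0} \|u\|_{s_0+\sigma} + |A|_{0,s_0,0} \|u\|_{s+\sigma},
\]
which is the sup-norm part of Definition \ref{LipTameConstants}.

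The Lipschitz part is obtained by repeating the same scheme for the operator $A(\omega_1)-A(\omega_2) = \op(\Delta a)$, where $\Delta a := a(\omega_1) - a(\omega_2)$. By the definition \eqref{norma2} of the weighted pseudo differential norm, $\sup_\xi \|\Delta a(\cdot,\cdot,\xi)\|_{s-1} \le \gamma^{-1} |\omega_1-\omega_2|\cdot |A|^{\gamma,\calO}_{0,s,0}$; the Fourier plus Cauchy--Schwarz argument applied at Sobolev index $s-1$ then gives
\[
\gamma \|\Delta_{\omega_1,\omega_2} A\, u\|_{s-1} \le_s |A|^{\gamma,\calO}_{0,s,0} \|u\|_{s_0+\sigma} + |A|^{\gamma,\calO}_{0,s_0,0} \|u\|_{s-1+\sigma},
\]
and combining with the sup-estimate produces $\mathfrak M^\gamma_A(\sigma,s) \le C(s) |A|^{\gamma,\calO}_{0,s,0}$.

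The argument is essentially mechanical; the only small subtlety is the need to sum over the integer variable $j$, which forces the loss of $\sigma>1/2$ derivatives on $u$ in the tame estimate and is the sole source of the tame parameter $\sigma$ in the statement. This cannot be avoided by this direct method, but sharper results (with $\sigma=0$) are not needed elsewhere in the paper.
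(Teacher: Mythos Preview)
Your argument proves only $\sigma$-tameness for some $\sigma>1/2$, not $0$-tameness as the lemma asserts. The phrase ``$0$-tame operator'' means $\sigma=0$ in Definition~\ref{LipTameConstants}; the $\s$ in the displayed constant is a notational slip for $0$. Your closing remark that the sharp $\sigma=0$ result is ``not needed elsewhere in the paper'' is incorrect: the lemma is invoked with $\sigma=0$ repeatedly, for instance in the proof of Lemma~\ref{INCLUSIONEpseudoInclasseL}, where one writes
\[
\gotM^{\gamma}_{\langle D_{x}\rangle^{m_1}A\langle D_{x}\rangle^{m_2}}(0,s) \le_s \big|\langle D_x\rangle^{m_1} A \langle D_x\rangle^{m_2}\big|^{\gamma,\calO}_{0,s,0},
\]
and similarly in the proof of Lemma~\ref{buttalapasta2}.

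The derivative loss comes from the triangle inequality $\|Au\|_s \le \sum_j \|a(\cdot,\cdot,j)\,u_j(\cdot)e^{\im j\cdot}\|_s$, which discards the $\ell^2$ structure in $j$. (Already for $a\equiv 1$ this bound fails without a loss: take $u_{\ell j}=\delta_{\ell,0}\langle j\rangle^{-s-\delta}$ with $1/2<\delta\le 1$.) The argument in \cite{BM1}, to which the paper defers, avoids this by expanding the \emph{output} in Fourier as well:
\[
(Au)_{\ell,k}=\sum_{j,\ell'}\hat a_{\ell-\ell',\,k-j}(j)\,u_{\ell',j}.
\]
One then distributes $\langle \ell,k\rangle^{s}\le_s \langle \ell-\ell',k-j\rangle^{s}+\langle \ell',j\rangle^{s}$, applies Cauchy--Schwarz with weight $\langle \ell-\ell',k-j\rangle^{-s_0}$, and sums over $(\ell,k)$. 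After the change of variables $(\ell'',m)=(\ell-\ell',k-j)$ the factor $\sum_{\ell'',m}\langle \ell'',m\rangle^{2r}|\hat a_{\ell'',m}(j)|^2=\|a(\cdot,\cdot,j)\|_r^2\le |A|_{0,r,0}^2$ appears with the supremum in $j$ taken \emph{after} the sum, which is precisely the norm~\eqref{norma}. This yields
\[
\|Au\|_s\le_s |A|_{0,s,0}\|u\|_{s_0}+|A|_{0,s_0,0}\|u\|_s
\]
with no loss; the Lipschitz part then follows by the same computation applied to $\op\big(a(\omega_1)-a(\omega_2)\big)$.
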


\begin{proof}
We refer to the proof of Lemma $2. 21$ of \cite{BM1}.
\end{proof}

\noindent
Given an operator $A\in \mathcal{L}_{\rho,p}$ we define
\begin{equation*}
\begin{aligned}
\mathfrak{M}^{\g}_{\partial_{\varphi_m}^{\tb_1}A}(-1, s):=\mathfrak{M}^{\g}_{\langle D_x \rangle^{1/2}\partial_{\varphi_m}^{\tb_1}A \langle D_x \rangle^{1/2}}(0, s),\qquad \mathfrak{M}^{\g}_{\partial_{\varphi_m}^{\tb_1}[A, \partial_x]}(-1, s) :=\mathfrak{M}^{\g}_{\langle D_x \rangle^{1/2}\partial_{\varphi_m}^{\tb_1}[A, \partial_x]\langle D_x \rangle^{1/2}}(0, s) .
\end{aligned}
\end{equation*}
the  Lip-$0$-tame constant of 	
$\langle D_x \rangle^{1/2} A \langle D_x \rangle^{1/2}$, $\langle D_x \rangle^{1/2}\partial_{\varphi_m}^{\tb} A \langle D_x \rangle^{1/2}$, 
 $\langle D_x \rangle^{1/2}[\partial_{\varphi_m}^{\tb} A, \partial_x]\langle D_x \rangle^{1/2}$,
  for any $m=1, \dots, \nu$, $0\le \tb_1\le \tb$ and we set
\begin{equation}\label{Mdritta2}
\begin{aligned}
\mathbb{B}^{\g}_A(s, \tb):=\max_{\substack{0\le \tb_1\le \tb\\ m=1,\dots, \nu}} 
\max\Big(\mathfrak{M}^{\g}_{\partial_{\varphi_m}^{\tb_1}A}(-1, s), \mathfrak{M}^{\g}_{\partial_{\varphi_m}^{\tb_1}[A, \partial_x]}(-1, s)    \Big).
\end{aligned}
\end{equation}

\noindent
We have the following result.

\begin{lem}\label{LemmaAggancio}
Let $s_0\geq [\nu/2]+3, \,s_0 \in\mathbb{N}$, $\mathtt{b}_0\in\mathbb{N}$ and recall \eqref{CK0-tame}, Def. \ref{ellerho} and \eqref{anagrafe}.

\noindent
$(i)$
 Let $A\in \mathcal{L}_{\rho, p}$ with $\rho:=s_0+\tb_0+3$, $p=s_0$,  
then $A$ is a $-1$-modulo tame operator. Moreover
\begin{equation}\label{chavez1}
\mathfrak{M}^{\sharp, \g^{3/2}}_A( s)\le \max_{m=1, \dots, \nu} \mathfrak{M}^{\g^{3/2}}_{\partial_{\varphi_m}^{s_0}  [A, \partial_x]}(-1,s),
\end{equation}
\begin{equation}\label{chavez2}
\mathfrak{M}^{\sharp, \g^{3/2}}_A( s, \tb_0)\le \max_{m=1,\dots, \nu}  \mathfrak{M}^{\g^{3/2}}_{\partial_{\varphi_m}^{s_0+\tb_0}  [A, \partial_x]}(-1,s).
\end{equation}
\begin{equation}\label{chavez20}
\|\lD^{1/2}\und{\Delta_{12} A}\lD^{1/2}\|_{\mathcal L(H^{s_0})}, \,\, \|\lD^{1/2}\und{\Delta_{12} \langle \partial_{\f}\rangle^{{\mathtt b_0}} A  }\lD^{1/2}\|_{\mathcal L(H^{s_0})} \le  \mathbb{B}_{\Delta_{12} A}(s_0, \mathtt{b}_0)
\end{equation}
where
\begin{equation}\label{Mdrittaconlai2}
\begin{aligned}
\mathbb{B}_{\Delta_{12} A}(s_0, \tb):=\max_{\substack{0\le \tb_1\le \tb\\ m=1,\dots, \nu}} 
\max\Big(\mathfrak{N}_{\partial_{\varphi_m}^{\tb_1}\Delta_{12}A}(-1, s_0), 
\mathfrak{N}_{\partial_{\varphi_m}^{\tb_1}[\Delta_{12}A, \partial_x]}(-1, s_0)    \Big).
\end{aligned}
\end{equation}
%with
% $\mathfrak{N}_{\partial_{\varphi_m}^{\tb_1}\Delta_{12} A}(-1, s_0)$, and 
%$\mathfrak{N}_{[\partial_{\varphi_m}^{\tb_1}\Delta_{12} A,\del_{x}]}(-1, s_0)$
%such that, for any $h\in H^{p}$, we have
%\begin{equation}\label{StimeDeltai}
%\begin{aligned}
%\|\langle D_x \rangle^{1/2} \partial_{\varphi_m}^{\tb_1} \Delta_{12} A \langle D_x \rangle^{1/2}h\|_{s_0}&\leq 
%\mathfrak{N}_{\partial_{\varphi_m}^{\tb_1}\Delta_{12} A}(-1, s_2)\|h\|_{s_0},\\
%\|\langle D_x \rangle^{1/2} [\partial_{\varphi_m}^{\tb_1} \Delta_{12} A,\del_{x}] \langle D_x \rangle^{1/2}h\|_{p}
%&\leq \mathfrak{N}_{[\partial_{\varphi_m}^{\tb_1}\Delta_{12} A,\del_{x}]}(-1, p)\|h\|_{p}.
%\end{aligned}
%\end{equation}

\noindent
$(ii)$ If $A:=\op(a)$ with $a=a(\oo, i(\omega))$ in $S^{m}$ with $m\leq-1$
		depending on $\oo\in \calO_0\subset \mathbb{R}^{\nu}$ in a Lipschitz way and on $i$ in a  Lipschitz way, then
  $A$ is a $-1$-modulo tame operator and bounds \eqref{chavez1}-\eqref{chavez20} hold.
\end{lem}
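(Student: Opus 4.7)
\smallskip

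\noindent\textbf{Proof plan.} The core observation is that for $s_0>\nu/2$ the assignment $B\mapsto \underline B$ (passing to the majorant) is controlled by tame bounds on $\partial_\varphi^{s_0}B$. More precisely, writing $(i\ell_m)^{s_0}B_j^{j'}(\ell)=(\partial_{\varphi_m}^{s_0}B)_j^{j'}(\ell)$, one has, for any $\ell$ and any choice of $m\in\{1,\dots,\nu\}$ with $|\ell_m|$ comparable to $|\ell|$,
\[
|B_j^{j'}(\ell)|\le C\langle\ell\rangle^{-s_0}\sum_m|(\partial_{\varphi_m}^{s_0}B)_j^{j'}(\ell)|\,.
\]
Plugging this into the convolution-in-$\varphi$ that defines the action of $\underline B$ and applying Cauchy–Schwarz in $\ell'$ against the summable weight $\sum_{\ell'}\langle\ell-\ell'\rangle^{-2s_0}<\infty$, one gets
\[
\|\underline B u\|_s\le C(s_0)\sum_m\bigl(\mathfrak M^{\g}_{\partial_{\varphi_m}^{s_0}B}(0,s)\|u\|_{s_0}+\mathfrak M^{\g}_{\partial_{\varphi_m}^{s_0}B}(0,s_0)\|u\|_s\bigr)\,,
\]
and the analogous bound with $\Delta_{\omega,\omega'}$ at cost $\g^{3/2}$ gives the $0$-modulo-tame constant in terms of the Lip-$0$-tame constant of $\partial_\varphi^{s_0}B$.

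\smallskip

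\noindent I would apply this general principle to $B=\langle D_x\rangle^{1/2}A\langle D_x\rangle^{1/2}$, so that the $0$-modulo-tame constant of $B$ is exactly the $-1$-modulo-tame constant of $A$. To bring in the $x$-smoothing, I split
\[
\langle j\rangle^{1/2}\langle j'\rangle^{1/2}\le \max(\langle j\rangle,\langle j'\rangle)\le \langle j-j'\rangle+\min(\langle j\rangle,\langle j'\rangle)\,,
\]
and observe that the two contributions can be absorbed by the commutator $[A,\partial_x]$ (whose entries are $-i(j-j')A_j^{j'}$): on the off-diagonal $|j-j'|\ge 1$ piece, $\langle j\rangle^{1/2}\langle j'\rangle^{1/2}|A_j^{j'}|$ is comparable to the entries of $\langle D_x\rangle^{1/2}[A,\partial_x]\langle D_x\rangle^{-1/2}$ (which is controlled using the excess $x$-smoothing available since $\rho\ge s_0+\tb_0+3$), while for $j=j'$ the factor $\langle j\rangle|A_j^j(\ell)|$ is handled again by Lemma \ref{lem: 2.3.6} together with the $m_1+m_2=\rho-|\vec\tb|\ge 1$ choice of smoothing, which makes the $\langle D_x\rangle^{1/2}$ on each side bounded by $\langle D_x\rangle^{m_1}\partial_\varphi^{\vec\tb}A\langle D_x\rangle^{m_2}$ composed with a Fourier multiplier of nonpositive order. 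Taking $|\vec\tb|=s_0$ produces the bound \eqref{chavez1}, while taking $|\vec\tb|=s_0+\tb_0$ produces \eqref{chavez2}. The Lipschitz-in-$\mathfrak I$ estimate \eqref{chavez20} follows from the same argument applied to $\Delta_{12}A$, noting that the corresponding constants $\mathbb B_{\Delta_{12}A}(s_0,\tb_0)$ are, by \eqref{Mdrittaconlai}, exactly the bounds one controls via $\gotL_{\rho,p}$ for finite differences (with $\rho$ decreased by one).

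\smallskip

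\noindent For part (ii) the argument is simpler: if $A=\op(a)$ with $a\in S^m$, $m\le -1$, then the matrix entries admit the explicit representation $A_j^{j'}(\ell)=\hat a(\ell,j-j',j')$, and Definition \ref{pseudoR} gives  $\langle j'\rangle|A_j^{j'}(\ell)|\le C_N\langle\ell\rangle^{-N}\langle j-j'\rangle^{-N}|a|^{\g^{3/2}}_{m,s,\alpha}$ for every $N$, by repeatedly integrating by parts in $\varphi$ and $x$ inside the Fourier coefficients. In particular $\langle j\rangle^{1/2}\langle j'\rangle^{1/2}|A_j^{j'}(\ell)|$ is summable enough to bound the action of $\underline{\langle D_x\rangle^{1/2}A\langle D_x\rangle^{1/2}}$ by Young/Cauchy–Schwarz, yielding the same form \eqref{chavez1}--\eqref{chavez20} with $\mathbb B$-quantities replaced by seminorms $|a|^{\g^{3/2}}_{m,s,\alpha}$; the commutator $[A,\partial_x]=\op(\partial_x a)\in OPS^m$, so the commutator-based bound is simply another instance of the same estimate.

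\smallskip

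\noindent\textbf{Main obstacle.} The delicate point is the diagonal $j=j'$ case in the splitting of $\langle j\rangle^{1/2}\langle j'\rangle^{1/2}|A_j^{j'}|$: the commutator $[A,\partial_x]$ kills diagonal entries, so one cannot close the estimate \eqref{chavez1} purely through it. The fix relies on the fact that $\gotL_{\rho,p}$ with $\rho\ge s_0+\tb_0+3$ gives tame control of $\langle D_x\rangle^{m_1}\partial_\varphi^{\vec\tb}A\langle D_x\rangle^{m_2}$ for any $m_1+m_2=\rho-|\vec\tb|\ge 3$, which via Lemma \ref{lem: 2.3.6} and Fourier-multiplier bounds is at least as strong as controlling $\langle D_x\rangle^{1/2}\partial_\varphi^{\vec\tb}A\langle D_x\rangle^{1/2}$; combining with the Cauchy–Schwarz trick in $\ell$ yields the modulo-tame bound. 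Keeping careful track that the number of available $\varphi$-derivatives ($s_0$, respectively $s_0+\tb_0$) never exceeds $\rho-2$ is what fixes the choice $\rho=s_0+\tb_0+3$.
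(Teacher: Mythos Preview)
Your approach is essentially the paper's: insert the weight $\langle\ell-\ell'\rangle^{s_0}\,|j-j'|$ and its reciprocal into the convolution defining $\langle D_x\rangle^{1/2}\underline{A}\langle D_x\rangle^{1/2}u$, apply Cauchy--Schwarz in $(\ell',j')$, and recognise the surviving quadratic form as a tame estimate for $\langle D_x\rangle^{1/2}\partial_{\varphi_m}^{s_0}[A,\partial_x]\langle D_x\rangle^{1/2}$ (with $\langle\ell-\ell'\rangle^{s_0}$ supplying the $\partial_{\varphi_m}^{s_0}$ and $|j-j'|$ the commutator). The paper writes this in one shot, using that $\sum_{\ell',j'}\langle\ell-\ell'\rangle^{-2s_0}|j-j'|^{-2}<\infty$.

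Your ``main obstacle'' is well spotted and is actually a point the paper glosses over: the weight $|j-j'|^{-2}$ is singular on the diagonal $j=j'$, and $[A,\partial_x]$ has vanishing diagonal, so it cannot by itself control $\langle j\rangle|A_j^j(\ell)|$. Your remedy---treat the diagonal separately using the spare $x$-smoothing from $\gotL_{\rho,p}$ (since $\rho-|\vec\tb|\ge 3$ one has $\langle D_x\rangle^{1/2}\partial_\varphi^{\vec\tb}A\langle D_x\rangle^{1/2}$ tame by composing $\langle D_x\rangle^{m_1}\partial_\varphi^{\vec\tb}A\langle D_x\rangle^{m_2}$ with negative-order multipliers)---is correct and is precisely why the quantity $\mathbb{B}^\gamma_A$ in \eqref{Mdritta2} carries \emph{both} the $\partial_{\varphi_m}^{\tb_1}A$ and the $\partial_{\varphi_m}^{\tb_1}[A,\partial_x]$ pieces. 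In effect you are proving the slightly more honest bound $\mathfrak M^{\sharp,\gamma^{3/2}}_A(s,\tb_0)\le_{s}\mathbb{B}^\gamma_A(s,s_0+\tb_0)$, which is what is actually used downstream (cf.\ \eqref{MaledettaCondizioneKAM}). One small slip: where you write $\langle D_x\rangle^{1/2}[A,\partial_x]\langle D_x\rangle^{-1/2}$ you mean $\langle D_x\rangle^{1/2}[A,\partial_x]\langle D_x\rangle^{1/2}$.

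For (ii) the paper simply remarks that $\partial_{\varphi_m}A$ and $[A,\partial_x]=\op(\partial_x a)$ remain in $OPS^{m}\subset OPS^{-1}$ and reruns the same computation; your integration-by-parts route to Fourier decay of $\hat a(\ell,j-j',j')$ is an equivalent, slightly more explicit, packaging of the same fact.
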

\begin{proof}
	Consider $\mathtt{b}\in \mathbb{N}$ and $\rho\in \mathbb{N}$ with $\rho\geq\mathtt{b}+3$.
We claim that if $A\in \gotL_{\rho, p}$ (see Def. \ref{ellerho}) then one has
\begin{equation}\label{pasta10}
\mathbb{B}^{\gamma}_{A}(s,\mathtt{b})\leq_{\rho,s}\mathbb{M}^{\gamma}_{A}(s,\rho-2),
\qquad
\mathbb{B}_{\Delta_{12}A }(p,\mathtt{b})\leq_{\rho, p}\mathbb{M}_{\Delta_{12}A }(p,\rho-3).
\end{equation}
The fact that  $\langle D_{x}\rangle^{1/2}A\langle D_x \rangle^{1/2}$  is Lip-$0$-tame  follows by \eqref{megaTame2} since
	$\rho\geq 1$. 
	Indeed $\langle D_{x}\rangle^{-\rho+1}$ is bounded in $x$ and for any $h\in H^{s}$
	\[
	\begin{aligned}
	\|\langle D_{x}\rangle^{\frac{1}{2}}A\langle D_x \rangle^{\frac{1}{2}}h\|_{s}^{\g,\calO_0}
	&\leq 
	\|\langle D_{x}\rangle^{-\rho+1}\big(\langle D_{x}\rangle^{\rho-\frac{1}{2}}A
	\langle D_x \rangle^{\frac{1}{2}}\big)  h\|_{s}^{\g,\calO_0} \\
	&\leq_{s}\gotM^{\gamma}_{A}(-\rho,s)\|h\|^{\gamma,\calO_0}_{s_0}+
	\gotM^{\gamma}_{A}(-\rho,s_0)\|h\|^{\gamma,\calO_0}_{s}. 
	\end{aligned}
	\]
By  studying the tameness constant of 
$
\partial_{\varphi}^{\vec{\tb}} A, [A, \partial_x], [\partial_{\varphi}^{\vec{\tb}} A, \partial_x]
\Delta_{12}A ,\del_{\f}^{\vec{\tb}}\Delta_{12}A , [\Delta_{12}A ,\del_{x}]$ and 
$[\del_{\f}^{\vec{\tb}}\Delta_{12}A ,\del_{x}]$
for $\vec{\tb}\in \mathbb{N}^{\nu}$, $|\vec{\tb}|=\tb$, following the same reasoning above one gets the \eqref{pasta10}. %\eqref{pasta10bis}. % and \eqref{StimeDeltai}.
	
\medskip
\noindent
We have, by Cauchy-Schwarz,
\[
\begin{aligned}
&\lVert \langle D_x \rangle^{1/2} \underline{A} \langle D_x \rangle^{1/2} u \rVert_s^2\le \sum_{\ell\in\mathbb{Z}^{\nu}, j \in \mathbb{Z}} \langle \ell, j \rangle^{2 s} \Big( \sum_{\ell'\in\mathbb{Z}^{\nu}, j' \in \mathbb{Z}} \langle j' \rangle^{1/2} \lvert (A)_j^{j'}(\ell-\ell') \rvert \langle j \rangle^{1/2} \lvert u_{\ell' j'} \rvert \Big)^2\\
&\qquad\le \sum_{\ell\in\mathbb{Z}^{\nu}, j \in \mathbb{Z}} \langle \ell, j \rangle^{2 s} \Big( \sum_{\ell'\in\mathbb{Z}^{\nu}, j' \in \mathbb{Z}} \frac{\langle \ell-\ell' \rangle^{s_0} \lvert j-j' \rvert}{\langle \ell-\ell' \rangle^{s_0} \lvert j-j' \rvert} \langle j' \rangle^{1/2} \lvert (A)_j^{j'}(\ell-\ell') \rvert \langle j \rangle^{1/2} \lvert u_{\ell' j'} \rvert \Big)^2\\
&\qquad\le  \sum_{\ell\in\mathbb{Z}^{\nu}, j \in \mathbb{Z}} \langle \ell, j \rangle^{2 s} ( \sum_{\ell'\in\mathbb{Z}^{\nu} , j' \in \mathbb{Z}}  C_{\ell j} )\, ( \sum_{\ell'\in\mathbb{Z}^{\nu} , j' \in \mathbb{Z}}  \langle j \rangle \langle j' \rangle \lvert j-j' \rvert^2 \langle \ell-\ell' \rangle^{2 s_0} \lvert (A)_j^{j'}(\ell-\ell')\rvert^2 \lvert u_{\ell' j'} \rvert^2)\\
&\qquad\le C  \sum_{\ell'\in\mathbb{Z}^{\nu}, j' \in \mathbb{Z}} \lvert u_{\ell' j'} \rvert^2 ( \sum_{\ell\in\mathbb{Z}^{\nu} , j \in \mathbb{Z}}  \langle \ell, j \rangle^{2 s}\, \,\langle j \rangle \langle j' \rangle \lvert j-j' \rvert^2 \langle \ell-\ell' \rangle^{2 s_0} \lvert (A)_j^{j'}(\ell-\ell')\rvert^2)
\end{aligned}
\]
since
\[
C:= \sum_{\ell, \ell'\in\mathbb{Z}^{\nu} , j, j' \in \mathbb{Z}}  C_{\ell j}<\infty,\,\,\qquad   C_{\ell j}:=\sum_{\ell'\in\mathbb{Z}^{\nu}, j' \in \mathbb{Z}} \frac{1}{\langle \ell-\ell' \rangle^{2 s_0} \lvert j-j' \rvert^2}.
\]
By the fact that for any $1\le m\le\nu$ (recall \eqref{cervino2})
\begin{equation*}
\begin{aligned}
&\sum_{\ell\in\mathbb{Z}^{\nu}, j \in \mathbb{Z}} \langle \ell, j \rangle^{2 s} \langle j \rangle \langle j' \rangle \lvert j-j' \rvert^2 \langle \ell_m-\ell^{'}_m \rangle^{2 s_0} \lvert (A)_j^{j'}(\ell-\ell')\rvert^2\\
&\qquad\qquad\le 2 (\mathfrak{M}^{\g^{3/2}}_{\partial_{\varphi_m}^{s_0}[A, \partial_x]}(-1, s))^2 \langle \ell', j' \rangle^{2 s_0}+2 (\mathfrak{M}^{\g^{3/2}}_{\partial_{\varphi_m}^{s_0}[A, \partial_x]}(-1, s_0))^2 \langle \ell', j' \rangle^{2 s}
\end{aligned}
\end{equation*}
and $\langle \ell-\ell' \rangle\le \max_{m=1, \dots, \nu} \langle \ell_m-\ell^{'}_m\rangle$
%\begin{equation}
%\langle \ell-\ell' \rangle\le \max_{m=1, \dots, \nu} \langle \ell_m-\ell^{'}_m\rangle
%\end{equation}
we obtain 
\[
\lVert \langle D_x \rangle^{1/2} \underline{A} \langle D_x \rangle^{1/2} u \rVert_s^2\le 2 \max_{m=1, \dots, \nu} (\mathfrak{M}^{\g^{3/2}}_{\partial_{\varphi_m}^{s_0}[A, \partial_x]}(-1, s_0))^2 \lVert u \rVert_s^2+2 \max_{m=1, \dots, \nu} (\mathfrak{M}^{\g^{3/2}}_{\partial_{\varphi_m}^{s_0}[A, \partial_x]}(-1, s))^2\lVert u \rVert_{s_0}^2.
\]
%Note that by definition $\g^{3/2}=\gamma^{3/2}< \gamma$ (recall \eqref{gammaDP}). 
%Hence by the same computation done above, for all $\omega, \omega'\in \calO$, $\omega\neq \omega'$ 
%\[
%\begin{aligned}
%&\g^{3/2} \lVert \langle D_x \rangle^{1/2} \underline{\Delta_{\omega, \omega'} A} \langle D_x \rangle^{1/2} u \rVert_s^2 \le \gamma \lVert \langle D_x \rangle^{1/2} \underline{\Delta_{\omega, \omega'} A} \langle D_x \rangle^{1/2} u \rVert_s^2 \\
%&\le  2 (\mathfrak{M}^{\gamma}_{\langle\partial_{\varphi} \rangle^{s_0}[A, \partial_x]}(-1, s_0))^2 \lVert u \rVert_s^2+2 (\mathfrak{M}^{\gamma}_{\langle\partial_{\varphi} \rangle^{s_0}[A, \partial_x]}(-1, s))^2\lVert u \rVert_{s_0}^2.
%\end{aligned}
%\]
Following the same reasoning above we conclude the same bound for $\lVert \langle D_x \rangle^{1/2} \underline{\Delta_{\omega, \omega'} A} \langle D_x \rangle^{1/2} u \rVert_s^2$,
it is sufficient to substitute $A_{j}^{j'}(\ell-\ell')$ with $\big(A_{j}^{j'}(\ell-\ell', \omega)-A_{j}^{j'}(\ell-\ell', \omega')\big)/(\omega-\omega')$ in the computations above.
By the fact that $\g^{3/2}<1$ we deduce \eqref{chavez1}. The proofs of \eqref{chavez2}, \eqref{chavez20} are analogous.
%The bound \eqref{chavez2} follows by the same arguments by choosing $\tb=s_0+\tb_0$.
The proof of item $(ii)$ follows using the above computations 
by noting that $\del_{\f_m} A$ and the commutator $[A,\del_{x}]$ are still pseudo-differential operators of order $-1$. 
\end{proof}

\begin{lem}\label{proprietatame}
Recall \eqref{partialorder}. The following holds.

\vspace{0.5em}
\noindent
$(i)$ If $A\preceq B$  and $\Delta_{\omega,\omega'} A\preceq \Delta_{\omega,\omega'} B$ for all $\omega\neq \omega'\in \mathcal O$, we may choose the modulo-tame constants of $A$ so that
\[
{\mathfrak M}_{A}^{{\sharp, \g^{3/2}}} (s)\le {\mathfrak M}_{B}^{{\sharp, \g^{3/2}}} (s)\,.
\] 

\noindent
$(ii)$ Let $ A$ be a $-1$ modulo-tame operator with modulo-tame constant 
$ {\mathfrak M}_A^{{\sharp, \g^{3/2}}}(s) $. Then  $\lD^{1/2}A\lD^{1/2}$ is majorant bounded $H^s\to H^{s}$ 
\[
\|\lD^{1/2}\un{A}\lD^{1/2}\|_{\mathcal L(H^s)}\le 2 {\mathfrak M}_A^{{\sharp, \g^{3/2}}}(s)\,, \qquad |A_j^j(0)|^{\g^{3/2}}\le 
{\mathfrak M}_A^{{\sharp, \g^{3/2}}}(s_0) \langle j\rangle^{-1}.
\]
\noindent
$(iii)$ Suppose that $ \langle \pa_\f \rangle^{{\mathtt b_0}} A $, $ {{\mathtt b_0}} \geq  0 $, is $-1$  modulo-tame. Then 
the operator $ \Pi_N^\bot A $ is $-1$ modulo-tame with modulo-tame constant
\begin{equation}
\label{proprieta tame proiettori moduli}
{\mathfrak M}_{\Pi_N^\bot A}^{{\sharp, \g^{3/2}}} (s) \leq \min\{ N^{- {{\mathtt b_0}} }{\mathfrak M}_{ \langle \pa_\f \rangle^{{\mathtt b_0}} A}^{{\sharp, \g^{3/2}}} (s),{\mathfrak M}_{ A}^{{\sharp, \g^{3/2}}} (s)\} \, . 
\end{equation}
\noindent
$(iv)$	Let $A,\, B$ be two $-1$ modulo-tame operators with modulo-tame constants 
$ {\mathfrak M}_A^{{\sharp, \g^{3/2}}}(s),\,\, {\mathfrak M}_B^{{\sharp, \g^{3/2}}}(s) $. Then 
$ A+ B $ is $-1$ modulo-tame with modulo-tame constant
\begin{equation}\label{modulo-tame-A+B}
{\mathfrak M}_{A + B}^{{\sharp, \g^{3/2}}} (s) \leq {\mathfrak M}_A^{{\sharp, \g^{3/2}}} (s)  + {\mathfrak M}_B^{{\sharp, \g^{3/2}}} (s)  \,.
\end{equation}
The composed operator 
$  A  \circ B $ is $-1$ modulo-tame with modulo-tame constant
\begin{equation}\label{modulo tame constant for composition}
{\mathfrak M}_{A B}^{{\sharp, \g^{3/2}}} (s) \leq  C(s) \big( {\mathfrak M}_{A}^{{\sharp, \g^{3/2}}}(s) 
{\mathfrak M}_{B}^{{\sharp, \g^{3/2}}} (s_0) + {\mathfrak M}_{A}^{{\sharp, \g^{3/2}}} (s_0) 
{\mathfrak M}_{B}^{{\sharp, \g^{3/2}}} (s) \big)\,.
\end{equation}
Assume in addition that  $ \langle \partial_\f \rangle^{{\mathtt b_0}} A $, 
$ \langle \partial_\f \rangle^{{\mathtt b_0}}  B $ are $-1$ modulo-tame with  modulo-tame constants 
 $ {\mathfrak M}_{\langle \partial_\f \rangle^{{\mathtt b_0}} A}^{{\sharp, \g^{3/2}}} (s) $ and  
$ {\mathfrak M}_{\langle \partial_\f \rangle^{{\mathtt b_0}} B}^{{\sharp, \g^{3/2}}} (s) $ respectively, then 
$ \langle \partial_\f \rangle^{{\mathtt b_0}} (A  B) $ is $-1$ modulo-tame with 
modulo-tame constant satisfsying
\begin{align}\label{K cal A cal B}
{\mathfrak M}_{\langle \partial_\f \rangle^{{\mathtt b_0}} (A  B)}^{{\sharp, \g^{3/2}}} (s) & \leq
C(s,{{\mathtt b_0}}) \Big( 
{\mathfrak M}_{\langle \partial_\f \rangle^{{\mathtt b_0}} A}^{{\sharp, \g^{3/2}}} (s) 
{\mathfrak M}_{B}^{{\sharp, \g^{3/2}}} (s_0) + 
{\mathfrak M}_{\langle \partial_\f \rangle^{{\mathtt b_0}} A }^{{\sharp, \g^{3/2}}} (s_0) 
{\mathfrak M}_{B}^{{\sharp, \g^{3/2}}} (s) \nonumber \\ 
& \qquad \qquad \qquad \quad + {\mathfrak M}_{A}^{{\sharp, \g^{3/2}}} (s) {\mathfrak M}_{ \langle \pa_\f \rangle^{{\mathtt b_0}} B}^{{\sharp, \g^{3/2}}} (s_0) 
+ {\mathfrak M}_{A}^{{\sharp, \g^{3/2}}} (s_0) {\mathfrak M}_{ \langle \pa_\f \rangle^{{\mathtt b_0}} B}^{{\sharp, \g^{3/2}}} (s)\Big) \,.
\end{align}
Finally, for any $k\ge 1$ we have, setting $ L= \mathrm{ad}^k(A) B$, $\mathrm{ad}(A) B:=A B-B A$:
\begin{equation}\label{adAkB}
\begin{aligned}
&{\mathfrak M}_{\langle \partial_\f \rangle^{{\mathtt b_0}} L}^{{\sharp, \g^{3/2}}} (s) \leq  C(s,{{\mathtt b_0}})^k   \Big[ ({\mathfrak M}_{A}^{{\sharp, \g^{3/2}}} (s_0))^{k}{\mathfrak M}_{\langle \partial_\f \rangle^{{\mathtt b_0}} B }^{{\sharp, \g^{3/2}}} (s)\\
 &+k({\mathfrak M}_{A}^{{\sharp, \g^{3/2}}} (s_0))^{k-1}\left({\mathfrak M}_{\langle \partial_\f \rangle^{{\mathtt b_0}} A}^{{\sharp, \g^{3/2}}} (s) 
{\mathfrak M}_{B}^{{\sharp, \g^{3/2}}} (s_0) + 
{\mathfrak M}_{\langle \partial_\f \rangle^{{\mathtt b_0}} A }^{{\sharp, \g^{3/2}}} (s_0) 
{\mathfrak M}_{B}^{{\sharp, \g^{3/2}}} (s)\right)  \\  
 &+  k(k-1)({\mathfrak M}_{A}^{{\sharp, \g^{3/2}}} (s_0))^{k-2}{\mathfrak M}_{A}^{{\sharp, \g^{3/2}}} (s){\mathfrak M}_{\langle \partial_\f \rangle^{{\mathtt b_0}} A}^{{\sharp, \g^{3/2}}} (s_0)  {\mathfrak M}_{ B}^{{\sharp, \g^{3/2}}} (s_0) \Big]\,.
\end{aligned}
\end{equation}
The same bound holds if we set $L= A^k B$.

\smallskip

\noindent
$(v)$ Let $\Phi := {\rm I} + A $ and assume,  for some ${\mathtt b_0}\ge 0$, that $A$, $\langle\partial_\f\rangle^{{\mathtt b_0}}A$ are Lip--1-modulo tame  and 
the smallness condition 
\begin{equation}\label{piccolezza neumann tame}
8 C(\mathcal{S},{\mathtt b_0}) {\mathfrak M}_{A}^{{\sharp, \g^{3/2}}} (s_0)  < 1\,,\qquad C(\mathcal{S},{\mathtt b_0})=\max_{s_0\le s\le \mathcal{S}}C(s,\tb_0)
\end{equation}
holds.
 Then the operator $ \Phi $ is invertible,  
$\check A :=   \Phi^{- 1} - {\rm I}  $  is 
$-1$  modulo-tame together with $\langle \partial_\f \rangle^{{\mathtt b_0}}  A$ with modulo-tame constants
\begin{align}\label{disuguaglianza constante tame A tilde A}
&{\mathfrak M}_{\check A}^{{\sharp, \g^{3/2}}} (s) \leq  2 {\mathfrak M}_A^{{\sharp, \g^{3/2}}} (s) \, , \\
\label{Psi tilde alta Neumann moduli}
&{\mathfrak M}_{\langle \partial_\f \rangle^{{\mathtt b_0}}  \check A}^{{\sharp, \g^{3/2}}} (s)  \leq 
2 {\mathfrak M}_{ \langle \partial_\f \rangle^{{\mathtt b_0}}A}^{{\sharp, \g^{3/2}}} (s)  + 
8  C(\mathcal{S},{\mathtt b_0})  {\mathfrak M}_{ \langle \partial_\f \rangle^{{\mathtt b_0}}A}^{{\sharp, \g^{3/2}}} (s_0)\, {\mathfrak M}_A^{{\sharp, \g^{3/2}}}(s) \, .  
\end{align}
\end{lem}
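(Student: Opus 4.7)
The plan is to prove the five items of Lemma \ref{proprietatame} in order, each leveraging the previous ones, since they form a logically nested statement about the algebra of $-1$-modulo-tame operators.

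For item $(i)$, the statement is immediate from the definition \ref{def:op-tame}: if $A\preceq B$ and $\Delta_{\omega,\omega'}A\preceq \Delta_{\omega,\omega'}B$, then by the order-preserving property \eqref{partialorder} we have $\|\underline{\langle D_x\rangle^{1/2} A\langle D_x\rangle^{1/2}}u\|_s\leq \|\underline{\langle D_x\rangle^{1/2} B\langle D_x\rangle^{1/2}}u\|_s$ (and similarly for the weighted Lipschitz variation), so we may simply set ${\mathfrak M}_A^{\sharp,\g^{3/2}}(s):={\mathfrak M}_B^{\sharp,\g^{3/2}}(s)$. For item $(ii)$ I would apply \eqref{CK0-tame} with $s=s_0$ to get $\|\langle D_x\rangle^{1/2}\underline{A}\langle D_x\rangle^{1/2}u\|_{s_0}\leq 2{\mathfrak M}_A^{\sharp,\g^{3/2}}(s_0)\|u\|_{s_0}$, and then use general $s$ for the full majorant $\mathcal L(H^s)$ bound; the estimate $|A_j^j(0)|\leq {\mathfrak M}_A^{\sharp,\g^{3/2}}(s_0)\langle j\rangle^{-1}$ follows by testing on $u=e^{\mathrm{i} jx}$ and isolating the diagonal coefficient.

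For item $(iii)$, the key is the pointwise majorant inequality $\Pi_N^\perp \underline{A}\preceq N^{-\mathtt b_0}\langle \partial_\f\rangle^{\mathtt b_0}\underline{A}$ valid on the Fourier side since $\langle \ell\rangle^{\mathtt b_0}\geq N^{\mathtt b_0}$ whenever $|\ell|>N$; combining this with item $(i)$ gives \eqref{proprieta tame proiettori moduli}. The additive bound \eqref{modulo-tame-A+B} of item $(iv)$ is the triangle inequality applied entrywise. For the composition bound \eqref{modulo tame constant for composition}, I would insert $\langle D_x\rangle^{1/2}\langle D_x\rangle^{-1}\langle D_x\rangle^{1/2}={\rm I}$ between the two majorants and use the definition twice, picking up the usual interpolation-type splitting in $s$. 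The bound \eqref{K cal A cal B} is obtained via discrete Leibniz: since $\langle \ell\rangle^{\mathtt b_0}\leq C(\mathtt b_0)(\langle \ell-\ell'\rangle^{\mathtt b_0}+\langle \ell'\rangle^{\mathtt b_0})$, the majorant of $\langle \partial_\f\rangle^{\mathtt b_0}(AB)$ is controlled entrywise by $\langle \partial_\f\rangle^{\mathtt b_0}\underline A\cdot \underline B + \underline A\cdot \langle \partial_\f\rangle^{\mathtt b_0}\underline B$, and then we apply the composition bound to each summand. The iterated commutator estimate \eqref{adAkB} is then proved by induction on $k$: writing ${\rm ad}^k(A)B$ as an alternating sum of products $A^{k_1}B\,A^{k_2}$ with $k_1+k_2=k$ and repeatedly applying \eqref{modulo tame constant for composition} and \eqref{K cal A cal B}; the tightness of \eqref{adAkB}, with exactly one high-$s$ factor appearing in each term and the factors of $k$, $k(k-1)$ counting the number of slots in which such a factor can sit, is the main combinatorial bookkeeping.

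Finally, for item $(v)$, the Neumann series $\check A=\sum_{k\geq 1}(-A)^k$ converges in majorant $\mathcal L(H^{s_0})$ norm by smallness \eqref{piccolezza neumann tame} together with \eqref{modulo tame constant for composition}: iterating gives ${\mathfrak M}_{A^k}^{\sharp,\g^{3/2}}(s_0)\leq(C(s_0){\mathfrak M}_A^{\sharp,\g^{3/2}}(s_0))^{k-1}{\mathfrak M}_A^{\sharp,\g^{3/2}}(s_0)$, so summing yields \eqref{disuguaglianza constante tame A tilde A} with the factor $2$ coming from the geometric series bound $\sum_k(1/2)^k$. For the high-derivative estimate \eqref{Psi tilde alta Neumann moduli}, I would apply \eqref{K cal A cal B} to each $A^k$ by induction; each application transfers one of the $k$ factors into a $\langle \partial_\f\rangle^{\mathtt b_0}$-weighted one while the others stay at $s_0$, producing $k$ terms bounded by $({\mathfrak M}_A^{\sharp,\g^{3/2}}(s_0))^{k-1}{\mathfrak M}_{\langle \partial_\f\rangle^{\mathtt b_0}A}^{\sharp,\g^{3/2}}(s)$ plus one mixed term, and summing the geometric series yields the claimed bound. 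The main obstacle I anticipate is to handle \eqref{adAkB} and its analogue used to prove \eqref{Psi tilde alta Neumann moduli}: keeping the combinatorial count sharp — so that only the linear and quadratic factors of $k$ appear — requires an inductive argument in which one is careful to always absorb the innermost low-norm factors into ${\mathfrak M}_A^{\sharp,\g^{3/2}}(s_0)$ using \eqref{piccolezza neumann tame}, and to distinguish the case where a derivative $\langle\partial_\f\rangle^{\mathtt b_0}$ falls on $A$ versus on $B$ versus on a previously high factor.
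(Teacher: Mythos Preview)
Your proposal is correct and follows essentially the same approach as the paper's proof: both argue item by item via the majorant partial order \eqref{partialorder}, insert $\langle D_x\rangle^{1/2}$ weights between factors for composition, use the discrete Leibniz inequality $\langle\ell\rangle^{\tb_0}\le C(\tb_0)(\langle\ell_1\rangle^{\tb_0}+\langle\ell_2\rangle^{\tb_0})$ for the $\langle\partial_\f\rangle^{\tb_0}$ bounds, and close $(v)$ by Neumann series. The only noteworthy stylistic difference is in the iterated commutator estimate \eqref{adAkB}: you propose expanding $\mathrm{ad}^k(A)B$ as an alternating binomial sum $\sum_j\binom{k}{j}(-1)^{k-j}A^jBA^{k-j}$, whereas the paper works directly at the majorant level with $\underline{\mathrm{ad}}(A)B:=AB+BA$ and the single inequality $\langle D_x\rangle^{1/2}\,\mathrm{ad}^k(A)B\,\langle D_x\rangle^{1/2}\preceq \underline{\mathrm{ad}}^k(\langle D_x\rangle^{1/2}\underline{A}\langle D_x\rangle^{1/2})\,\langle D_x\rangle^{1/2}\underline{B}\langle D_x\rangle^{1/2}$; this avoids the binomial coefficients (which you would otherwise absorb into $C(s,\tb_0)^k$) and makes the placement of the single $\langle\partial_\f\rangle^{\tb_0}$ factor in the $k$-fold product more transparent, yielding the sharp $k$ and $k(k-1)$ multiplicities directly.
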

\begin{proof}
	In the following we shall sistematically use the fact that if $B$ is an operator with matrix coefficients $\ge 1$, then $A\preceq \un{A\circ B}=\un{A}\circ \un{B} =\un{A}\circ B$. Note that $\langle D_x \rangle^{1/2}$ is a diagonal operator with positive eigenvalues.
\\
{\it (i)} Assume that $A\preceq B$ i.e. $|A_{j}^{j'}(\ell)|\le 	|B_{j}^{j'}(\ell)|$ for all $j,j',\ell$. Then, by \eqref{partialorder},
\[
\| \lD^{1/2}\un{A} \lD^{1/2}u\|_s \le \|\lD^{1/2}\un{A}\lD^{1/2} \un{u}\|_s \le \|\lD^{1/2}\un{B} \lD^{1/2}\un{u}\|_s. %\le  {\mathfrak M}_{B}^{{\sharp, \g^{3/2}}} (\s,s_0) \| u \|_{s} +
%{\mathfrak M}_{B}^{{\sharp, \g^{3/2}}} (\s,s) \| u \|_{s_0} 
\]
The same  reasoning holds for $\lD^{1/2}\un{\Delta_{\omega,\omega'} A} \lD^{1/2}$, so that the result follows.
\\
{\it (ii)} The first bound  is just a reformulation of the definition, indeed
\[
\sup_{\|u\|_s\le 1}\| \lD^{1/2}\un{A} \lD^{1/2}u\|_s \le  \sup_{\|u\|_s\le 1}({\mathfrak M}_{A}^{{\sharp, \g^{3/2}}} (s_0) \| u \|_{s} +
{\mathfrak M}_{A}^{{\sharp, \g^{3/2}}} (s) \| u \|_{s_0} )\le 2{\mathfrak M}_{A}^{{\sharp, \g^{3/2}}} (s).
\]
 In order to prove the second bound we notice that setting 
 \[
 B_{j}^{j'}(\ell)= \begin{cases} \langle j\rangle A_j^j(0) & \ell=  0\;\; \mbox{and} \;\; j = j', \\ 0 & \mbox{otherwise}, \\
  \end{cases}
 \]
 %$B=$diag$(\langle j\rangle A_j^j(0))_{\ell,j\in \Z^\nu+1}$ 
 we have $B\preceq \lD^{1/2}{A}\lD^{1/2}$, same for $\Delta_{\omega, \omega'}B$. Fix any $j_0$ and  consider the unit vector $u^{(j_0)}$  in $H^{s_0}(\T^{\nu+1})$ defined by
 $u_{j,\ell}= 0$ if $(j,\ell)\neq (j_0,0)$ and $u_{j_0,0}= \langle j_0\rangle^{-s_0}$.  We have by \eqref{partialorder}
\[
 \langle j_0\rangle |A_{j_0}^{j_0}(0)|= \| \un{B}u^{(j_0)}\|_{s_0} \le  \| \lD^{1/2}\un{A}\lD^{1/2} \un{u}^{(j_0)}\|_{s_0} \le  {\mathfrak M}_{A}^{{\sharp, \g^{3/2}}} (s_0).
\]  
The same holds for $ \g^{3/2}\langle j_0\rangle |\Delta_{\omega, \omega'} A_{j_0}^{j_0}(0)|$.
\\
{\it (iii)} We remark that $ |A_{j}^{j'}(\ell) | \le N^{-\tb_0} \langle\ell\rangle^{\tb_0} |A_{j}^{j'}(\ell) |$ if $|\ell|\ge N$ and the same holds for $|\Delta_{\omega, \omega'} A_{j}^{j'}(\ell) | $. Therefore we have
\[
\Pi_N^\perp A\preceq N^{-\tb_0}\langle \partial_\f \rangle^{\tb_0} \Pi_N^\perp A \preceq N^{-\tb_0}\langle \partial_\f \rangle^{\tb_0}  A
\]
and clearly $\Pi_N^\perp A\preceq A$ and the result follows by $(i)$. See also Lemma $2.27$  of \cite{BM1}.
\\
{\it (iv)} The computations involved in this proof are similar to the ones in Lemma $2. 25$ of \cite{BM1}.
For the first bound we just remark that \[\lD^{1/2}(\un{A+B})\lD^{1/2} \preceq \lD^{1/2}\un{A}\lD^{1/2}+ \lD^{1/2}\un{B}\lD^{1/2},\] and the same for the Lipschitz variation, so that \eqref{modulo-tame-A+B} follows.
Regarding the second we note that
\begin{align*}
&\lD^{1/2}\un{A\circ B}\lD^{1/2} \preceq \lD^{1/2}\un{A}\circ \un{B}\lD^{1/2}\preceq  \lD^{1/2}\un{A}\lD^{1/2}\circ \lD^{1/2} \un{B}\lD^{1/2}\,,\\
&\lD^{1/2}\un{\Delta_{\omega, \omega'}A\circ B}\lD^{1/2} \preceq 
  \lD^{1/2}\un{\Delta_{\omega, \omega'}A}\lD^{1/2}\circ \lD^{1/2} \un{B}\lD^{1/2}\\ &+ \lD^{1/2}\un{A}\lD^{1/2}\circ \lD^{1/2} \un{\Delta_{\omega, \omega'}B}\lD^{1/2},
\end{align*}

so that \eqref{modulo tame constant for composition} follows.
For the third bound we 
note that
\begin{equation}\label{allday}
\langle \ell\rangle^{\tb_0} \sum_{j_1, \ell_1+\ell_2=\ell} A_{j}^{j_1}(\ell_1) B_{j_1}^{j'}(\ell_2) \le C(\tb_0)  \sum_{j_1, \ell_1+\ell_2=\ell} (\langle \ell_1\rangle^{\tb_0}+\langle \ell_2\rangle^{\tb_0} ) A_{j}^{j_1}(\ell_1) B_{j_1}^{j'}(\ell_2)
\end{equation}
and the same holds for $\Delta_{\omega, \omega'} A \circ B$ and $ A \circ \Delta_{\omega, \omega'} B$. Hence by \eqref{allday}
\begin{align*}
\lD^{1/2}\un{\langle\partial_\f\rangle^{\tb_0}( A\circ B)}\lD^{1/2}  &\preceq C(\tb_0)\Big(\lD^{1/2}\un{\langle\partial_\f\rangle^{\tb_0} A}\lD^{1/2}\circ \lD^{1/2}\un{B}\lD^{1/2}\\ &+ \lD^{1/2}\un{A}\lD^{1/2}\circ \lD^{1/2}\un{\langle\partial_\f\rangle^{\tb_0} B}\lD^{1/2}\Big) \,,
\end{align*}
same for the Lipshitz variations.  The result follows from the estimate on the composition.

In order to prove \eqref{adAkB} we note that
\[
\lD^{1/2} {\mathrm{ad}}^k(A) B \lD^{1/2} \preceq \un{\mathrm{ad}}^k\left(\lD^{1/2}\un{A}\lD^{1/2})\right)\lD^{1/2}\un B\lD^{1/2}\,,
\]
where $\un{\mathrm{ad}}(A)B:= AB+BA$, since $\mathrm{ad}^k(A) B \preceq \un{\mathrm{ad}}^k(\un{A}) \un{B }$.
Similarly
\begin{align*}
\langle\partial_\f  \rangle^{\tb_0}\lD^{1/2}  \mathrm{ad}^k(A) & B \lD^{1/2} \preceq \un{\mathrm{ad}}^k\left(\lD^{1/2}\un{A}\lD^{1/2})\right)\lD^{1/2}\langle\partial_\f^{\tb_0} \rangle\un B\lD^{1/2}
\\
+\sum_{\substack{k_1+k_2=k-1,\\ k_1, k_2 \geq 0}} & \un{\mathrm{ad}}^{k_1}\left(\lD^{1/2}\un{A}\lD^{1/2})\right)\un{\mathrm{ad}}(\lD^{1/2}\langle\partial_\f^{\tb_0} \rangle\un A\lD^{1/2})\\
 &\un{\mathrm{ad}}^{k_2}\left(\lD^{1/2}\un{A}\lD^{1/2})\right)\lD^{1/2}\un B\lD^{1/2}\,.
\end{align*}
Completely analogous bounds can be proved for the Lipschitz variations, by recalling that
\[
\Delta_{\omega, \omega'}\mathrm{ad} (A)B= \mathrm{ad} (\Delta_{\omega, \omega'}A)B(\omega)+ \mathrm{ad} (A(\omega'))\Delta_{\omega, \omega'}B.
\]
The result follows, by induction, from the estimate on the composition. The estimate \eqref{adAkB} when $C= A^k\circ B $ follows in the same way using
\begin{align*}
\langle\partial_\f \rangle^{\tb_0}\lD^{1/2} (A)^k\circ & B \lD^{1/2} \preceq (\lD^{1/2} \un{A}\lD^{1/2})^k\circ \lD^{1/2}\langle\partial_\f \rangle^{\tb_0}\un{B} \lD^{1/2} \\ +
\sum_{k_1+k_2=k-1} &\left(\lD^{1/2}\un{A}\lD^{1/2})\right)^{k_1} \left(\lD^{1/2}\langle\partial_\f^{\tb_0} \rangle\un{A}\lD^{1/2} \right)\left(\lD^{1/2}\un{A}\lD^{1/2})\right)^{k_2}\\
&\lD^{1/2}\un{B}\lD^{1/2}.
\end{align*}
 {\it (v)} follows by Neumann series, $\check A=\sum_{k\geq 1} (-1)^k\,A^k$, and from \eqref{adAkB} with $L= A^k\circ B $, $B=\rm{I}$.
\end{proof}

\subsection{Pseudo differential operators}

First of all we note that the norm \eqref{norma2} satisfies
\begin{equation}\label{norma3}
\begin{aligned}
&\forall\; s\leq s',\; \al\leq \al'\; \Rightarrow |\cdot|_{m,s,\al}^{\g,\calO}\leq |\cdot|_{m,s',\al}^{\g,\calO},\;\;
 \; |\cdot|_{m,s,\al}^{\g,\calO}\leq |\cdot|_{m,s,\al'}^{\g,\calO}\\
&m\leq m' \quad  \Rightarrow |\cdot|_{m',s,\al}^{\g,\calO}\leq |\cdot|_{m',s,\al}^{\g,\calO}.
\end{aligned}
\end{equation}
In the following lemma we collect properties of pseudo differential operators which will be used in the sequel.
 We remark that along the Nash-Moser iteration we shall control the Lipschitz variation respect to the torus embedding $i:=i(\varphi)$ of the terms of the linearized operator at $i$. Hence we consider pseudo differential operators which depend on this variable.

\begin{lem}\label{PROP}
Fix $m,m',m''\in \mathbb{R}$. Let $i$ be a torus embedding.
Consider symbols
 \[
 a(i, \la,\f,x,\x)\in S^{m},\,\,b(i, \la,\f,x,\x)\in S^{m'},\,\,c(\la,\f,x,\x)\in S^{m''}, \,\, d(\la,\f,x,\x)\in S^{0}
 \]
  which depend on $\lambda\in\calO$ and $i\in H^s$ in a Lipschitz way.
Set 
\[
\begin{aligned}
&A:={\rm Op}(a(\la,\f,x,\x)), \quad B:={\rm Op}(b(\la,\f,x,\x)),\\
&C:={\rm Op}(c(\la,\f,x,\x)), \quad D:={\rm Op}(d(\la,\f,x,\x)).
\end{aligned}
\]
Then one has\\
\noindent
$(i)$ for any $\al\in \mathbb{N}$, $s\geq s_0$,
\begin{equation}\label{tazza}
|A\circ B|^{\g,\calO}_{m+m',s,\al}\leq_{m,\al} C(s)|A|^{\g,\calO}_{m,s,\al}|B|^{\g,\calO}_{m',s_0+\al+|m|,\al}+ 
C(s_0)|A|^{\g,\calO}_{m,s_0,\al}|B|^{\g,\calO}_{m',s+\al+|m|,\al}.
\end{equation}
One has also that, for any $N\geq1$, the operator $R_{N}:={\rm Op}(r_{N})$ with $r_{N}$ defined in \eqref{comp2} satisfies
\begin{equation}\label{tazza1}
\begin{aligned}
|R_{N}|^{\g,\calO}_{m+m'-N,s,\al}&\leq_{m,N,\al} \frac{1}{N!}\left(
C(s)|A|^{\g,\calO}_{m,s,\al+N}|B|^{\g,\calO}_{m',s_0+2N+\al+|m|,\al}+\right.\\
&\left. \qquad \quad
C(s_0)|A|^{\g,\calO}_{m,s_0,\al+N}|B|^{\g,\calO}_{m',s+2N+\al+|m|,\al}
\right);
\end{aligned}
\end{equation}
\begin{equation}\label{rNdei}
\begin{aligned}
| \Delta_{1 2} R_{N} [i_1-i_2]|^{\g,\calO}_{m+m'-N,s,\al}&\leq_{m,N,\al} \frac{1}{N!}\Big(
C(s)| \Delta_{1 2} A [i_1-i_2] |^{\g,\calO}_{m,s,\al+N}|B|^{\g,\calO}_{m',s_0+2N+\al+|m|,\al}+\\
&
C(s_0)|\Delta_{1 2} A [i_1-i_2] |^{\g,\calO}_{m,s_0,\al+N}|B|^{\g,\calO}_{m',s+2N+\al+|m|,\al}\Big)\\
&+ \frac{1}{N!}\Big(
C(s)|  A  |^{\g,\calO}_{m,s,\al+N}| \Delta_{1 2} B [i_1-i_2]|^{\g,\calO}_{m',s_0+2N+\al+|m|,\al}\\
& +C(s_0)| A  |^{\g,\calO}_{m,s_0,\al+N}| \Delta_{1 2}  B [i_1-i_2] |^{\g,\calO}_{m',s+2N+\al+|m|,\al}
\Big);
\end{aligned}
\end{equation}

\noindent
$(ii)$ the adjoint operator $C^{*}:={\rm Op}(c^{*}(\la,\f,x,\x))$ in \eqref{adj} satisfies
\begin{equation}\label{tazza2}
|C^{*}|^{\g,\calO}_{m'',s,0}\leq_{m} |C|^{\g,\calO}_{m'',s+s_0+|m''|,0};
\end{equation}

\noindent
$(iii)$  consider the map $\Phi:=\mathrm{I}+D$, then
there are constants $C(s_0,\al),C(s,\al)\geq1$ such that if
\begin{equation}\label{tazza3}
C(s_0,\al)|D|^{\g,\calO}_{0,s_0+\al,\al}\leq \frac{1}{2},
\end{equation}
then, for all $\la$, the map $\Phi$ is invertible and $\Phi^{-1}\in OPS^{0}$ and for any $s\geq s_0$
one has
\begin{equation}\label{tazza4}
|\Phi^{-1}-\mathrm{I}|^{\g,\calO}_{0,s,\al}\leq C(s,\al)|D|^{\g,\calO}_{0,s+\al,\al}.
\end{equation}
\end{lem}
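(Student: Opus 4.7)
\smallskip

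\textbf{Proof plan for Lemma \ref{PROP}.} The three parts are essentially standard facts of symbolic calculus, but one needs to carefully track the parameter-Lipschitz weights and the distribution of Sobolev and $\xi$-derivative orders.

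For part (i), I would start from the composition formula \eqref{comp1}, namely $(a\#b)(\varphi,x,\xi)=\sum_{j} a(\varphi,x,\xi+j)\,\widehat{b}_j(\varphi,\xi)\,e^{\im j x}$, and estimate $\partial_\xi^\beta(a\# b)$ for $\beta\le\alpha$. By Leibniz $\partial_\xi^\beta(a\#b) = \sum_{\beta_1+\beta_2=\beta} \binom{\beta}{\beta_1}\sum_j(\partial_\xi^{\beta_1}a)(\varphi,x,\xi+j)\,\partial_\xi^{\beta_2}\widehat b_j(\varphi,\xi)e^{\im j x}$. Fix $\xi\in\R$. The $H^s_{\varphi,x}$ norm of the $j$-th term is controlled, via the interpolation/tame product estimate $\|fg\|_s\le_s\|f\|_s\|g\|_{s_0}+\|f\|_{s_0}\|g\|_s$ (applied to the symbol evaluated at $\xi+j$ and the $j$-th Fourier coefficient of $b$ in $x$), by $|a|_{m,s,\beta_1}\langle\xi+j\rangle^{m-\beta_1}|\widehat b_j(\cdot,\xi)|_{s_0}\langle\xi\rangle^{m'-\beta_2}+\text{(swap)}$. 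Using $\langle\xi+j\rangle^{m-\beta_1}\le C_m(\langle\xi\rangle^{m-\beta_1}+\langle j\rangle^{|m|})$ one can redistribute the weight by using the extra $\langle j\rangle^{|m|}$ to get an absolutely summable series in $j$, paying $s_0+\alpha+|m|$ extra $x$-derivatives on $b$. This yields \eqref{tazza}. The bound \eqref{tazza1} on the remainder follows the same strategy from the integral representation in \eqref{comp2}: here one loses $N$ derivatives in $\xi$ on $a$ and $N$ derivatives in $x$ on $b$, which explains the shift by $2N$ and the factor $1/N!$. The Lipschitz-in-$i$ variation \eqref{rNdei} follows by applying the Leibniz rule in $\Delta_{12}$ to the same integral formula and using the two estimates already proved.

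For part (ii), from the explicit formula \eqref{adj}, $c^*(\varphi,x,\xi)=\overline{\sum_j \widehat c_j(\varphi,\xi-j)e^{\im j x}}$, I would estimate directly $\|c^*(\cdot,\cdot,\xi)\|_s\langle\xi\rangle^{-m''}$. Splitting $\langle\xi-j\rangle^{m''}\le 2^{|m''|}(\langle\xi\rangle^{m''}+\langle j\rangle^{|m''|})$ and summing in $j$ against $\langle j\rangle^{-s_0-|m''|}$, which costs an extra $s_0+|m''|$ regularity in $x$ on the symbol $c$, gives \eqref{tazza2}. The Lipschitz variation in $\omega$ is handled identically since the map $c\mapsto c^*$ is linear.

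For part (iii), I would invert $\Phi=\mathrm{I}+D$ by the Neumann series $\Phi^{-1}-\mathrm{I}=\sum_{k\ge1}(-D)^k$. Iterating \eqref{tazza} (with $m=m'=0$) gives inductively, under the smallness \eqref{tazza3}, the low-norm bound $|D^k|_{0,s_0,\alpha}^{\gamma,\calO}\le (2 C(s_0,\alpha)|D|_{0,s_0+\alpha,\alpha}^{\gamma,\calO})^k$ and the high-norm bound $|D^k|_{0,s,\alpha}^{\gamma,\calO}\le k\,C(s,\alpha)\,|D|_{0,s+\alpha,\alpha}^{\gamma,\calO}\,(2C(s_0,\alpha)|D|_{0,s_0+\alpha,\alpha}^{\gamma,\calO})^{k-1}$. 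The geometric series then converges in the topology of $OPS^0$ and yields \eqref{tazza4}; a similar estimate, or Lemma \ref{actiononsobolev} applied to the Neumann series, ensures actual invertibility on $H^s$ for every parameter $\lambda$. The $\gamma$-weighted Lipschitz variation is absorbed by writing $\Delta_{\omega,\omega'}D^k$ via the Leibniz rule and using the analogous geometric bounds.

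The main obstacle, which is essentially the same in all three parts, is the careful bookkeeping required to redistribute the $\langle\xi\rangle$-weight versus the Sobolev index when the two symbols have different orders $m,m'$: the splitting $\langle\xi+j\rangle^m\le C(\langle\xi\rangle^m+\langle j\rangle^{|m|})$ (and its analogue for $\xi-j$) forces the extra $|m|$ derivatives appearing in the right-hand sides of \eqref{tazza}--\eqref{tazza2}, and getting the indices on $s,\alpha,|m|,N$ exactly right is the only delicate point; once this is in place, the tame product estimate does the rest.
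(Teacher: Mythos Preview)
Your proposal is correct and outlines the standard symbolic-calculus argument; the paper itself does not reprove the lemma but simply refers to Lemmata 2.13, 2.16 and 2.17 of \cite{BM1} (and observes that \eqref{rNdei} follows from the proof of Lemma 2.16 there together with the Leibniz rule), and your sketch is precisely the route those lemmata take. The only point where your explanation is slightly imprecise is the origin of the extra $2N$ derivatives on $B$ in \eqref{tazza1}: $N$ of them come from the explicit $\partial_x^N b$ in the integral remainder \eqref{comp2}, while the remaining ones come from the Peetre-type weight shift $\langle\xi+\tau j\rangle^{m-N-\beta_1}\le C\langle\xi\rangle^{m-N-\beta_1}\langle j\rangle^{|m-N-\beta_1|}$, which now involves the exponent $|m-N-\beta_1|$ rather than $|m-\beta_1|$; otherwise the bookkeeping is exactly as you describe.
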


\begin{proof}
Item $(i)$ and $(iii)$ are proved respectively in Lemmata $2.13$ and $2.17$ of \cite{BM1}. The estimates \eqref{tazza} and \eqref{tazza1} are proved in Lemma $2. 16$ of \cite{BM1}.
The bound \eqref{rNdei} is obtained following the proof of Lemma $2.16$ of \cite{BM1} and exploiting the Leibniz rule. 
\end{proof}
\begin{remark}
When the domain of parameters $\calO$ depends on the variable $i$ then we are interested in estimating the variation $\Delta_{1 2}A:=A(i_1)-A(i_2)$ on $\calO(i_1)\cap \calO(i_2)$ instead of the derivative $\partial_i$. The bound \eqref{rNdei} holds also for $\Delta_{1 2}$ by replacing $i_1-i_2\rightsquigarrow \hat{\imath}$.
\end{remark}
\paragraph{Commutators.}
By formula \eqref{comp2}
the commutator between two pseudo differential operators\\
\noindent
 $A:={\rm Op}(a(\la,\f,x,\x))$, $B:={\rm Op}(b(\la,\f,x,\x))$ with $a\in S^{m}$ and $b\in S^{m'}$, is a pseudo differential operator
such that
\begin{equation}\label{tazza5}
[A,B]:={\rm Op}(a\star b), \quad a\star b(\la,\f,x,\x):=\big(a\# b-b\# a\big)(\la,\f,x,\x).
\end{equation}
The symbols $a\star b$ (called the Moyal parenthesis of $a$ and $b$) admits the expansion
\begin{equation}\label{tazza6}
a\star b=-\mathrm{i}\{a,b\}+\mathtt{r}_{2}(a,b), \quad \{a,b\}=\del_{\x}a\del_{x}b-\del_{x}a\del_{\x}b\in S^{m+m'-1}, 
\end{equation}
where 
\begin{equation}\label{tazza7}
\mathtt{r}_{2}(a,b)=\Big[(a\#b)-\frac{1}{\mathrm{i}}\del_{\x}a\del_{x}b\Big] - 
\Big[(b\#a)-\frac{1}{\mathrm{i}}\del_{\x}b\del_{x}a\Big] \in S^{m+m'-2}.
\end{equation}
Following Definition \ref{cancelletti} we also set
\begin{equation}\label{starcontro}
a\star_{k} b:=a\#_k b- b\#_k a, \qquad a\star_{< N} b:=\sum_{k=0}^{N-1} a\star_k b, \qquad a\star_{\geq N} b:=a\#_{\geq N} b- b\#_{\geq N} a.
\end{equation}
As a consequence, using bounds \eqref{tazza} and \eqref{tazza1}, one has
\begin{equation}\label{tazza8}
\begin{aligned}
|[A,B]|^{\g,\calO}_{m+m'-1,s,\al}&\leq_{m, m'} C(s)
|A|^{\g,\calO}_{m,s+2+|m'|+\al,\al+1}|B|^{\g,\calO}_{m',s_0+2+\al+|m|,\al+1} \\
&\qquad +C(s_0)
|A|^{\g,\calO}_{m,s_0+2+|m'|+\al+1,\al+1}|B|^{\g,\calO}_{m',s+2+\al+|m|,\al+1}.
\end{aligned}
\end{equation}
The last inequality is proved in Lemma $2.15$ of \cite{BM1}.

We now give a lemma on symbols defined on $\T^d$. 
Recalling Definition \ref{pseudoR} and \eqref{norma} we define
\begin{equation}
\lvert A w \rvert_{m, s, \alpha}:=\sup_{\xi\in\mathbb{R}^d} \max_{0\le \lvert \vec{\alpha}_1 \rvert \le \alpha} \lVert \partial_{\xi}^{\vec{\alpha}_1} A w \rVert_s \langle \xi \rangle^{-m+\lvert\vec{\alpha}_1 \rvert},
\end{equation}
we recall the notation
\[
\partial_{y}^{\vec{\alpha}} := \prod_{i=1}^{d} \partial_{y_i}^{\alpha^{(i)}}, \quad \vec{\alpha}:=(\alpha^{(1)}, \dots, \alpha^{(d)}). 
\]

\begin{lem}\label{Lemminobis}
Let $\calO$ be a subset of $\mathbb{R}^{\nu}$. Let $p=p_{\lambda}$ as in the previous lemma, let $A$ be the linear operator defined for all $w=w_{\lambda}(x, \xi)\in S^m(\mathbb{T}^d)$, $\lambda\in\calO$, as
\begin{equation}
A w=w(f(x), g(x) \xi), \quad f(x):=x+p(x), \quad g(x)=(\mathrm{I}+D p)^{-1}, \quad x\in \T^d, \xi\in\mathbb{R}^d
\end{equation}
 such that $\lVert p \rVert^{\g, \calO}_{2s_0+2}< 1$. Then $A$ is bounded, namely $A w\in S^m$, with
\begin{equation}\label{stimabis}
\lvert A w \rvert^{\g, \calO}_{m, s, \alpha}\le_{s, m, \alpha} \lvert w \rvert^{\g, \calO}_{m, s, \alpha}+\sum_{\substack{k_1+k_2+k_3=s,\\ k_1<s, k_1, k_2, k_3\geq 0,\\ k_1+k_2\geq 1}} 
\lvert w \rvert^{\g, \calO}_{m, k_1, \alpha+k_2} \lVert p \rVert^{\g, \calO}_{k_3+s_0+2}.
\end{equation}
\end{lem}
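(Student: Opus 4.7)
\noindent\textbf{Proof plan for Lemma \ref{Lemminobis}.}
The plan is to reduce the seminorm $|Aw|_{m,s,\alpha}^{\gamma,\calO}$ to a combination of Sobolev norms of the pull-back $w\circ (f,g\xi)$ and then to invoke the standard tame composition estimate for functions of $x$ alone. First I would compute the $\xi$-derivatives: since $g(x)\xi$ is linear in $\xi$, the Faà di Bruno formula reduces to
\[
\partial_\xi^{\vec\alpha_1}(Aw)(x,\xi) \;=\; \sum_{|\vec\alpha_2|=|\vec\alpha_1|} c_{\vec\alpha_1,\vec\alpha_2}(x)\,(\partial_\xi^{\vec\alpha_2} w)(f(x), g(x)\xi),
\]
where each $c_{\vec\alpha_1,\vec\alpha_2}(x)$ is a polynomial of degree $|\vec\alpha_1|$ in the entries of $g(x)$. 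Using the smallness hypothesis $\|p\|_{2s_0+2}^{\gamma,\calO}<1$ and the Neumann expansion of $(I+Dp)^{-1}$, I obtain $\|c_{\vec\alpha_1,\vec\alpha_2}\|_s^{\gamma,\calO} \lesssim_{s,\alpha} 1+\|p\|_{s+1}^{\gamma,\calO}$ by standard tame product estimates.

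Next I would deal with the weight $\langle\xi\rangle^{-m+|\vec\alpha_1|}$. Since $g(x)$ is close to the identity in $L^\infty$ (as $\|p\|_{s_0+1}^{\gamma,\calO}<1$ implies $\|Dp\|_{L^\infty}<1/2$ by Sobolev embedding), there are $C_1,C_2>0$ such that $C_1\langle\xi\rangle \le \langle g(x)\xi\rangle \le C_2\langle\xi\rangle$ for all $x,\xi$. Hence
\[
\langle\xi\rangle^{-m+|\vec\alpha_1|}\bigl|(\partial_\xi^{\vec\alpha_2} w)(f(x),g(x)\xi)\bigr| \;\le\; C\, \langle g(x)\xi\rangle^{-m+|\vec\alpha_2|}\bigl|(\partial_\xi^{\vec\alpha_2} w)(f(x),g(x)\xi)\bigr| \;\le\; C\,|w|_{m,0,|\vec\alpha_2|}\langle\xi\rangle^0
\]
at the level of sup-norm; for the $\|\cdot\|_s$ norm I would freeze $\xi$, set $W_{\vec\alpha_2}(x):=\langle g(x)\xi\rangle^{-m+|\vec\alpha_2|}(\partial_\xi^{\vec\alpha_2} w)(f(x),g(x)\xi)$, and estimate $\|W_{\vec\alpha_2}\|_s^{\gamma,\calO}$ by the composition lemma for Sobolev functions (Lemma A.6 / A.7 of \cite{FGMP}, applied to the $\lambda$-dependent diffeomorphism $x\mapsto f(x)$ and the weighted symbol $\langle\eta\rangle^{-m+|\vec\alpha_2|}\partial_\xi^{\vec\alpha_2} w(y,\eta)$ evaluated at $(y,\eta)=(f(x),g(x)\xi)$). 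This yields, for each fixed $\xi$,
\[
\|W_{\vec\alpha_2}\|_s^{\gamma,\calO} \lesssim_{s} |w|_{m,s,|\vec\alpha_2|}^{\gamma,\calO} + \sum_{\substack{k_1+k_2+k_3=s\\ k_1<s,\ k_1+k_2\ge 1}} |w|_{m,k_1,|\vec\alpha_2|+k_2}^{\gamma,\calO}\,\|p\|_{k_3+s_0+2}^{\gamma,\calO},
\]
uniformly in $\xi$, the extra $k_2$ derivatives in $\xi$ coming from the derivatives of the weight $\langle g(x)\xi\rangle^{-m+|\vec\alpha_2|}$ with respect to $x$, and the $\|p\|_{k_3+s_0+2}^{\gamma,\calO}$ factors from repeated applications of $Df$, $Dg$.

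Finally I would combine this with the tame bound on the coefficients $c_{\vec\alpha_1,\vec\alpha_2}$ via the product estimate $\|uv\|_s^{\gamma,\calO}\lesssim_s \|u\|_s^{\gamma,\calO}\|v\|_{s_0}^{\gamma,\calO}+\|u\|_{s_0}^{\gamma,\calO}\|v\|_s^{\gamma,\calO}$, absorb $\|p\|_{s+1}^{\gamma,\calO}$ into the sum on the right-hand side of \eqref{stimabis} (taking $k_1=0$, $k_2=0$, $k_3=s$ if necessary), take the supremum over $\xi\in\R^d$ and the maximum over $|\vec\alpha_1|\le\alpha$, and conclude. The one quantitative subtlety I anticipate as the main obstacle is bookkeeping the indices $(k_1,k_2,k_3)$ correctly so that the contributions from differentiating the coefficients $c_{\vec\alpha_1,\vec\alpha_2}$, the weight $\langle g(x)\xi\rangle^{-m+|\vec\alpha_2|}$, and the pulled-back symbol $\partial_\xi^{\vec\alpha_2} w\circ(f,g\xi)$ all fit into the prescribed shape $|w|_{m,k_1,\alpha+k_2}^{\gamma,\calO}\|p\|_{k_3+s_0+2}^{\gamma,\calO}$ with $k_1+k_2+k_3=s$, $k_1<s$, $k_1+k_2\ge 1$; this will require several careful applications of Sobolev interpolation, but no new idea beyond what is already in the proof of Lemma A.6 of \cite{FGMP}.
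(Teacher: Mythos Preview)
Your plan is sound in spirit and would lead to a correct proof, but the organization differs from the paper's, and the step you label ``composition lemma'' is precisely where the real work sits.

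The paper does not separate $\xi$- and $x$-derivatives. It writes down the full Fa\`a di Bruno expansion of $D_\xi^\alpha D_x^s\bigl(w(f(x),g(x)\xi)\bigr)$ in one go (formula \eqref{Faa}), obtaining terms of the form
\[
(D_\xi^{k-r+\alpha}D_x^r w)\bigl[D^{j_1}f,\dots,D^{j_r}f,\,D^{n_1}g\cdot\xi,\dots,D^{n_{k-r}}g\cdot\xi,\,g,\dots,g\bigr],
\]
and then bounds these case by case ($k=1$, $k=s$, $2\le k\le s-1$) in $L^2_x$, estimating the factors $D^{j}f$, $D^{n}g$ in $W^{\cdot,\infty}$ and converting at the end via Sobolev embedding $|D^2 p|_{W^{k_3,\infty}}\lesssim \|p\|_{k_3+s_0+2}$. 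This is where the $+s_0+2$ in \eqref{stimabis} comes from.

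Your route---first taking $\partial_\xi^{\vec\alpha_1}$ (clean, since $g(x)\xi$ is linear in $\xi$), then freezing $\xi$ and estimating $\|W_{\vec\alpha_2}(\cdot)\|_{H^s_x}$---is a legitimate reorganization. The issue is that no off-the-shelf composition lemma in \cite{FGMP} handles the map $x\mapsto (f(x),g(x)\xi)$: the standard results there are for $u(x+p(x))$ with $u$ a function of $x$ alone. When you differentiate $w(f(x),g(x)\xi)$ in $x$, each hit on the second slot produces a factor $(Dg)\xi$ together with an extra $\partial_\xi$ on $w$; tracking how many of the $s$ derivatives fall on $w$ in $x$ (your $k_1$), how many produce extra $\xi$-derivatives via $Dg\cdot\xi$ (your $k_2$), and how many land on $f,g$ (your $k_3$) is exactly the Fa\`a di Bruno computation the paper does. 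So your ``composition lemma'' step, once unfolded, reproduces the paper's argument. What your organization buys is a cleaner handling of the original $\alpha$ derivatives in $\xi$ (via the explicit polynomial coefficients $c_{\vec\alpha_1,\vec\alpha_2}$ in $g$), at the cost of having to redo the same expansion inside the black box.

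One minor point: the extra $\xi$-derivatives come not only from differentiating the weight $\langle g(x)\xi\rangle^{-m+|\vec\alpha_2|}$ as you write, but primarily from differentiating the argument $g(x)\xi$ inside $w$ itself. Both contribute to the $k_2$ count.
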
	

\begin{proof}
We adopt the notation $\lvert \cdot \rvert_{W^{s, \infty}}$ instead of $\lvert \cdot \rvert_{s, \infty}$ (see 
estimate $(A.1)$ in \cite{FGMP}
%\eqref{holderinfinito}
) 
in order to avoid confusion with the norm of the symbols.  We also denote with $D^s_{\xi}$ the $s$-th Fr\'echet derivative with respect to $\xi$.\\
We study
\begin{equation}\label{Faa}
D_{\xi}^{\alpha}D^s w(f, g \xi)=\sum_{k=1}^s \sum_{\substack{r=0, \\ \sum (j_i+ n_i)=s}}^k C_{k r j n}\,\, (D_{\xi}^{k-r+\alpha} D^r w)[D^{j_1} f,\dots, D^{j_r} f, D^{n_1} g\,\xi,\dots, D^{n_{k-r}} g\, \xi,\underbrace{ g, \dots, g}_{\alpha\,\,\mbox{times}}]
\end{equation}
where $j:=(j_1, \dots, j_r)$, $n:=(n_1, \dots, n_{k-r})$. In the following formulas  we shall denote $\underbrace{ g, \dots, g}_{\alpha\,\,\mbox{times}}$ by $g^\al$.
For $k=1$ and $r=0$ we get from the expression \eqref{Faa} (and estimating $|g|_{L^\infty} \le 2$)
\begin{equation}\label{stima0}
\lVert  (D^{1+\alpha}_{\xi} w) [D^s g\,\xi, g^\al] \rVert_{L^2(\T^d)}\le_\al \lvert w \rvert_{m, 0, \alpha+1} \lvert D^2p\rvert_{W^{s-1, \infty}}
\end{equation}
and for $r=1$
\begin{equation}\label{stima1}
\lVert (D^{\alpha}_{\xi}D w) [D^s f,g^\al]\rVert_{L^2(\T^d)}\le_\al \lvert w \rvert_{m, 1, \alpha} \lvert D^2p \rvert_{W^{s-2, \infty}}.
\end{equation}
For $k=s$ we have that $j_i=n_i=1$ for all $i$ and we get from \eqref{Faa}
\begin{equation}\label{stima2}
\begin{aligned}
&\lVert \sum_{r=0}^s (D_{\xi}^{s-r+\al} D^r w) [\underbrace{D f, \dots, D f}_{r\,\,\mbox{times}}, \underbrace{D g\,\xi, \dots, D g\,\xi}_{s-r\,\,\mbox{times}}, g^\al] \rVert_{L^2(\T^d)}\le \sum_{r=0}^s \lvert w \rvert_{m, r, \alpha+(s-r)} \lvert f \rvert^r_{W^{1, \infty}} \lvert D^2p \rvert_{L^{\infty}}^{s-r}\\
&\le_s \sum_{\substack{s_1+p=s,\\ s_1, p \geq 0}} \lvert w \rvert_{m, s_1, \alpha+p}  \lvert D^2p \rvert_{L^{\infty}}^{p}\le_s \lvert w \rvert_{m, s, \alpha}+ \sum_{\substack{s_1+p=s,\\ s_1, p \geq 0, s_1<s}} \lvert w \rvert_{m, s_1, \alpha+p}  \lvert D^2p\rvert_{L^{\infty}}.
\end{aligned}
\end{equation}
It remains to estimate
\begin{equation}\label{Faa2}
\sum_{k=2}^{s-1} \sum_{\substack{r=0, \\ \sum (j_i+ n_i)=s}}^k C_{k r j n}\,\, (D_{\xi}^{k-r+\al} D^r w)[D^{j_1} f,\dots, D^{j_r} f, D^{n_1} g\,\xi,\dots, D^{n_{k-r}} g\, \xi,g^\al].
\end{equation}
We call $\ell\geq 1$ the number of indices $j_i$ that are $\geq 2$ and we rename these ones $\sigma_i$.
Then $\sum_i (\sigma_i+n_i)=s-(k-\ell)=s-k+\ell$. The $L^2$-norm of \eqref{Faa2} can be estimated by
\begin{equation}\label{stima3}
\begin{aligned}
&\sum_{k=2}^{s-1} \sum_{r=0}^k \sum_{\ell\geq 1} \lvert w \rvert_{m, r, \alpha+(k-r)} \lvert D f \rvert_{L^{\infty}}^{k-\ell} \lvert D^{\sigma_1} f \rvert_{L^{\infty}}\dots \lvert D^{\sigma_{\ell}} f \rvert_{L^{\infty}}\lvert D^{n_1} g \rvert_{L^{\infty}}\dots \lvert D^{n_{k-r}} g \rvert_{L^{\infty}}\\
& \le_s\sum_{k=2}^{s-1} \sum_{r=0}^k \sum_{\ell\geq 1} \lvert w \rvert_{m, r, \alpha+(k-r)} \lvert D^{\sigma_1-2} D^2 p \rvert_{L^{\infty}}\dots \lvert D^{\sigma_{\ell}-2}  D^2 p \rvert_{L^{\infty}}\lvert D^{n_1-1}  D^2 p \rvert_{L^{\infty}}\dots \lvert D^{n_{k-r}-1}  D^2 p \rvert_{L^{\infty}}\\
&\le_s \sum_{k=2}^{s-1} \sum_{r=0}^k \sum_{\ell\geq 1} \lvert w \rvert_{m, r, \alpha+(k-r)}  \lvert D^2p \rvert^{k+\ell-r-1}_{L^{\infty}} \lvert D^2p \rvert_{W^{s-2k-\ell+r, \infty}}\\
& \le_s \sum_{k=2}^{s-1} \sum_{r=0}^k  \lvert w \rvert_{m, r, \alpha+(k-r)}  \lvert D^2p \rvert_{W^{s-k-1, \infty}}\le \sum_{\substack{s_1+p+s_3=s-1, \\ s>s_1, p, s_3\geq 0}}  \lvert w \rvert_{m, s_1, \alpha+p}  \lvert D^2 p \rvert_{W^{s_3, \infty}}.
\end{aligned}
\end{equation}
Then by \eqref{stima0}, \eqref{stima1}, \eqref{stima2}, \eqref{stima3} we have \eqref{stimabis} for $\lvert A w \rvert_{m, s, \alpha}$. For the Lipschitz variation we observe that
\begin{equation}\label{ba}
\Delta_{\lambda, \lambda'}(w(\lambda, f(\lambda), g(\lambda)\xi))=A (\Delta_{\lambda, \lambda'} w)+A \,D w [\Delta_{\lambda, \lambda'} f]+A\, D_{\xi}w[\Delta_{\lambda, \lambda'} g\,\xi].
\end{equation}
One follows exactly the strategy above but considering $s-1$ derivatives instead of $s$ (recall \eqref{norma2}). This is important since in formula \eqref{ba} we have one extra derivative either in $x$ or $\xi$.
\end{proof}

\section{Pseudo differential calculus and the classes of remainders
%${\gotL_{\rho, p}}$
}

\subsection{Properties of the smoothing remainders}\label{restismooth}

%\subsection{Properties of pseudo differential operators and the class $\mathfrak{L}_{\rho}$}\label{vermicelli}
In the first step of our reduction procedure 
in order to prove Theorem \ref{EgorovQuantitativo} we need to work with operators which are pseudo differential up to a remainder in the class $\mathfrak{L}_{\rho}$. 
In the following we shall study properties of such operators under composition, inversion etc...

\smallskip

The following Lemma guarantees that the class of operators in Def. \ref{ellerho} 
is closed under composition.

\begin{lem}\label{chiusuracompoclasseL}
If $A$ and $B$ belong to $\mathfrak{L}_{\rho}$, for $\rho\geq3$ (see Def. \ref{ellerho}) ,
then $A\circ B\in\gotL_{\rho, p}$ and, for $s_0\le s\le \mathcal{S}$,
\begin{equation}\label{composizioneTame}
\mathbb{M}_{A\circ B}^{\gamma}(s,\mathtt{b})\le_{s, \rho} 
\sum_{\tb_1+\tb_2=\tb}\left(\mathbb{M}^{\gamma}_A(s_0,\mathtt{b}_1)\mathbb{M}^{\gamma}_B(s,\mathtt{b}_2)+\mathbb{M}^{\gamma}_A(s,\mathtt{b}_1)\mathbb{M}^{\gamma}_B(s_0,\mathtt{b}_2)\right),\;\;\;\; \mathtt{b}\leq \rho-2,
\end{equation}
%when $\mathtt{b}\leq \rho-2$,
\begin{equation}\label{composizioneTamedei}
\begin{aligned}
\mathbb{M}_{\Delta_{12} (A\circ B)  }(p,\tb)&
\le_{p, \rho} \sum_{\tb_1+\tb_2=\tb}\Big(\mathbb{M}_{\Delta_{12} A}(p,\mathtt{b}_1)\mathbb{M}_B(p,\mathtt{b}_2)+\mathbb{M}_A(p,\mathtt{b}_1)\mathbb{M}_{\Delta_{12} B  }(p,\mathtt{b}_2)\Big)\,,\;\;\; \mathtt{b}\leq \rho-3.
%\le_{s, \rho} \sum_{\tb_1+\tb_2=\tb}\left(\mathbb{M}_{\Delta_{12} A}(s,\mathtt{b}_1)
%\mathbb{M}_B(s_0,\mathtt{b}_2)+ 
%\mathbb{M}_{\Delta_{12} A}(s_0,\mathtt{b}_1)\mathbb{M}_B(s,\mathtt{b}_2)\right.\\
%&+\mathbb{M}_A(s_0,\mathtt{b}_1)\mathbb{M}_{\Delta_{12} B  }(s,\mathtt{b}_2)
%+\mathbb{M}_A(s,\mathtt{b}_1)\mathbb{M}_{\Delta_{12} B  }(s_0,\mathtt{b}_2)\left.\right)\,,
\end{aligned}
\end{equation}
%when $\mathtt{b}\leq \rho-3$.
\end{lem}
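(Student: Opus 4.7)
The plan is to verify directly the four items of Definition \ref{ellerho} for the composition $A\circ B$, reducing each to a composition of two Lip-$0$-tame operators via the insertion of a well-chosen power of $\langle D_x\rangle$, and then invoking Lemma \ref{lem: 2.3.6}. Fix a multi-index $\vec{\tb}=\vec{\tb}_1+\vec{\tb}_2$ with $|\vec{\tb}|\le \rho-2$, and non-negative integers $m_1,m_2$ with $m_1+m_2=\rho-|\vec{\tb}|$. First I would expand $\partial_\varphi^{\vec{\tb}}(AB)$ by Leibniz; the main step is then, for each Leibniz summand, to write
\[
\langle D_x\rangle^{m_1}(\partial_\varphi^{\vec{\tb}_1}A)(\partial_\varphi^{\vec{\tb}_2}B)\langle D_x\rangle^{m_2}=U_1\circ U_2,
\]
\[
U_1:=\langle D_x\rangle^{m_1}(\partial_\varphi^{\vec{\tb}_1}A)\langle D_x\rangle^{k_1},\qquad U_2:=\langle D_x\rangle^{-k_1}(\partial_\varphi^{\vec{\tb}_2}B)\langle D_x\rangle^{m_2},
\]
with $k_1:=\rho-|\vec{\tb}_1|-m_1$. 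The identity $m_1+m_2=\rho-|\vec{\tb}_1|-|\vec{\tb}_2|$ guarantees that both $k_1$ and $n_2:=\rho-|\vec{\tb}_2|-m_2$ are $\ge 0$.

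By construction $m_1+k_1=\rho-|\vec{\tb}_1|$, so $U_1$ is Lip-$0$-tame with constant bounded by $\mathbb{M}^\gamma_A(s,|\vec{\tb}_1|)$ directly from the definition of $A\in\gotL_{\rho,p}$. For $U_2$ I would use the two trivial Fourier-multiplier bounds $\|\langle D_x\rangle^{-k_1}w\|_s\le \|w\|_s$ (valid since $k_1\ge 0$) and $\|w\|_s\le \|\langle D_x\rangle^{n_2}w\|_s$ (valid since $n_2\ge 0$) to get
\[
\|U_2 u\|_s \le \|(\partial_\varphi^{\vec{\tb}_2}B)\langle D_x\rangle^{m_2}u\|_s \le \|\langle D_x\rangle^{n_2}(\partial_\varphi^{\vec{\tb}_2}B)\langle D_x\rangle^{m_2}u\|_s,
\]
which reduces the tame control of $U_2$ to the Lip-$0$-tame estimate on $\langle D_x\rangle^{n_2}(\partial_\varphi^{\vec{\tb}_2}B)\langle D_x\rangle^{m_2}$, controlled by $\mathbb{M}^\gamma_B(s,|\vec{\tb}_2|)$. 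Applying Lemma \ref{lem: 2.3.6} to $U_1\circ U_2$ and then summing over the Leibniz splittings and taking the max over admissible $m_1,m_2$ yields \eqref{composizioneTame}.

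The commutator condition (item (ii)) follows from the same scheme applied to $[\partial_\varphi^{\vec{\tb}_1}A\,\partial_\varphi^{\vec{\tb}_2}B,\partial_x]=[\partial_\varphi^{\vec{\tb}_1}A,\partial_x]\,\partial_\varphi^{\vec{\tb}_2}B + \partial_\varphi^{\vec{\tb}_1}A\,[\partial_\varphi^{\vec{\tb}_2}B,\partial_x]$, now with the budget $m_1+m_2=\rho-|\vec{\tb}|-1$; the new exponents $k_1,n_2$ shift by $1$ accordingly, and remain non-negative. The Lipschitz estimate \eqref{composizioneTamedei} is obtained by expanding $\Delta_{12}(AB)=(\Delta_{12}A)B(i_2)+A(i_1)(\Delta_{12}B)$, using Leibniz in $\varphi$ with $|\vec{\tb}|\le \rho-3$ and $m_1+m_2=\rho-|\vec{\tb}|-1$ (or $\rho-|\vec{\tb}|-2$ for the commutator item (iv)), and performing the identical $\langle D_x\rangle^{\pm k_1}$-splitting at the $H^p$ level: one factor is controlled by $\mathbb{M}_{\Delta_{12}A}(p,|\vec{\tb}_1|)$ or $\mathbb{M}_{\Delta_{12}B}(p,|\vec{\tb}_2|)$, the other by $\mathbb{M}^\gamma_B(p,|\vec{\tb}_2|)$ or $\mathbb{M}^\gamma_A(p,|\vec{\tb}_1|)$.

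The entire argument is essentially bookkeeping, and the only delicate point—which is also where I expect the main pitfall to lie—is verifying that at every occurrence the interpolating exponents $k_1,n_2$ remain non-negative: each time a commutator with $\partial_x$ or a $\Delta_{12}$ is introduced, the global budget $m_1+m_2$ decreases by one, but the constraint $|\vec{\tb}|\le \rho-2$ is correspondingly tightened to $|\vec{\tb}|\le \rho-3$, and the two losses compensate so that admissibility always closes.
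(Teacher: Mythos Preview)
Your proof is correct and follows essentially the same route as the paper's: both argue by inserting appropriate powers of $\langle D_x\rangle$ between the two factors so that each piece becomes Lip-$0$-tame directly from the defining bounds of $\gotL_{\rho,p}$, and then invoke Lemma \ref{lem: 2.3.6}. The only cosmetic difference is that the paper keeps the bounded middle factor explicit via a three-term insertion $\langle D_x\rangle^{y}\langle D_x\rangle^{-y-w}\langle D_x\rangle^{w}$ with $-y-w=-\rho$, whereas you achieve the same effect through the monotonicity inequalities $\|\langle D_x\rangle^{-k_1}w\|_s\le\|w\|_s\le\|\langle D_x\rangle^{n_2}w\|_s$; the commutator and $\Delta_{12}$ bookkeeping is handled identically.
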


\begin{proof}
%We prove
%that  $A\circ B$ is $(-\rho)$-tame according to Definition \eqref{SigmaTame}
%by showing 
We start by noting that $\gotM^{\gamma}_{A\circ B}(-\rho,s)$ defined in 
\eqref{megaTame2} with $A\rightsquigarrow A\circ{B}$
is controlled by the r.h.s. of \eqref{composizioneTame}.
Let $m_{1},m_{2}\in \mathbb{R}$, $m_{1},m_{2}\geq0$ and $m_{1}+m_{2}=\rho$.
We can write
\[
\langle D_{x}\rangle^{m_1}A\circ B\langle D_{x}\rangle^{m_{2}}=
\langle D_{x}\rangle^{m_1}A  
\langle D_{x}\rangle^{m_2}\langle D_{x}\rangle^{{-\rho}}
 \langle D_{x}\rangle^{m_1}B\langle D_{x}\rangle^{m_2}.
\]
By hypothesis we know that $A$ belongs to the class $\calL_{\rho}$, hence  by  $(i)$ of Definition \ref{ellerho} one has that 
$\langle D_{x}\rangle^{m_1}A  
\langle D_{x}\rangle^{m_2}$ is a $0-$tame operator. 
For the same reason also $\langle D_{x}\rangle^{m_1}B
\langle D_{x}\rangle^{m_2}$ is  a $0-$tame operator.
Note also that, since $\rho\geq0$, then $\langle D_{x}\rangle^{{-\rho}}: H^{s}(\mathbb{T}^{\nu+1})\to H^{s}(\mathbb{T}^{\nu+1})$ is  a $0-$tame operator. 
Hence,  using Lemma \ref{lem: 2.3.6}  for any $u\in H^{s}$ one has
\begin{equation}\label{Extron}
\begin{aligned}
\|\langle D_{x}\rangle^{m_1}A\circ B\langle D_{x}\rangle^{m_2}u\|_{s}&\leq_{s}
(\gotM_{A}(-\rho,s)\gotM_{B}(-\rho,s_0)+\gotM_{A}(-\rho,s_0)\gotM_{B}(-\rho,s))
\|u\|_{s_0}\\
&+\gotM_{A}(-\rho,s_0)\gotM_{B}(-\rho,s_0)\|u\|_{s},
\end{aligned}
\end{equation}
%where $\gotM_{C}(-\rho,s):=\sup_{\substack{m_{1}+m_{2}=\rho\\
%m_{1},m_{2}\geq0}}\gotM_{\langle D_{x}\rangle^{m_1}C\langle D_{x}\rangle^{m_2}}(0,s)$ with $C=A,B$. 
where $\gotM_{A}(-\rho,s)$, $\gotM_{B}(-\rho,s)$   are defined in \eqref{megaTame2}.
Then we may set
\[
\gotM_{A\circ B}(-\rho,s)= C(s)\Big(\gotM_{A}(-\rho,s)\gotM_{B}(-\rho,s_0)+\gotM_{A}(-\rho,s_0)\gotM_{B}(-\rho,s)\Big).
\]
Reasoning as in \eqref{Extron}
one can check that 
\[
\gotM^{\gamma}_{A\circ B}(-\rho,s)\leq C(s)\Big(\gotM^{\gamma}_{A}(-\rho,s)\gotM^{\gamma}_{B}(-\rho,s_0)+\gotM^{\gamma}_{A}(-\rho,s_0)\gotM^{\gamma}_{B}(-\rho,s)\Big).
\]

\noindent
Let us study the operator $\del_{\f}^{\vec{\mathtt{b}}}(A\circ{B})$ for $\vec{\tb}\in\mathbb{N}^{\nu}$ and $|\vec{\tb}|\leq \rho-2$. We have
\begin{equation}\label{Extron0}
\del_{\f}^{\vec{\mathtt{b}}}(A\circ{B})=\sum_{\vec{\mathtt{b}_{1}}+\vec{\mathtt{b}_2}=\vec{\mathtt{b}}}
(\del_{\f}^{\vec{\mathtt{b}_1}}A)(\del^{\vec{\mathtt{b}_2}}_{\f}B).
\end{equation}
We show that any summand in \eqref{Extron0}
satisfies item $(i)$ of Def. \eqref{ellerho}.
%is a Lip-$(-\rho+\mathtt{b})$-tame according to Def. 
%\ref{LipTameConstants}.
Let $m_{1},m_{2}\in \mathbb{R}$, $m_{1},m_{2}\geq0$ and $m_{1}+m_{2}=\rho-|\vec\tb|$.
We write
\[
\begin{aligned}
 \langle D_{x}\rangle^{m_1} (\del_{\f}^{\vec{\mathtt{b}_1}}A)(\del^{\vec{\mathtt{b}_2}}_{\f}B)
  \langle D_{x}\rangle^{m_2}&=
 \langle D_{x}\rangle^{m_1}
(\del_{\f}^{\vec{\mathtt{b}_1}}A) 
  \langle D_{x}\rangle^{y}
 \langle D_{x}\rangle^{-y-w}  
  \langle D_{x}\rangle^{w}(\del^{\vec{\mathtt{b}_2}}_{\f}B)
 \langle D_{x}\rangle^{m_2}
\end{aligned}
\]
with $y:=\rho-|\vec{\mathtt{b}_1}|-m_1$, $w=\rho-|\vec{\mathtt{b}_2}|-m_2$ and note that
$-y-w=-\rho\leq0$. Moreover 
$m_1+y=\rho-|\vec{\mathtt{b}_1}|$, and $w+m_{2}=\rho-|\vec{\mathtt{b}_2}|$. Hence
the operators 
$ \langle D_{x}\rangle^{m_1}(\del_{\f}^{\vec{\mathtt{b}_1}}A) 
  \langle D_{x}\rangle^{y}$ 
  and 
  $ \langle D_{x}\rangle^{w}(\del_{\f}^{\vec{\mathtt{b}_2}}b) 
  \langle D_{x}\rangle^{m_2}$
  are Lip-$0$-tame operator. 
Hence, using Lemma \ref{lem: 2.3.6}
%and the fact that 
%$\mathtt{b}_{1},\mathtt{b}_{2}\leq\mathtt{b}\leq \rho-1$) 
one has
\begin{align}\label{scoccio}
\| \langle D_{x}\rangle^{m_1} (\del_{\f}^{\vec{\mathtt{b}_1}}A)
(\del^{\vec{\mathtt{b}_2}}_{\f}B)
  \langle D_{x}\rangle^{m_2}\|^{\gamma,\calO}_{s}&\leq
 \gotM^{\gamma}_{\del_{\f}^{\vec{\mathtt{b}_1}}A}(-\rho+|\vec{\mathtt{b}_1}|,s)
 \gotM^{\gamma}_{\del_{\f}^{\vec{\mathtt{b}_2}}B}(-\rho+|\vec{\mathtt{b_{2}}}|,s_0)\|u\|_{s_0}\\\notag
 &+
 \gotM^{\gamma}_{A}(-\rho+|\vec{\mathtt{b}_1}|,s_0)
 \gotM^{\gamma}_{B}(-\rho+|\vec{\mathtt{b}_2}|,s)
\|u\|_{s_0}\\ \notag
&+\gotM^{\gamma}_{A}(-\rho+|\vec{\mathtt{b}_1}|,s_0)
\gotM^{\gamma}_{B}(-\rho+|\vec{\mathtt{b}_2}|,s_0)\|u\|_{s}, 
  \end{align}
for $u\in H^{s}$. We can conclude that $\gotM^{\gamma}_{\del_{\f}^{\vec{\mathtt{b}}}(A\circ{B})}(-\rho+|\vec{\tb}|,s)$
is controlled by the r.h.s. of \eqref{composizioneTame}.
Regarding the operator $[A\circ B,\del_{x}]$ we reason as follows.
We prove that
\begin{equation}\label{Extron3}
[A\circ B,\del_{x}]=A[B,\del_{x}]+[A,\del_{x}]B.
\end{equation}
satisfies item $(ii)$ of Definition \eqref{ellerho}.
%is Lip-$(-\rho+1)$-tame according to Def. \ref{LipTameConstants}.
Let $m_{1},m_{2}\in \mathbb{R}$, $m_{1},m_{2}\geq0$ and $m_{1}+m_{2}=\rho-1$.
Moreover
\[
\langle D_{x}\rangle^{m_1}[A,\del_{x}]B\langle D_{x}\rangle^{m_2}
=\langle D_{x}\rangle^{m_1}[A,\del_{x}]\langle D_{x}\rangle^{y}
\langle D_{x}\rangle^{{-y-z}}
\langle D_{x}\rangle^{z}B
\langle D_{x}\rangle^{m_2},
\]
with $y=\rho-1-m_1$, $z=\rho-m_2$. Hence by definition 
(see Def. \eqref{ellerho}) we have that 
$\langle D_{x}\rangle^{m_1}[A,\del_{x}]\langle D_{x}\rangle^{y}$
and $\langle D_{x}\rangle^{z}B
\langle D_{x}\rangle^{m_2}$
are Lip-$0-$tame.
Thus one can conclude, as done above, that $\gotM_{[A,\del_{x}]B}(-\rho+1,s)$
is controlled by the r.h.s. of \eqref{composizioneTame}.
One can reason in the same way for the first summand in \eqref{Extron3}
and for the operator $[\del_{\f}^{\vec{\mathtt{b}}}(AB),\del_{x}]$.
This proves \eqref{composizioneTame}.

\noindent
Let us study the term
\begin{equation}\label{Extron4}
\Delta_{12}(A\circ{B}) =(\Delta_{12}A)\, B(\mathfrak{I}_2)+
A(\mathfrak{I}_1)\,(\Delta_{12}B) .
\end{equation}
By definition both $\langle D_{x}\rangle^{m_1}\Delta_{12}A \langle D_{x}\rangle^{m_2}, \langle D_{x}\rangle^{m_1}\Delta_{12}B \langle D_{x}\rangle^{m_2}$ with $m_1+m_2=\rho-1$ 
are bounded operators on $H^{s}$
%$0$-tame operators
(see \eqref{Mdrittaconlai} and Def. \ref{TameConstants}).
In order to prove \eqref{composizioneTamedei} one can bound %the tameness constant
%of 
the two summand in \eqref{Extron4} by following the same procedure 
used to prove \eqref{composizioneTame}.
\end{proof}

The next Lemma shows that, if $\rho\geq 3$, $OPS^{-\rho}\subset \gotL_{\rho, p}$ (see Section \ref{sezione functional setting} for the definition of $OPS^m$).
\begin{lem}\label{INCLUSIONEpseudoInclasseL}
Fix $\rho\geq 3$ and consider a symbol $a=a(\oo, \mathfrak{I}(\omega))$ 
in $S^{-\rho}$
depending on   $\oo\in \calO\subset \mathbb{R}^{\nu}$ and on $\mathfrak{I}$  
in a Lipschitz way.
One has that $A:={\rm op}(a(\f,x,\x))\in \gotL_{\rho, p}$ (see \ref{ellerho})
and 
\begin{equation}\label{costsimbolo}
\begin{aligned}
\mathbb{M}^{\gamma}_{A}(s,\tb)&\leq_{s,\rho} |a|^{\gamma,\calO}_{-\rho,s+\rho,0},\qquad \;\;\; 
\mathbb{M}_{\Delta_{12}A }(p,\tb)\leq_{p,\rho} 
|\Delta_{12}a |_{-\rho,p+\rho,0}.
\end{aligned}
\end{equation}
\end{lem}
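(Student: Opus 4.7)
My plan is to verify directly each of the four items of Definition \ref{ellerho} (the two tame conditions in $\gamma$ and the two Lipschitz-in-$\mathfrak I$ conditions), using the symbolic calculus of Lemma \ref{PROP} together with the boundedness result of Lemma \ref{actiononsobolev}.

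The underlying observation is simple: if $a\in S^{-\rho}$, then $\partial_\varphi^{\vec\tb} a\in S^{-\rho}$ with
\[
|\partial_\varphi^{\vec\tb} a|_{-\rho,s,\alpha}^{\gamma,\calO}\le |a|_{-\rho,s+|\vec\tb|,\alpha}^{\gamma,\calO},
\]
and similarly $\partial_x a\in S^{-\rho}$ with $|\partial_x a|_{-\rho,s,\alpha}^{\gamma,\calO}\le |a|_{-\rho,s+1,\alpha}^{\gamma,\calO}$. Moreover the symbol $\langle \xi\rangle^{m}$ lies in $S^{m}$ with norm bounded by a constant (depending only on $m$ and $\alpha$), so composing with $\langle D_x\rangle^{m_1}$ and $\langle D_x\rangle^{m_2}$ on the left and right preserves the pseudo-differential character.

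\textbf{Step 1 (item (i)).} Fix $\vec\tb\in\N^\nu$ with $|\vec\tb|\le\rho-2$ and $m_1,m_2\ge 0$ with $m_1+m_2=\rho-|\vec\tb|$. Since $\partial_\varphi^{\vec\tb}A=\op(\partial_\varphi^{\vec\tb} a)$ has symbol in $S^{-\rho}$, two applications of \eqref{tazza} give
\[
\langle D_x\rangle^{m_1}\partial_\varphi^{\vec\tb} A\langle D_x\rangle^{m_2}=\op(c)\in OPS^{m_1+m_2-\rho}=OPS^{-|\vec\tb|}\subseteq OPS^{0},
\]
with
\[
|c|_{0,s,0}^{\gamma,\calO}\le_{s,\rho}|a|_{-\rho,s+|\vec\tb|+m_1+m_2+s_0,0}^{\gamma,\calO}\le_{s,\rho}|a|_{-\rho,s+\rho,0}^{\gamma,\calO}.
\]
By Lemma \ref{actiononsobolev} the operator $\op(c)$ is Lip-$0$-tame with tame constant controlled by $|c|_{0,s,0}^{\gamma,\calO}$. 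This gives item $(i)$ of Definition \ref{ellerho} and the first bound of \eqref{costsimbolo}.

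\textbf{Step 2 (item (ii)).} For the commutator, one has $[\partial_\varphi^{\vec\tb} A,\partial_x]=-\op(\partial_x\partial_\varphi^{\vec\tb} a)$, whose symbol is again in $S^{-\rho}$ and satisfies $|\partial_x\partial_\varphi^{\vec\tb} a|_{-\rho,s,0}^{\gamma,\calO}\le |a|_{-\rho,s+|\vec\tb|+1,0}^{\gamma,\calO}$. Repeating Step 1 with $m_1+m_2=\rho-|\vec\tb|-1$ and using \eqref{tazza} again produces a pseudo-differential operator in $OPS^{-|\vec\tb|-1}\subseteq OPS^0$ whose symbol has $|\cdot|_{0,s,0}^{\gamma,\calO}$-norm bounded by $|a|_{-\rho,s+\rho,0}^{\gamma,\calO}$, hence Lip-$0$-tame.

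\textbf{Step 3 (items (iii)–(iv)).} For the Lipschitz dependence on $\mathfrak I$, note that $\Delta_{12}(\partial_\varphi^{\vec\tb} A)=\op(\partial_\varphi^{\vec\tb}\Delta_{12} a)$ and likewise for its commutator with $\partial_x$. The symbol $\Delta_{12} a\in S^{-\rho}$ satisfies bounds analogous to $a$, so the exact same argument as in Steps 1–2, now with $|\vec\tb|\le\rho-3$ and using the $\Delta_{12}$-versions of the symbolic calculus estimates, produces bounded operators $H^p\to H^p$ controlled by $|\Delta_{12}a|_{-\rho,p+\rho,0}$, yielding the second bound of \eqref{costsimbolo}.

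None of these steps is really difficult; the only bookkeeping point is to make sure the chosen $\rho,|\vec\tb|,m_1,m_2$ keep the final symbol in $OPS^m$ with $m\le 0$ (so that Lemma \ref{actiononsobolev} applies), which is guaranteed by the constraints $|\vec\tb|\le\rho-2$ (or $\rho-3$) and $m_1+m_2=\rho-|\vec\tb|$ (or $\rho-|\vec\tb|-1$). The loss of derivatives on the symbol is at most $\rho$, matching the index $s+\rho$ in the right-hand side of \eqref{costsimbolo}.
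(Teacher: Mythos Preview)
Your proof is correct and follows essentially the same route as the paper: compose with the Fourier multipliers $\langle D_x\rangle^{m_1},\langle D_x\rangle^{m_2}$ using \eqref{tazza}, land in $OPS^0$, then invoke Lemma \ref{actiononsobolev}. The only substantive difference is in Step~2: you compute the commutator symbol explicitly as $[\op(b),\partial_x]=-\op(\partial_x b)$, whereas the paper simply bounds $A\partial_x$ and $\partial_x A$ separately (explicitly remarking that it ``is not taking advantage of the pseudo differential structure''); your version is slightly cleaner but both give the same bound.

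One small slip: in Step~1 your displayed chain
\[
|c|_{0,s,0}^{\gamma,\calO}\le_{s,\rho}|a|_{-\rho,s+|\vec\tb|+m_1+m_2+s_0,0}^{\gamma,\calO}\le_{s,\rho}|a|_{-\rho,s+\rho,0}^{\gamma,\calO}
\]
is inconsistent as written, since $|\vec\tb|+m_1+m_2=\rho$ so the middle term is $|a|_{-\rho,s+\rho+s_0,0}$, which is \emph{larger} than the right-hand side. The correct computation from \eqref{tazza} (using that $|\langle D_x\rangle^{m_i}|_{m_i,s,0}$ is a constant independent of $s$) gives a loss of only $m_1+|\vec\tb|\le\rho$, so the final bound $|a|^{\gamma,\calO}_{-\rho,s+\rho,0}$ is correct; just drop the spurious $+s_0$.
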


\begin{proof}
Let $m_{1},m_{2}\in \mathbb{R}$, $m_{1},m_{2}\geq0$ and $m_{1}+m_{2}=\rho$.
We need to show that 
$\langle D_{x}\rangle^{m_1}A\langle D_{x}\rangle^{m_2}$
satisfies item $(i)$ of Definition \ref{ellerho}. By definition it is the composition of three pseudo differential operators hence,
by Lemma \ref{actiononsobolev} and by formula \eqref{tazza} of Lemma \ref{PROP} one has that
\begin{align}
\gotM^{\gamma}_{\langle D_{x}\rangle^{m_1}A
\langle D_{x}\rangle^{m_2}}(0, s) &\le_s  |\langle D_{x}\rangle^{m_1}A
\langle D_{x}\rangle^{m_2}|^{\gamma,\calO}_{0,s,0} \le_s
 |\langle D_{x}\rangle^{m_1}|_{m_1,s,0} |a|^{\gamma,\calO}_{-\rho,s+|m_1|, 0} 
 |\langle D_{x}\rangle^{m_2}|_{m_2,s+|m_1|+\rho,0}
 \nonumber
 \\ &\le_s |a|^{\gamma,\calO}_{-\rho,s+|m_1|, 0 }\label{Extron5}
\end{align}
This means that
\[
\gotM_{A}^{\gamma}(-\rho,s)
\leq_{s}|a|^{\gamma,\calO}_{-\rho,s+\rho,0}. 
\]
Secondly we consider the operator $(\del_{\f}^{\vec{\mathtt{b}}}{\rm op}(a(\f,x,\x)))=
{\rm op}(\del_{\f}^{\vec{\mathtt{b}}}a(\f,x,\x))$
for $\vec{\tb}\in\mathbb{N}^{\nu}$ and $|\vec{\tb}|\leq \rho-2$.
It is pseudo differential and its symbol $\del_{\f}^{\vec{\mathtt{b}}}a(\f,x,\x)$
is such that
\[
|\del_{\f}^{\vec{\mathtt{b}}}a|^{\gamma,\calO}_{-\rho,s,\al}\leq |a|^{\gamma,\calO}_{-\rho,s+|\vec{\tb}|,\al}.
\]
Following the same reasoning used in \eqref{Extron5} (recall that $m_1+m_2=\rho-|\vec{\tb}|$) one obtains
\[
\gotM_{\del_{\f}^{\vec{\mathtt{b}}}A}^{\gamma}(-\rho+|\vec{\tb}|,s)\leq_{s} |a|^{\gamma,\calO}_{-\rho,s+|\vec{\tb}|+
	(\rho-|\vec{\tb}|),0}=C(s)\,|a|^{\gamma,\calO}_{-\rho,s+\rho,0}.
\]
The operator $[A,\del_x]= A\del_x-\del_x A$  can be treated in the same way, discussing each of the two summands separately, 
 (we are not taking advantage of the pseudo dfferential structure in order to control the order of the commutator), with $m_1+m_2=\rho-1$,
\begin{equation*}
\gotM^{\gamma}_{
	\langle D_{x}\rangle^{m_1}\del_x A
	\langle D_{x}\rangle^{m_2}}(0, s) \le_s  |\langle D_{x}\rangle^{m_1}\del_x A
\langle D_{x}\rangle^{m_2}|^{\gamma,\calO}_{0,s,0} \le_s  |a|^{\gamma,\calO}_{-\rho,s+\rho, 0}.
\end{equation*}
 The same strategy holds for $[\del_{\f}^{\vec{\mathtt{b}}}A,\del_{x}]$
Hence one gets the first of \eqref{costsimbolo}.
The second bound in \eqref{costsimbolo}
can be obtained by noting that $\Delta_{12}A ={\rm op}(\Delta_{12}a )[\cdot]$
and then following almost word by word the discussion above.
\end{proof}
The next Lemma shows that $\gotL_{\rho, p}$ is closed under left and right multiplication by  operators in $S^0$.
\begin{lem}\label{idealeds}
	Let $a\in S^{0}$ and $B\in \gotL_{\rho, p}$, then $\op(a)\circ B, B\circ \op(a)\in \gotL_{\rho, p}$ and satisfy the bounds
	\begin{align}\label{idealerho}
\mathbb{M}_{\op(a)\circ B}^\g(s,\tb) & 
\le_{s, \rho} |a|^{\g,\calO}_{0,s+\rho,0} \mathbb{M}_{ B}^\g(s_0,\tb) 
+ |a|^{\g,\calO}_{0,s_0+\rho,0} \mathbb{M}_{ B}^\g(s,\tb)\\ \notag
\mathbb{M}_{\Delta_{12} (\op(a)\circ B)  }(p,\tb)&
\le_{p, \rho} 
|\Delta_{12}a|_{1,p+\rho,0}\mathbb{M}_B(p,\mathtt{b})
+|a|_{0,p+\rho,0}\mathbb{M}_{\Delta_{12} B  }(p,\mathtt{b})\,,\
\end{align}
for all $s_0\le s\le \mathcal{S}$.
 Moreover if $B\in \gotL_{\rho+1}$  then $ 	\del_{\f_m} B, [\del_x,B]$, $m=1,\dots,\nu$, are in $\gotL_{\rho, p}$
and satisfy the bounds
 \begin{equation}\label{miserialadra}
  \begin{aligned} &	\mathbb M_{\del_{\f_m} B}^\g (s,\tb),\mathbb M_{[\del_{x}, B]}^\g (s,\tb) \le 
  \mathbb M_{ B}^\g (s,\tb+1) \,,\quad \tb \le \rho-2\\  
	&	\mathbb M_{\del_{\f_m}\Delta_{12} B} (p,\tb),\mathbb M_{[\del_{x}, \Delta_{12}B]} (p,\tb) 
\le \mathbb M_{\Delta_{12} B} (p,\tb+1) \,,\quad \tb \le \rho-3
 \end{aligned}
  \end{equation}
 for all $s_0\le s\le \mathcal{S}$. Note that in \eqref{miserialadra} the constants 
 in the right hand side control the tameness constants of $B$ as an element of $\gotL_{\rho+1}$.
\end{lem}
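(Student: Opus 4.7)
The plan is to verify, in each of the four cases, the defining clauses of Definition~\ref{ellerho} together with their Lipschitz-in-$\mathfrak I$ analogues. The underlying strategy is uniform: Leibniz rule and commutator identities reduce each object to a sum of products of the shape ``pseudo-differential factor of order $\leq 0$'' times ``$\varphi$-derivative of $B$ (or its commutator with $\partial_x$) sandwiched between two $\langle D_x\rangle^\alpha$ factors''. The first factor is $0$-tame with constants controlled by $|a|^{\gamma,\calO}_{0,\cdot,0}$ via Lemmata~\ref{PROP} and~\ref{actiononsobolev}; the second is $0$-tame with constants controlled by the $\mathbb M^\gamma_B$'s, provided the exponents are chosen compatibly with the hypothesis $B\in\gotL_{\rho,p}$. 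Composition is then handled by Lemma~\ref{lem: 2.3.6}.

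For $\op(a)\circ B$, Leibniz gives $\partial_\varphi^{\vec{\tb}}(\op(a)\circ B)=\sum_{\vec{\tb}_1+\vec{\tb}_2=\vec{\tb}}\op(\partial_\varphi^{\vec{\tb}_1} a)\circ \partial_\varphi^{\vec{\tb}_2} B$, and for $m_1+m_2=\rho-|\vec{\tb}|$ I insert $\langle D_x\rangle^{-(m_1+|\vec{\tb}_1|)}\langle D_x\rangle^{m_1+|\vec{\tb}_1|}$ between the two factors. The left factor then has order $-|\vec{\tb}_1|\leq 0$; the right factor is exactly the quantity bounded by \eqref{megaTame2} applied to $B$, since $(m_1+|\vec{\tb}_1|)+m_2=\rho-|\vec{\tb}_2|$. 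Clause $(ii)$ is reduced to the same picture via the identity $[\partial_x,\op(a)\circ B]=\op(a_x)\circ B+\op(a)\circ[\partial_x,B]$ (with $a_x=\partial_x a\in S^0$), invoking \eqref{megaTame4} for the commutator $[\partial_x,B]$. The case $B\circ\op(a)$ is symmetric, and the Lipschitz bound in \eqref{idealerho} follows from $\Delta_{12}(\op(a)\circ B)=\op(\Delta_{12}a)\circ B(\mathfrak I_2)+\op(a)(\mathfrak I_1)\circ\Delta_{12}B$ combined with the same splitting.

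For the moreover part, $\partial_{\varphi_m}B\in\gotL_{\rho,p}$ is a pure index shift: $\partial_\varphi^{\vec{\tb}}(\partial_{\varphi_m}B)=\partial_\varphi^{\vec{\tb}+e_m}B$ and $|\vec{\tb}+e_m|\leq \rho-1=(\rho+1)-2$, so each required estimate is obtained by applying the hypothesis $B\in\gotL_{\rho+1,p}$ with $|\vec{\tb}|$ replaced by $|\vec{\tb}|+1$. For $[\partial_x,B]$, clause $(i)$ of Definition~\ref{ellerho} is immediate from \eqref{megaTame4} applied to $B\in\gotL_{\rho+1,p}$. The delicate point is clause $(ii)$, which requires controlling the \emph{double} commutator $[[\partial_x,\partial_\varphi^{\vec{\tb}}B],\partial_x]$. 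I would exploit the elementary identity
\[
[[\partial_x,C],\partial_x]\;=\;2\,\partial_x\, C\,\partial_x-C\,\partial_x^2-\partial_x^2\, C
\]
with $C=\partial_\varphi^{\vec{\tb}}B$. Writing $\partial_x=\langle D_x\rangle\circ Q$ with $Q=\langle D_x\rangle^{-1}\partial_x\in OPS^0$ (and noting that $Q$ commutes with every Fourier multiplier), each of the three terms reduces to the form $Q^{\nu}\langle D_x\rangle^{m'_1}\,C\,\langle D_x\rangle^{m'_2}Q^{\nu'}$ with $m'_1+m'_2=m_1+m_2+2=(\rho+1)-|\vec{\tb}|$; this is exactly the total order of $x$-derivatives controlled by $B\in\gotL_{\rho+1,p}$ via \eqref{megaTame2}, and the boundedness of $Q$ on all $H^s$ is harmless.

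The main obstacle is essentially bookkeeping: the asymmetric roles of $m_1$, $m_2$, and $|\vec{\tb}|$ in the clauses of Definition~\ref{ellerho} force a case-specific choice of the shift $\langle D_x\rangle^{\pm\alpha}$ in each splitting, and the double commutator in the ``moreover'' part consumes one extra index of regularity (hence the hypothesis $B\in\gotL_{\rho+1,p}$ instead of $\gotL_{\rho,p}$). Once the splittings are fixed, no new analytic input is required beyond Lemma~\ref{lem: 2.3.6} (composition of tame operators), Lemma~\ref{PROP} (symbolic calculus), and Lemma~\ref{actiononsobolev} (action of $S^0$ on Sobolev spaces).
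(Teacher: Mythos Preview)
Your proposal is correct and follows essentially the same approach as the paper: Leibniz rule on $\partial_\varphi^{\vec\tb}(\op(a)\circ B)$, insertion of an appropriate $\langle D_x\rangle^{\pm\alpha}$ between the factors, control of the pseudo-differential factor via Lemmata~\ref{PROP} and~\ref{actiononsobolev}, and composition via Lemma~\ref{lem: 2.3.6}; for the commutator clause you use the same identity $[\partial_x,\op(a)B]=\op(a_x)B+\op(a)[\partial_x,B]$ as the paper, and for $\partial_{\varphi_m}B$ the same index-shift argument. Your treatment of $[\partial_x,B]\in\gotL_{\rho,p}$ is in fact more explicit than the paper's: the paper records only the index-shift observation and does not spell out how clause~$(ii)$ of Definition~\ref{ellerho} (which involves the \emph{double} commutator $[[\partial_x,\partial_\varphi^{\vec\tb}B],\partial_x]$) is obtained, whereas your expansion $[[\partial_x,C],\partial_x]=2\partial_x C\partial_x-C\partial_x^2-\partial_x^2 C$ together with $\partial_x=Q\langle D_x\rangle$ reduces each term cleanly to item~$(i)$ for $B\in\gotL_{\rho+1,p}$ with total exponent $(\rho+1)-|\vec\tb|$.
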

\begin{proof}
We start by studying the Lip-$0$-tame norm of
\[
\langle D_{x}\rangle^{m_1}\del_\f^{\vec{\tb}_1} \op(a) \circ \del_\f^{\vec{\tb}_2} B 
\langle D_{x}\rangle^{m_2} =  
\langle D_{x}\rangle^{m_1}\del_\f^{\vec{\tb}_1} \op(a)\langle D_{x}\rangle^{-m_1} \circ \langle D_{x}\rangle^{m_1}
\del_\f^{\vec{\tb}_2} B \langle D_{x}\rangle^{m_2},
\]
with $\lvert \vec{\tb}_1 \rvert+\lvert \vec{\tb}_2 \rvert=\lvert \vec{\tb} \rvert$ and $m_1+m_2=\rho-\lvert \vec{\tb} \rvert$.
By Lemma \ref{actiononsobolev}  and formula \eqref{tazza}
\[
\gotM^\g_{ \langle D_{x}\rangle^{m_1}\del_\f^{\vec{\tb}_1} \op(a)\langle D_{x}\rangle^{-m_1}}(0,s)\le_s 
|a|^{\g, \calO}_{0,s+|\vec{\tb}_1|+m_1,0}\le_s |a|^{\g, \calO}_{0,s+\rho,0}\,
\]
 hence by Lemma \ref{lem: 2.3.6} we have 
\[
\gotM^\g_{\langle D_{x}\rangle^{m_1}\del_\f^{\vec{\tb}} (\op(a)  B)\langle D_{x}\rangle^{m_2}}(-\rho+|\vec{\tb}|,s) \le_{s, \rho} 
|a|^{\g, \calO}_{0,s+\rho,0} \mathbb M^\g_B(s_0,\tb) + |a|^{\g, \calO}_{0,s_0+\rho,0} \mathbb M^\g_B(s,\tb)\,.
\]	
Regarding 
	\[
\langle D_{x}\rangle^{m_1}\del_\f^{\vec{\tb}} [\del_x,\op(a)  B]\langle D_{x}\rangle^{m_2}=\langle D_{x}\rangle^{m_1}\del_\f^{\vec{\tb}} ([\del_x,\op(a)]  B)\langle D_{x}\rangle^{m_2} + \langle D_{x}\rangle^{m_1}\del_\f^{\vec{\tb}} (\op(a) [\del_x, B])\langle D_{x}\rangle^{m_2}
\]
we only need to consider the first summand as the second can be discussed exactly as above.
Recalling that by definition $m_1+m_2= \rho-|\vec{\tb}|-1$ we write for $\lvert \vec{\tb}_1 \rvert+\lvert \vec{\tb}_2 \rvert=\lvert \vec{\tb} \rvert$ and $m_1+m_2=\rho-\lvert \vec{\tb} \rvert$
\[
\langle D_{x}\rangle^{m_1}\del_\f^{\vec{\tb}_1} [\del_x,\op(a)] \del_\f^{\vec{\tb}_2} B \langle D_{x}\rangle^{m_2} =
\langle D_{x}\rangle^{m_1}\del_\f^{\vec{\tb}_1} [\del_x,\op(a)]\langle D_{x}\rangle^{-m_1-1}\langle D_{x}\rangle^{m_1+1} \del_\f^{\vec{\tb}_2} B \langle D_{x}\rangle^{m_2}
\]
and the result follows by recalling that
\[
\gotM^\g_{ \langle D_{x}\rangle^{m_1}\del_\f^{\vec{\tb}_1} [\del_x,\op(a)]\langle D_{x}\rangle^{-m_1-1}}(0,s)\le_s 
|a|^{\g, \calO}_{0,s+|\vec{\tb}_1|+m_1,0}\le_s |a|^{\g, \calO}_{0,s+\rho,0}\,.
\]
The bounds \eqref{miserialadra} follows by the fact that
$\partial_{\varphi}^{\vec{\tb}}\partial_{\varphi_m}=\partial_{\varphi}^{\vec{\tb}_0}$ with $\lvert \vec{\tb}_0 \rvert=\lvert \vec{\tb} \rvert+1$
and  $\mathbb{M}_A^{\g}(s, \tb)\le \mathbb{M}_A^{\g}(s, \tb+1)$ if $A\in\gotL_{\rho+1}$.
\end{proof}

The next Lemma gives a canonical way to write the composition of two pseudo differential operators as a pseudo differential operator plus a remainder in $\gotL_{\rho, p}$. Of course Lemma \ref{PROP} says that such a composition is itself a pseudo differential operator, so in principle one could take the remainder to be zero. The purpose of this Lemma is to get better bounds with respect to \eqref{tazza}, the price to pay is that we do not control the symbol of the composition but only an approximation up to a smoothing remainder of order $-\rho$.

\begin{lem}[{\bf Composition}]\label{James}
Let $a=a(\omega)\in S^m$, $b=b(\omega)\in S^{m'}$ be defined on some subset $\calO\subset \mathbb{R}^{\nu}$ with $m, m'\in\mathbb{R}$ and consider any  $\rho \geq \max\{-(m+m'+1), 3\}$.  Assume also that $a$ and $b$ depend in a Lipschitz way 
on the parameter $\mathfrak{I}$.
There exist an operator $R_{\rho}\in \gotL_{\rho, p}$ such that (recall Definition \eqref{cancelletti}) setting $N=m+m'+\rho\ge 1$
\begin{equation*}
\op(a\# b)=\op(c)+R_{\rho}, \qquad c:=a\#_{<N} b\in S^{m+m'}
\end{equation*}
where
\begin{equation}\label{crawford}
\begin{aligned}
\lvert c \rvert^{\g, \calO}_{m+m', s, \alpha}\le_{s, \rho, \alpha, m, m'} & \lvert a \rvert^{\g, \calO}_{m, s,N-1+\alpha}\lvert b \rvert^{\g, \calO}_{m', s_0+N-1, \alpha}
+\lvert a \rvert^{\g, \calO}_{m, s_0, N-1+\alpha}\lvert b \rvert^{\g, \calO}_{m', s+N-1, \alpha},
\end{aligned}
\end{equation}
\begin{equation}\label{jamal}
\begin{aligned}
\mathbb{M}^{\g}_{R_{\rho}}(s, \tb)
&\le_{s, \rho,  m , m'} \lvert a \rvert^{\g, \calO}_{m,s+\rho, N}\lvert b \rvert^{\g, \calO}_{m', s_0+2 N+\lvert m \rvert, 0}+\lvert a \rvert^{\g, \calO}_{m, s_0, N}\lvert b \rvert^{\g, \calO}_{m', s+\rho+2N+\lvert m \rvert, 0}.
\end{aligned}
\end{equation}
for all $0\le \tb\le \rho-2$ and $s_0\le s\le \mathcal{S}$.
Moreover one has
%if $a$ and $b$ depends in a $C^1$ way on a parameter $i$ then
%\begin{equation}
%\Delta_{12} \op(a\# b)  =\op(\Delta_{12} c  )+\Delta_{12} R_{\rho} 
%\end{equation}
%and
\begin{equation}\label{crawford2}
\begin{aligned}
\lvert \Delta_{12} c   \rvert_{m+m', p, \alpha} &\le_{p, \alpha, \rho, m, m'} 
\lvert \Delta_{12} a   \rvert_{m, p, N-1+\alpha}\lvert b \rvert_{m', p+N-1, \alpha}\\
&+\lvert a \rvert_{m, p, N-1+\alpha}\lvert \Delta_{12} b   \rvert_{m', p+N-1, \alpha}
\end{aligned}
\end{equation}
\begin{equation}\label{jamal2}
\begin{aligned}
\mathbb{M}_{\Delta_{12} R_{\rho} } (p, \tb) &\le_{p, \rho, m, m'} 
\lvert \Delta_{12} a   \rvert_{m+1, p+\rho, N}\lvert b \rvert_{m', p+2 N+ \lvert m \rvert, 0}\\
&+\lvert a \rvert_{m, p+\rho, N}\lvert \Delta_{12} b   \rvert_{m'+1, p+2 N+ \lvert m \rvert, 0}.
\end{aligned}
\end{equation}
for all $0\le \tb \le \rho-3$ and where $p$ is the constant given in Definition \ref{ellerho}.
%$s_0\le p\le \mathcal{S}$.
\end{lem}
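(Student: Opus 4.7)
\medskip
\noindent\textbf{Proof plan for Lemma \ref{James}.}

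The idea is to simply apply the asymptotic expansion \eqref{comp2} with the cutoff $N := m+m'+\rho \geq 1$, so that the natural splitting is
\[
a\# b \;=\; \underbrace{\sum_{n=0}^{N-1}\tfrac{1}{n!\,\mathrm{i}^n}(\partial_\xi^n a)(\partial_x^n b)}_{c \,=\, a\#_{<N} b \,\in\, S^{m+m'}} \;+\; \underbrace{r_N}_{\in\, S^{m+m'-N}\,=\, S^{-\rho}},
\]
and then set $R_\rho := \op(r_N)$. The point is that $c$ is to be estimated purely as a symbol of order $m+m'$, while $R_\rho$ is to be placed in the class $\gotL_{\rho,p}$ by invoking the inclusion $OPS^{-\rho}\subset \gotL_{\rho,p}$ from Lemma \ref{INCLUSIONEpseudoInclasseL}; no genuinely new estimate is required, everything reduces to Lemma \ref{PROP} combined with standard tame product estimates for the Sobolev norm in $(\varphi,x)$.

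For \eqref{crawford}, I would estimate $c$ term by term: using the definition \eqref{norma} of $|\cdot|_{m+m',s,\alpha}$ and the identity $|\partial_\xi^n a|_{m,s,\alpha}=|a|_{m,s,\alpha+n}$ (with a corresponding shift in the weight $\langle\xi\rangle^{-m+\beta}$), each summand $\tfrac{1}{n!\,\mathrm{i}^n}(\partial_\xi^n a)(\partial_x^n b)$ with $n\le N-1$ is controlled by applying the tame product estimate in $H^s(\T^{\nu+1})$ (Appendix $A$ of \cite{FGMP}) to the two factors, yielding
\[
\bigl|(\partial_\xi^n a)(\partial_x^n b)\bigr|^{\g,\calO}_{m+m',s,\alpha} \le_{s,\alpha} |a|^{\g,\calO}_{m,s,\alpha+n}|b|^{\g,\calO}_{m',s_0+n,\alpha}+ |a|^{\g,\calO}_{m,s_0,\alpha+n}|b|^{\g,\calO}_{m',s+n,\alpha}\,.
\]
Summing for $n=0,\dots,N-1$ and monotonicity \eqref{norma3} in $s$ and $\alpha$ gives \eqref{crawford}. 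For the Lipschitz-in-$\mathfrak{I}$ bound \eqref{crawford2} I apply Leibniz in the same computation, $\Delta_{12}\bigl((\partial_\xi^n a)(\partial_x^n b)\bigr)= (\partial_\xi^n\Delta_{12}a)(\partial_x^n b(\mathfrak{I}_1))+(\partial_\xi^n a(\mathfrak{I}_2))(\partial_x^n \Delta_{12} b)$, and then the same tame product estimate.

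For \eqref{jamal} I proceed in two steps. First, by Lemma \ref{INCLUSIONEpseudoInclasseL} applied to the symbol $r_N\in S^{-\rho}$ one has $R_\rho\in \gotL_{\rho,p}$ together with the bound
\[
\mathbb M^\gamma_{R_\rho}(s,\tb)\le_{s,\rho} |r_N|^{\g,\calO}_{-\rho,s+\rho,0}\,, \qquad \tb\le\rho-2\,.
\]
Second, the quantitative estimate on $|r_N|^{\g,\calO}_{-\rho,s+\rho,0}$ is exactly the content of \eqref{tazza1} in Lemma \ref{PROP}, specialized to $\alpha=0$ and with $s$ replaced by $s+\rho$:
\[
|r_N|^{\g,\calO}_{-\rho,s+\rho,0}\le_{s,\rho,m,m'}\tfrac{1}{N!}\Bigl(|a|^{\g,\calO}_{m,s+\rho,N}|b|^{\g,\calO}_{m',s_0+2N+|m|,0}+|a|^{\g,\calO}_{m,s_0,N}|b|^{\g,\calO}_{m',s+\rho+2N+|m|,0}\Bigr),
\]
which is exactly \eqref{jamal}. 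The Lipschitz variation \eqref{jamal2} follows analogously: Lemma \ref{INCLUSIONEpseudoInclasseL} (second bound in \eqref{costsimbolo}) yields $\mathbb M_{\Delta_{12} R_\rho}(p,\tb)\le_{p,\rho}|\Delta_{12}r_N|_{-\rho,p+\rho,0}$, and \eqref{rNdei} estimates the right-hand side.

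The argument is essentially bookkeeping and there is no serious obstacle; the only point requiring care is the tracking of the correct numbers of derivatives in $\xi$ and $x$ so that the final bounds match exactly the stated norms. In particular, one must choose $\alpha=0$ in the Fourier multiplier direction when invoking Lemma \ref{INCLUSIONEpseudoInclasseL} (which demands only the symbol norm at $\alpha=0$ but with $s$ shifted by $\rho$), and then use \eqref{tazza1} with the shifted parameter $s+\rho$; this explains the precise loss $s+\rho+2N+|m|$ appearing in \eqref{jamal}.
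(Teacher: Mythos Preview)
Your proposal is correct and follows essentially the same approach as the paper: decompose $a\#b$ via the asymptotic expansion \eqref{comp2} with $N=m+m'+\rho$, estimate the finite sum $c$ term by term via the tame product inequality in $H^s$ (and Leibniz for $\Delta_{12}$), and bound $R_\rho=\op(r_N)$ by combining Lemma \ref{INCLUSIONEpseudoInclasseL} with \eqref{tazza1} (resp.\ \eqref{rNdei}) at regularity $s+\rho$. The paper's own proof is precisely this chain of reductions.
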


\begin{proof}
To shorten the notation we write $\lVert \cdot \rVert_s:=\lVert \cdot \rVert_s^{\g, \calO}$. For $\beta\in\mathbb{R}$,
using formula \eqref{comp2} and by the tameness of the product, we have
\begin{equation*}
\lVert \partial_{\xi}^{\beta} c \rVert_s\le_s \sum_{k=0}^{N-1} \frac{1}{k!} \sum_{\beta_1+\beta_2=\beta} C_{\beta_1 \beta_2} \big( \lVert \partial_{\xi}^{\beta_1+k} a \rVert_{s}\,\lVert \partial_{\xi}^{\beta_2} \partial_x^k b \rVert_{s_0}
+\lVert \partial_{\xi}^{\beta_1+k} a \rVert_{s_0}\,\lVert \partial_{\xi}^{\beta_2} \partial_x^k b \rVert_{s}\big).
\end{equation*}
Thus, recalling \eqref{norma},
one gets
\begin{align*}
\lvert c \rvert_{m+m', s, \alpha} \le_s 
\le_{s, \alpha} & \sum_{k=0}^{N-1} \frac{1}{k!}   \,\sup_{\xi\in\mathbb{R}} \big( \max_{0\le \beta_1 \le \alpha} \lVert \partial_{\xi}^{\beta_1+k} a \rVert_{s} \langle \xi \rangle^{-m+\beta_1}\,\max_{0\le \beta_2 \le \alpha} \lVert \partial_{\xi}^{\beta_2} \partial_x^k b \rVert_{s_0}\langle \xi \rangle^{-m'+\beta_2}\\
&+\max_{0\le \beta_1 \le \alpha} \lVert \partial_{\xi}^{\beta_1+k} a \rVert_{s_0} \langle \xi \rangle^{-m+\beta_1}\,\max_{0\le \beta_2 \le \alpha} \lVert \partial_{\xi}^{\beta_2} \partial_x^k b \rVert_{s} \langle \xi \rangle^{-m'+\beta_2}\big),
\end{align*}
which implies \eqref{crawford}.
%Then by definition \eqref{norma} we get \eqref{crawford}. 
In the same way we obtain the bound \eqref{crawford2} by using the following fact
\[
\Delta_{12} (\partial_{\xi}^k a\,\partial_x^k b) =\partial_{\xi}^k (\Delta_{12} a  )\,\partial_x^k b+\partial_{\xi}^k a\,\partial_x^k (\Delta_{12} b  ).
\]
We remark that $R_{\rho}$ is the pseudo differential operator $R_N$ considered in Lemma \ref{PROP} (recall  $N=m+m'+\rho$). %To shorten the notation let us call $r_{\rho}\in S^{-\rho}$ the symbol of $R_{\rho}$.\\
By Lemma \ref{INCLUSIONEpseudoInclasseL}
\[
\mathbb{M}^\g_{R_\rho} \stackrel{\eqref{costsimbolo}}{\le}_{s,\rho,m,m'} |R_\rho|_{-\rho,s+\rho,0}
\]
then  by formula \eqref{tazza1} of Lemma \ref{PROP} we get the bounds \eqref{jamal}.
The bounds  \eqref{jamal2}, follow in the same way.
\end{proof}
\begin{remark}
Note that if $m+m'\le -\rho\le - 3$ then by Lemma \ref{INCLUSIONEpseudoInclasseL} $\op(a)\circ\op(b)\in \gotL_{\rho, p}$.
\end{remark}

\begin{lem}\label{casabianca}
Fix $\rho\geq 3$ and $n\in\mathbb{N}$, $n<\rho$. Let $a\in S^{-1}$ depending in a  Lipschitz way on a parameter $i$. Then there exist a symbol $c^{(n)}\in S^{-n}$ and a operator $R_{\rho}^{(n)}\in\gotL_{\rho, p}$ such that
\begin{equation}
\op(a)^n=\op(c^{(n)})+R_{\rho}^{(n)}. 
\end{equation}
Moreover the following bounds hold
\begin{align}
\lvert c^{(n)} \rvert^{\g, \calO}_{-n, s, \alpha} &\le_{n, s, \alpha, \rho} \lvert a \rvert^{\g, \calO}_{-1, s+(n-1)(\rho-3), \alpha+\rho-3}\big(\lvert a \rvert^{\g, \calO}_{-1, s_0+(n-1)(\rho-3), \alpha+\rho-3}\big)^{n-1},\label{kessie}\\
\lvert \Delta_{12} c^{(n)}   \rvert_{-n, p, \alpha, \rho} &\le  
\lvert \Delta_{12} a   \rvert_{-1, p+(n-1)(\rho-3), \alpha+\rho-3}
\lvert a \rvert^{n-1}_{-1, p+(n-1)(\rho-3), \alpha+\rho-3} \label{jacob}\\
\mathbb{M}^{\g}_{ R_{\rho}^{(n)}}(s, \tb)&\le_{s, \rho, \tb, n} \lvert a \rvert^{\g, \calO}_{-1, s+n (\rho-3)+\rho, \rho-2} \big(\lvert a \rvert^{\g,\calO}_{-1, s_0+n (\rho-3)+\rho, \rho-2}\big)^{n-1}\label{kessie3}\\
\mathbb{M}_{\Delta_{12}R^{(n)}_{\rho} }(p,\tb) & \le_{p, n, \tb}  
\lvert \Delta_{12} a   \rvert_{0, p+n(\rho-3)+\rho, \rho-2}
\big(\lvert a \rvert_{-1, p+n(\rho-3)+\rho, \rho-2}\big)^{n-1}\label{kessie4}
\end{align}
for all  $s_0\le s\le \mathcal{S}$ and where $p$ is the constant given in Definition \ref{ellerho}.

%Moreover $\partial_{\varphi}^{\tb} R_{\rho}, [R_{\rho}, \partial_x], [\partial_{\varphi}^{\tb} R_{\rho}, \partial_x]$ are tame and bounded operator such that
%\begin{equation}
%\mathfrak{M}_{\partial_{\varphi}^{\tb} R_{\rho}}(s), \mathfrak{M}_{[ R_{\rho}, \partial_x ]}(s), \mathfrak{M}_{[\partial_{\varphi}^{\tb} R_{\rho}, \partial_x]}(s)\le C(s, \alpha, \rho)^{n} \lvert a \rvert_{s_0+2\rho+\tb+1, \rho}^{n-1} \lvert a \rvert_{s+2\rho+\tb+1, \rho}.
%\end{equation}
\end{lem}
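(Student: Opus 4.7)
The plan is to prove Lemma \ref{casabianca} by induction on $n$, with the base case $n=1$ being trivial (take $c^{(1)}=a$ and $R_\rho^{(1)}=0$). For the inductive step, assuming the statement holds for $n-1$, write
\[
\op(a)^n=\op(a)\circ\op(c^{(n-1)})+\op(a)\circ R_\rho^{(n-1)}.
\]
Apply Lemma \ref{James} to the first summand with $m=-1$, $m'=-(n-1)$ and the same $\rho$: this is permitted because $n<\rho$ ensures $N:=m+m'+\rho=\rho-n\ge 1$, so
\[
\op(a)\circ\op(c^{(n-1)})=\op(c^{(n)})+\widetilde R_\rho,\qquad c^{(n)}:=a\#_{<N}c^{(n-1)}\in S^{-n},\qquad \widetilde R_\rho\in\gotL_{\rho,p}.
\]
The second summand $\op(a)\circ R_\rho^{(n-1)}$ lies in $\gotL_{\rho,p}$ by Lemma \ref{idealeds} (since $a\in S^{-1}\subset S^0$). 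Hence setting $R_\rho^{(n)}:=\widetilde R_\rho+\op(a)\circ R_\rho^{(n-1)}$ provides the desired decomposition.

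To obtain the quantitative bound \eqref{kessie}, apply \eqref{crawford} with the above parameters so that the loss in each Sobolev index is $N-1=\rho-n-1\le\rho-3$ and the loss in the $\alpha$ index is also $\le\rho-3$; combined with the inductive bound on $c^{(n-1)}$ (which already carries a loss of $(n-2)(\rho-3)$ in $s$ and $\rho-3$ in $\alpha$), the estimate compounds to the claimed loss of $(n-1)(\rho-3)$ in $s$ and $\rho-3$ in $\alpha$. The Lipschitz variation bound \eqref{jacob} follows in the same way from \eqref{crawford2} via the Leibniz rule $\Delta_{12}(a\#_{<N}c^{(n-1)})=(\Delta_{12}a)\#_{<N}c^{(n-1)}+a\#_{<N}(\Delta_{12}c^{(n-1)})$ and the inductive hypothesis, using interpolation to absorb a factor $|a|_{-1,s_0+\cdots}^{n-2}$ into the constant under the smallness implicit in \eqref{ipopiccolezza}.

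For the remainder bounds \eqref{kessie3}–\eqref{kessie4}, estimate $\mathbb M^\gamma_{\widetilde R_\rho}(s,\tb)$ by \eqref{jamal}, giving a loss of $\rho$ in the Sobolev index of $a$ on top of the $N=\rho-n$ loss already absorbed, and $\mathbb M^\gamma_{\op(a)\circ R_\rho^{(n-1)}}(s,\tb)$ by \eqref{idealerho} combined with Lemma \ref{actiononsobolev} (or directly with \eqref{costsimbolo} to estimate pseudo-differential norms of $a$ by their symbol norms). The inductive hypothesis then provides a bound of the form $|a|_{-1,s+\mathrm{loss},\rho-2}\cdot|a|_{-1,s_0+\mathrm{loss},\rho-2}^{n-1}$ with cumulative loss $n(\rho-3)+\rho$, as claimed.

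The main technical obstacle will be the careful bookkeeping of the Sobolev and $\alpha$-losses through the composition: at each inductive step Lemma \ref{James} contributes a fixed loss of $N-1=\rho-n-1$, while Lemma \ref{idealeds} contributes a loss of $\rho$ only on the factor stemming from $R_\rho^{(n-1)}$. One must interpolate and use the smallness of $|a|_{-1,s_0+\sigma}^{\gamma,\calO}$ to absorb the $(n-1)$ powers of $|a|$ in low norm into implicit constants, so that the final bounds \eqref{kessie}--\eqref{kessie4} have the stated tame form (linear in the high-$s$ norm times a power of the low-$s_0$ norm). The Lipschitz estimates \eqref{jacob} and \eqref{kessie4} are obtained by the same scheme, using \eqref{crawford2} and \eqref{jamal2} together with the $\Delta_{12}$-version of Leibniz's rule applied to $\op(a)^n=\op(a)\circ\op(a)^{n-1}$.
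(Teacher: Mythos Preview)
Your approach is essentially the paper's: both argue by induction on $n$, splitting $\op(a)^n=\op(a)\circ\op(c^{(n-1)})+\op(a)\circ R_\rho^{(n-1)}$, applying Lemma~\ref{James} to the first summand and Lemma~\ref{idealeds} to the second; the paper simply fixes the truncation level at $\rho-2$ (rather than your $N=\rho-n$) and unrolls the recursion for $R_\rho^{(n)}$ into the explicit sum $\sum_{k=0}^{n-2}\op(a)^k\op(a\#_{\ge\rho-2}c^{(n-k-1)})$. One correction: your appeal to smallness from \eqref{ipopiccolezza} to ``absorb'' low-norm powers of $a$ is unnecessary and not assumed in the lemma---the bounds \eqref{kessie}--\eqref{kessie4} carry the factors $|a|^{n-1}$ explicitly, and the Leibniz-type splitting already reproduces them without any absorption.
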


\begin{proof}
We define $c^{(1)}:=a\in S^{-1},$ and, for $n\geq 2$,
\[
c^{(n)}:=a\#_{<\rho-2} c^{(n-1)}, \qquad
\qquad
\;\;\; R_{\rho}^{(n)}:=\sum_{k=0}^{n-2} [\op(a)]^{k} \op(a\#_{\geq \rho-2} c^{(n-k-1)})
\]
By using Lemma \ref{James} 
we have that \eqref{kessie} is satisfied for $n=2$.
%\[
%\lvert c^{(2)} \rvert^{\g, \calO}_{-2, s, \alpha}\le_{s, \alpha} \lvert a \rvert^{\g, \calO}_{-1, s+\rho-3, \alpha+\rho-3}\lvert a \rvert^{\g, \calO}_{-1, s_0+\rho-3, \alpha+\rho-3}
%\]
%and so \eqref{kessie} is satisfied for $n=2$.
Now given \eqref{kessie} for $n$ we prove it for $n+1$. For simplicity we write $\le_{n, s, \alpha}=\le$. We have
\begin{align*}
\lvert a\#_{<\rho-2} c^{(n)} \rvert^{\g, \calO}_{-n-1, s, \alpha} &\le \lvert a \rvert^{\g, \calO}_{-1, s, \alpha+\rho-3} \lvert c^{(n)} \rvert^{\g, \calO}_{- n, s_0+\rho-3, \alpha}+\lvert a \rvert^{\g, \calO}_{-1, s_0, \alpha+\rho-3}\lvert a^{(n)} \rvert^{\g, \calO}_{-n, s+\rho-3, \alpha}
%\\
%&\le \lvert a \rvert^{\g, \calO}_{-1, s, \alpha+\rho-3}\lvert a \rvert^{\g, \calO}_{-1, s_0+n(\rho-3), \alpha+\rho-3}
%\big(\lvert a \rvert^{\g, \calO}_{-1, s_0+(n-1)(\rho-3), \alpha+\rho-3}\big)^{n-1}\\
%&+\lvert a \rvert^{\g, \calO}_{-1, s+n(\rho-3), \alpha+\rho-3} \lvert a \rvert^{\g, \calO}_{-1, s_0, \alpha+\rho-3}
%\big(\lvert a \rvert^{\g, \calO}_{-1, %s_0+(n-1)(\rho-3), \alpha+\rho-3}\big)^{n-1}
\\
&\le \lvert a \rvert^{\g, \calO}_{-1, s+n(\rho-3), \alpha+\rho-3}\big(\lvert a \rvert^{\g, \calO}_{-n, s_0+n (\rho-3), \alpha+\rho-3}\big)^{n},  
\end{align*}
hence \eqref{kessie}  is proved. Arguing as above one can prove \eqref{jacob}.

\noindent
Now fix $2\le k\in\mathbb{N}$ and define
$r_k:= a\#_{\geq \rho-2} c^{(k-1)}\in S^{-\rho}$.
We apply repeatedly  \eqref{idealerho} in oder to get
\begin{align*}
\mathbb{M}^{\g}_{R_{k}}(s,\tb)  \le_{s, \rho, \tb} (|a|^{\g, \calO}_{-1,s_0+\rho,0})^{k-1}  \left(  |a|^{\g, \calO}_{-1,s+\rho,0} \mathbb{M}^{\g}_{\op(r_{n-k})}(s_0,\tb) +  |a|^{\g, \calO}_{-1,s_0+\rho,0} \mathbb{M}^{\g}_{\op(r_{n-k})}(s,\tb)\right),
\end{align*}
with $R_{k}:=(\op(a)^k \op(r_{n-k}))$.
 Now by Lemma \ref{INCLUSIONEpseudoInclasseL} we have that for all $k\ge 2$
\[
\mathbb{M}^{\g}_{\op(r_{k})}(s, \tb)\le_{s, \rho, \tb}  \lvert r_{k} \rvert^{\g, \calO}_{-\rho, s+\rho, 0}
\]
Now by \eqref{tazza1} with $m=-1,m'=-k+1,N=\rho-2$ we have
\begin{align*}
\lvert r_{k} \rvert^{\g, \calO}_{-\rho, s, 0} &\le \lvert r_{k} \rvert^{\g, \calO}_{-\rho-k+2, s, 0}
%\\ & \le |a|^{\g, \calO}_{-1,s,\rho-2}|c^{(k-1)}|^{\g, \calO}_{-k+1,s_0+2(\rho-2)+1,0} 
%+ |a|^{\g, \calO}_{-1,s_0,\rho-2}|c^{(k-1)}|^{\g, \calO}_{-k+1,s+2(\rho-2)+1,0}
\stackrel{\eqref{kessie}}{ \le} 
 |a|^{\g, \calO}_{-1,s+ k(\rho-3),\rho-2}(|a|^{\g, \calO}_{-1,s_0+ k(\rho-3),\rho-2})^{k-1}
\end{align*}
Then
\[
\mathbb{M}^{\g}_{R_{\rho}^{(n)}}(s, \tb)\le_{s, \rho, \tb}  \lvert a \rvert^{\g, \calO}_{-1, s+n(\rho-3)+\rho, \rho-2}\big( \lvert a \rvert^{\g, \calO}_{-1, s_0+n(\rho-3)+\rho, \rho-2}\big)^{n-1}.
\]
We follow the same strategy in order to study the operator
\[
\Delta_{12} \big(\op(a)^k R_{\rho^{(n-k)}}\big)  =k \op(a)^{k-1} \op(\Delta_{12} a  ) R_{\rho^{(n-k)}}+\op(a)^k \Delta_{12} R_{\rho^{(n-k)}}  
\]
and we get \eqref{kessie4}.
\end{proof}
\begin{remark}
	Note that if $n\ge \rho \geq 3$ and $a\in S^{-1}$ then $\op(a)^n\in \gotL_{\rho, p}$, by Lemma \ref{INCLUSIONEpseudoInclasseL}. 
\end{remark}
\begin{coro}\label{InvertibilityUtile}
Let $a\in S^{-1}$ and consider $\mathrm{I}-(\op(a)+T)$, where $T\in\gotL_{\rho, p}=\gotL_{\rho, p}$ (recall Def. \ref{ellerho}) with $\rho\geq 3$. There exist a constant $C(\mathcal{S}, \alpha, \rho)$ such that if
\begin{equation}\label{piccoloperNeumann}
C(\mathcal{S}, \alpha, \rho) \Big(\lvert a \rvert^{\g, \calO}_{-1, p+(\rho-1)(\rho-2)+3, \rho-2}+\mathbb{M}^{\g}_T(s_0, \tb)\Big)<1,
\end{equation}
where $\mathcal{S}$ is a fixed constant appearing in the Def. \ref{ellerho},
then $\mathrm{I}-(\op(a)+T)$ is invertible and
\begin{equation}\label{finaleUff}
(\mathrm{I}-(\op(a)+T))^{-1}=\mathrm{I}+\op(c)+R_{\rho}
\end{equation}
where
\begin{equation}\label{kepalle}
\lvert c \rvert^{\g, \calO}_{-1, s, \alpha} \le_{s, \alpha, \rho} \lvert a \rvert^{\g, \calO}_{-1, s+(\rho-2)(\rho-3), \alpha+\rho-3},
\quad \lvert \Delta_{12} c   \rvert_{-1, s, \alpha} \le  \lvert \Delta_{12} a   \rvert_{-1, p+(\rho-2)(\rho-3), \alpha+\rho-3}
\end{equation}
and $R_{\rho}\in \gotL_{\rho, p}$ with
\begin{equation}\label{casabianca0}
\mathbb{M}^{\g}_{R_{\rho}}(s, \tb)\le \lvert a \rvert^{\g, \calO}_{-1, s+(\rho-1)(\rho-2)+3, \rho-2} 
+ \mathbb{M}^{\g}_{T}(s, \tb),\;\;\;\;
0\le\tb\le \rho-2,
\end{equation}
\begin{equation}\label{kepalle100}
\mathbb{M}_{\Delta_{12} R_{\rho}  }(p, \tb)\le \lvert \Delta_{12} a   \rvert_{-1, p+(\rho-1)(\rho-2)+3, \rho-2}
+\mathbb{M}_{\Delta_{12} T }(p,\tb)\,,\;\;\;0\le\tb\le \rho-3,
\end{equation}
for all  $s_0\le s\le \mathcal{S}$.
\end{coro}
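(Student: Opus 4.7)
The plan is to invert $\mathrm{I}-(\op(a)+T)$ by Neumann series and then to reorganise the resulting sum into a pseudo differential part of order $-1$ plus a remainder in $\gotL_{\rho,p}$. Under the smallness assumption \eqref{piccoloperNeumann}, Lemma \ref{actiononsobolev} applied to $\op(a)\in OPS^0$ (since $S^{-1}\subset S^0$) combined with the Lip-$0$-tame control on $T$ implicit in the definition of $\gotL_{\rho,p}$ yields an estimate of the form $\lVert(\op(a)+T)u\rVert_{s_0}\le \kappa\lVert u\rVert_{s_0}$ with $\kappa<1/2$. Hence
\[
(\mathrm{I}-(\op(a)+T))^{-1}-\mathrm{I}\;=\;\sum_{n\ge 1}(\op(a)+T)^n
\]
converges absolutely in $\calL(H^{s_0})$ and defines a bounded inverse.

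The next step is to separate the pure $\op(a)^n$ contributions from the words containing at least one factor of $T$. By Lemma \ref{idealeds} the class $\gotL_{\rho,p}$ is stable under left and right composition with $\op(a)$; iterating \eqref{idealerho} and summing a geometric series whose ratio is controlled by the left-hand side of \eqref{piccoloperNeumann}, one obtains that the sum $\mathcal{T}$ of all words containing at least one $T$ belongs to $\gotL_{\rho,p}$, with $\mathbb M^\g_{\mathcal T}(s,\tb)\lesssim \mathbb M^\g_T(s,\tb)$, and analogously $\mathbb M_{\Delta_{12}\mathcal T}(p,\tb)\lesssim \mathbb M_{\Delta_{12}T}(p,\tb)$ via the Lipschitz parts of Lemmas \ref{idealeds}-\ref{chiusuracompoclasseL}. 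This produces the $T$-contributions in \eqref{casabianca0} and \eqref{kepalle100}.

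For the pure $\op(a)^n$ terms the key tool is Lemma \ref{casabianca}. For $1\le n\le \rho-1$ it gives the splitting $\op(a)^n=\op(c^{(n)})+R_\rho^{(n)}$ with $c^{(n)}\in S^{-n}\subset S^{-1}$ satisfying \eqref{kessie}--\eqref{jacob} and $R_\rho^{(n)}\in\gotL_{\rho,p}$ satisfying \eqref{kessie3}--\eqref{kessie4}. For $n\ge \rho$ the composition Lemma \ref{James} shows that $\op(a)^n$ is pseudo differential with symbol in $S^{-n}\subset S^{-\rho}$, so Lemma \ref{INCLUSIONEpseudoInclasseL} places the whole of $\op(a)^n$ into $\gotL_{\rho,p}$, with bounds of order $|a|^{\g,\calO}_{-1,s+\rho,0}\bigl(|a|^{\g,\calO}_{-1,s_0+\rho,0}\bigr)^{n-1}$, giving a geometric tail. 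One then sets
\[
c\;:=\;\sum_{n=1}^{\rho-1} c^{(n)}\in S^{-1},\qquad R_\rho\;:=\;\sum_{n=1}^{\rho-1} R_\rho^{(n)}+\sum_{n\ge\rho}\op(a)^n+\mathcal T,
\]
which gives \eqref{finaleUff}; the bounds \eqref{kepalle}, \eqref{casabianca0} and their Lipschitz-in-$\mathfrak{I}$ versions \eqref{jacob}, \eqref{kepalle100} follow from summing the finitely many Lemma \ref{casabianca} estimates for the first block and from geometric-series summation, legitimate by \eqref{piccoloperNeumann}, for the two tails.

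The main technical obstacle is that Lemma \ref{casabianca} exhibits a loss of regularity in the Sobolev index that grows linearly in $n$, which prevents an unrestricted extraction of a pseudo differential summand. This is precisely why one truncates the pseudo differential block at $n=\rho-1$ and absorbs the remainder of the Neumann tail into $R_\rho$ via Lemma \ref{INCLUSIONEpseudoInclasseL}, where the loss becomes a uniform (in $n$) loss of $\rho$ derivatives and the decay in $n$ is provided solely by the smallness condition. The Lipschitz-in-$\mathfrak{I}$ bounds are produced in parallel by applying the $\Delta_{12}$ estimates in each invoked lemma to the factor being varied and using the smallness \eqref{piccoloperNeumann} to absorb the remaining $|a|$-factors.
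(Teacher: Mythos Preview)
Your proof is correct and follows essentially the same route as the paper: Neumann series, separation of the words containing at least one $T$ from the pure powers $\op(a)^n$, Lemma~\ref{casabianca} for $1\le n\le\rho-1$, and absorption of the tail $n\ge\rho$ into $\gotL_{\rho,p}$ via Lemma~\ref{INCLUSIONEpseudoInclasseL}. One cosmetic point: the tame bound $\mathbb M^\g_{\mathcal T}(s,\tb)\lesssim \mathbb M^\g_T(s,\tb)$ is a slight overstatement, since Lemma~\ref{idealeds} also yields a term $|a|^{\g,\calO}_{0,s+\rho,0}\,\mathbb M^\g_T(s_0,\tb)$ (and $\Delta_{12}\mathcal T$ picks up $\Delta_{12}a$ contributions), but these are exactly the $a$-terms already present in \eqref{casabianca0}--\eqref{kepalle100}, so the conclusion is unaffected.
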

\begin{proof}
To shorten the notation we write $\lvert \cdot \rvert_{m, s, \alpha}^{\g, \calO}=\lvert \cdot \rvert_{m, s, \alpha}$.
We have by \eqref{piccoloperNeumann} and Neumann series
\[
\begin{aligned}
(\mathrm{I}-(\op(a)+T))^{-1} &=\mathrm{I}+\sum_{n\geq 1}  (\op(a)+T)^n=\mathrm{I}+\sum_{n= 1}^{\rho-1} \Big(\op(a)^n+ \sum_{n= 1}^{\infty}\tilde{R}^{(n)}_{\rho}\Big)+\sum_{n\ge\rho} \op(a)^n\\
&=\mathrm{I}+\sum_{n= 1}^{\rho-1} \Big(\op(c^{(n)})
+R^{(n)}_{\rho}+\tilde{R}^{(n)}_{\rho}\Big)+\sum_{n\ge \rho} \Big(\op(a)^n 
+ \tilde{R}^{(n)}_{\rho}\Big)
\end{aligned}
\]
where $\tilde{R}^{(n)}_{\rho}:=(\op(a)+T)^n-\op(a)^n$ 
and $c^{(n)}$ and $R^{(n)}_{\rho}$ are given by Lemma \ref{casabianca}
(and we are setting $R^{(1)}_\rho=0$).
We define the symbol $c$ and the operator $R_{\rho}$ in \eqref{finaleUff} as
\begin{equation}\label{defFinale}
\begin{aligned}
c:=\sum_{n=1}^{\rho-1} c^{(n)},
\qquad
R_{\rho}:= \sum_{n= 1}^{\rho-1}(R^{(n)}_{\rho}+\tilde{R}^{(n)}_{\rho})+ \sum_{n\ge \rho} \tilde R_\rho^{(n)}+ \sum_{n\ge\rho}\op(a)^n. 
\end{aligned}
\end{equation}

%We need to show that \eqref{kepalle}-\eqref{kepalle100} hold.

\noindent
By  using \eqref{kessie} we get
\[
\begin{aligned}
\lvert c \rvert_{-1, s, \alpha} &\le_{s, \alpha, \rho} \sum_{n=1}^{\rho-1}\lvert a \rvert_{-1, s+(n-1)(\rho-3), \alpha+\rho-3}\big(\lvert a \rvert_{-1, s_0+(n-1)(\rho-3), \alpha+\rho-3}\big)^{n-1}
\end{aligned} 
\]
which implies the first of \eqref{kepalle}.
The second one in \eqref{kepalle} is obtained as above by using \eqref{jacob}.\
The bounds \eqref{casabianca0}, \eqref{kepalle100} on $R_{\rho}$ in \eqref{defFinale}
can be proved similarly by using Lemmata \ref{INCLUSIONEpseudoInclasseL}, \ref{idealeds}, \ref{James} and  \ref{casabianca}.

\noindent
In order to bound the $\mathfrak{I}$ variation we note
\[
\Delta_{12} (1-(\op(a)+T))^{-1}  =-(1-(\op(a)+T))^{-1}  (\op(\Delta_{12} a  )+\Delta_{12} T ) (1-(\op(a)+T))^{-1}\,,
\]
and proceed as above.
\end{proof}

\subsection{The torus diffeomorphisms}\label{someprop}

In this Section we
 wish to study conjugation of elements of  $\mathfrak{L}_\rho$ 
 under the action of the map $\mathcal{A}^{\tau}$ introduced in \eqref{flussoKDV}.
% 
% We remark that the torus diffeomorphism $\mathcal{A}^{\tau}$ satisfies 
%\begin{equation}\label{ignobel}
%\begin{aligned}
%&\mathcal{A}^{\tau}h(\varphi, x):=(1+\tau \beta_x(\varphi, x)) h(\varphi, x+\tau\beta(\varphi, x)), \qquad \quad \,\, \varphi\in \T^{\nu},\, x\in \T,\\
%&(\mathcal{A}^{\tau})^{-1}h(\varphi, y):=(1+ \tilde{\beta}_y(\tau,\varphi, y))\,h(\varphi, y+\tilde{\beta}(\tau, \varphi, y)), \quad \varphi\in \T^{\nu},\, y\in \T,
%\end{aligned}
%\end{equation}
%where $\tilde{\be}(\tau;x,\x)$ is such that
%\[
%x\mapsto y=x+\tau\be(\f,x) \; \Leftrightarrow \; y\mapsto x=y+\tilde{\be}(\tau,\f,x), \;\; \tau\in[0,1].
%\]
%We refer to the Appendix \ref{someprop} for some properties of the operator $\mathcal{A}^{\tau}$ in \eqref{ignobel}. 
We first give some properties of $\mathcal A^\tau$ defined in \eqref{ignobel}. 
\begin{lem}\label{bastalapasta}
Assume that 
$\beta:=\beta(\omega, \mathfrak{I}(\omega))\in H^s(\T^{\nu+1})$ %(see \eqref{pseudo}) 
for some $s\geq s_0$, is  Lipschitz in $\omega\in \calO \subseteq \Omega_{\varepsilon}$ 
and Lipschitz  in the variable $i$.
If $\| \beta\|^{\gamma, \calO}_{s_0+\mu}\leq 1$,
for some $\mu\gg1$, 
then, for any $s\geq s_0$ and $u\in H^{s}$ with $u=u(\oo)$ depending in a Lipschitz way on $\oo\in \calO $,
one has
%Using Lemma \ref{change}, one can prove that
\begin{equation}\label{pasta7}
\begin{aligned}
&\sup_{\tau\in[0,1]} \lVert \mathcal{A}^{\tau} u \rVert_s^{\gamma,  \calO }, 
\sup_{\tau\in[0,1]} \lVert (\mathcal{A}^{\tau})^{*} u \rVert_s^{\gamma,  \calO }
\le_s 
\left(
\|u\|_{s}^{\gamma,  \calO }+\| \be\|_{s+\s}^{\gamma,  \calO }\|u\|_{s_0}^{\gamma,  \calO }
\right)
\end{aligned}
\end{equation}
\begin{equation}\label{Amenouno}
\begin{aligned}
&\sup_{\tau\in[0,1]} 
\lVert (\mathcal{A}^{\tau}-\mathrm{I})u \rVert_s^{\gamma,  \calO }, 
\sup_{\tau\in[0,1]} \lVert ((\mathcal{A}^{\tau})^{*}-\mathrm{I})u \rVert_s^{\gamma,  \calO }
\le_s \left(
\lVert \be \rVert_{s_0+\s}^{\gamma,  \calO } 
\|u\|_{s+1}^{\gamma,  \calO }+\| \be\|_{s+\s}^{\gamma,  \calO }\|u\|^{\gamma,  \calO }_{s_0}
\right)
\end{aligned}
\end{equation}
	%\begin{equation}\label{Amenodue}
	%\begin{aligned}
	%&\sup_{\tau\in[0,1]} \lVert (\mathcal{A}^{\tau})^{*} u \rVert_s^{\gamma,  \calO }\le_s \left(
	%\|u\|_{s}^{\gamma,  \calO }+\| \be\|_{s+\s}^{\gamma,  \calO }\|u\|_{s_0}^{\gamma,  \calO }
	%\right)
	%\end{aligned}
	%\end{equation}
	%\begin{equation}\label{Amenotre}
	%\begin{aligned}
	%&\sup_{\tau\in[0,1]} \lVert ((\mathcal{A}^{\tau})^{*}-\mathrm{I})u \rVert_s^{\gamma,  \calO }\le_s \left(
%\lVert \beta \rVert_{s_0+\s}^{\gamma,  \calO } \|u\|_{s+1}^{\gamma,  \calO }+\| \beta\|_{s+\s}^{\gamma,  \calO }\|u\|^{\gamma,  \calO }_{s_0}
%	\right).
	%\end{aligned}
	%\end{equation}
	for some $\sigma=\s(s_0)$.
	The inverse map $(\mathcal{A}^{\tau})^{-1}$ satisfies the same estimates but with possibly larger $\s$.
\end{lem}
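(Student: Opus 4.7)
My plan is to exploit the explicit factorisation $\mathcal{A}^{\tau} = M_{\tau}\,T_{\tau}$, where
$M_{\tau}$ denotes multiplication by $1+\tau\beta_{x}(\varphi,x)$ and
$T_{\tau}u(\varphi,x) := u(\varphi,x+\tau\beta(\varphi,x))$ is the pull-back by the torus diffeomorphism. The $\tau$-uniform tame estimates for $T_{\tau}$ in weighted Sobolev spaces are standard and are already developed in Appendix~A of \cite{FGMP}: under the smallness assumption $\|\beta\|^{\gamma,\calO}_{s_{0}+\mu}\le 1$, with $\mu$ large enough, the diffeomorphism $x\mapsto x+\tau\beta(\varphi,x)$ is invertible uniformly in $\tau\in[0,1]$, its inverse $y\mapsto y+\tilde{\beta}(\tau,\varphi,y)$ satisfies $\|\tilde{\beta}\|^{\gamma,\calO}_{s}\le_{s}\|\beta\|^{\gamma,\calO}_{s+\s'}$ for some $\s'$, and the composition operator obeys the tame bound
\[
\|T_{\tau}u\|^{\gamma,\calO}_{s}\le_{s}\|u\|^{\gamma,\calO}_{s}+\|\beta\|^{\gamma,\calO}_{s+\s'}\|u\|^{\gamma,\calO}_{s_{0}}\,.
\]
Combining this with the interpolation product estimate for $M_{\tau}$ (tameness of multiplication on $H^{s}$) gives \eqref{pasta7} for $\mathcal{A}^{\tau}$, with $\s$ enlarged by a fixed amount. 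The inverse $(\mathcal{A}^{\tau})^{-1}$ has the explicit form in \eqref{ignobel}, which is of the same type with $\tilde{\beta}$ in place of $\beta$, hence satisfies the same bound; in particular the smallness of $\tilde{\beta}$ follows from that of $\beta$.

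For the adjoint $(\mathcal{A}^{\tau})^{*}$, a direct computation in Fourier (or by change of variable in the $L^{2}$ pairing) shows that it is again a prefactor times a pull-back by the inverse diffeomorphism: concretely $(\mathcal{A}^{\tau})^{*}u(\varphi,y)=u(\varphi,y+\tilde{\beta}(\tau,\varphi,y))$. Thus $(\mathcal{A}^{\tau})^{*}$ belongs to the same structural class as $\mathcal{A}^{\tau}$, and the tame bound \eqref{pasta7} for $(\mathcal{A}^{\tau})^{*}$ follows from the same composition lemma applied with $\tilde{\beta}$ in place of $\tau\beta$.

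To obtain the sharper bound \eqref{Amenouno} on $\mathcal{A}^{\tau}-\mathrm{I}$ I will use the flow equation \eqref{flussoKDV}, which gives the integral identity
\[
\mathcal{A}^{\tau}u-u=\int_{0}^{\tau}\mathtt{X}\,\mathcal{A}^{s}u\,ds=\int_{0}^{\tau}\partial_{x}\!\big(b(s,\varphi,x)\,\mathcal{A}^{s}u\big)\,ds\,.
\]
Moving the $\partial_{x}$ outside and applying \eqref{pasta7} to $\mathcal{A}^{s}u$ (together with the product tame estimate, noting that $\|b(s,\cdot)\|^{\gamma,\calO}_{s}\le_{s}\|\beta\|^{\gamma,\calO}_{s+1}$ in the range of $s$ under the smallness hypothesis) yields \eqref{Amenouno} for $\mathcal{A}^{\tau}-\mathrm{I}$. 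For $(\mathcal{A}^{\tau})^{*}-\mathrm{I}$ I run the same argument using the adjoint flow equation $\partial_{\tau}(\mathcal{A}^{\tau})^{*}=(\mathcal{A}^{\tau})^{*}\,\mathtt{X}^{*}$, where $\mathtt{X}^{*}=-b\,\partial_{x}$ is still first-order, and the same tame product estimate produces the factor $\|u\|_{s+1}$.

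The only delicate point — and the main bookkeeping obstacle — is the Lipschitz-in-$\omega$ estimate, because both the diffeomorphism and the multiplicative prefactor depend on $\omega$ through $\beta$ and $u$. I handle this by the usual telescoping: writing
\[
\Delta_{\omega,\omega'}\mathcal{A}^{\tau}u=(\Delta_{\omega,\omega'}M_{\tau})\,T_{\tau}(\omega)u+M_{\tau}(\omega')\bigl(\Delta_{\omega,\omega'}T_{\tau}\bigr)u(\omega)+M_{\tau}(\omega')T_{\tau}(\omega')\Delta_{\omega,\omega'}u\,,
\]
and estimating $\Delta_{\omega,\omega'}T_{\tau}$ through a one-parameter family of diffeomorphisms interpolating between the two, which reduces it to a composition estimate applied to $\partial_{x}u$ with weight $\Delta_{\omega,\omega'}\beta$. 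Packaging the three contributions with the definition \eqref{tazza10} of the weighted norm gives \eqref{pasta7}--\eqref{Amenouno} with a possibly larger $\s$; the very same scheme, with $\tilde{\beta}$ replacing $\beta$, yields the statement for $(\mathcal{A}^{\tau})^{-1}$ and $(\mathcal{A}^{\tau})^{*}$.
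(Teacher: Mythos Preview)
Your proof is correct and follows essentially the same strategy as the paper: both reduce \eqref{pasta7} to the standard tame composition estimates for torus diffeomorphisms (the paper invokes Lemma~A.3 of \cite{FGMP} on the explicit formula \eqref{ignobel}, you factor $\mathcal{A}^{\tau}=M_{\tau}T_{\tau}$ and cite the same lemma), and both handle the Lipschitz variation by the same telescoping decomposition (your three-term splitting is exactly the paper's formula \eqref{apple}). The only cosmetic difference is that for \eqref{Amenouno} you go through the flow identity $\mathcal{A}^{\tau}-\mathrm{I}=\int_{0}^{\tau}\mathtt{X}\mathcal{A}^{s}\,ds$ rather than expanding the explicit formula directly; this is an equivalent one-line variant and in fact yields the stated $\|u\|_{s_0}$ in the second term cleanly, since $\|\mathcal{A}^{s}u\|_{s_0}\le C\|u\|_{s_0}$ by \eqref{pasta7} at low norm.
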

\begin{proof}
	The bounds \eqref{pasta7}-\eqref{Amenouno}
	in norm $\|\cdot\|_{s}$
	follows by an explicit computation using 
	the formula \eqref{ignobel} and applying 
	Lemma $A.3$
	%\ref{change} 
	in Appendix $A$ in \cite{FGMP}.
	%\ref{lemmitecnici}.
	If $\be=\be(\oo)$ is a function of the parameters $\oo\in  \calO $, hence
	we need 
	to study the 
	term
	\begin{equation}\label{apple1}
	\sup_{\oo_1\neq\oo_{2}}\frac{\|(\mathcal{A}^{\tau}(\oo_{1})-\mathcal{A}^{\tau}(\oo_{2}))u\|_{s-1}}{|\oo_1-\oo_2|}
	\end{equation}
	in order to estimate the
	Lip-norm introduced in \eqref{tazza10}. We reason as follows.
	By \eqref{ignobel}
	we have for $\oo_{1},\oo_{2}\in \calO$
	\begin{equation}\label{apple}
	\begin{aligned}
	(\mathcal{A}^{\tau}(\oo_{1})-\mathcal{A}^{\tau}(\oo_{2}))u&=
	(1+\tau \be_{x}(\oo_1))\big[
	u(\oo_{1},x+\be(\oo_1))-u(\oo_{1},x+\be(\oo_2))
	\big]\\
	&+(1+\tau \be_{x}(\oo_1))\big[
	u(\oo_{1},x+\be(\oo_2))-u(\oo_{2},x+\be(\oo_2))\big]\\
	&+\tau u(\oo_{1},x+\be(\oo_2))(\be_{x}(\oo_1)-\be_{x}(\oo_2)).
	\end{aligned}
	\end{equation}
	Using the estimates in Lemma $A.3$ in \cite{FGMP} %\eqref{A20b} 
	and interpolation arguments we get
	\[
	\begin{aligned}
	\|u(\oo_{1},x+\be(\oo_1))-u(\oo_{1},x+\be(\oo_2))\|_{s-1}&\leq_s
	\|\be(\oo_1)-\be(\oo_2)\|_{s_0}\|u\|_{s}\\
	&+
	\|\be(\oo_1)-\be(\oo_2)\|_{s+1}\|u\|_{s_0}\\
	&\leq_s
	\Big(\|\be\|_{s+s_0+1}^{\gamma, \calO }\|u\|^{\gamma, \calO }_{s_0}+
	\|\be\|_{s_0}^{\gamma, \calO }\|u\|^{\gamma, \calO }_{s}
	\Big)|\oo_1-\oo_2|.
	\end{aligned}
	\]
	The term we have estimated above is the most critical one among the summand in \eqref{apple}.
	%in the first summand is estimated by using the \eqref{A20b} 
	%which gives a loos of one derivatives on the function $u$. Since we need to estimate the 
	%variation in $\oo$ only in the $H^{s-1}$-norm, then to get the bounds \eqref{pasta7}
	%is is enough to have that $u$ belongs to $H^s$. 
	The other estimates follow by the fact that $u(\oo,\f,x)$ and $\be(\oo,\f,x)$ are Lipschitz functions of $\oo\in  \calO $. 
	One can reason in the same way to get the estimates on the inverse map $(\mathcal{A}^{\tau})^{-1}$ by recalling that it has the same form of $\mathcal{A}^{\tau}$
	(see \eqref{ignobel}) and $\beta=-\mathcal{A}^{\tau} \tilde{\beta}$. 
\end{proof}
\begin{lem}\label{buttalapasta2}
	Fix $\mathtt{b}\in \mathbb{N}$.
	For any $\alpha\in\mathbb{N}^{\nu}$, $|\al |\leq \mathtt{b}$, $m_1,m_{2}\in \mathbb{R}$ such that $m_1+m_{2}=|\al|$, for any 
	$s\geq s_0$ there exists a constant $\mu=\mu(|\al|,m_1,m_2)$ and $\delta=\delta(m_1,s)$
	such that if 
	\begin{equation}\label{buttalapasta3}
	\|\be\|_{2s_0+|m_1|+2}\leq \delta,\quad
	\|\be\|^{\gamma, \calO }_{s_0+\mu}\leq1,
	\end{equation}
	then one has
	\begin{equation}\label{pasta1} 
	\sup_{\tau\in[0,1]}\|\langle D_{x}\rangle^{-m_1}\del_{\f}^{\al}\mathcal{A}^{\tau}(\f)
	\langle D_{x}\rangle^{-m_2}u\|^{\gamma, \calO }_{s}
	\leq_{s,\mathtt{b},m_1,m_2}
	\| u \|_{s}+\|\be\|^{\gamma, \calO }_{s+\mu}\| u\|_{s_0}.
	\end{equation}
	The inverse map $(\mathcal{A}^{\tau})^{-1}$ satisfies the same estimate.
\end{lem}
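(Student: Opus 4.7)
The plan is to argue by induction on $|\alpha|$, reducing to the base case $|\alpha|=0$ (where $m_1=m_2=0$) which is exactly estimate \eqref{pasta7} of Lemma \ref{bastalapasta}. To start, I would differentiate the explicit formula $\mathcal A^\tau u(\varphi,x)=(1+\tau\beta_x)u(\varphi,x+\tau\beta)$ once in $\varphi_j$ and rewrite the result in terms of $\mathcal A^\tau$ itself, obtaining the commutation identity
\begin{equation*}
\partial_{\varphi_j}\mathcal A^\tau = g_j(\varphi,x)\,\mathcal A^\tau + \mathcal A^\tau\,\partial_{\varphi_j} + h_j(\varphi,x)\,\mathcal A^\tau\,\partial_x,
\end{equation*}
with $g_j=\tau\partial_{\varphi_j}\beta_x/(1+\tau\beta_x)$ and $h_j=\tau\partial_{\varphi_j}\beta$. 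Iterating $|\alpha|$ times yields a finite decomposition
\begin{equation*}
\partial_\varphi^\alpha \mathcal A^\tau = \sum_{j+|\gamma|\le|\alpha|} c_{\alpha,j,\gamma}(\varphi,x)\,\mathcal A^\tau\,\partial_x^j\,\partial_\varphi^\gamma,
\end{equation*}
where each coefficient $c_{\alpha,j,\gamma}$ is a polynomial expression in $\beta$ and its derivatives (with factors of $(1+\tau\beta_x)^{-k}$), so that standard tame Moser-type bounds, together with the smallness \eqref{buttalapasta3}, give $\|c_{\alpha,j,\gamma}\|^{\gamma,\calO}_s\lesssim_{|\alpha|,s}\|\beta\|^{\gamma,\calO}_{s+\mu'}$ for some $\mu'=\mu'(|\alpha|)$.

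The core step is then to estimate each summand $\langle D_x\rangle^{-m_1}c_{\alpha,j,\gamma}\,\mathcal A^\tau\,\partial_x^j\,\partial_\varphi^\gamma\,\langle D_x\rangle^{-m_2}$. I would write $\partial_x^j\langle D_x\rangle^{-m_2}=\op(\sigma_j)$ with $\sigma_j(\xi):=(\mathrm i\xi)^j\langle\xi\rangle^{-m_2}\in S^{j-m_2}$, and apply Theorem \ref{EgorovQuantitativo} (with $\rho$ taken large) to produce the pseudo-differential expansion
\begin{equation*}
\mathcal A^\tau\,\op(\sigma_j)\,(\mathcal A^\tau)^{-1} = \op(q_j) + R_j,\qquad q_j\in S^{j-m_2},\quad R_j\in\gotL_{\rho,p},
\end{equation*}
with quantitative bounds on $|q_j|^{\gamma,\calO}_{j-m_2,s,\alpha}$ and on the modulo-tame seminorms of $R_j$ in terms of $\|\beta\|^{\gamma,\calO}_{s+\mu''}$. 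Substituting yields
\begin{equation*}
\langle D_x\rangle^{-m_1}c_{\alpha,j,\gamma}\,\mathcal A^\tau\,\op(\sigma_j) = \langle D_x\rangle^{-m_1}c_{\alpha,j,\gamma}\bigl(\op(q_j)+R_j\bigr)\,\mathcal A^\tau.
\end{equation*}
Since $m_1+m_2=|\alpha|$ and $j\le|\alpha|-|\gamma|\le|\alpha|$, the composition $\langle D_x\rangle^{-m_1}c_{\alpha,j,\gamma}\op(q_j)$ is pseudo-differential of order $j-|\alpha|\le 0$, so by Lemma \ref{actiononsobolev} and Lemma \ref{PROP} it is Lip-$0$-tame with tame constant controlled by $\|\beta\|^{\gamma,\calO}_{s+\mu}$; an analogous bound holds for the smoothing remainder $R_j$ using its tameness in $\gotL_{\rho,p}$. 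Composing with $\mathcal A^\tau\,\partial_\varphi^\gamma u$, whose tame bound follows from Lemma \ref{bastalapasta} applied to $\partial_\varphi^\gamma u$ (and only requires control of $\|u\|_{s}$ and $\|u\|_{s_0}$ since $|\gamma|<|\alpha|\le\mathtt b$), and using the tame composition rule of Lemma \ref{lem: 2.3.6}, gives the desired bound on each summand. Summing over the finitely many $(j,\gamma)$ produces \eqref{pasta1}.

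The main obstacle will be the bookkeeping of derivatives of $\beta$ lost across the three sources of losses: (i) the iteration that produces the coefficients $c_{\alpha,j,\gamma}$, (ii) the Egorov expansion in Theorem \ref{EgorovQuantitativo} for the symbols $q_j$ and the remainders $R_j$, and (iii) the tame composition bounds. One must then exhibit a single $\mu=\mu(|\alpha|,m_1,m_2)$ large enough to absorb all these losses, uniformly in $\tau\in[0,1]$, while only requiring smallness of a low norm of $\beta$. The appearance of $|m_1|$ in the smallness condition \eqref{buttalapasta3} reflects the invertibility thresholds (\emph{cf.} Lemma \ref{InvertibilityUtile}) needed to move $\langle D_x\rangle^{-m_1}$ past the pseudo-differential symbol $q_j$ of order $j-m_2$, which in turn has order depending on $m_1$. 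Finally, the Lipschitz dependence in $\omega$ is handled by invoking the Lipschitz versions of Lemma \ref{bastalapasta} and Theorem \ref{EgorovQuantitativo} (which account for $\Delta_{\omega,\omega'}\beta$), costing one additional derivative in each place; the analogous claim for $(\mathcal A^\tau)^{-1}$ follows by the same argument applied to the inverse diffeomorphism, whose structure is identical up to replacing $\beta$ by $\tilde\beta$.
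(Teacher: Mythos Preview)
Your argument has a circularity problem. You invoke Theorem \ref{EgorovQuantitativo} to pass $\op(\sigma_j)$ through $\mathcal{A}^\tau$, but in this paper the control on the Egorov remainder $R$ (namely $R\in\gotL_{\rho,p}$ with the stated bounds) goes through Lemma \ref{preparailsugo}, whose proof uses precisely the estimates of Lemma \ref{buttalapasta2} that you are trying to establish. So Theorem \ref{EgorovQuantitativo} is not available at this point.

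There is a related gap in your base case. For $|\alpha|=0$ the constraint is $m_1+m_2=0$, i.e.\ $m_1=-m_2$ with $m_1$ not necessarily zero; estimate \eqref{pasta7} of Lemma \ref{bastalapasta} only covers the case $m_1=m_2=0$. The paper handles the general base case directly from the flow equation: setting $\Psi^\tau:=\langle D_x\rangle^{m}\mathcal{A}^\tau\langle D_x\rangle^{-m}$ one has $\partial_\tau\Psi^\tau=\mathtt{X}\Psi^\tau+G^\tau\Psi^\tau$ with $G^\tau:=[\langle D_x\rangle^{m},\mathtt{X}]\langle D_x\rangle^{-m}\in OPS^0$, and closes a Gronwall estimate from the Duhamel representation using smallness of $\|\beta\|_{2s_0+|m_1|+2}$ (this is exactly where that smallness hypothesis enters). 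For the inductive step the paper differentiates the flow equation, obtaining
\[
\partial_\varphi^\alpha\mathcal{A}^\tau=\int_0^\tau\mathcal{A}^\tau(\mathcal{A}^t)^{-1}\!\!\sum_{\substack{\alpha_1+\alpha_2=\alpha\\ |\alpha_1|\ge 1}}C_{\alpha_1\alpha_2}\,\partial_x\big[(\partial_\varphi^{\alpha_1}b)\,\partial_\varphi^{\alpha_2}\mathcal{A}^t\big]\,dt,
\]
inserts $\langle D_x\rangle^{\pm(m_2-|\alpha_2|)}$, applies the inductive hypothesis to $\langle D_x\rangle^{m_2-|\alpha_2|}\partial_\varphi^{\alpha_2}\mathcal{A}^t\langle D_x\rangle^{-m_2}$ (allowed since $(|\alpha_2|-m_2)+m_2=|\alpha_2|<|\alpha|$), and uses the already-proved $|\alpha|=0$ case on $\langle D_x\rangle^{-m_1}\mathcal{A}^\tau(\mathcal{A}^t)^{-1}\langle D_x\rangle^{m_1}$. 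No Egorov is needed. Your commutation-and-expand idea is sound in spirit, but to make it work without Egorov you would likewise need the general $m_1=-m_2$ base case, which you have not supplied.
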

\begin{proof}
	We  prove the bound \eqref{pasta1} for the  $\|\cdot\|_{s}$ norm
	since one can obtain the bound in the Lipschitz norm $\|\cdot\|^{\gamma, \calO }_{s}$
	using the same arguments (recall also the reasoning used in \eqref{apple}).
	We take $h\in C^{\infty}$, so that $\del_{\f}^{\al}\mathcal{A}^{\tau}(\f)h\in C^{\infty}$ for any $|\al|\leq \mathtt{b}$ and we prove the bound \eqref{pasta1} in this case.
	The thesis will follows by density.
	\\
	We argue  by induction on $\al$. 
	Given $\al\in \mathbb{N}^\nu$ we write $\al' \preceq \al$ if  $\al'_{n}\leq \al_{n}$ for any $n=1,\ldots,\nu$  and $\al'\neq\al$.
	\\
	Let us check \eqref{pasta1} for $\al=0$. 
	Let us define $\Psi^{\tau}:=\langle D_{x}\rangle^{m}\mathcal{A}^{\tau}(\f)
	\langle D_{x}\rangle^{-m}$ with $m=-m_1=m_2$. 
	One has that $\Psi^{0}:=\mathrm{I}$ (where $\mathrm{I}$ is the identity operator).
	One can check that $\Psi^{\tau}$ solves the problem (recall \eqref{flussoKDV})
	\begin{equation}\label{pasta5}
	\del_{\tau}\Psi^{\tau}=\mathtt{X}\Psi^{\tau}+G^{\tau}\Psi^{\tau},
	\end{equation} 
	where $G^{\tau}:=[\langle D_{x}\rangle^{m},\mathtt{X}]\,\langle D_{x}\rangle^{-m}$.
	Therefore by Duhamel principle one has
	\[
	\Psi^{\tau}=\mathcal{A}^{\tau}+\mathcal{A}^{\tau}\int_{0}^{\tau}(\mathcal{A}^{t})^{-1}G^{t}\Psi^{t}\,dt.
	\]
By Lemma \ref{PROP} and \eqref{tazza8} one has that
$|G^{\tau}|_{0,s,0}\leq_{s}\|\be\|_{s+m+3}$, for $s\geq s_0,$
hence by estimate \eqref{pasta7}, Lemma \ref{actiononsobolev} we have
\begin{equation}\label{apple4}
\begin{aligned}
\sup_{\tau\in[0,1]}\|\Psi^{\tau}h\|_{s}&\leq_{s}\|h\|_s+\|\be\|_{s+\s} 
\|h\|_{s_0}+
\|\be\|_{s_0+m+3}\sup_{\tau\in[0,1]}\|\Psi^{\tau}h\|_{s}\\
&+(\|\be\|_{s+m+3}+\|\be\|_{s+\s})\sup_{\tau\in[0,1]}\|\Psi^{\tau}h\|_{s_0}
\end{aligned}
\end{equation}
	for some $\s>0$ given in Lemma \ref{bastalapasta}.
	For $\delta$ in \eqref{buttalapasta3} small enough, then the \eqref{apple4} for $s=s_0$ implies
	that $\sup_{\tau\in[0,1]}\|\Psi^{\tau}h\|_{s_0}\leq_{s_0} \|h\|_{s_0}$. Using this bound in \eqref{apple4}
	one gets the \eqref{pasta1}.
	\\
	Now assume that the bound \eqref{pasta1} holds for any $\al'\preceq \al$ with $|\al|\leq \mathtt{b}$
	and $m_1,m_2\in \mathbb{R}$ with $m_1+m_2=|\al'|$.
	We now prove the estimate \eqref{pasta1}
	for the operator $\langle D_{x}\rangle^{-m_1}\del_{\f}^{\al}\mathcal{A}^{\tau}(\f)
	\langle D_{x}\rangle^{-m_2}$ for $m_1+m_2=|\al|$. 
Differentiating the \eqref{flussoKDV} and using the Duhamel formula
we get that
\begin{equation}\label{apple5}
\begin{aligned}
&\del_{\f}^{\al}\mathcal{A}^{\tau}(\f)=\int_{0}^{\tau}\,\mathcal{A}^{\tau}(\f)
(\mathcal{A}^{t}(\f))^{-1}F_{\al}^{t}dt,\qquad F_{\al}^{t}
:=\sum_{\substack{\al_1+\al_2=\al,\\ \lvert \alpha_2 \rvert+1\le\alpha}}C(\al_1,\al_2)\,\del_{x}\, [(\del_{\f}^{\al_1}b)
\del_{\f}^{\al_2}\mathcal{A}^{t}(\f)].
\end{aligned}
\end{equation}
For any $m_1+m_2=|\al|$ and any $\tau,s\in [0,1]$ 
we write
\begin{equation}\label{apple6}
\begin{aligned}
&\langle D_{x}\rangle^{-m_1}
\del_{x}(\del_{\f}^{\al_1}b)
\del_{\f}^{\al_2}\mathcal{A}^{t}(\f)
\langle D_{x}\rangle^{-m_2}\\
&=\langle D_{x}\rangle^{-m_1}\del_{x} (\del_{\f}^{\al_1}b)
\langle D_{x}\rangle^{-m_2+|\al_2|}\langle D_{x}\rangle^{m_2-|\al_2|}
\del_{\f}^{\al_2}\mathcal{A}^{t}(\f)\langle D_{x}\rangle^{-m_2}.
\end{aligned}
\end{equation}
	Hence  in order to estimate the operator 
	$\langle D_{x}\rangle^{-m_1}\mathcal{A}^{\tau}\,(\mathcal{A}^{t}(\f))^{-1}F_{\al}^{t}\langle D_{x}\rangle^{-m_2}$
	we need to estimate, uniformly in $\tau,s\in[0,1]$ the term
	\begin{equation}\label{apple7}
	\!\!\!\!\!\Big(
	\langle D_{x}\rangle^{-m_1}\mathcal{A}^{\tau}(\mathcal{A}^{t})^{-1}\langle D_{x}\rangle^{m_1}
	\Big)
	\Big(
	\langle D_{x}\rangle^{-m_1}\del_{x} (\del_{\f}^{\al_1}b)
	\langle D_{x}\rangle^{-m_2+|\al_2|}
	\Big)
	\Big(
	\langle D_{x}\rangle^{m_2-|\al_2|}
	\del_{\f}^{\al_2}\mathcal{A}^{t}(\f)\langle D_{x}\rangle^{-m_2}
	\Big).
	\end{equation}
	For $s\geq s_0$, by the inductive hypothesis one has
	\begin{equation}\label{apple8}
	\|\langle D_{x}\rangle^{-m_1}\mathcal{A}^{\tau}
	(\mathcal{A}^{t})^{-1}\langle D_{x}\rangle^{m_1}h\|_{s}\leq _{s,m_1}\|h\|_{s}+
	\|\be\|_{s+\mu}^{\gamma, \calO }\|h\|_{s_0},
	\end{equation}
	\begin{equation}\label{apple9}
	\|\langle D_{x}\rangle^{m_2-|\al_2|}
	\del_{\f}^{\al_2}\mathcal{A}^{t}(\f)\langle D_{x}\rangle^{-m_2}h\|_{s}\leq_{s,\mathtt{b},m_2}
	\|h\|_{s}+
	\|\be\|_{s+\mu}^{\gamma, \calO }\|h\|_{s_0}.
	\end{equation}
	provided that $\al_1\neq 0$.
	We estimate the second factor in \eqref{apple7}. We first note that
	\begin{equation*}
	-m_1-m_2+1+|\al_{2}|=1+|\al_{2}|-|\al|\leq 0.
	\end{equation*}
	This implies that 
	$\langle D_{x}\rangle^{-m_1}\del_{x} (\del_{\f}^{\al_1}b)
	\langle D_{x}\rangle^{-m_2+|\al_2|}$ belongs to $OPS^{0}$,
	and in particular, using Lemma \ref{PROP} and \eqref{norma3},
	we obtain
	\begin{equation}\label{apple10}
	|\langle D_{x}\rangle^{-m_1}\del_{x} (\del_{\f}^{\al_1}b)
	\langle D_{x}\rangle^{-m_2+|\al_2|}|_{0,s,0}\leq_{\mathtt{b},m_1,m_2}
	\|a\|^{\gamma, \calO }_{s+|m_1|+|\al_2|}.
	\end{equation}
	To obtain the bound \eqref{pasta1} it is enough to
	use bounds \eqref{apple8}, \eqref{apple9},\eqref{apple10}, Lemma \ref{actiononsobolev}
	and recall the smallness assumption \eqref{buttalapasta3}.\\
	About the estimate for the inverse of $\mathcal{A}^{\tau}$, we note that 
$\partial_{\tau} (\mathcal{A}^{\tau})^{-1}=\big(\partial_y\circ \tilde{b}\big)\, (\mathcal{A}^{\tau})^{-1}$
	with $\tilde{b}:=\frac{\partial_{\tau} \tilde{\beta}}{1+\tilde{\beta}_y}$ and $\lVert \tilde{b} \rVert_s\le \lVert \beta \rVert_{s+\tilde{\s}}$ for some $\tilde{\s}>0$. Then one can follow the same arguments above with $\partial_y\circ \tilde{b}$ instead of $\mathtt{X}$ and $\tilde{b}$ instead of $b$.
	\end{proof}
\begin{lem}\label{buttalapasta60}
	Let $\mathtt{b}\in \mathbb{N}$ and let $p>0$ be the constant given in Def. \ref{ellerho}.
	For any $|\al |\leq \mathtt{b}$, $m_1,m_{2}\in \mathbb{R}$ such that $m_1+m_{2}=|\al|+1$, 
	for any 
	$s\geq s_0$ there exists a constant $\mu=\mu(|\al|,m_1,m_2)$, $\s=\s(|\al|,m_1,m_2)$ 
	and $\delta=\delta(s, m_1)>0$
	such that if $\|\be\|_{s_0+\mu}\leq \delta$ and   $\|\be\|_{p+\s}\leq 1$
	%$p\leq s_0+\mu$ and
%	\begin{equation}\label{buttalapasta61}
%	\|\be\|_{s_0+\mu}\leq \delta, \qquad \|\be\|_{p+\s}\leq 1,
%	%\quad
%	%\|\be\|^{\gamma, \calO }_{s_0+\mu}\leq1,
%	\end{equation}
then one has
\begin{equation}\label{pasta100} 
\begin{aligned}
\sup_{\tau\in[0,1]} & \| \langle D_{x}\rangle^{-m_1}\del_{\f}^{\al}\Delta_{12}\mathcal{A}^{\tau}(\f) 
\langle D_{x}\rangle^{-m_2}u\|_{p}  \leq_{s,\mathtt{b},m_1,m_2}
\lVert u \rVert_{p}\lVert \Delta_{12} \be   \rVert_{p+\mu}
%+\lVert u \rVert_{s_0+1}( \lVert \Delta_{12} \be   \rVert_{s+\mu}
%+\lVert \Delta_{12} \be   \rVert_{s_0+\mu}\lVert \be \rVert_{s+\mu}).
%\| u \|^{\gamma, \calO }_{s}+\|\be\|^{\gamma, \calO }_{s+\mu}\| u\|^{\gamma, \calO }_{s_0}.
\end{aligned}
\end{equation}
The operators $\Delta_{12}(\mathcal{A}^{\tau})^{*} $, $\Delta_{12}(\mathcal{A}^{\tau})^{-1} $ satisfy the same estimate.
\end{lem}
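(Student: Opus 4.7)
The strategy is to mirror the inductive argument used in the proof of Lemma \ref{buttalapasta2}, but keeping careful track of the fact that the $\Delta_{12}$ variation costs exactly one extra $x$--derivative, which is precisely the reason why the total weight $m_1+m_2 = |\alpha|+1$ here, as opposed to $m_1+m_2 = |\alpha|$ in the purely quantitative bound of Lemma \ref{buttalapasta2}. I would proceed by induction on $|\alpha|\le \mathtt b$, and at each step split the argument into three substeps: first obtain an exact identity for the quantity being estimated, then decompose the operators into pseudo--differential pieces of nonpositive order plus factors to which the inductive hypothesis applies, and finally apply the action Lemmas \ref{actiononsobolev} and the estimates on $\mathcal A^\tau,(\mathcal A^\tau)^{-1}$ from Lemmas \ref{bastalapasta}--\ref{buttalapasta2}. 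Density and the smallness assumption on $\beta$ allow throughout to work on smooth test functions and absorb self--referential terms by fixing $\delta$ small.

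\emph{Base case $\alpha=0$.} Writing $\mathcal{A}^{\tau}(\mathfrak{I}_j) u = (1+\tau \beta_x(\mathfrak{I}_j))\,u(x+\tau\beta(\mathfrak{I}_j))$ and telescoping, I obtain the identity
\[
\Delta_{12}\mathcal A^\tau u \;=\; \tau(\Delta_{12}\beta_x)\,u\big(x+\tau\beta(\mathfrak I_1)\big) \;+\; (1+\tau\beta_x(\mathfrak I_2))\,\tau\,\Delta_{12}\beta\!\int_0^1\!(\partial_x u)\big(x+\tau\beta(s)\big)\,ds ,
\]
with $\beta(s):=s\beta(\mathfrak I_1)+(1-s)\beta(\mathfrak I_2)$. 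The first summand is bounded via Lemma~A.3 of \cite{FGMP} after multiplying by $\langle D_x\rangle^{-m_1}$ on the left and $\langle D_x\rangle^{-m_2}$ on the right, using that the resulting operator is the composition of pseudo--differential factors of total order $-|m_1|-|m_2|+0\le 0$. The second summand carries an explicit $\partial_x$: conjugating by $\langle D_x\rangle^{-m_1}\cdot \langle D_x\rangle^{-m_2}$ with $m_1+m_2=1$ exactly absorbs it, and the remaining composition–with–torus--diffeomorphism piece is bounded via Lemma \ref{bastalapasta}. Both contributions are estimated by $\|u\|_p\,\|\Delta_{12}\beta\|_{p+\mu}$ for $\mu$ large enough depending on $m_1,m_2$.

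\emph{Inductive step.} Assume the estimate for all multi–indices $\alpha'\prec\alpha$ and for all admissible $m_1',m_2'$ with $m_1'+m_2'=|\alpha'|+1$. Starting from the Duhamel formula \eqref{apple5} and applying $\Delta_{12}$ by Leibniz, I get three classes of terms according to where the variation falls:
\[
\Delta_{12}\bigl(\mathcal A^\tau(\mathcal A^t)^{-1}F_\alpha^t\bigr)
=(\Delta_{12}\mathcal A^\tau)\,(\mathcal A^t)^{-1}F_\alpha^t+\mathcal A^\tau\,\Delta_{12}(\mathcal A^t)^{-1}\,F_\alpha^t+\mathcal A^\tau(\mathcal A^t)^{-1}\Delta_{12}F_\alpha^t,
\]
with $F_\alpha^t$ as in \eqref{apple5}. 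In the first two terms the $\Delta_{12}$ lands on a flow of order $|\alpha|=0$, so the base case applies after inserting and splitting $\langle D_x\rangle^{\pm k}$ as in \eqref{apple7}; the remaining factors are controlled through Lemma \ref{buttalapasta2}. In the third term, Leibniz distributes $\Delta_{12}$ inside $F_\alpha^t$, either onto $\partial_\varphi^{\alpha_1}\beta$ (producing a $\Delta_{12}\beta$ factor, which carries the small factor we want) or onto $\partial_\varphi^{\alpha_2}\mathcal A^t$ with $|\alpha_2|<|\alpha|$ (to which the inductive hypothesis applies with $m_1'+m_2'=|\alpha_2|+1\le |\alpha|$). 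The explicit $\partial_x$ sitting in $F_\alpha^t$ is precisely what upgrades the derivative budget from $|\alpha|$ to $|\alpha|+1$, matching the constraint $m_1+m_2=|\alpha|+1$ of the statement. As in the proof of Lemma \ref{buttalapasta2}, a self--referential term arises and is reabsorbed by choosing $\|\beta\|_{s_0+\mu}\le \delta$ small enough.

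\emph{Main obstacle.} The only genuine technical difficulty is bookkeeping the intermediate pseudo--differential orders in the splitting \eqref{apple6}--\eqref{apple7}: one must check that in every Leibniz summand $(\alpha_1,\alpha_2)$ the exponent $m_2-|\alpha_2|$ chosen for the internal $\langle D_x\rangle$ leaves the multiplication operator $\langle D_x\rangle^{-m_1}\partial_x(\partial_\varphi^{\alpha_1}b)\langle D_x\rangle^{-m_2+|\alpha_2|}$ of nonpositive order so Lemma \ref{PROP} applies with an $H^s$--bound of the form $\le \|\beta\|_{s+\mu}^{\gamma,\calO}$. This is why one needs $m_1+m_2=|\alpha|+1$ rather than $|\alpha|$: it provides exactly the one extra derivative that the $\Delta_{12}$ variation consumes via the mean value theorem. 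The estimates on $\Delta_{12}(\mathcal A^\tau)^{\pm 1}$ and $\Delta_{12}(\mathcal A^\tau)^*$ are obtained analogously, exploiting that $(\mathcal A^\tau)^{-1}$ solves a PDE of the same type with coefficient $\tilde b$ controlled by $\beta$ as recalled at the end of Lemma \ref{buttalapasta2}.
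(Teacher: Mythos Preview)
Your proposal is correct and follows essentially the same route as the paper, which simply says ``The Lemma can be proved arguing as in the proof of Lemma \ref{buttalapasta2} using $(\mathcal{A}^{\tau})^*=(1+\tau \beta)^{-1}\,\mathcal{A}^{\tau}$.'' Your write-up is in fact considerably more detailed than the paper's: you spell out explicitly the telescoping identity for the base case, the Leibniz decomposition of $\Delta_{12}$ applied to the Duhamel formula \eqref{apple5}, and the derivative-counting that explains why the constraint shifts from $m_1+m_2=|\alpha|$ to $m_1+m_2=|\alpha|+1$.
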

\begin{proof}
The Lemma can be proved arguing as in the proof of Lemma \ref{buttalapasta2} using 
$(\mathcal{A}^{\tau})^*=(1+\tau \beta)^{-1}\,\mathcal{A}^{\tau}$.
\end{proof}

We have the following Lemma.

\begin{lem}\label{preparailsugo}
	Fix $\rho\geq3$, consider $ \calO \subset\R^\nu$ 
	and let $R\in \gotL_{\rho, p}( \calO )$
	(see Def. 
	\ref{ellerho}).
	Consider a function $\be$ such that
	$\beta:=\beta(\omega, i(\omega))\in H^s(\T^{\nu+1})$  
	for some $s\geq s_0$, assume that it is  Lipschitz in $\omega\in  \calO $ and $i$.  Let $\mathcal{A}^{\tau}$ be the operator defined in \eqref{ignobel}.
	There exists $\mu=\mu(\mathtt{\rho})\gg1$, $\s=\s(\rho)$ and $\delta>0$ small such that
	if $\| \beta\|^{\gamma,  \calO }_{s_0+\mu}\leq \delta$ and $\| \beta\|^{\gamma,  \calO }_{p+\s}\leq 1$, then
	the operator $M^{\tau}:=\mathcal{A}^{\tau}R(\mathcal{A}^{\tau})^{-1}$
	belongs to the class $\gotL_\rho$.
	In particular one has, for $s_0\le s\le \mathcal{S}$,
	\begin{equation}\label{casalotti}
	\begin{aligned}
	\mathbb{M}^{\gamma}_{M^{\tau}}(s,\mathtt{b}) &\leq \mathbb{M}^{\gamma}_{R}(s,\mathtt{b})+
	\|\be\|_{s+\mu}^{\gamma, \calO }\mathbb{M}^{\gamma}_{R}(s_0,\mathtt{b}),\qquad \mathtt{b}\leq \rho-2
	\end{aligned}
	\end{equation}
%for $\mathtt{b}\leq \rho-2$ and
\begin{equation}
\begin{aligned}
\mathbb{M}_{\Delta_{12} M^{\tau}  }(p,\mathtt{b}) &\leq  
\mathbb{M}_{\Delta_{12} R^{\tau}  }(p,\mathtt{b})
+\lVert \Delta_{12} \beta   \rVert_{p+\mu}
\mathbb{M}_{R^{\tau}}^{\gamma}(p,\mathtt{b}), \qquad \mathtt{b}\leq \rho-3.
%	\mathbb{M}_{\Delta_{12} R^{\tau}  }(s,\mathtt{b})+\lVert \beta \rVert_{s+\mu} 
%	\mathbb{M}_{\Delta_{12} R^{\tau}  }(s_0,\mathtt{b})\\
%	&+\lVert \Delta_{12} \beta   \rVert_{s+\mu}
%	\mathbb{M}_{R^{\tau}}^{\gamma}(s_0,\mathtt{b})
%	+\lVert \Delta_{12} \beta   \rVert_{s_0+\mu} \mathbb{M}_{R^{\tau}}^{\gamma}(s,\mathtt{b})
\end{aligned}
\end{equation}
%for $\mathtt{b}\leq \rho-3$.
\end{lem}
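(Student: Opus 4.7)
The strategy is to verify the four items of Definition \ref{ellerho} for $M^{\tau}$ by inserting compensating powers of $\langle D_x\rangle$, factoring the operators, and then invoking the composition Lemma \ref{lem: 2.3.6} together with the tame bounds for derivatives of $\mathcal{A}^{\tau}$ and $(\mathcal{A}^{\tau})^{-1}$ obtained in Lemmata \ref{buttalapasta2} and \ref{buttalapasta60}. First, for any multi-index $\vec{\tb}$ with $|\vec{\tb}|=\tb\le \rho-2$, applying the Leibniz rule gives
\[
\partial_{\f}^{\vec{\tb}}M^{\tau}= \sum_{\vec{\tb}_1+\vec{\tb}_2+\vec{\tb}_3=\vec{\tb}} C(\vec{\tb}_1,\vec{\tb}_2,\vec{\tb}_3)\,(\partial_{\f}^{\vec{\tb}_1}\mathcal{A}^{\tau})\,(\partial_{\f}^{\vec{\tb}_2} R)\,(\partial_{\f}^{\vec{\tb}_3} (\mathcal{A}^{\tau})^{-1}).
\]
For exponents $m_1, m_2\ge 0$ with $m_1+m_2=\rho-\tb$, we split
\[
\langle D_x\rangle^{m_1}\!(\partial_{\f}^{\vec{\tb}_1}\mathcal A^{\tau})\langle D_x\rangle^{-m_1-|\vec{\tb}_1|}\cdot \langle D_x\rangle^{m_1+|\vec{\tb}_1|}(\partial_{\f}^{\vec{\tb}_2} R)\langle D_x\rangle^{m_2+|\vec{\tb}_3|}\cdot \langle D_x\rangle^{-m_2-|\vec{\tb}_3|}(\partial_{\f}^{\vec{\tb}_3}(\mathcal A^{\tau})^{-1})\langle D_x\rangle^{m_2},
\]
where the outer factors are Lip-$0$-tame by Lemma \ref{buttalapasta2} (with loss of derivatives $\mu=\mu(\rho)$), and the middle factor is Lip-$0$-tame because $(m_1+|\vec{\tb}_1|)+(m_2+|\vec{\tb}_3|)=\rho-|\vec{\tb}_2|$ matches the hypothesis in Definition \ref{ellerho} (i) applied to $\partial_{\f}^{\vec{\tb}_2}R$. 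Composing the three tame factors via Lemma \ref{lem: 2.3.6} and using the smallness assumption $\|\beta\|_{s_0+\mu}^{\g,\calO}\le \delta$ to absorb the low norm terms yields \eqref{casalotti}.

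For the commutator with $\partial_x$ required by item (ii) of Definition \ref{ellerho}, I use the algebraic identity
\[
[M^{\tau},\partial_x]=\mathcal{A}^{\tau}\,[R,(\mathcal{A}^{\tau})^{-1}\partial_x\mathcal{A}^{\tau}]\,(\mathcal{A}^{\tau})^{-1},
\]
and observe that $(\mathcal{A}^{\tau})^{-1}\partial_x\mathcal{A}^{\tau}=(1+\tau\beta_x)\partial_x+\mathtt{c}(\tau,\f,x)$ differs from $\partial_x$ only by multiplication by a smooth function of the form $1+O(\beta)$ together with a zero-order term. Hence $[R,(\mathcal{A}^{\tau})^{-1}\partial_x\mathcal{A}^{\tau}]$ decomposes as $[R,\partial_x]$ (which is $(\rho-1)$-smoothing by hypothesis) plus a bounded multiplication by a small coefficient composed with $R$, so it still fits the class $\gotL_{\rho-1,p}$ with norms controlled by $\mathbb M^{\g}_R(s,\tb)+\|\beta\|_{s+\mu}^{\g,\calO}\mathbb M^{\g}_R(s_0,\tb)$. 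Inserting the appropriate $\langle D_x\rangle^{m_1}\!\cdots\!\langle D_x\rangle^{m_2}$ with $m_1+m_2=\rho-\tb-1$ and repeating the three-factor splitting gives the required estimate for $\langle D_x\rangle^{m_1}[\partial_{\f}^{\vec{\tb}}M^{\tau},\partial_x]\langle D_x\rangle^{m_2}$.

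For the Lipschitz variation in $\mathfrak{I}$ (items (iii)--(iv) of Def.\ \ref{ellerho}), the decomposition
\[
\Delta_{12}M^{\tau}=(\Delta_{12}\mathcal A^{\tau})R(\mathfrak I_2)(\mathcal A^{\tau}(\mathfrak I_2))^{-1}+\mathcal A^{\tau}(\mathfrak I_1)(\Delta_{12}R)(\mathcal A^{\tau}(\mathfrak I_2))^{-1}+\mathcal A^{\tau}(\mathfrak I_1)R(\mathfrak I_1)(\Delta_{12}(\mathcal A^{\tau})^{-1})
\]
reduces the problem to the three-factor scheme already used, with Lemma \ref{buttalapasta60} replacing Lemma \ref{buttalapasta2} for the $\Delta_{12}$ factors. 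The key accounting point: Lemma \ref{buttalapasta60} costs one extra derivative ($m_1+m_2=|\al|+1$ instead of $|\al|$), which is precisely compensated by the passage $\tb\le \rho-2$ in item (i) to $\tb\le \rho-3$ in item (iii); this is why \eqref{ellerho} is designed with one fewer $\f$-derivative in the $\Delta_{12}$ bound.

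The main technical obstacle is the bookkeeping of Sobolev exponents and loss of derivatives: one must verify that after splitting, every outer factor has a total non-negative (or at worst order $0$) Sobolev exponent matching the range of Lemma \ref{buttalapasta2}/\ref{buttalapasta60}, and that the middle factor has exponents summing exactly to $\rho-|\vec{\tb}_2|$ (respectively $\rho-|\vec{\tb}_2|-1$ for the commutator, $\rho-|\vec{\tb}_2|-1$ for $\Delta_{12}$, and $\rho-|\vec{\tb}_2|-2$ for $\Delta_{12}[\cdot,\partial_x]$) as required by Definition \ref{ellerho}. Once this is done and the smallness $\|\beta\|^{\g,\calO}_{s_0+\mu}\le\delta$, $\|\beta\|^{\g,\calO}_{p+\s}\le 1$ is used to reabsorb terms with $\|\cdot\|_{s_0}$ of $M^{\tau}$, the claim follows by induction on $\tb$, exactly as in the end of the proof of Proposition \ref{DPdiffeo}.
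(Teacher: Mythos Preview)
Your approach is correct and for item~(i) and the $\Delta_{12}$ bounds it is exactly the paper's: the same Leibniz expansion, the same three-factor splitting with compensating powers of $\langle D_x\rangle$, and the same appeal to Lemmata~\ref{buttalapasta2} and~\ref{buttalapasta60} for the outer $\mathcal{A}^{\tau}$ factors.

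The one genuine difference is the commutator. The paper distributes
\[
[M^{\tau},\partial_x]=\mathcal{A}^{\tau}[R,\partial_x](\mathcal{A}^{\tau})^{-1}+\mathcal{A}^{\tau}R[(\mathcal{A}^{\tau})^{-1},\partial_x]+[\mathcal{A}^{\tau},\partial_x]R(\mathcal{A}^{\tau})^{-1}
\]
and then computes $[\mathcal{A}^{\tau},\partial_x]$ explicitly (formula~\eqref{apple20}), handling each summand by the three-factor scheme. Your identity $[M^{\tau},\partial_x]=\mathcal{A}^{\tau}[R,(\mathcal{A}^{\tau})^{-1}\partial_x\mathcal{A}^{\tau}](\mathcal{A}^{\tau})^{-1}$ is more compact and equally valid; note however that the coefficient of $\partial_x$ in $(\mathcal{A}^{\tau})^{-1}\partial_x\mathcal{A}^{\tau}$ is $(1+\tau\beta_x)\circ\phi_\tau^{-1}$ rather than $1+\tau\beta_x$, and your description of the remainder as ``a bounded multiplication by a small coefficient composed with $R$'' is too loose: expanding $[R,g\partial_x]=g[R,\partial_x]+[R,g]\partial_x$ one also gets $[R,g]\partial_x$, which is not of that form but is still handled by the same splitting (e.g.\ $\langle D_x\rangle^{m_1}R\langle D_x\rangle^{m_2+1}\cdot\langle D_x\rangle^{-m_2-1}g\partial_x\langle D_x\rangle^{m_2}$ with $m_1+m_2=\rho-1$). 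Finally, your closing reference to ``induction on $\tb$'' and reabsorption of $M^{\tau}$ terms is unnecessary: the induction is already contained in Lemma~\ref{buttalapasta2}, and no self-referential term in $M^{\tau}$ arises here---the estimate follows directly by composition.
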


\begin{proof}
	We start by showing that $M^{\tau}$  satisfies item $(i)$ of Definition \ref{ellerho}.
	%is Lip-$(-\rho)$-tame (see Def. \eqref{LipTameConstants}).
	Let $m_{1},m_{2}\in \mathbb{R}$, $m_{1},m_{2}\geq0$
	and $m_{1}+m_{2}=\rho$.
	We write
	\begin{equation*}
	\begin{aligned}
	\langle D_{x}\rangle^{m_1}M^{\tau} \langle D_{x}\rangle^{m_2}&=
	\langle D_{x}\rangle^{m_1}\mathcal{A}^{\tau}\langle D_{x}\rangle^{-m_1}
	\langle D_{x}\rangle^{m_1}R\langle D_{x}\rangle^{m_2}
	\langle D_{x}\rangle^{-m_2}(\mathcal{A}^{\tau})^{-1}\langle D_{x}\rangle^{m_2}.
	\end{aligned}
	\end{equation*}
	Recall that by hypothesis the operator 
	$\langle D_{x}\rangle^{m_1}R\langle D_{x}\rangle^{m_2}$
	is Lip-$0$-tame with constants $\gotM_{R}^{\gamma}(-\rho,s)$ see \eqref{megaTame2}. 
	Lemma \eqref{buttalapasta2} implies the estimates
	\begin{equation*}
	\|\langle D_{x}\rangle^{m_1}\mathcal{A}^{\tau}(\f)
	\langle D_{x}\rangle^{-m_1}u\|^{\gamma, \calO }_{s},\|\langle D_{x}\rangle^{-m_2}(\mathcal{A}^{\tau}(\f))^{-1}
	\langle D_{x}\rangle^{m_2}u\|^{\gamma, \calO }_{s}
	\leq_{s,\rho}
	\| u \|_{s}+\|\be\|^{\gamma, \calO }_{s+\mu}\| u\|_{s_0},
	\end{equation*}
	for $\tau\in [0,1]$,
	which implies that $\langle D_{x}\rangle^{m_1}M^{\tau} \langle D_{x}\rangle^{m_2}$
	is Lip-$0-$tame with constant 
	\begin{equation}\label{apple14}
	\gotM^{\gamma}_{\langle D_{x}\rangle^{m_1}M^{\tau} \langle D_{x}\rangle^{m_2}}(0,s)
	\leq_{s,\rho}
	\gotM_{R}^{\gamma}(-\rho,s)
	+\|\be\|_{s+\mu}^{\gamma, \calO }\gotM^{\gamma}_{R}(-\rho,s_0).
	\end{equation}
	Hence  $M^{\tau}$ is Lip-$(-\rho)$-tame
	with constant $\gotM_{M^{\tau}}^{\gamma}(-\rho,s)=\sup_{\substack{m_{1}+m_{2}=\rho\\
			m_{1},m_{2}\geq0}}\gotM^{\gamma}_{\langle D_{x}\rangle^{m_1}M^{\tau} \langle D_{x}\rangle^{m_2}}(0,s)$.
	Fix $\mathtt{b}\leq \rho-2$
	and 
	let $m_{1},m_{2}\in \mathbb{R}$, $m_{1},m_{2}\geq0$
	and $m_{1}+m_{2}=\rho-\mathtt{b}$.
	%We now show that the operator $\del_{\f}^{\mathtt{b}}M$ 
	%satisfies item $(ii)$ of Definition \ref{ellerho}.
	%is Lip-$(-\rho+\mathtt{b})$-tame.
	We note that for any $\vec{\tb}\in\mathbb{N}^{\nu}$ with $|\vec{\tb}|=\tb$ 
	\begin{equation}\label{apple11}
	\del_{\f}^{\vec{\mathtt{b}}}M=\sum_{\vec{\mathtt{b}_1}+\vec{\mathtt{b}_2}+\vec{\mathtt{b}_{3}}=\vec{\mathtt{b}}}
	C(|\vec{\mathtt{b}_{1}}|,|\vec{\mathtt{b}_{2}}|,|\vec{\mathtt{b}_{3}}|)
	(\del_{\f}^{\vec{\mathtt{b}_1}}\mathcal{A}^{\tau})\del_{\f}^{\vec{\mathtt{b}_2}}R(\del_{\f}^{\vec{\mathtt{b}_3}}(\mathcal{A}^{\tau})^{-1}),
	\end{equation}
	for some constants $C(|\vec{\mathtt{b}_{1}}|,|\vec{\mathtt{b}_{2}}|,|\vec{\mathtt{b}_{3}}|)>0$, 
	hence
	we need to show that each summand in \eqref{apple11}
	%the operator 
	%\[
	%\langle D_{x}\rangle^{m_1}
	%(\del_{\f}^{\mathtt{b}_1}
	%\mathcal{A}^{\tau})\del_{\f}^{\mathtt{b}_2}R(\del_{\f}^{\mathtt{b}_3}(\mathcal{A}^{\tau})^{-1})
	%\langle D_{x}\rangle^{m_2},
	%\]
	satisfies item $(i)$ of Definition \ref{ellerho}.
	%is Lip-$(0)$-tame (see Def. \ref{LipTameConstants}).
	We write
	\begin{equation}\label{apple13}
	\begin{aligned}
	&\langle D_{x}\rangle^{m_1}
	(\del_{\f}^{\vec{\mathtt{b}_1}}\mathcal{A}^{\tau})\del_{\f}^{\vec{\mathtt{b}_2}}R(\del_{\f}^{\vec{\mathtt{b}_3}}(\mathcal{A}^{\tau})^{-1})
	\langle D_{x}\rangle^{m_2}=\\
	&=
	\langle D_{x}\rangle^{m_1}
	(\del_{\f}^{\vec{\mathtt{b}_1}}\mathcal{A}^{\tau})
	\langle D_{x}\rangle^{y}
	\langle D_{x}\rangle^{-y}
	(\del_{\f}^{\vec{\mathtt{b}_2}}R)
	\langle D_{x}\rangle^{z}
	\langle D_{x}\rangle^{-z}
	(\del_{\f}^{\vec{\mathtt{b}_3}}(\mathcal{A}^{\tau})^{-1})
	\langle D_{x}\rangle^{m_2},
	\end{aligned}
	\end{equation}
	where $y=-|\vec{\mathtt{b}_{1}}|-m_1$, $z=\rho-|\vec{\mathtt{b}_2}|-|\vec{\mathtt{b}_1}|-m_1$.
	Since
	$y+m_1=-|\vec{\mathtt{b}_1}|$ and $-z+m_2=-|\vec{\mathtt{b}_3}|$, hence
	by Lemma \ref{buttalapasta2} the operators
	\[
	\langle D_{x}\rangle^{m_1}
	(\del_{\f}^{\vec{\mathtt{b}_1}}\mathcal{A}^{\tau})
	\langle D_{x}\rangle^{y}, \quad 
	\langle D_{x}\rangle^{-z}
	(\del_{\f}^{\vec{\mathtt{b}_3}}(\mathcal{A}^{\tau})^{-1})
	\langle D_{x}\rangle^{m_2}, 
	\]
	satisfy bounds like \eqref{pasta1}.
	Moreover $-y+z=\rho-|\vec{\mathtt{b}_2}|$ and $-y,z\geq0$, hence, 
	by the definition of the class
	$\gotL_{\rho, p}$, we have that the operator $\langle D_{x}\rangle^{-y}
	(\del_{\f}^{\vec{\mathtt{b}_2}}R)
	\langle D_{x}\rangle^{z}$ is Lip-$0$-tame. 
	Following the reasoning used to prove \eqref{apple14}
	one obtains
	\begin{equation}\label{apple15}
	\gotM^{\gamma}_{\langle D_{x}\rangle^{m_1}
		\del_{\f}^{\vec{\mathtt{b}}}M^{\tau} \langle D_{x}\rangle^{m_2}}(0,s)
	\leq_{s,\rho}
	\mathbb{M}^{\gamma}_{R}(s, \tb)
	+\|\be\|_{s+\mu}^{\gamma, \calO }\mathbb{M}^{\gamma}_{R}(s, \tb).
	\end{equation}
	Let us consider the operator $[M^{\tau},\del_{x}]$. We write
	\begin{equation}\label{apple16}
	[M^{\tau},\del_{x}]=\mathcal{A}^{\tau}[R,\del_{x}](\mathcal{A}^{\tau})^{-1}+
	\mathcal{A}^{\tau}R[(\mathcal{A}^{\tau})^{-1},\del_{x}]+
	[\mathcal{A}^{\tau},\del_{x}]R(\mathcal{A}^{\tau})^{-1},
	\end{equation}
	for $\tau\in[0,1]$. We need to show that
	each summand in \eqref{apple16} satisfies item $(ii)$ in Definition \eqref{ellerho}.
	Let $m_{1},m_{2}\in \mathbb{R}$, $m_{1},m_{2}\geq0$
	and $m_{1}+m_{2}=\rho-1$.
	We first note that
	\begin{equation}\label{apple17}
	\begin{aligned}
	&\langle D_{x}\rangle^{m_1}\mathcal{A}^{\tau}[R,\del_{x}](\mathcal{A}^{\tau})^{-1}
	\langle D_{x}\rangle^{m_2}= \\
	&=
	\langle D_{x}\rangle^{m_1}
	\mathcal{A}^{\tau}
	\langle D_{x}\rangle^{-m_1}\langle D_{x}\rangle^{m_1}
	[R,\del_{x}]
	\langle D_{x}\rangle^{m_2}\langle D_{x}\rangle^{-m_2}
	(\mathcal{A}^{\tau})^{-1}
	\langle D_{x}\rangle^{m_2},
	\end{aligned}
	\end{equation}
	hence, by applying Lemma \ref{buttalapasta2} to estimate the terms
	\[
	\langle D_{x}\rangle^{-m_2}
	(\mathcal{A}^{\tau})^{-1}
	\langle D_{x}\rangle^{m_2},\quad
	\langle D_{x}\rangle^{m_1}
	(\mathcal{A}^{\tau})^{-1}
	\langle D_{x}\rangle^{-m_1}
	\] 
	and using the tameness of the operator 
	$\langle D_{x}\rangle^{m_1}
	[R,\del_{x}]
	\langle D_{x}\rangle^{m_2}$ (recall that $R\in\gotL_{\rho, p}$) one gets
	\begin{equation}\label{apple18}
	\gotM^{\gamma}_{\langle D_{x}\rangle^{m_1} 
		\mathcal{A}^{\tau}[R,\del_{x}](\mathcal{A}^{\tau})^{-1}
		\langle D_{x}\rangle^{m_2}}(0,s)
	\leq_{s,\rho}
	\mathbb{M}^{\gamma}_{R}(s,\mathtt{b})
	+\|\be\|_{s+\mu}^{\gamma, \calO }\mathbb{M}^{\gamma}_{R}(s_0,\mathtt{b}).
	\end{equation}
	The term $[\mathcal{A}^{\tau},\del_{x}]R(\mathcal{A}^{\tau})^{-1}$ in \eqref{apple16}
	is more delicate.
	Let $m_{1},m_{2}\in \mathbb{R}$, $m_{1},m_{2}\geq0$
	and $m_{1}+m_{2}=\rho-1$.
	We write
	\begin{equation}\label{apple19}
	\langle D_{x}\rangle^{m_1}[\mathcal{A}^{\tau},\del_{x}]\langle D_{x}\rangle^{-m_1-1}
	\langle D_{x}\rangle^{m_1+1}R\langle D_{x}\rangle^{m_2}
	\langle D_{x}\rangle^{-m_2}(\mathcal{A}^{\tau})^{-1}
	\langle D_{x}\rangle^{m_2}.
	\end{equation}
	By Lemma \ref{buttalapasta2} we have that $\langle D_{x}\rangle^{-m_2}(\mathcal{A}^{\tau})^{-1}
	\langle D_{x}\rangle^{m_2}$ satisfies a bound like \eqref{pasta1} with $\al=0$.
	The operator $\langle D_{x}\rangle^{m_1+1}R\langle D_{x}\rangle^{m_2}\langle D_{x}\rangle^{m_1+1}R\langle D_{x}\rangle^{m_2}$ is Lip-$0$-tame
	since $R\in \gotL_{\rho, p}$ and $m_1+m_2+1=\rho$.
	Moreover by an explicit computation (using formula \eqref{ignobel}) we get
	\begin{equation}\label{apple20}
	[\mathcal{A}^{\tau},\del_{x}]=\tau \frac{\be_{xx}}{1+\tau\be_{x}}\mathcal{A}^{\tau}+\tau\be_{x}\mathcal{A}^{\tau}\del_{x}.
	\end{equation}
	We claim that, for $s\geq s_0$ and $u\in H^{s}$, one has
	\begin{equation}\label{apple21}
	\|
	\langle D_{x}\rangle^{m_1}
	[\mathcal{A}^{\tau},\del_{x}]
	\langle D_{x}\rangle^{-m_1-1}
	u\|_{s}^{\gamma, \calO }\leq_{s,\rho}
	\|\be\|_{s_0+\mu}^{\gamma, \calO } \| u \|_{s}+\|\be\|^{\gamma, \calO }_{s+\mu}
	\| u\|_{s_0},
	\end{equation}
	for some $\mu>0$ depending only on $s,\rho$.
	The first summand in \eqref{apple20}
	satisfies the bound \eqref{apple21} thanks to Lemma \ref{PROP} for the estimate of 
	$\langle D_{x}\rangle^{m_1}\be_{xx}(1+\tau\be_{x})^{-1}\langle D_{x}\rangle^{-m_1}$ and thanks Lemma 
	\ref{buttalapasta2} to estimate $\langle D_{x}\rangle^{m_1}\mathcal{A}^{\tau}
	\langle D_{x}\rangle^{-m_1}$. For the second summand we reason as follow:
	we write
	\[
	\begin{aligned}
	\langle D_{x}\rangle^{m_1}
	\tau\be_{x}\mathcal{A}^{\tau}\del_{x}
	\langle D_{x}\rangle^{-m_1-1}
	&=
	\Big(\langle D_{x}\rangle^{m_1}\be_{x}\langle D_{x}\rangle^{-m_1}\Big)
	\Big(
	\langle D_{x}
	\rangle^{m_1}
	\mathcal{A}^{\tau}
	\langle D_{x}\rangle^{-m_1}
	\Big)
	\del_{x}\langle D_{x}\rangle^{-1}
	\end{aligned}
	\]
	and we note that the operator $\del_{x}\langle D_{x}\rangle^{-1}$ is bounded on $H^{s}$.
	Hence the bound \eqref{apple21} follows by applying Lemmata \ref{PROP} and \ref{buttalapasta2}. 
	By the discussion above one gets
	\begin{equation}\label{apple22}
	\gotM^{\gamma}_{\langle D_{x}\rangle^{m_1} 
		[\mathcal{A}^{\tau},\del_{x}]R(\mathcal{A}^{\tau})^{-1}
		\langle D_{x}\rangle^{m_2}}(0,s)
	\leq_{s,\rho}
	\mathbb{M}^{\gamma}_{R}(s, \tb)
	+\|\be\|_{s+\mu}^{\gamma, \calO }\mathbb{M}^{\gamma}_{R}(s, \tb).
	\end{equation}
	One can study the tameness constant of the operator $\mathcal{A}^{\tau}R[(\mathcal{A}^{\tau})^{-1},\del_{x}]$ in \eqref{apple16} by using the same arguments above.
	
	\noindent
	We check now that $M^{\tau}$ satisfies item $(iii)$ of Def. \ref{ellerho}. 
	Let $m_{1},m_{2}\in \mathbb{R}$, $m_{1},m_{2}\geq0$
	and $m_{1}+m_{2}=\rho-\lvert \vec{\mathtt{b}} \rvert-1$.
	We write for $\vec{\tb}\in \mathbb{N}^{\nu}$, $|\vec{\tb}|=\tb$
	\begin{equation}\label{apple23}
	\begin{aligned}
	\; [\del_{\f}^{\vec{\mathtt{b}}}\mathcal{A}^{\tau}R (\mathcal{A}^{\tau})^{-1},\del_{x} ]&=\sum_{\vec{\mathtt{b}_1}+\vec{\mathtt{b}_2}+\vec{\mathtt{b}_3}=\vec{\mathtt{b}}}
	C(|\vec{\mathtt{b}_{1}}|,|\vec{\mathtt{b}_{2}}|,|\vec{\mathtt{b}_{3}}|)
	\Big[
	(\del_{\f}^{\vec{\mathtt{b}_1}}\mathcal{A}^{\tau})
	(\del_{\f}^{\vec{\mathtt{b}_2}}R)(\del_{\f}^{\vec{\mathtt{b}_3}}(\mathcal{A}^{\tau})^{-1})
	,\del_{x}\Big]
	\end{aligned}
	\end{equation}
	and we note that
	\begin{equation}\label{apple24}
	\begin{aligned}
	\;[(\del_{\f}^{\vec{\mathtt{b}_1}}\mathcal{A}^{\tau})
	(\del_{\f}^{\vec{\tb_2}}R)(\del_{\f}^{\vec{\mathtt{b}_1}}(\mathcal{A}^{\tau})^{-1})
	,\del_{x}]&=
	(\del_{\f}^{\vec{\mathtt{b}_1}}\mathcal{A}^{\tau})\Big[(\del_{\f}^{\vec{\mathtt{b}_2}}R),\del_{x}\Big]
	(\del_{\f}^{\vec{\mathtt{b}_3}}\mathcal{A}^{\tau})^{-1})\\
	&+
	(\del_{\f}^{\vec{\mathtt{b}_1}}\mathcal{A}^{\tau})
	(\del_{\f}^{\vec{\mathtt{b}_2}}R)
	\Big[(\del_{\f}^{\vec{\mathtt{b}_3}}(\mathcal{A}^{\tau})^{-1}),\del_{x}\Big]\\
	&+
	\Big[(\del_{\f}^{\vec{\mathtt{b}_1}}\mathcal{A}^{\tau}),\del_{x}\Big]
	(\del_{\f}^{\vec{\mathtt{b}_2}}R)
	(\del_{\f}^{\vec{\mathtt{b}_3}}(\mathcal{A}^{\tau})^{-1}).
	\end{aligned}
	\end{equation}
	The most difficult term to study is the last summand in \eqref{apple24}.
	We have that
	\begin{equation}\label{apple25}
	\begin{aligned}
	&\langle D_{x}\rangle^{m_{1}}
	\Big[(\del_{\f}^{\vec{\mathtt{b}_1}}\mathcal{A}^{\tau}),\del_{x}\Big]
	(\del_{\f}^{\vec{\mathtt{b}_2}}R)
	(\del_{\f}^{\vec{\mathtt{b}_3}}(\mathcal{A}^{\tau})^{-1})
	\langle D_{x}\rangle^{m_{2}}=\\
	&=
	\langle D_{x}\rangle^{m_{1}}
	\Big[(\del_{\f}^{\vec{\mathtt{b}_1}}\mathcal{A}^{\tau}),\del_{x}\Big]
	\langle D_{x}\rangle^{-y}
	\langle D_{x}\rangle^{y}
	(\del_{\f}^{\vec{\mathtt{b}_2}}R)
	\langle D_{x}\rangle^{z}
	\langle D_{x}\rangle^{-z}
	(\del_{\f}^{\vec{\mathtt{b}_3}}(\mathcal{A}^{\tau})^{-1})
	\langle D_{x}\rangle^{m_{2}},
	\end{aligned}
	\end{equation}
	with $z=m_{2}+|\vec{\mathtt{b}_{3}}|$ and $y=\rho-|\vec{\mathtt{b}_{2}}|-|\vec{\mathtt{b}_{3}}|-m_{2}$.
	Note the operator $\langle D_{x}\rangle^{-z}
	(\del_{\f}^{\vec{\mathtt{b}_3}}(\mathcal{A}^{\tau})^{-1})
	\langle D_{x}\rangle^{m_{2}}$ satisfies bound like \eqref{pasta1} with $\al=\vec{\tb}_3$;
	moreover the operator
	$\langle D_{x}\rangle^{y}
	(\del_{\f}^{\vec{\mathtt{b}_2}}R)
	\langle D_{x}\rangle^{z}$ is Lip-$0$-tame
	since $y+z=\rho-|\vec{\mathtt{b}_{2}}|$ and $R\in\gotL_{\rho, p}$. Note also that, since $m_1+m_{2}=\rho-|\vec{\mathtt{b}}|-1$, 
	one has $y=m_{1}+|\vec{\mathtt{b}_{1}}|+1$. We now study the tameness constant of
	\[
	\langle D_{x}\rangle^{m_1}
	\Big[(\del_{\f}^{\vec{\mathtt{b}_1}}\mathcal{A}^{\tau}),\del_{x}\Big]
	\langle D_{x}\rangle^{-m_{1}-|\vec{\mathtt{b}_1}|-1}. 
	\]
By differentiating the \eqref{apple20} we get
\begin{equation}\label{apple26}
\del_{\f}^{\vec{\mathtt{b}_1}}[\mathcal{A}^{\tau},\del_{x}]=
\sum_{\vec{\mathtt{b}}_{1}'+\vec{\mathtt{b}}_{1}''=\vec{\mathtt{b}}_{1}}
\tau(\del_{\f}^{\vec{\mathtt{b}}_{1}'}g) (\del_{\f}^{\vec{\mathtt{b}}_{1}''}\mathcal{A}^{\tau})+
\tau(\del_{\f}^{\vec{\mathtt{b}}_{1}'}\be_{x})(\del_{\f}^{\vec{\mathtt{b}}_{1}''}\mathcal{A}^{\tau})\del_{x},
\end{equation}
where $g=\be_{xx}/(1+\tau\be_{x})$. We claim that
\begin{equation}\label{apple27}
\|
\langle D_{x}\rangle^{m_1}
[\del_{\f}^{\vec{\mathtt{b}}_1}\mathcal{A}^{\tau},\del_{x}]
\langle D_{x}\rangle^{-m_1-|\vec{\mathtt{b}_1}|-1}
u\|_{s}^{\gamma, \calO }\leq_{s,\rho}
\| u \|_{s}\|\be\|_{s_0+\mu}^{\gamma, \calO }+\|\be\|^{\gamma, \calO }_{s+\mu}\| u\|_{s_0},
\end{equation}
for some $\mu>0$ depending on $s,\rho$. 
We study the most difficult summand in \eqref{apple26}.
We have
\begin{equation}\label{apple28}
\begin{aligned}
\langle D_{x}\rangle^{m_1}(\del_{\f}^{\vec{\mathtt{b}}_{1}'}\be_{x})
(\del_{\f}^{\vec{\mathtt{b}}_{1}''}\mathcal{A}^{\tau})\del_{x}\langle D_{x}\rangle^{-m_1-|\vec{\mathtt{b}}_1|-1}&=
\langle D_{x}\rangle^{m_1}(\del_{\f}^{\vec{\mathtt{b}}_{1}'}\be_{x})
\langle D_{x}\rangle^{-m_1-|\vec{\mathtt{b}_1}|+|\vec{\mathtt{b}_1''}|}\\
&\times
\langle D_{x}\rangle^{m_1+|\vec{\mathtt{b}}_1|-|\vec{\mathtt{b}}_1''|}
(\del_{\f}^{\vec{\mathtt{b}}_{1}''}\mathcal{A}^{\tau})
\langle D_{x}\rangle^{-m_1-\lvert\vec{\mathtt{b}}_1\rvert}\del_{x}\langle D_{x}\rangle^{-1}.
\end{aligned}
\end{equation}
The \eqref{apple27} follows for the term in \eqref{apple28} 
by using Lemmata \ref{PROP},
\ref{buttalapasta2} and the fact that 
$\del_{x}\langle D_{x}\rangle^{-1}$ is bounded on $H^{s}$. 
On the other summand in \eqref{apple26} one uses similar arguments.
By the discussion above one can check that
\begin{equation}\label{apple30}
\gotM^{\gamma}_{\langle D_{x}\rangle^{m_1} 
[\del_{\f}^{\vec{\mathtt{b}}}\mathcal{A}^{\tau},\del_{x}]R(\mathcal{A}^{\tau})^{-1}
\langle D_{x}\rangle^{m_2}}(0,s)
\leq_{s,\rho}
\mathbb{M}^{\gamma}_{R}(s,\mathtt{b})
+\|\be\|_{s+\mu}^{\gamma, \calO }\mathbb{M}^{\gamma}_{R}(s_0,\mathtt{b}).
\end{equation}
The fact  that the operator $M$ satisfies items $(iii)$-$(iv)$ of Definition \eqref{ellerho}
can be proved arguing as done above for items $(i)$-$(ii)$. 
\end{proof}

%\bibliography{bibliografia.bib}

\def\cprime{$'$}

\end{document}